\newcolumntype{L}{>{\arraybackslash}X}
\theoremstyle{plain}
\newtheorem{theorem}{Theorem}[section]
\theoremstyle{remark}
\newtheorem{remark}[theorem]{Remark}
\theoremstyle{plain}
\newtheorem{lemma}[theorem]{Lemma}
\newtheorem{proposition}[theorem]{Proposition}
\newtheorem{definition}[theorem]{Definition}
\newtheorem{assumption}[theorem]{Assumption}
\numberwithin{equation}{section}
\def\N{{\mathbb N}}
\def\R{{\mathbb R}}
\newcommand{\one}{{{\bf 1}}}
\newcommand{\E}{{\mathbf E}}
\renewcommand{\P}{{\mathbf P}}
\newcommand{\F}{{\mathscr F}}
\newcommand{\g}{\gamma}
\newcommand{\om}{\omega}
\renewcommand{\O}{\Omega}
\newcommand{\n}{{\rm N}}
\newcommand{\Dom}{\mathcal{O}}
\renewcommand{\emptyset}{\varnothing}
\newcommand{\Tor}{\mathbb{T}}
\newcommand{\A}{{\mathcal A}}
\newcommand{\loc}{\mathrm{loc}}
\newcommand{\calL}{{\mathscr L}}
\newcommand{\z}{x_3}
\newcommand{\h}{{\rm H}}
\newcommand{\D}{\mathscr{D}}
\newcommand{\Ls}{\mathbb{L}}
\newcommand{\Hs}{\mathbb{H}}
\newcommand{\wt}{\widetilde}
\newcommand{\forcetwo}{\mathcal{E}}
\newcommand{\force}{\mathcal{F}}
\renewcommand{\div}{\mathrm{div}}
\newcommand{\w}{w}
\newcommand{\p}{\mathbb{P}}
\newcommand{\q}{\mathbb{Q}}
\newcommand{\embed}{\hookrightarrow}
\newcommand{\Progress}{\mathscr{P}}
\newcommand{\cor}{\mathcal{C}}
\newcommand{\MT}{\mathcal{M}}
\newcommand{\Temp}{\theta}
\newcommand{\T}{\Temp}
\newcommand{\op}{\mathcal{J}}
\newcommand{\opt}{\mathcal{T}}
\newcommand{\ellip}{\nu}
\newcommand{\Hr}{H_{{\rm R}}}
\newcommand{\kone}{\kappa}
\newcommand{\ktwo}{\sigma}
\newcommand{\ktwon}{\sigma_n}
\newcommand{\Borel}{\mathscr{B}}
\newcommand{\wh}{\widehat}
\newcommand{\pr}{\mathbb{P}_{\h}}
\newcommand{\qr}{\mathbb{Q}_{\h}}
\newcommand{\ft}{F_{\T}}
\newcommand{\fvt}{f_v}
\newcommand{\gvtn}{g_{v,n}}
\newcommand{\fv}{F_{v}}
\newcommand{\gt}{G_{\T}}
\newcommand{\gv}{G_{v}}
\newcommand{\Lp}{\mathcal{P}_{\hp}}
\newcommand{\Lpp}{\mathcal{P}_{\hp,\phi}}
\newcommand{\Lt}{\mathcal{L}_{\tp,\hp}}
\newcommand{\Ltb}{\overline{\mathcal{L}}_{\tp,\hp}}
\newcommand{\Lpg}{\mathcal{P}_{\hp,G}}
\newcommand{\hp}{\gamma}
\newcommand{\Br}{\mathrm{B}}
\newcommand{\gx}{G_{u}}
\newcommand{\gtn}{G_{\T,n}}
\newcommand{\gvn}{G_{v,n}}
\newcommand{\y}{\Xi}
\newcommand{\Ht}{\mathcal{R}}
\newcommand{\vv}{u}
\newcommand{\stocgrav}{{k}}
\renewcommand{\v}{U_{*}}
\newcommand{\tp}{\pi}
\newcommand{\TT}{\Theta}
\newcommand{\xx}{\mathcal{X}}
\newcommand{\yy}{\mathcal{Y}}
\newcommand{\II}{\boxed{L}}
\newcommand{\III}{\boxed{I}}
\newcommand{\dop}{\mathcal{L}}
\newcommand{\low}{L}
\newcommand{\Fun}{\mathcal{F}_{\alpha}}
\newcommand{\dd}{\mathrm{d}}
\newcommand{\reference}{{\rm r}}
\newcommand\reallywidehat[1]{%
\savestack{\tmpbox}{\stretchto{%
  \scaleto{%
    \scalerel*[\widthof{\ensuremath{#1}}]{\kern-.6pt\bigwedge\kern-.6pt}%
    {\rule[-\textheight/2]{1ex}{\textheight}}
  }{\textheight}%
}{0.5ex}}%
\stackon[1pt]{#1}{\tmpbox}%
}
\begin{document}

\date\today

\title[Primitive equations with non-isothermal turbulent pressure]{The stochastic primitive equations with \\ non-isothermal turbulent pressure}

\keywords{stochastic partial differential equations, primitive equations, global well-posedness, gradient noise, stochastic maximal regularity, turbulent flows, Kraichnan's turbulence, thermal fluctuations.}

\thanks{Antonio Agresti has received funding from the European Research Council (ERC) under the European Union’s Horizon 2020 research and innovation programme (grant agreement No 948819) \includegraphics[height=0.4cm]{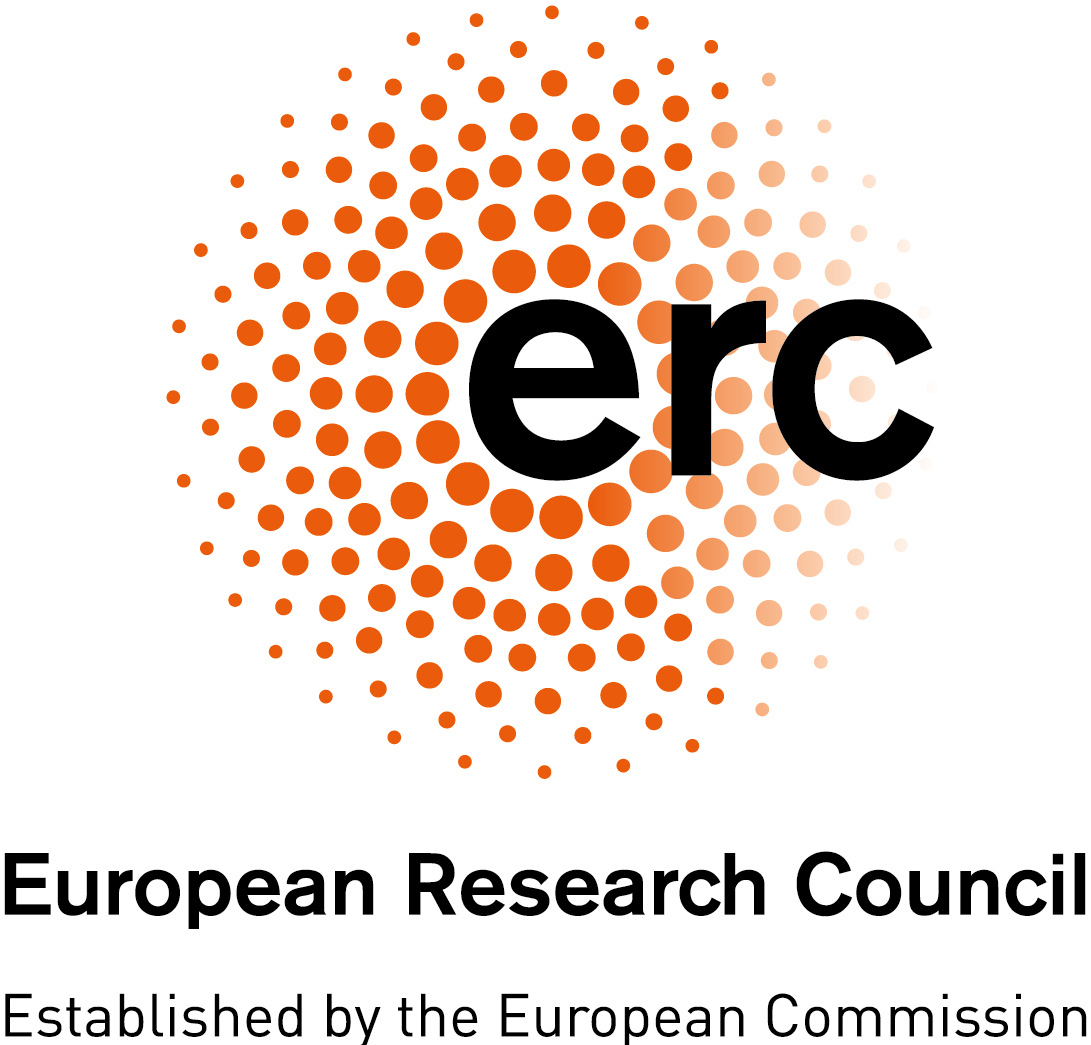}\,\includegraphics[height=0.4cm]{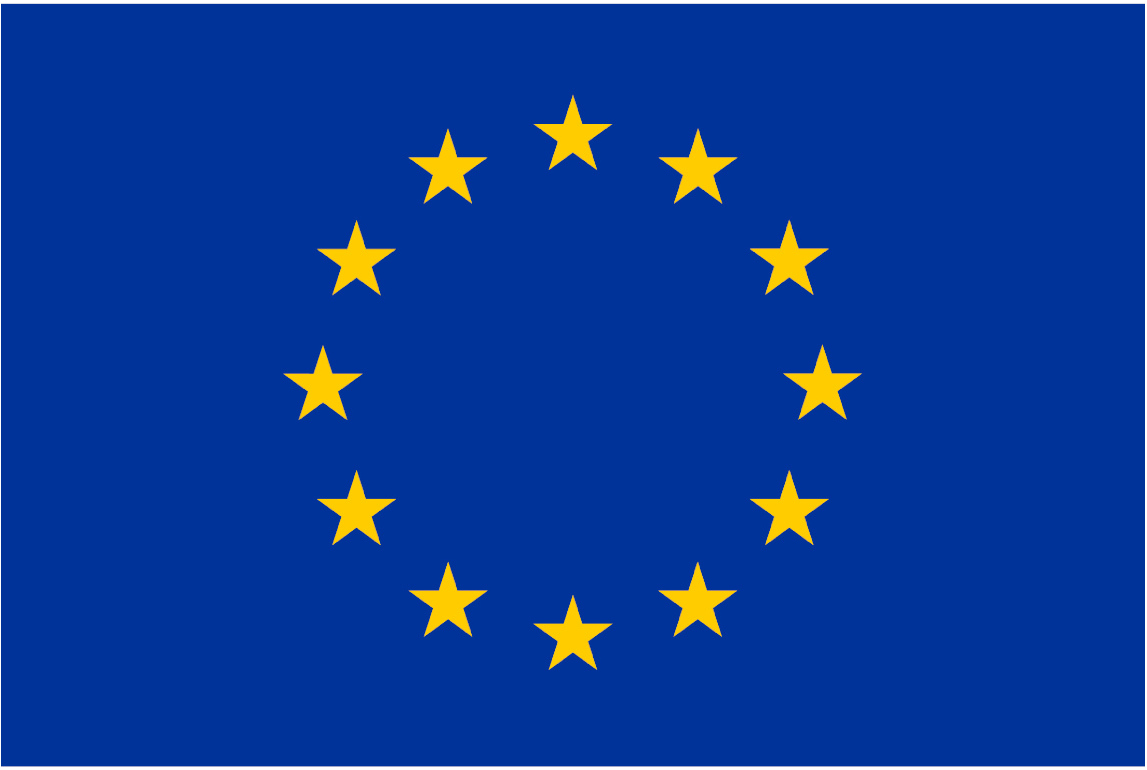}. Antonio Agresti is a member of GNAMPA (IN$\delta$AM).
\\
Matthias Hieber gratefully acknowledges the support by the Deutsche Forschungsgemeinschaft (DFG) through  the Research Unit 5528 -- project number 500072446.\\
Amru Hussein has been supported by 
Deutsche Forschungsgemeinschaft (DFG) -- project number 508634462 and by MathApp -- Mathematics Applied to Real-World Problems -- part of the Research Initiative of the Federal State of Rhineland-Palatinate, Germany.
\\
Martin Saal has been supported by Deutsche Forschungsgemeinschaft (DFG) -- project number 429483464.
}

\author[Agresti]{Antonio Agresti}
\address{Institute of Science and Technology Austria (ISTA), Am Campus 1,
	3400 Klosterneuburg, Austria}
\email{antonio.agresti92@gmail.com}
\curraddr{Delft Institute of Applied Mathematics, Delft University of Technology, P.O.\ Box 5031, 2600 GA Delft, The Netherlands}

\author[Hieber]{Matthias Hieber} 
\address{Department of Mathematics,
	TU Darmstadt, Schlossgartenstr. 7, 64289 Darmstadt, Germany}
\email{hieber@mathematik.tu-darmstadt.de}

\author[Hussein]{Amru Hussein}
\address{Department of Mathematics,
	RPTU Kaiserslautern-Landau, Paul-Ehrlich-Stra{\ss}e 31,
	67663 Kaiserslautern, Germany}
\email{hussein@mathematik.uni-kl.de}

\author[Saal]{Martin Saal}
\address{Department of Mathematics,
	TU Darmstadt, Schlossgartenstr. 7, 64289 Darmstadt, Germany}
	\email{msaal@mathematik.tu-darmstadt.de}

\subjclass[2010]{Primary 35Q86; Secondary 35R60, 60H15, 76M35, 76U60} 

\begin{abstract}
In this paper, we introduce and study the primitive equations with \emph{non}-isothermal turbulent pressure and transport noise. They are derived from the Navier-Stokes equations by employing stochastic versions of the Boussinesq and the hydrostatic approximations.
The temperature dependence of the turbulent pressure can be seen as a consequence of an additive noise acting on the small vertical dynamics.
For such a model we prove global well-posedness in $H^1$ where the noise is considered in both the It\^{o} and Stratonovich formulations.
Compared to previous variants of the primitive equations, the one considered here presents a more intricate coupling between the velocity field and the temperature. The corresponding analysis is seriously more involved than in the deterministic setting.
Finally, the continuous dependence on the initial data and the energy estimates proven here are new, even in the case of isothermal turbulent pressure.  
\end{abstract}

\maketitle

\addtocontents{toc}{\protect\setcounter{tocdepth}{1}}
\tableofcontents

\section{Introduction}\label{s:intro}

In this paper, we introduce and study the stochastic primitive equation with \emph{non}-isothermal turbulent pressure and transport noise. 
The primitive equations are one of the fundamental models 
for geophysical flows  used to describe
oceanic and atmospheric dynamics. They are derived from the
Navier-Stokes equations on domains where the vertical scale
is much smaller than the horizontal scale by the small aspect ratio limit.
Additional information for the various versions of the deterministic
primitive equations can be found, e.g.\ in \cite{Ped, Vallis06}. 
The introduction of additive and multiplicative noise
into models for geophysical flows can  
 be used on the one hand 
 to account for numerical
and empirical uncertainties and errors
and on the other hand as 
subgrid-scale parameterizations for data assimilation, and ensemble prediction 
as described in the review articles \cite{Delsole04, Franzke14, Palmer19}.
The primitive equations with non-isothermal turbulent pressure introduced here present a more intricate interplay between the velocity field and the temperature which leads to serious mathematical complications compared to the deterministic situation, see e.g.\ \cite{CT07,HH20_fluids_pressure}. The same difficulties also appear when comparing previously studied \emph{stochastic} perturbations of the primitive equations (see e.g.\  \cite{Primitive1,BS21,DEBUSSCHE20111123,Debussche_2012} and the references therein) with the one considered here. A discussion of these difficulties can be found in Subsection \ref{ss:novelty} below.  The presence of the temperature in the balance for the turbulent pressure can be thought of as the large-scale effect of thermal fluctuations acting on the small vertical dynamics. 
From a modelling point of view, a non-isothermal turbulent pressure may provide a new perspective on the contribution of the temperature on geophysical flows ruled by the primitive equations. For instance, we hope that the model introduced in the current paper can be used in the study of the influence of thermal fluctuations on oceanic streams.
 As in \cite{Primitive1}, we also consider dynamics driven by transport noise. 
 The latter was first introduced by  R.H.~Kraichanan in the study of turbulent flows
\cite{K68,K94}, and it has been widely studied in the context of the Navier-Stokes equations, see \cite{HLN21_annals,MR01,MR04} for a physical justification and also  \cite{AV21_NS,BCF91,BCF92,F_intro,HLN19, MR05} and the references therein for related mathematical results.
Let us stress that the difficulties arising from the non-isothermal turbulent pressure are still present in the absence of transport noise, see Subsection \ref{ss:novelty} for details.

The primitive equations with non-isothermal turbulent pressure in the domain $\Dom=\Tor^2\times (-h,0)$, where $ h>0$ and $\Tor^2$ denotes the two-dimensional flat torus,  are given by the following system:
\begin{subequations}
	\label{eq:primitive_full}
\begin{alignat}{6}
\label{eq:primitive_full_1}
		\begin{split}
		\dd v -\Delta v\,\dd t &=\Big[  -\nabla_{\h} P  -(v\cdot \nabla_{\h})v- w\partial_3 v + \fv
		\Big]\,\dd t \\
		&\quad +\sum_{n\geq 1}\Big[(\phi_{n}\cdot\nabla) v-\nabla_{\h}\wt{P}_n  +\gvn\Big] \, \dd \beta_t^n,
		\end{split}\\
\label{eq:primitive_full_2}
		\dd  \T -\Delta \T\,\dd t&=\Big[  -(v\cdot \nabla_{\h})\T- w\partial_3 \T+ \ft \Big]\, \dd t 
		+\sum_{n\geq 1}\Big[(\psi_{n}\cdot\nabla) \T+\gtn\Big]\,\dd \beta_t^n,\\
\label{eq:primitive_full_3}
\partial_{3} P+ \kone \T&=0,\\
\label{eq:primitive_full_4} 
\partial_3 \wt{P}_n+\ktwon \T&=0,\\
\label{eq:primitive_full_5}
\div_{\h} v+\partial_3 w&=0,\\
\label{eq:primitive_full_6}
v(0,\cdot)&=v_0,\quad \quad\T(0,\cdot)=\T_0.
\end{alignat}
\end{subequations}
Here $\kone,\ktwon$ and $\phi_{n}=(\phi_n^{j})_{j=1}^3,\psi_{n}=(\psi_n^{j})_{j=1}^3$ are assigned maps. 
 Moreover
$v=(v^{k})_{k=1}^2:[0,\infty)\times \O\times \Dom\to \R^2$ denotes the horizontal component of the unknown velocity field $u=(v,w)$ and $w:[0,\infty)\times \O\times \Dom\to \R$ the vertical one, $P:[0,\infty)\times \O\times  \Dom\to \R$ the unknown pressure, $\wt{P}_n:[0,\infty)\times \O\times  \Dom \to \R$ the components of the unknown turbulent pressure and $\T:[0,\infty)\times \O\times \Dom\to \R$ the unknown temperature, respectively.
Finally, 
$(\beta_t^n\,:\,t\geq 0)_{n\geq 1}$ is a sequence of independent standard Brownian motions on a given filtered probability space $(\O,\A,(\F_t)_{t\geq 0},\P)$, and $(\fv,\ft,\gvn,\gtn)$ are given maps possibly depending on $(v,\T,\nabla v,\nabla \T)$. These describe deterministic and stochastic forces, they also take into account lower-order effects like the Coriolis force. The reader is referred to Subsection \ref{ss:set_up} for the unexplained notation.

The problem \eqref{eq:primitive_full} is supplemented with the following boundary conditions
\begin{subequations}
	\label{eq:boundary_conditions_intro}
\begin{alignat}{2}
\label{eq:boundary_conditions_intro_1}
		\partial_3 v (\cdot,-h)=\partial_3 v(\cdot,0)=0 \ \  \text{ on }\Tor^2,&\\
\label{eq:boundary_conditions_intro_2}
		\partial_3 \T(\cdot,-h)= \partial_3 \T(\cdot,0)+\alpha \T(\cdot,0)=0\ \  \text{ on }\Tor^2,&
\end{alignat}
\end{subequations}
where $\alpha\in \R$ is given and 
\begin{equation}
	\label{eq:boundary_conditions_w}
	\begin{aligned}
		w(\cdot,-h) =w(\cdot,0)=0\ \ \text{ on }\Tor^2.
	\end{aligned}
\end{equation}

Actually, in our main results, we consider a generalization of the system in \eqref{eq:primitive_full}, see \eqref{eq:primitive} in the main text. Moreover, our arguments also cover the case where the boundary conditions \eqref{eq:boundary_conditions_intro} are replaced by periodic ones. Further comments are given in Remark \ref{r:periodic_BC}.

The aim of this paper is to show the \emph{global well-posedness} in the strong setting (both analytically and probabilistically) of the system \eqref{eq:primitive_full}-\eqref{eq:boundary_conditions_w}, see Theorems \ref{t:global_primitive_strong_strong} and \ref{t:continuous_dependence}. In these results, the noise is understood in the It\^{o}-sense. 
In Section \ref{s:Stratonovich} we also discuss the case of Stratonovich noise. 
In stochastic fluid mechanics, and in particular, for geophysical flows, the \emph{Stratonovich formulation} of the noise is relevant, and it is seen as a more realistic model compared to the It\^o one, see e.g.\ 
\cite{BiFla20,DP22_two_scale,FlaPa21,Franzke14,HL84,MR01,MR04,W_thesis}.
 From an analytic point of view, the Stratonovich noise is not more difficult than the It\^o one and, at least formally, one can convert the Stratonovich formulation into the It\^o one up to 
 some additional corrective terms. The global well-posedness of \eqref{eq:primitive_full} in the strong setting with \emph{Stratonovich}  noise is proved in Section \ref{s:Stratonovich}.

For the reader's convenience, we state here a simplified version of the Theorems \ref{t:global_primitive_strong_strong} and \ref{t:continuous_dependence}. Below we write $\phi^j\stackrel{{\rm def}}{=}(\phi^j_n)_{n\geq 1}$, $\psi^j\stackrel{{\rm def}}{=}(\psi^j_n)_{n\geq 1}$ and $\R_+\stackrel{{\rm def}}{=}(0,\infty)$.

\begin{theorem}[Simplified version]
\label{t:intro}
Let 
$\kone$ be constant, $(\ktwon)_{n\geq 1}\in \ell^2$, $\gvn^k=\gtn=0$,  $\ft=0$, and let $\fv= k_0(v^2,-v^1)$ for $k_0\in\R$ be the Coriolis force.
For all $n\geq 1$ let the maps 
\begin{align*}
\phi_n,\psi_n\colon \R_+\times \O\times \Dom\to \R^3
\end{align*}
 be $\Progress\otimes \Borel$-measurable, and let for some $\delta>0$ and all $j\in \{1,2,3\}$ be
\begin{align*}
\phi^j,\psi^j\in L^\infty(\R_+\times \O;H^{1,3+\delta}(\Dom;\ell^2)) .
\end{align*}
Suppose that $(\phi^j_n,\psi_n^j)$ are independent of $x_3$ for $j\in \{1,2\}$.
Furthermore, assume that there exists $\ellip\in (0,2)$ such that,  a.s.\ for all $t\in \R_+$, $x\in \Dom$ and $\xi\in \R^3$ the parabolicity conditions
		\begin{align*}
			\sum_{n\geq 1} \Big(\sum_{1\leq j\leq 3} \phi^j_n(t,x) \xi_j\Big)^2\leq \ellip |\xi|^2
			\ \ \text{ and }\ \ 
			\sum_{n\geq 1} \Big(\sum_{1\leq j\leq 3} \psi^j_n(t,x) \xi_j\Big)^2 \leq \ellip |\xi|^2
		\end{align*}
hold. Then for each
	$
	v_0\in L^0_{\F_0}(\O;\Hs^1(\Dom)) $ and $ \T_0\in L^0_{\F_0}(\O;H^1(\Dom))
	$
	the following hold:
\begin{enumerate}[{\rm(1)}]
\item 
There exists a \emph{unique global} strong solution $(v,\T)$ to \eqref{eq:primitive_full}-\eqref{eq:boundary_conditions_w} satisfying
	$$
	(v,\T)\in L^2_{\loc}([0,\infty);\Hs_{\n}^2(\Dom)\times \Hr^2(\Dom))\cap C([0,\infty);\Hs^1(\Dom)\times H^1(\Dom)) 
	\text{ a.s.\ }
	$$
\item\label{it:estimate_intro} 
For all $T\in (0,\infty)$ and all $\g>e^{e}$,
\begin{align*}
\P\Big(\sup_{t\in [0,T]}\|v(t)\|_{H^1}^2+\int_{0}^T \|v(t)\|_{H^2}^2\,\dd t\geq \g \Big)
&\lesssim_T \frac{1+\E\|v_0\|_{H^1}^4+\E\|\T_0\|_{H^1}^4}{\log\log\log(\g)},\\
\P\Big(\sup_{t\in [0,T]}\|\T(t)\|_{H^1}^2+\int_{0}^T \|\T(t)\|_{H^2}^2\,\dd t\geq \g \Big)
&\lesssim_T \frac{1+\E\|v_0\|_{H^1}^4+\E\|\T_0\|_{H^1}^4}{\log\log\log(\g)}.
\end{align*}
\item\label{it:estimate_intro_1} 
The assignment $(v_0,\T_0)\mapsto (v,\T)$ is continuous in probability in the sense of Theorem \ref{t:continuous_dependence}. 
\end{enumerate}
\end{theorem}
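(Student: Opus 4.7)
The natural approach has three stages: reduce \eqref{eq:primitive_full}--\eqref{eq:boundary_conditions_w} to a closed quasilinear SPDE in $(v,\T)$ and construct a unique maximal local strong solution by stochastic maximal regularity; derive an a priori $H^1$--bound that rules out the blow--up criterion and gives the tail estimate in \eqref{it:estimate_intro}; then upgrade the Lipschitz stability built into the local theory to continuous dependence in probability for $L^0$--data. For the reduction, vertical integration of \eqref{eq:primitive_full_3}--\eqref{eq:primitive_full_4} yields $P(x_\h,\z)=P_{{\rm s}}(x_\h)-\kone\int_0^{\z}\T(\cdot,z')\,dz'$ and an analogous formula for $\wt P_n$ with $\ktwon$ in place of $\kone$, while \eqref{eq:primitive_full_5} and \eqref{eq:boundary_conditions_w} give $w=-\int_{-h}^{\z}\div_\h v\,dz'$; the surface pressures $P_{{\rm s}}$ and $\wt P_{n,{\rm s}}$ are then recovered from the solvability condition $\div_\h\langle v\rangle=0$, producing bounded nonlocal operators acting on $(v,\T)$. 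Under the stated integrability and the parabolicity $\ellip<2$, the resulting SPDE fits the $L^p$--stochastic maximal regularity setting, and a contraction argument on the critical space $L^p_tH^1_x$ yields a unique maximal strong solution $(v,\T,\tT)$ with the regularity claimed in (1), together with the blow--up criterion that on $\{\tT<\infty\}$ one has $\limsup_{t\uparrow\tT}(\|v(t)\|_{H^1}+\|\T(t)\|_{H^1})=+\infty$ almost surely.

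The core of the proof, and its main obstacle, is the global $H^1$--bound. The non--isothermal pressure feeds $\T$ back into the momentum equation at top order through the hydrostatic formula, so $(v,\T)$ must be estimated simultaneously rather than sequentially as in the isothermal case. I would perform a barotropic/baroclinic decomposition $v=\bar v+\wt v$ and apply It\^o's formula to $\|\bar v\|_{L^2}^2+\|\wt v\|_{H^1}^2+\|\T\|_{H^1}^2$. The parabolic dissipation $\|\nabla\bar v\|_{L^2}^2+\|\Delta\wt v\|_{L^2}^2+\|\Delta\T\|_{L^2}^2$ must then absorb three families of terms: the standard anisotropic trilinear Cao--Titi terms produced by $(v\cdot\nabla_\h)v+w\partial_3 v$; the new cross--terms coming from the non--isothermal deterministic and stochastic pressures, namely $\int\nabla_\h\big(\kone\int_0^{\z}\T\,dz'\big)\cdot\bar v$ and $\sum_n\int\nabla_\h\big(\ktwon\int_0^{\z}\T\,dz'\big)\cdot(\phi_n\cdot\nabla)\wt v$, together with their It\^o corrections in $\|\wt v\|_{H^1}^2$ generated by the stochastic pressure gradients; and the quadratic It\^o corrections from the transport noise on $v$ and $\T$, which the hypothesis $\ellip<2$ renders strictly subcritical. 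Linearity of the coupling in $\T$ lets one trade a vertical derivative via integration by parts in $\z$, after which a logarithmic Sobolev--type inequality of Brezis--Gallouet/Kozono--Taniuchi type closes the estimate, as in the deterministic Cao--Titi argument but now at top order in both fields simultaneously. A stochastic Gronwall inequality of Scheutzow type then produces an a priori bound whose tail decays like $(\log\log\log\g)^{-1}$, yielding \eqref{it:estimate_intro} and, in combination with the blow--up criterion, the globality in (1).

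The continuous dependence in \eqref{it:estimate_intro_1} follows from the local Lipschitz dependence built into the fixed point of the first step: on the event $\{\|v_0\|_{H^1}+\|\T_0\|_{H^1}\leq R\}$ two solutions are Lipschitz comparable in $L^p_\O(C_tH^1\cap L^2_tH^2)$ up to the stopping time at which either of them leaves a ball of radius $R'\gg R$ in $H^1$. The tail estimate from \eqref{it:estimate_intro} controls the probability that such a stopping time occurs on $[0,T]$; letting $R'\to\infty$ first and then $R\to\infty$ upgrades the bounded--data Lipschitz stability to continuous dependence in probability for general $L^0_{\F_0}$--data in the sense of Theorem \ref{t:continuous_dependence}.
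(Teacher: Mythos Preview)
Your outline for stages one and three (reformulation plus local existence via stochastic maximal $L^2$--regularity, and continuous dependence by localization and the tail bound) matches the paper's route. The core gap is in stage two: the a priori estimate.

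First, the Lyapunov functional $\|\bar v\|_{L^2}^2+\|\wt v\|_{H^1}^2+\|\T\|_{H^1}^2$ cannot close. Applying It\^o to $\|\nabla\wt v\|_{L^2}^2$ produces, from the stochastic hydrostatic pressure $\sum_n\ktwon\int_{-h}^{\cdot}\nabla_{\h}\T\,d\zeta\,d\beta^n$, an It\^o correction of size $\|\ktwo\|_{\ell^2}^2\|\nabla_{\h}^2\T\|_{L^2}^2$. Since $(\ktwon)_{n\geq 1}\in\ell^2$ carries no smallness, this cannot be absorbed by the $\T$--dissipation $\|\Delta\T\|_{L^2}^2$; this is exactly the obstruction described in Subsection~\ref{ss:novelty}. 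The paper therefore does \emph{not} work with $\|\wt v\|_{H^1}^2$. Following Cao--Titi/Hieber--Kashiwabara it uses instead the collection
\[
\|\wt v\|_{L^4}^4,\quad \|\bar v\|_{H^1(\Tor^2)}^2,\quad \|\partial_3 v\|_{L^2}^2,\quad \|\partial_3\T\|_{L^2}^2,
\]
and --- new in the non--isothermal case --- the weighted vertical average $\wh\T=\fint_{-h}^0\T\,\zeta\,d\zeta$ in $H^1(\Tor^2)$, plus several \emph{mixed} functionals $\||\wt v||\T|\|_{L^2}^2$, $\||\wt v||\TT|\|_{L^2}^2$, $\||\T||\TT|\|_{L^2}^2$, $\|\TT\|_{L^4}^4$ with $\TT=\int_{-h}^{\cdot}\T\,d\zeta$. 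These nine estimates are coupled through unavoidable cross--terms and must be combined with carefully chosen weights to close (Section~\ref{s:intermediate_estimate}). Your single--functional approach misses $\wh\T$ and the mixed quantities entirely, and those are precisely what handle the new $\ktwon$--coupling.

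Second, the appeal to a Brezis--Gallouet/Kozono--Taniuchi inequality is misplaced: neither the deterministic Cao--Titi argument nor this paper uses one. The $(\log\log\log\g)^{-1}$ decay arises from three \emph{nested} applications of the stochastic Gronwall lemma of \cite[Lemma~A.1]{AV_variational}: one for the basic $L^2$/$L^4$--estimates of Lemma~\ref{l:basic_estimates}, one for the intermediate estimate of Lemma~\ref{l:main_estimate}, and one for the final $H^1$--$H^2$ bound via maximal regularity in Section~\ref{s:proof_energy_estimate_conclusion}. Each layer converts an exponential Gronwall constant into a logarithmic tail bound, and the three layers concatenate to the triple logarithm.
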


The reader is referred to Subsections \ref{ss:set_up} and \ref{ss:reformulation} for the definition of $\Progress\otimes \Borel$-measurable, $ L^0_{\F_0}(\O;X)$ and the notation for the function spaces. 
In the above, we have not specified the unknowns $w$, $P$ and $\wt{P}_n$  as they are uniquely determined by $v$ and $\T$ due to the divergence-free condition and the hydrostatic Helmholtz projection. The reader is referred to \cite[Section 1]{Primitive1} for comments on the relation between the regularity of the transport noise considered in this paper and Krainchan's noise.

Physical motivations for the independence of $(\phi_n^j,\psi_n^j)$ on the $x_3$-coordinate for $j\in \{1,2\}$ are discussed in Remarks \ref{r:phi_independence} and \ref{r:twod_turbulence}. 
In a nutshell, the small aspect ratio limit (i.e.\ the hydrostatic approximation discussed for the deterministic setting in \cite{KGHHKW20,LT19}) shows that the primitive equations can be derived by taking the limit  $\varepsilon\downarrow 0$ of the anisotropic Navier-Stokes equations on a thin domain $\Tor^2\times(-\varepsilon,0)$ (see Figure \ref{fig:domain}), and therefore the variability in the vertical direction of the coefficients disappear in the limit. Hence, the independence of $(\phi_n^j,\psi_n^j)$ on $x_3$ for $j\in \{1,2\}$ is justified.
In particular, the situation for geophysical flows is different from usual turbulence models concerning Navier-Stokes equations \cite{BE12,T02_twod}.

The logarithmic bounds of Theorem 	\ref{t:intro}\eqref{it:estimate_intro} seem rather weak. However, compared to the estimates in the deterministic setting (see e.g.\ \cite{CT07}), even in the absence of noise, it does not seem possible to obtain in \eqref{it:estimate_intro} more than a $\log\log$-decay due to three applications of Grownall's inequality. 
Moreover, it is unclear how to improve the estimates in \eqref{it:estimate_intro} without enforcing regularity assumptions on the noise. The reader is referred to the text below Theorem \ref{t:global_primitive_strong_strong} and to Remark \ref{r:comparison_primitive1} for more details. 
The bounds in Theorem \ref{t:intro}\eqref{it:estimate_intro} remind us of the estimates obtained in \cite[Theorem 4.2]{GHKVZ14}, where the authors proved logarithmic moment bounds in $H^2(\Dom)$ under additional assumptions on the noise. In particular, in \cite{GHKVZ14}, it is not possible to consider gradient or transport type noises (in particular, this forces $\ktwon\equiv0$, cf.\ Subsection \ref{ss:novelty} below). However, it seems that there is no direct relation between the estimates of \eqref{it:estimate_intro} and the above-mentioned estimate of \cite{GHKVZ14}. In the latter, the authors used logarithmic moment bounds to prove the existence of ergodic invariant measures 
in $H^1(\Dom)$. The extension of such result to the system \eqref{eq:primitive_full} goes beyond the scope of this manuscript. Finally, let us mention that the continuous dependence on the initial data in \eqref{it:estimate_intro_1} readily implies the \emph{Feller property} for \eqref{eq:primitive_full} which is a first step in the proof of the existence of ergodic measures, and it is based on the energy estimates in \eqref{it:estimate_intro}.  The reader is referred to Remark \ref{r:Feller} for more details on the Feller property. 

\subsection{Novelties and description of the main difficulty}
\label{ss:novelty}
Compared to the results in \cite{Primitive1}, the major novelty of the current work is the presence of $\ktwon\neq 0$.  Here we explain the main analytic difficulty behind this fact. For simplicity, as in Theorem \ref{t:intro}, in this subsection we assume that $(\sigma_n)_{n\geq 1}\in \ell^2$ is constant.
Note that \eqref{eq:primitive_full_4} yields, for all $(x_{\h},\z)\in \Dom$ (here and below $x_{\h}\in \Tor^2$ and $x_3\in (h,0)$ denote the horizontal and vertical variables, respectively) and $t\in \R_+$,
$$
\wt{P}_n(t,x_{\h},x_3)=\wt{p}_n(t,x_{\h})+\sigma_n\int_{-h}^{\z} \T(t,x_{\h},\zeta)\,\dd \zeta,
$$
where $\wt{p}_n$ depends only on $x_{\h}\in \Tor^2$ (typically referred as \emph{turbulent  surface pressure}). Using the above identity in \eqref{eq:primitive_full_1}, the following \emph{gradient} noise term appears in the $v$-dynamics:
\begin{equation}
\label{eq:gradient_sigma_n}
\sum_{n\geq 1}\ktwon \int_{-h}^{\z} \nabla_{\h}\T(t,x_{\h},\zeta)\,\dd \zeta\,\dd \beta_t^n,
\end{equation}
where $\nabla_{\h}=(\partial_1 ,\partial_2)$. 
In particular, as maximal $L^2$-regularity estimates show (see e.g.\ \cite[Proposition 6.8]{Primitive1} or Lemma \ref{l:smr}), to obtain a-priori $L^{\infty}_t(H^1_x)\cap L^2_t(H^2_x)$-bounds for $v$ (and hence global existence for \eqref{eq:primitive_full}), one needs $L^{\infty}_t(H^2_x)$-bounds for $\T$. This is dramatically different from the case of isothermal turbulent pressure (i.e.\ $\ktwon\equiv 0$), where it is sufficient to show $L^{\infty}_t(H^1_x)$-bounds for $\T$ to obtain $L^{\infty}_t(H^1_x)\cap L^2_t(H^2_x)$-estimates for $v$ (see \cite[Section 5]{Primitive1}). 
Since $L^{\infty}_t(H^1_x)$-bounds for $\T$ follow from standard energy estimates, from an analytic point of view, the proof of global existence of strong solutions in the case $\ktwon\equiv 0$ is essentially independent of the $\T$-dynamics, cf.\ \cite[Section 5]{Primitive1}.
This is not the case for \eqref{eq:primitive_full} with $\ktwon\neq 0$ where the coupling between the evolution of $v$ and the one of $\T$ is more subtle and $v$ cannot be decoupled from $\T$ in the $L^{\infty}_t(H^1_x)\cap L^{2}_t(H^2_x)$-estimates. Let us remark that these difficulties are also present even in the absence of transport noise in \eqref{eq:primitive_full_1}-\eqref{eq:primitive_full_2}, i.e.\ having $\phi_n=\psi_n\equiv 0$.

Before going further, let us mention some 
more differences compared with \cite{Primitive1}. The energy estimates and the continuous dependence on the initial data of Theorem \ref{t:intro}\eqref{it:estimate_intro}-\eqref{it:estimate_intro_1} were not contained in \cite{Primitive1} and are based on the use of a recent stochastic Grownall's lemma proven in \cite[Appendix A]{AV_variational}. 
Finally, due to the presence of the term \eqref{eq:gradient_sigma_n} in the $v$-dynamics \eqref{eq:primitive_full_1}, we cannot allow for a strong-weak setting as in \cite[Section 3]{Primitive1}, i.e.\ considering \eqref{eq:primitive_full_1} in the strong setting (in the sense of Sobolev spaces) and \eqref{eq:primitive_full_2} in the weak analytic one. Hence we only consider the strong setting, i.e.\ both \eqref{eq:primitive_full_1} and \eqref{eq:primitive_full_2} are understood in the strong sense.

To conclude, let us anticipate that in Theorems \ref{t:global_primitive_strong_strong} and \ref{t:continuous_dependence} we can even allow $(\ktwon)_{n\geq 1}$ to depend on $(t,\om,x_{\h})$, but not on $x_3$. The physical relevance of the $x_3$-independence of $\ktwon$ is discussed in Remark \ref{r:sigma_independence}.
As for the $x_3$-independence of $\phi^j_n,\psi^j_n$ for $j\in \{1,2\}$ in Theorem \ref{t:intro}, the justification is via the hydrostatic approximation.

\subsection{On the physical derivations} 
\label{ss:physical_intro}
Besides the symmetry of the relations \eqref{eq:primitive_full_3}-\eqref{eq:primitive_full_4}, to motivate the presence of the non-isothermal balance \eqref{eq:primitive_full_4}, in Section \ref{s:physical_derivation} we provide two physical derivations of \eqref{eq:primitive_full}. In both derivations the condition \eqref{eq:primitive_full_4} appears naturally.
Following the strategy used in the deterministic framework, we derive \eqref{eq:primitive_full} by employing suitable stochastic variants of the \emph{Boussinesq} and the \emph{hydrostatic approximations}. In both cases, the main ideas are in the Boussinesq approximation. In fluid dynamics, the  Boussinesq approximation is employed in the study of \emph{buoyancy-driven} flows (also referred to as natural convection), and it is typically a good approximation in the context of oceanic flows. 
Roughly speaking, the idea behind the Boussinesq approximation is that, in a natural convection regime, the role of the compressibility is negligible in the inertial and the convection terms, but \emph{not} in the gravity term. More precisely, in the \emph{compressible} Navier-Stokes equations one assumes
\begin{equation}
\label{eq:boussinesq_approx_intro_1}
(\rho-\rho_{\reference})\big(\partial_t U -(U\cdot\nabla) U\big)\approx 0
\end{equation}
for some reference density $\rho_{\reference}>0$.
Here, $U$ and $\rho$ denote the velocity and density of the fluid, respectively. In our first approach to derive \eqref{eq:primitive_full}, borrowing some ideas from \emph{stochastic climate modeling} (see e.g.\ \cite{MTVE01}), we replace the right hand side in \eqref{eq:boussinesq_approx_intro_1} by a noisy term:
\begin{equation}
\label{eq:boussinesq_approx_intro_2}
(\rho-\rho_{\reference})\big(\partial_t U -(U\cdot\nabla) U\big) 
\approx\sum_{n\geq 1} \big[ (\rho-\rho_{\reference})\stocgrav_n-\nabla \wt{Q}_n\big]\,\dot{\beta}_t^n.
\end{equation} 
Here $\stocgrav_n\in \R^3$ is given and $\wt{Q}_n$'s are turbulent pressures that make the modelling assumption on the right-hand side in \eqref{eq:boussinesq_approx_intro_2} compatible with the divergence-free condition which follows from assuming $\rho\approx \rho_{\reference}$ in the density balance, cf.\ \eqref{eq:primitive_full_5} and \eqref{eq:Navier_Stokes_det_2}.  

At least formally, the right-hand side in \eqref{eq:boussinesq_approx_intro_2} has zero expectation (if we interpret the noise in the It\^o formulation). Hence, the approximation in \eqref{eq:boussinesq_approx_intro_2} is consistent with \eqref{eq:boussinesq_approx_intro_1} when considering expected values, and it can be seen as a refinement of the usual Boussinesq approximation. Employing the approximation \eqref{eq:boussinesq_approx_intro_2} and the hydrostatic approximation used in the deterministic case (see e.g.\ \cite{AG01_approx,KGHHKW20,LT19}) one obtains \eqref{eq:primitive_full} where $\ktwon=-\lambda \stocgrav_n^3$ for some $\lambda \in \R$, where $\stocgrav_n^3$ is the third component of $\stocgrav_n\in\R^3$. The reader is referred to Subsection \ref{ss:hydrostatic_approx} for more details.

Our second derivation of \eqref{eq:primitive_full} is based on a two-scale interpretation of the primitive equations. Indeed, as the small aspect ratio limit suggests, in the context of the primitive equations the horizontal and the vertical directions can be thought of as small and large scales, respectively. 
Hence, as usual in the literature (see e.g.\ \cite{BE12,DP22_two_scale,FP20_small,MTVE01}), it is physically reasonable to consider an additive noise (per unit of mass) on the small-scale dynamics. Eventually, such choice and a further variant of the Boussinesq and hydrostatic approximations lead to the system \eqref{eq:primitive_full}. Details on this approach can be found in Subsection \ref{ss:derivation_two_scale}.

\subsection{Comments on the literature}
Here we collect further references to the literature on primitive equations. Since the literature is extensive, we restrict to literature particularly relevant to this work, referring to the references in the cited works for a more extensive and complete overview.
 
In the deterministic setting, the primitive equations were first studied by J.~L.\ Lions, R.\ Teman, and S.\ Wang in a series of articles \cite{LiTeWa1,LiTeWa2,LiTeWa3}. There, the authors proved the existence of global Leray-Hopf type solutions for initial data $v_0\in L^2$. 
As for the Navier-Stokes equations, the uniqueness of such solutions is still open. Under additional regularity assumptions uniqueness holds, see \cite{Ju17}.
In the deterministic setting, a breakthrough result has been proven independently by C.\ Cao and E.S.\ Titi \cite{CT07} and R.M.\ Kobelkov \cite{Ko07} where they proved the \emph{global well-posedness} 
of the primitive equations via $L^{\infty}_t(H^1_x)\cap L^2_t(H^2_x)$ a-priori estimates provided $v_0\in H^1$.
See also \cite{Kukavica_2007} for other boundary conditions. 
The results of \cite{CT07,Ko07} have been extended to the $L^p$-setting by the second author and T.\ Kashiwabara in \cite{HK16}. Further results can be found in \cite{GGHHK20_analytic,GGHHK20_bounded,GGHK21_scaling}. See also \cite{HH20_fluids_pressure} for an overview.

\emph{Stochastic} versions of the primitive equations have been studied by 
several authors. 
Global well-posedness for pathwise strong solutions 
has been established for multiplicative white noise in time by A.~Debussche, N.~Glatt-Holtz and R.~Temam in \cite{DEBUSSCHE20111123} and the same authors with M.~Ziane in \cite{Debussche_2012}.  
There, the authors used a Galerkin approach to first show the existence of 
martingale solutions, and then strong existence is deduced via pathwise uniqueness and a Yamada-Watanabe-type result. The global existence of solutions is then shown by energy estimates where the noise is seen as a perturbation of the linear system. The drawback of this approach is that it needs some smoothness for the noise which for instance excludes the case of gradient or transport noises. 
 Z.~Brze\'{z}niak and J.~Slav\'{i}k in \cite{BS21} employed a similar approach to show local and global well-posedness of the primitive equations with small transport noise. The stochastic perturbation of the primitive equations considered in \cite{BS21} is such that it does not act directly on the pressure when turning to the question of global existence. 
This allows the authors of \cite{BS21} to overcome some of the difficulties that arose in \cite{DEBUSSCHE20111123,Debussche_2012}.
In \cite{Primitive1}, by combining energy estimates and the functional analytic setting of \cite{AV19_QSEE_1,AV19_QSEE_2} we were able to overcome such drawbacks in the presence of gradient and transport-type noises.

\subsection{Strategy and overview}
As in \cite{Primitive1}, we take another point of view on stochastic primitive equations like \eqref{eq:primitive_full} as compared to \cite{BS21,DEBUSSCHE20111123,Debussche_2012}. More precisely, we interpret the transport and gradient noise terms as a part of the linearized system. Hence we only need to impose conditions guaranteeing that this linearization is parabolic. Such conditions are known to be optimal in the parabolic setting. With this perspective, the local existence and blow-up criterion of Theorem \ref{t:local_primitive_strong_strong} follow easily from the theory of critical spaces for stochastic evolution equations developed by the first author and M.C.~Veraar in \cite{AV19_QSEE_1,AV19_QSEE_2}. 

Once having obtained local existence and blow-up criteria from the abstract setting of \cite{AV19_QSEE_1,AV19_QSEE_2}, 
we turn our attention to the global well-posedness which is the main point of the present manuscript. 
Here we follow the arguments of \cite{CT07}, where the authors show a-priori estimates in $L^{\infty}_t(H^1_x)\cap L^2_t(H^2_x)$ for $v$ as a by-product of several concatenated estimates. In \cite{CT07}, the core of the argument is an intermediate estimate involving the \emph{barotropic} and \emph{baroclinic modes} given by 
\begin{equation*}
\overline{v}=\fint_{-h}^0 v(\cdot,\zeta)\,\dd \zeta
\qquad
\text{ and }
\qquad
\wt{v}=v-\overline{v},
\end{equation*}
respectively. Note that this is also the strategy used in our previous work \cite{Primitive1}. However, in \cite{Primitive1,CT07}, the temperature acts in the $v$-equations only as a lower order term, and therefore it does \emph{not} play any role in the estimates involving $(\overline{v},\wt{v})$, see the discussed below Theorem \ref{t:intro}. The presence of $\T$ in \eqref{eq:primitive_full_4} (and hence the term \eqref{eq:gradient_sigma_n} in the $v$-dynamics) creates several additional terms in the estimates for $(\overline{v},\wt{v})$ which cannot be treated as lower-order contributions.  Such terms will be described extensively at the beginning of Section \ref{s:intermediate_estimate}. In particular, we need to estimate $(\overline{v},\wt{v})$ and $\T$ \emph{jointly} exploiting some further (subtle) cancellations appearing in the energy balances. 
In our derivation of the energy estimates for $(\overline{v},\wt{v})$, here and in \cite{Primitive1}, we follow the simplified approach due to the second author and T.~Kashiwabara in \cite{HK16} (also used in \cite{HH20_fluids_pressure}). There, for instance, the  $L^6$-estimates proven in \cite{CT07} are replaced by the (apparently) weaker $L^4$-estimates.

The paper is organized as follows.
\begin{itemize}
\item Section \ref{s:physical_derivation}: Physical derivations of \eqref{eq:primitive_full}.
\item Section \ref{s:strong_strong}: Statements of the local and global well-posedness results of \eqref{eq:primitive_full} in $H^1$.
\item Section \ref{s:proof_main_results}: Proof of the main results of Section \ref{s:strong_strong} taking for granted the energy estimates of Proposition \ref{prop:energy_estimate_primitive_strong_strong}. 
\item Section \ref{s:basic_estimate}: Basic energy estimates for $(v,\T)$.
\item Section \ref{s:intermediate_estimate}: Proof of the crucial intermediate estimate involving $(\overline{v},\wt{v})$ and other unknowns.
\item Section \ref{s:proof_energy_estimate_conclusion}: Proof of the energy estimates of Proposition \ref{prop:energy_estimate_primitive_strong_strong}.
\item Section \ref{s:Stratonovich}: Global well-posedness of \eqref{eq:primitive_full} with noise in Stratonovich form.
\end{itemize}

\subsection{Notation}
\label{ss:set_up}
Here we collect the main notation which will be used throughout the paper. By $C$ we denote a constant that may change from line to line and depends only on the parameters introduced in our main assumption, namely Assumption \ref{ass:well_posedness_primitive_double_strong} below.

For any integer $k\geq 1$, $s\in (0,\infty)$ and $p\in(1,\infty)$, $L^p(\Dom;\R^k)=(L^p(\Dom))^k$ denotes the usual Lebesgue space and $H^{s,p}(\Dom;\R^k)$ the corresponding Bessel-potential spaces. In this paper we also use the common abbreviation $H^{s}(\Dom;\R^k)\stackrel{{\rm def}}{=}H^{s,2}(\Dom;\R^k)$. 
Appropriate function spaces of divergence-free velocity fields will be introduced in Subsection \ref{ss:reformulation} and are denoted by $\Hs^s(\Dom)$ or $\Ls^s(\Dom)$. Function spaces which take also into account the boundary conditions \eqref{eq:boundary_conditions_intro} are defined in \eqref{eq:def_Hn}-\eqref{eq:def_Hr}.

Since $\Dom=\Tor^2\times (-h,0)$, we employ the natural splitting $x\mapsto (x_{\h},\z)$ where $x_{\h}\in \Tor^2$, $\z\in (-h,0)$ and the subscript $\h$ stands for \emph{horizontal}. Similarly, we set 
$$
\div_{\h}=\partial_{1} +\partial_{2} , \qquad \nabla_{\h}=(\partial_{1},\partial_{2}), \qquad 
\Delta_{\h}=\div_{\h}\nabla_{\h},
$$
and for a vector $y=(y^j)_{j=1}^3\in \R^3$ we write $y_{\h}=(y^j)_{j=1}^2$ for its horizontal component. In the same spirit, we also set
\begin{align*}
	(v\cdot\nabla_{\h}) v &\stackrel{{\rm def}}{=}\Big(\sum_{1\leq j\leq 2}  v^j \partial_j v^k\Big)_{k=1}^2,  & 
	(\phi_{n}\cdot\nabla) v&\stackrel{{\rm def}}{=}\Big(\sum_{1\leq j\leq 3} \phi_n^j \partial_j v^k\Big)_{k=1}^2,\\
	(v\cdot\nabla_{\h}) \T &\stackrel{{\rm def}}{=}\sum_{1\leq j\leq 2} v^j \partial_j \T , &
	(\psi_{n}\cdot\nabla) \T&\stackrel{{\rm def}}{=}\sum_{1\leq j\leq 3} \psi_n^j \partial_j \T.
\end{align*} 
Moreover, we also employ the following usual notation for the vertical average: 
$$
\textstyle{\fint}_{-h}^0\cdot\,\dd \zeta\stackrel{{\rm def}}{=}\frac{1}{h}
\textstyle{\int}_{-h}^0\cdot\,\dd \zeta.
$$

If no confusion seems likely, we write $L^2$, $H^{k}$, $\Hs^{k}$, $ L^2(\ell^2)$ and $ H^k(\ell^2)$ instead of $L^2(\Dom;\R^m)$, $H^{k}(\Dom;\R^m)$, $\Hs^{k}(\Dom)$, $L^2(\Dom;\ell^2(\N;\R^m))$ and $ H^k(\Dom;\ell^2(\N;\R^m))$ for some $m\geq 1$ etc. 

Finally, we collect the main probabilistic notation. Throughout the paper we fix a filtered probability space $(\O,\A,(\F_t)_{t\geq 0},\P)$, and we let $\E[\cdot]\stackrel{{\rm def}}{=}\int_{\O}\cdot \,\dd\P$. Moreover, $(\beta^n)_{n\geq 1}=(\beta^n_t\,:\,t\geq 0)_{n\geq 1}$ denotes a sequence of independent standard Brownian motion on the above-mentioned probability space. 
A stopping time $\tau:\O\to [0,\infty]$ is a measurable map such that $\{\tau\leq t\}\in \F_t$ for all $t\geq 0$.
For a stopping time $\tau$, we let
$
[0,\tau]\times \O\stackrel{{\rm def}}{=}\{(t,\om)\,:\, 0\leq \tau(\om)\leq t\}
$
and use analogous definitions for $[0,\tau)\times \O$ etc. 
By $\Progress$ and $\Borel$ we denote the progressive and the Borel $\sigma$-algebra, respectively. Finally, for brevity, we say that a map $\Phi: \R_+\times \O\times \R^m\to \R$ is $\Progress\otimes \Borel$-measurable if $\Phi$ is $\Progress\otimes \Borel(\Dom)\otimes \Borel(\R^m)$-measurable.

\section{Physical derivations}
\label{s:physical_derivation}
In this section, we derive the primitive equations with non-isothermal turbulent pressure \eqref{eq:primitive_full}. In the deterministic framework, the primitive equations are derived from the \emph{compressible} Navier-Stokes equations by means of the \emph{Boussinsesq} and \emph{hydrostatic approximations}.
In the current section, following the same path, we propose two derivations of \eqref{eq:primitive_full} both based on suitable stochastic variants of these approximations. 
In the first derivation, given in Subsections \ref{ss:boussinesq_approx}-\ref{ss:hydrostatic_approx}, we motivate the noise leading to the non-isothermal turbulence balance \eqref{eq:primitive_full_4} by borrowing ideas from stochastic climate modelling (see e.g.\ \cite{AFP21,MTVE01} and the reference therein). In the second one, worked out in Subsection \ref{ss:derivation_two_scale}, we derive \eqref{eq:primitive_full} by looking at the Navier-Stokes equations as a two-scale system, where large and small scales are given by the horizontal and vertical ones, respectively; see Figure \ref{fig:domain}. As explained in Subsection \ref{ss:novelty} (see the text around \eqref{eq:gradient_sigma_n}), the presence of $\T$ in the balance  \eqref{eq:primitive_full_4} gives rise to a gradient type noise for the unknown $\int_{-h}^{\cdot}\T\,\dd \zeta$ in the equations for the horizontal part of the velocity field $v$.
Hence, the two-scale viewpoint is somehow in accordance with the results obtained in \cite{DP22_two_scale,FlaPa21}, where an additive noise on small-scale dynamics gives rise to a transport (or gradient) noise on large-scale ones.
For exposition convenience, in the first derivation of Subsections \ref{ss:boussinesq_approx}-\ref{ss:hydrostatic_approx}, to emphasise the natural appearance of the non-isothermal turbulent balance \eqref{eq:primitive_full_4}, we do not consider transport noise in the equations for the velocity. The former is included in the second derivation of Subsection \ref{ss:derivation_two_scale}. Let us anticipate that the derivations below also naturally lead to $x_3$-independence of $(\ktwon,\phi_n^j,\psi_n^j)$ for $j\in \{1,2\}$ used in our global well-posedness results of Theorems \ref{t:intro} and \ref{t:global_primitive_strong_strong}, see Remarks \ref{r:sigma_independence}-\ref{r:phi_independence}.
Finally, we mention that the primitive equations for the ocean are often formulated by adding an equation for the salinity. We do not consider this here, as the equation for the salinity has the same structure as the one for $\T$ and does not provide any new mathematical difficulty (see e.g.\ \cite{HHK16_salinity} and the reference therein).

\subsection{Stochastic Boussinesq approximation}
\label{ss:boussinesq_approx}
In fluid dynamics, the {\sc Boussinesq approximation} is employed in the study of \emph{buoyancy-driven} flows (also referred to as natural convection). As already mentioned in Subsection \ref{ss:physical_intro}, the idea behind the Boussinesq approximation is that, in a natural convection regime, the role of the compressibility is negligible in the inertial and the convection terms, but \emph{not} in the gravity term. 
Next, we propose an extension of such approximation in the context of stochastic Navier-Stokes equations.
Let us consider the compressible anisotropic Navier-Stokes equations on the $\varepsilon$-dependent domain 
$$\Dom_{\varepsilon}\stackrel{{\rm def}}{=}\Tor^2\times (-\varepsilon,0),$$ 
where $\varepsilon>0$ is a \emph{small} parameter which measures the smallness of the vertical direction,
see Figure \ref{fig:domain}; hence the velocity field $U:\R_+\times \O\times \Dom_{\varepsilon}\to \R^3$, the pressure $Q:\R_+\times \O\times \Dom_{\varepsilon}\to \R$ and the density $\rho:\R_+\times \O\times \Dom_{\varepsilon}\to \R_+$ satisfy, on $\Dom_{\varepsilon}$,
\begin{subequations}
\label{eq:Navier_Stokes_compressible_det}
\begin{alignat}{2}
\label{eq:Navier_Stokes_compressible_det_1}
		\begin{split}
		\rho \big(\partial_t  U+ (U\cdot\nabla)U\big)
		&= -\nabla Q+ \mu_{\h}\Delta_{\h}  U + \varepsilon^2 \partial_3^2 U+ \mu\nabla (\div \,U) - g \rho\,e_3,
		\end{split}\\
\label{eq:Navier_Stokes_compressible_det_2}
\partial_t \rho + \div (\rho \, U)&=0,
\end{alignat}
\end{subequations}
where $g$ and $\mu_{\h},\mu$ denote the gravity and the dynamic viscosities, respectively. In the above, as usual, we let $e_3=(0,0,1)$.
The anisotropic behaviour of the viscosity in \eqref{eq:Navier_Stokes_compressible_det} is in accordance with physical observations of oceanic flows, see e.g.\ \cite[Subsection 1.2.3]{HH20_fluids_pressure}.

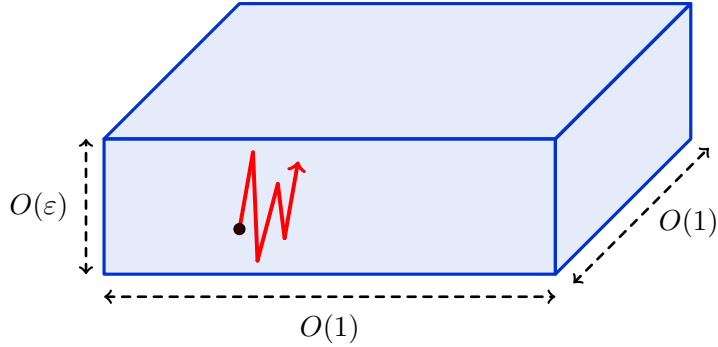
\begin{figure}
\definecolor{ffqqqq}{rgb}{1.,0.,0.}
\definecolor{ttqqqq}{rgb}{0.2,0.,0.}
\definecolor{qqttcc}{rgb}{0.,0.2,0.8}
\centering
\begin{tikzpicture}[line cap=round,line join=round,x=1.0cm,y=1.0cm,scale=0.6]
\path [line width=1.2pt,draw=qqttcc,fill=qqttcc,fill opacity=0.1] (0.,0.) rectangle (10.,3.);
\fill[line width=1.2pt,draw=qqttcc,fill=qqttcc,fill opacity=0.1] (0.,3.) -- (3.,6.) -- (13.,6.) -- (10.,3.) -- cycle;
\fill[line width=1.2pt,draw=qqttcc,fill=qqttcc,fill opacity=0.1] (10.,0.) -- (13.,3.) -- (13.,6.) -- (10.,3.) -- cycle;

\draw [line width=1.5pt,color=ffqqqq] (3.,1.)-- (3.3,2.7);
\draw [line width=1.5pt,color=ffqqqq] (3.3,2.7)-- (3.4,0.3);
\draw [line width=1.5pt,color=ffqqqq] (3.4,0.3)-- (3.85,2.);
\draw [line width=1.5pt,color=ffqqqq] (3.85,2.)-- (4.,0.8);
\draw [->,line width=1.5pt,color=ffqqqq] (4.,0.8) -- (4.3,2.5);
\draw [fill=ttqqqq] (3.,1.) circle (3.5pt);

\draw [<->,line width=1pt,dashed] (0.,-0.5) -- node[below,yshift=-0.1cm]{\large$O(1)$} (10.,-0.5);
\draw [<->,line width=1pt,dashed] (10.4,-0.2) -- node[right,xshift=0.1cm,yshift=-0.1cm]{\large$O(1)$} (13.4,2.8);
\draw [<->,line width=1pt,dashed] (-0.4,0) -- node[left,xshift=-0.1cm]{\large$O(\varepsilon)$} (-0.4,3);

\end{tikzpicture}
\caption{A particle subject to random forces in the thin domain $\Dom_{\varepsilon}=\Tor^2\times (-\varepsilon,0)$.}
\label{fig:domain}
\end{figure}

Let $\rho_{\reference}>0$ be a reference density, e.g.\ the density of the fluid in standard conditions. 
The {\sc stochastic Boussinesq approximation} consists of the following approximations:
\begin{enumerate}[{\rm(a)}]
\item\label{it:incompressible_boussinesq} Take $\rho\approx \rho_{\reference}$ in \eqref{eq:Navier_Stokes_compressible_det_2}, and therefore $\div\,U=0$.
\item\label{it:noise_boussinesq} Approximate all the terms in \eqref{eq:Navier_Stokes_compressible_det_1} which contain $\rho$ with a noise, expect for the buoyancy term $-g\rho\,e_3 $.
More precisely, in \eqref{eq:Navier_Stokes_compressible_det_1}, we use the following approximation
\begin{equation}
\label{eq:Boussinesq_approximation}
(\rho-\rho_{\reference})\big(\partial_t U -(U\cdot\nabla) U\big) 
\approx\sum_{n\geq 1} \big[ (\rho-\rho_{\reference})\stocgrav_{n,\varepsilon}-\nabla \wt{Q}_n\big]\,\dot{\beta}_t^n,
\end{equation}
where $(\beta^n)_{n\geq 1}$ is a family of independent standard Brownian motions, $\stocgrav_{n,\varepsilon}\in \R^3$ are given and $ \wt{Q}_n:\R_+\times \O\times \Dom_{\varepsilon}\to \R$ are suitable maps for $ n\geq 1$.
\end{enumerate}

Recall that, in the deterministic setting, the Boussinesq approximation consists in assuming \eqref{it:incompressible_boussinesq} and considering  $
(\rho-\rho_{\reference})\big(\partial_t U-(U\cdot\nabla) U\big)\approx 0$, see e.g.\ \cite[Subsection 1.2.2]{HH20_fluids_pressure}. The reason not to approximate the gravity term $- g\rho\, e_3$ is that, experimentally, in buoyancy-driven flows, such term is the most relevant in the dynamics and there is no natural approximation for it. At least formally, the right-hand side of \eqref{eq:Boussinesq_approximation} has zero expected value (if we interpret the noise in the It\^o formulation), cf.\ \cite[Assumption (A.4)]{MTVE01}. Hence, the modelling assumption on the right-hand side of \eqref{eq:Boussinesq_approximation} is consistent with the usual Boussinesq approximation when one considers expectations, and it can be seen as a refinement of the latter. The presence of the turbulent pressure $\wt{Q}_n$ on the right-hand side of \eqref{eq:Boussinesq_approximation} is necessary to obtain compatibility with the divergence-free condition $\div\,U=0$, see \eqref{it:incompressible_boussinesq} in the above list. 

To some extent, the approximation in \eqref{it:noise_boussinesq} follows the philosophy of \emph{stochastic climate modelling}, where there are certain unresolved variables (in our case $\rho-\rho_{\reference}$), and the main assumption is that the nonlinear interactions among unresolved variables can be represented stochastically. Such approximation has two basic advantages. Firstly, the noise keeps track of the approximations done in the balances ruling the dynamics, and secondly, the corresponding model has a reasonable complexity (both mathematically and computationally). The reader is referred to \cite{AFP21,MTVE01} and the references therein for more details on stochastic climate models.

Using the stochastic Boussinesq approximation of \eqref{it:incompressible_boussinesq}-\eqref{it:noise_boussinesq} in \eqref{eq:Navier_Stokes_compressible_det} we obtain, on $\Dom_{\varepsilon}$,
\begin{subequations}
\label{eq:Navier_Stokes_det}
\begin{alignat}{2}
\label{eq:Navier_Stokes_det_1}
		\begin{split}
		\dd  U&= \Big[ \nu_{\h}\Delta_{\h}  U + \frac{\varepsilon^2}{\rho_{\reference}} \partial_3^2 U-\frac{\nabla Q}{\rho_{\reference}} - ( U\cdot \nabla) U
		-g \frac{\rho}{\rho_{\reference}}\, e_3 \Big]\, \dd t \\
		& 
		+\sum_{n\geq 1} \Big[\frac{\rho-\rho_{\reference}}{\rho_{\reference}}  \stocgrav_{n,\varepsilon} - \frac{\nabla\wt{Q}_n  }{\rho_{\reference}}
		  \Big]\, \dd \beta_t^n,
		\end{split}\\
		\label{eq:Navier_Stokes_det_2}
		\div \, U&=0,
\end{alignat}
\end{subequations}
where, as usual, $\nu_{\h}\stackrel{{\rm def}}{=}\mu_{\h}/\rho_{\reference}$ denotes the viscosity while $\stocgrav_{n,\varepsilon}=(\varepsilon \stocgrav_{n,\h}, \stocgrav_n^3)$, $\stocgrav_{n,\h}\in \R^2$ and $\stocgrav_n^3\in \R$ are given. The anisotropic behaviour of $\stocgrav_n$ reflects the anisotropic viscosity in \eqref{eq:Navier_Stokes_det_1}.

To remove the dependency on $\rho$ in \eqref{eq:Navier_Stokes_det}, we use a state equation $\rho=\rho(\Theta)$ where $\Theta$ represents the fluid temperature. As standard in the context of primitive equations, we assume that $\Theta\mapsto\rho(\Theta)$ is linear, i.e.\ 
\begin{equation}
\label{eq:density_law_linear_det}
\rho= \rho_{\reference}+\lambda (\Theta-\Theta_{\reference}),
\end{equation}
where $\lambda\in \R$ and $\Theta_{\reference}$ denote a parameter to be determined experimentally and a reference temperature, respectively (other possible choices can be found in \cite{Korn21}).
To close the problem consisting of \eqref{eq:Navier_Stokes_compressible_det} and \eqref{eq:density_law_linear_det}, we need an equation for $\Theta$. By using the thermal balancing with constant density, one obtains, on $\Dom_{\varepsilon}$,
\begin{equation}
\label{eq:temperature_anisotropic_det}
\partial_t \Theta
=  \kappa_{\h}\Delta_{\h} \Theta + \varepsilon^2 \partial_3^2 \Theta  - (U\cdot \nabla) \Theta.
\end{equation}
In the above, as in \eqref{eq:Navier_Stokes_compressible_det}, we use anisotropic conductivity.  
In the sequel, to simplify the presentation, we let 
\begin{equation}
\label{eq:simplification_in_coeffcients}
\nu_{\h}=\rho_{\reference}=\lambda=1 \quad \text{ and } \quad \Theta_{\reference}=0.
\end{equation}
The general case is similar (note that \eqref{eq:temperature_anisotropic_det} is also satisfied by $\Theta-\Theta_{\reference}$ for all $\Theta_{\reference}\in\R $).

\subsection{Stochastic hydrostatic approximation}
\label{ss:hydrostatic_approx}
Roughly speaking, the {\sc hydrostatic approximation} consists in
 neglecting several terms in the dynamics for the vertical component of the velocity field  $U$. 
From a mathematical point of view, we would like to take the limit 
 $\varepsilon\downarrow 0$ in \eqref{eq:Navier_Stokes_det} and \eqref{eq:temperature_anisotropic_det}. To this end, it is convenient to rescale the vertical variable $\z$ to obtain a problem on the fixed domain $\Dom\stackrel{{\rm def}}{=}\Dom_1=\Tor^2\times (-1,0)$. Moreover, to accommodate the anisotropic behaviour of viscosity and conductivity in \eqref{eq:Navier_Stokes_det} and \eqref{eq:temperature_anisotropic_det}, we let 
$$
U=(V,W) \ \  \text{ where }\   V\in 	\R^2 \ \text{ and } \ W\in \R.
$$ 
In other words, $V$ and $W$ are the horizontal and the vertical parts of the velocity field $U$, respectively.
Let $\varepsilon>0$ and consider the rescaled quantities: For $t\in [0,\infty)$, $x_{\h}\in \Tor^2$  and $\z\in (-1,0)$,
\begin{align}
\nonumber
 v_{\varepsilon}(t,x)&\stackrel{{\rm def}}{=}V(t,x_{\h},\varepsilon \z), &
w_{\varepsilon}(t,x)&\stackrel{{\rm def}}{=}\varepsilon^{-1} W(t,x_{\h},\varepsilon \z),\\ 
\label{eq:rescaling}
   \T_{\varepsilon}(t,x)&\stackrel{{\rm def}}{=}\varepsilon\,\Theta(t,x_{\h},\varepsilon \z),\qquad \qquad &\qquad \qquad  &\\
\nonumber
   P_{\varepsilon}(t,x)&\stackrel{{\rm def}}{=}Q(t,x_{\h},\varepsilon \z), 
& \wt{P}_{\varepsilon,n}(t,x)&\stackrel{{\rm def}}{=}
   \wt{Q}_n(t,x_{\h},\varepsilon \z).  
\end{align}
The choice of the rescaling is the one used in the deterministic setting, and it reflects the natural size of the corresponding quantities, see e.g.\ \cite{KGHHKW20,LT19} and \cite{PZ22} for the rescaling of $\Theta$. 

Note that $(v_{\varepsilon},w_{\varepsilon},P_{\varepsilon},\wt{P}_{\varepsilon},\T_{\varepsilon}) $ are defined on the fixed domain $\Dom=\Tor^2\times (-1,0)$. From \eqref{eq:Navier_Stokes_det} and \eqref{eq:temperature_anisotropic_det}, we infer that, on $\Dom$, 
\begin{subequations}
\label{eq:Navier_Stokes_varepsilon}
\begin{alignat}{4}
\label{eq:Navier_Stokes_varepsilon_1}
\begin{split}
\dd v_{\varepsilon}&= \Big[ \Delta_{\h} v_{\varepsilon} + \partial_{3}^2 v_{\varepsilon} -\nabla_{\h} P_{\varepsilon} 
- (u_{\varepsilon}\cdot \nabla) v_{\varepsilon} \Big]\, \dd t + \sum_{n\geq 1} \Big[ \T_{\varepsilon}\stocgrav_{n,\h} - \nabla_{\h} \wt{P}_{\varepsilon,n} \Big]\, \dd \beta_t^n,
\end{split}
\\
\label{eq:Navier_Stokes_varepsilon_2}
\begin{split}
\dd  (\varepsilon^2 w_{\varepsilon})
&=\Big[ \varepsilon^2 \big( \Delta_{\h} w_{\varepsilon} + \partial_{3}^2 w_{\varepsilon} 
- (u_{\varepsilon}\cdot \nabla) w_{\varepsilon}\big)-\partial_{3} P_{\varepsilon} -  g \T_{\varepsilon}+ \varepsilon g \Big]\, \dd t \\
& \ \ \quad +
\sum_{n\geq 1} \big[- \partial_{3}  \wt{P}_{\varepsilon,n}+  k_{n}^3 \T_{\varepsilon} \big]\, \dd \beta_t^n,
\end{split}\\
\label{eq:Navier_Stokes_varepsilon_3}
\dd  \T_{\varepsilon}
&= \big[ \Delta \T_{\varepsilon}  - (u_{\varepsilon}\cdot \nabla) \T_{\varepsilon} \big]\,  \dd t,\\
\div \,u_{\varepsilon}&=0.
\end{alignat}
\end{subequations}
The {\sc stochastic hydrostatic approximation} consists in taking the formal limit 
$\varepsilon\downarrow 0$ in \eqref{eq:Navier_Stokes_varepsilon} and assuming that the quantities in $(v_{\varepsilon},P_{\varepsilon}, \wt{P}_{\varepsilon}, \T_{\varepsilon})$ converge (in suitable function spaces) and 
\begin{equation}
\label{eq:w_varepsilon_goes_to_zero}
\lim_{\varepsilon\to 0}\varepsilon^2 w_{\varepsilon}= 0, \qquad \qquad   \lim_{\varepsilon\to 0} \varepsilon^2 \big( \Delta_{\h} w_{\varepsilon} + \partial_{3}^2 w_{\varepsilon} 
- (u_{\varepsilon}\cdot \nabla) w_{\varepsilon} \big)= 0.
\end{equation}
The reader is referred to \cite{Ped} for physical reasons for the approximation to hold. We recall that the limits \eqref{eq:w_varepsilon_goes_to_zero} are justified in the deterministic setting, see e.g.\ \cite{KGHHKW20,LT19}.

\smallskip

Assume that the hydrostatic approximation holds and denote by $(v,P, \wt{P}, \T)$ the limit as $\varepsilon\downarrow 0$ of $(v_{\varepsilon},P_{\varepsilon}, \wt{P}_{\varepsilon}, \T_{\varepsilon})$.
By \eqref{eq:Navier_Stokes_varepsilon_1} and \eqref{eq:Navier_Stokes_varepsilon_2}, at least formally, one sees that $(v,P, \wt{P},\T)$ solve \eqref{eq:primitive_full_1} and \eqref{eq:primitive_full_2} where $\gvn=\T\stocgrav_{n,\h}$ and $\fv=\ft=\gtn\equiv 0$. While
using \eqref{eq:w_varepsilon_goes_to_zero} and \eqref{eq:Navier_Stokes_varepsilon_2}, one obtains \eqref{eq:primitive_full_3} and \eqref{eq:primitive_full_4} with $\kone= g$ and $\ktwon=- \stocgrav_n^3$, respectively. 
Therefore \eqref{eq:primitive_full} follows from \eqref{eq:Navier_Stokes_varepsilon} by means of the stochastic hydrostatic approximation.

\begin{remark}[$x_3$-independency of $\ktwon$]
\label{r:sigma_independence}
In our main result, i.e.\ Theorem \ref{t:global_primitive_strong_strong}, we assume that $\ktwon$ depends on $(t,\om,x_{\h})\in \R_+\times\O\times \Tor^2$, cf.\ Assumption \ref{ass:global_primitive_strong_strong} below. Here we discuss how the $x_3$-independence arises naturally from the stochastic hydrostatic approximation. Indeed, let us assume that the maps $k_{n,\varepsilon}$ are $(t,\om,x)$-dependent and \emph{consistent} in $\varepsilon>0$, i.e.\ there exists a map $K_n$ on $\R_+\times \O\times \Dom_{\varepsilon_0}$ where $\varepsilon_0>0$ satisfying $k_{n,\varepsilon}=K_n$ on $\R_+\times \O\times \Dom_{\varepsilon}$ for all $\varepsilon\leq \varepsilon_0$. From a modelling point of view, it is reasonable to assume that  $K_n$ is continuous in $x\in \Dom_{\varepsilon_0}$. Below, for simplicity, we take $\varepsilon_0=1$. Then repeating the argument in \eqref{eq:Navier_Stokes_det}-\eqref{eq:rescaling} leading to the stochastic primitive equations \eqref{eq:primitive_full}, one obtains in \eqref{eq:Navier_Stokes_varepsilon_2} that the stochastic perturbation is of the form 
$
\sum_{n\geq 1}\big[- \partial_{3}  \wt{P}_{\varepsilon,n}(t,x)+  K_{n}^3(t,x_{\h},\varepsilon x_3) \T_{\varepsilon}(t,x) \big]\, \dd  \beta^n_t.
$
In particular, if the stochastic hydrostatic approximation \eqref{eq:w_varepsilon_goes_to_zero} holds, then the limiting balance \eqref{eq:primitive_full_4} is satisfied with $\ktwon(t,\om,x_{\h})=- k_{n,\varepsilon}^3(t,\om,x_{\h},0)$ for any fixed $\varepsilon\in (0,1)$ (here we used the continuity of $K_n$ and that $k_{n,\varepsilon}(t,\om,x_{\h},0)$ is independent of $\varepsilon$ by consistency). A similar situation arises if we also assume $\lambda$ in \eqref{eq:density_law_linear_det} to be $(t,\om,x)$-dependent instead of \eqref{eq:simplification_in_coeffcients}.

Let us conclude by noticing that, if in the above argument, one assumes $k_n(t,\om,x_{\h},x_3)=K_n^3(t,\om,x_{\h},\varepsilon^{-1}x_3)$ for some mapping $K_n$ on $\R_+\times \O\times \Dom_1$, then the stochastic hydrostatic approximation eventually leads to $x_3$-dependent $\ktwon$'s. However, in the authors' opinion, the latter choice does not seem physically relevant.  Indeed, in the spirit of Boussinesq approximations, one wants to obtain a reduced model from the Navier-Stokes equations by neglecting detailed information about the vertical dynamics, and this is in contrast 
to
the rescaling of the vertical direction, which increases the effect of the vertical dynamics on the limiting SPDEs as  $\lim_{\varepsilon\to 0}\| K_n^3(x_\h,\varepsilon^{-1}\cdot)\|_{C^{\alpha}(0,\varepsilon)}= \infty$ for all $\alpha>0$ and $x_\h\in \Tor^2$ even if $K_n$ is smooth. To the authors' knowledge, in the literature, there is no derivation of the primitive equations with $x_3$-dependent coefficients, and therefore we cannot compare our situation with known results. 

We conclude this remark by highlighting that in Section \ref{s:intermediate_estimate} we show that the $x_3$-independence of $\ktwon$ allows us to obtain a meaningful splitting of the stochastic primitive equations \eqref{eq:primitive_full} in terms of the \emph{barotropic} and \emph{baroclinic modes}, whose relevance is commented in Remark \ref{r:phi_independence} below.
\end{remark}

\subsection{A related derivation and the two-scale viewpoint}
\label{ss:derivation_two_scale}
In this section, we give another derivation of \eqref{eq:primitive_full} still based on the Boussinesq and hydrostatic approximation. Here the main starting point is a \emph{two-scale} interpretation of the Navier-Stokes equations for the velocity field $U=(V,W)\in \R^2\times \R$ on the thin domain $\Dom_{\varepsilon}$. Indeed, as Figure \ref{fig:domain} suggests, the Navier-Stokes equations on the thin-domain $\Dom_{\varepsilon}$ can be seen as a two-scale system where the large-scale dynamics is the horizontal component of the velocity field, i.e.\ $V$, and the small dynamics is the vertical component, i.e.\ $W$. 
Since $W$ is somehow a small scale, from a physical point of view it is natural to consider additive noise on this component, see e.g.\ \cite{AFP21,DP22_two_scale,FP20_small,FlaPa21,MTVE01}. 

To make this rigorous, as a starting point, assume that $U:\R_+\times \O\times \Dom_{\varepsilon}\to \R^3$, the pressures $Q,\wt{Q}_n:\R_+\times \O\times \Dom_{\varepsilon}\to \R$ and the density $\rho:\R_+\times \O\times \Dom_{\varepsilon}\to \R_+$ satisfy, on $\Dom_{\varepsilon}$,
\begin{subequations}
\label{eq:Navier_Stokes_compressible}
\begin{alignat}{2}
\label{eq:Navier_Stokes_compressible_1}
		\begin{split}
		\rho\, \dd  U&= \Big[ \mu_{\h}\Delta_{\h}  U + \varepsilon^2 \partial_3^2 U+ \mu\nabla (\div \,U) -\nabla Q - 
		\rho\, (U\cdot\nabla)U- g \rho\,e_3\Big]\,\dd t\\
		&\quad \ \ +\sum_{n\geq 1} \Big[ (\Phi_{n,\h}\cdot\nabla_{\h}) U + \varepsilon\,\Phi^3_n \partial_3 U -
		 \nabla \wt{Q}_n+\stocgrav_{n,\varepsilon}( \rho) \Big]\, \dd \beta_t^n,
		\end{split}\\
\label{eq:Navier_Stokes_compressible_2}
\partial_t \rho + \div (\rho \, U)&=0.
\end{alignat}
\end{subequations}
A derivation of \eqref{eq:Navier_Stokes_compressible} is given, for instance, in \cite{MR01,MR04}. In the latter works, transport noise is a consequence of a stochastic dynamic at the level of fluid particles, see \cite[eq.\ (2.4)]{MR04}.  Here, as above, $(\beta^n)_{n\geq 1}$ is a sequence of independent standard Brownian motions on some probability space, $-g\,e_3$ is the gravity vector, $\mu_{\h},\mu$ are the dynamic viscosities and $\Phi_{n,\h}\in \R^2,\Phi^3_n\in \R$ are given.
In \eqref{eq:Navier_Stokes_compressible_1} we used anisotropic viscosity as in \eqref{eq:Navier_Stokes_compressible_det}, which is in accordance with measurements in oceanic flows.  
The anisotropic behaviour of the transport noise reflects the different order of the leading differential operators in the deterministic and stochastic terms. The latter fact is a consequence of the different 
 (time) scaling of the Brownian noise $\dd \beta_t^n$ and the time $\dd t$, see e.g.\ \cite[Subsection 1.1]{AV21_NS} for a discussion. Finally, $\stocgrav_n(\rho)$ is a given function of the density $\rho$. Results on compressible Navier-Stokes equations can be found in  \cite{BFH20_compressible1,BFH21_compressible2,BFHM19_compressible3} and the references therein.

Next we add a structural assumption on $\stocgrav_{n,\varepsilon}(\rho) =(\stocgrav_{n,\varepsilon,\h}(\rho),\stocgrav_{n,\varepsilon}^3(\rho))$, where $\stocgrav_{n,\varepsilon,\h}(\rho)\in \R^2$ and $\stocgrav_{n,\varepsilon}^3(\rho)\in \R$. More precisely, we assume that 
\begin{align}
\label{eq:k_n_assumption_j}
k_{n,\varepsilon,\h} (\rho)&=\wt{k}_{n,\h}(\varepsilon \rho) ,
& \text{ where }&\wt{k}_{n,\h}:\R_+\to \R\text{ is a given nonlinearity},\\
\label{eq:k_n_assumption_3}
k^3_{n,\varepsilon} (\rho) &=\wt{k}_n^3 \rho & \text{ where }&\wt{k}_n^3\in \R.
\end{align}
The condition \eqref{eq:k_n_assumption_3} tells us that, on the vertical component, an \emph{additive noise per unit of mass} is acting. As mentioned above, this is in accordance with the two-scale interpretation of the Navier-Stokes equations \eqref{eq:Navier_Stokes_compressible} in the thin domain $\Dom_{\varepsilon}$. 
The condition \eqref{eq:k_n_assumption_j} is somehow technical and it is motivated by the scaling argument as in \eqref{eq:rescaling} which will be used below. However, let us stress that, for our purposes, the crucial assumption is \eqref{eq:k_n_assumption_3}.

Now following the scheme of Subsections \ref{ss:boussinesq_approx}-\ref{ss:hydrostatic_approx}, one can derive \eqref{eq:primitive_full} from \eqref{eq:Navier_Stokes_compressible} and the structural assumptions \eqref{eq:k_n_assumption_j}-\eqref{eq:k_n_assumption_3} performing the following steps:
\begin{itemize}
\item {\sc (Stochastic Boussinesq approximation II)}. Assume that the density is  \emph{constant} (i.e.\  $\rho\approx \rho_{\reference}$ for some reference density $\rho_{\reference}>0$) in all terms in \eqref{eq:Navier_Stokes_compressible} expect in the buoyancy term $-g \rho \,e_3$ and its stochastic counterpart $\stocgrav_n (\rho) $.
\item{\sc (Heat balance and state equation II)}. The heat balance shows that the temperature $\Theta$ evolves according to the equations 
\begin{equation}
\label{eq:temperature_eq_II}
\dd  \Theta
= \big[ \kappa_{\h}\Delta_{\h} \Theta + \varepsilon^2 \partial_3^2 \Theta  - (U\cdot \nabla) \Theta \big]\, \dd t
+\sum_{n\geq 1} \big[ (\Psi_{n,\h}\cdot\nabla_{\h}) \Theta + \varepsilon\,\Psi^3_n \partial_3 \Theta \big]\, \dd \beta_t^n,
\end{equation}
where $(\Psi_n)_{n\geq 1}$ is a sequence of vector fields. 
Finally, as a state equation $\rho=\rho(\Theta)$, use the linear map $\rho=\rho_{\reference}+\lambda(\Theta-\Theta_{\reference})$ where $\lambda,\Theta_{\reference}\in \R$ are given.
\item{\sc (Stochastic hydrostatic approximation II)}. The hydrostatic approximation can be performed as in Subsection \ref{ss:hydrostatic_approx}, where one also needs to add in \eqref{eq:w_varepsilon_goes_to_zero} the requirement
\begin{equation}
\label{eq:gradient_convergence}
 \lim_{\varepsilon\to 0} \varepsilon^2 \big[(\Phi_n\cdot\nabla) w_{\varepsilon} \big]= 0,
\end{equation}
with $w_\varepsilon$ as in \eqref{eq:rescaling}. 
Let us stress that, in the deterministic setting  \cite{KGHHKW20,LT19},  one can even prove that \eqref{eq:w_varepsilon_goes_to_zero} holds  and $\varepsilon^2 \Delta w_{\varepsilon} \stackrel{\varepsilon\downarrow 0}{\to} 0$. Hence, it seems that \eqref{eq:gradient_convergence} is no more demanding than the requirements in  \eqref{eq:w_varepsilon_goes_to_zero}. 
 \end{itemize}

We conclude this subsection by admitting that there is no direct relation between the above derivation with the two-scale arguments in  \cite{DP22_two_scale,FlaPa21,MTVE01}. It would be interesting to study which contribution(s) need to be considered in the small scale equation of \cite[Subsection 7.3]{DP22_two_scale} to obtain the non-isothermal balance for $\wt{P}_n$ of \eqref{eq:primitive_full_4} for the effective dynamics.

\begin{remark}[$x_3$-independence of transport noise]
\label{r:phi_independence}
Arguing as in Remark \ref{r:sigma_independence}, if one assumes that $\Phi_{n,\h}$ and $\Phi_{n}^3$ are $(t,\om,x)$-dependent and consistent in $\varepsilon>0$ with consistency map that is continuous in $x\in \Dom_{\varepsilon_0}$ for some $\varepsilon_0>0$, then the stochastic hydrostatic approximation (i.e.\ \eqref{eq:w_varepsilon_goes_to_zero} and \eqref{eq:gradient_convergence} both hold) leads to the transport noise coefficients 
$
\phi_n(t,\om,x_{\h})=(\Phi_{n,\h}(t,\om,x_{\h},0),\Phi^3_n(t,\om,x_{\h},0))
$
in \eqref{eq:primitive_full_1}. Let us remark that the continuity of the consistency map is satisfied in the physically relevant case of the Kraichnan noise (see e.g.\ the discussion below \cite[eq.\ (1.3)]{MR05}). 
Therefore the $x_3$-independence condition of Assumption \ref{ass:global_primitive_strong_strong} is in accordance with the physical derivation.  

As in Remark \ref{r:sigma_independence}, the $x_3$-independence of $\phi_n$ arises if and only if one rescales also the vertical variable by $\varepsilon^{-1}$. As in the latter remark, in the authors' opinion, on the one hand, this seems unreasonable for the horizontal part of $\Phi_n$, i.e.\ $\Phi_{n,\h}$. On the other hand, rescaling of $\Phi_{n}^3$ might be physically relevant as in \eqref{eq:Navier_Stokes_compressible_1} we are weakening the strength of the contribution $\Phi^3_n \partial_3 v$ via the multiplication by $\varepsilon$. Thus, if one assumes $\Phi^3_n =\Phi^3_n(t,\om,x_{\h},\varepsilon^{-1}x_3)$, then this leads to an $x_3$-dependent $\phi^3_n=\Phi^3_n$. The latter situation is also covered by our results as in Assumption \ref{ass:global_primitive_strong_strong} no condition on the vertical component of $\phi$ is enforced. 

As it follows from Section \ref{s:intermediate_estimate}, the $x_3$-independence of $\phi_{n,\h}$ is \emph{necessary} for the stochastic primitive equations \eqref{eq:primitive_full} to behave well under the decomposition into barotropic and baroclinic modes, i.e.\ $v=\overline{v}+\wt{v}$ with $\overline{v}=\fint_{-h}^0 v(\cdot,\zeta)\,\dd \zeta$. The latter is very important in physics and in particular for the study of oceanic dynamics, see e.g.\ \cite{CH19_vbar,DHCWL95_vbar,HS97_vbar,OL07_vbar,SB99_vbar,YTLR17_vbar}. 

We conclude by noticing that the above arguments holds with $(\Phi_n,\phi_n)$ replaced by $(\Psi_n,\psi_n)$ which appear in the temperature balance \eqref{eq:temperature_eq_II} in case $\Psi_n$ is $x$-dependent.
\end{remark}

\begin{remark}[Two-dimensional turbulence]
\label{r:twod_turbulence}
The 2D nature of the transport noise for the stochastic primitive equations arose in the above introduced stochastic hydrostatic approximation (cf.\ Remark \ref{r:phi_independence}) is in accordance with physical measurements of turbulent flows in the ocean, as the latter show that turbulent oceanic flows are (approximately) two dimensional, see e.g.\ 
\cite{BE12,C01_twod,R73_twod,T02_twod,Y088_twod}.
\end{remark}

\section{Local and global well-posedness}
\label{s:strong_strong}
In this section, we state our main results on local and global well-posedness of \eqref{eq:primitive_full}. 
Actually, we will consider the following generalization of \eqref{eq:primitive_full}:
\begin{subequations}
\label{eq:primitive}
\begin{alignat}{7}
\label{eq:primitive_1}
		\begin{split}
		&\dd v -\Delta v\, \dd t=\Big[-\nabla_{\h} P  -(v\cdot \nabla_{\h})v- w\partial_{3} v + \fv(\cdot,v,\T,\nabla v,\nabla \T)
		+\partial_{\hp} \wt{p} \Big]\, \dd t \\
		&\qquad \qquad \ \ \ +\sum_{n\geq 1}\Big[(\phi_{n}\cdot\nabla) v-\nabla_{\h}\wt{P}_n  +\gvn(\cdot,v,\T)\Big] 			\, \dd \beta_t^n,
		\end{split}\\
\label{eq:primitive_2}
		\begin{split}
		&\dd  \T -\Delta \T\, \dd t=\Big[ -(v\cdot \nabla_{\h})\T- w\partial_{3} \T+ \ft(\cdot,v,\T,\nabla v,\nabla \T) 			
		\Big]\, \dd t \\
		&\qquad\qquad\ \ \  +\sum_{n\geq 1}\Big[(\psi_{n}\cdot\nabla) \T+\gtn(\cdot,v,\T)\Big]\,\dd \beta_t^n,\\
		\end{split}\\
\label{eq:primitive_3}
		&\partial_{\hp} \wt{p}\stackrel{{\rm def}}{=}\Big(\sum_{n\geq 1}\sum_{j=1}^2 \hp^{j,k}_n \big[\partial_j \wt{P}_n+ \partial_j \int_{-h}^{\cdot} \ktwon(\cdot,\zeta)\T(\cdot,\zeta)\,\dd \zeta\big]\Big)_{k=1}^2,\\
\label{eq:primitive_4}
&\partial_{3} P+ \kone \T+ (\tp \cdot\nabla )\T=0,\\
\label{eq:primitive_5} 
&\partial_{3} \wt{P}_n+\ktwon \T=0,\\
\label{eq:primitive_6}
&\div_{\h} v+\partial_{3} w=0,\\
\label{eq:primitive_7}
&v(0,\cdot)=v_0,\qquad \T(0,\cdot)=\T_0,
\end{alignat}
\end{subequations}
where the above equations hold on $\Dom=\Tor^2\times (-h,0)$. In \eqref{eq:primitive_3}, with $\int_{-h}^{\cdot} \ktwon(\cdot,\zeta)\T(\cdot,\zeta)\,\dd \zeta$ we understand the mapping
 $x\mapsto \int_{-h}^{x_3}\ktwon(x_{\h},\zeta)\T(x_{\h},\zeta)\,\dd \zeta $ where  $x=(x_{\h},x_3)\in \Dom$ with $x_{\h}\in \Tor^2$ and $x_3\in (-h,0)$. A similar notation will be also employed in the sequel if no confusion seems likely.

There are two additional terms in \eqref{eq:primitive} compared to \eqref{eq:primitive_full}. 
Firstly, \eqref{eq:primitive_1} contains the additional term $\partial_{\hp} \wt{p}$ defined in \eqref{eq:primitive_3} which takes into account the effect of the \emph{hydrostatic} turbulent pressure $\wt{p}_n$ (defined in \eqref{eq:integration_vertical_direction_3} below) on the deterministic component of the dynamics of $v$, i.e.\ \eqref{eq:primitive_1}. 
A similar term was also considered in \cite{Primitive1}. 
Secondly, in \eqref{eq:primitive_5}, there is an additional transport type term $(\tp \cdot\nabla)\T$ which is also due to the effect of the turbulent pressure. Both terms $\partial_{\hp} \wt{p}$ and $(\tp \cdot\nabla)\T$ are motivated by the Stratonovich formulation of \eqref{eq:primitive}. The reader is referred to Section \ref{s:Stratonovich} for further discussion. Let us mention that the term $(\tp \cdot\nabla)\T$ gives rise to the same mathematical difficulties of $\ktwon \T$ in \eqref{eq:primitive_5}, and therefore the problem \eqref{eq:primitive_1} is as (analytically) difficult as \eqref{eq:primitive_full}.
Finally let us note that, comparing \eqref{eq:primitive_full} and \eqref{eq:primitive}, the terms $(\fv,\ft,\gvn,\gtn)$ are $(v,\T)$-dependent nonlinearities. 

The system \eqref{eq:primitive} is complemented with the following boundary conditions on $\Tor^2$:
\begin{subequations}
\label{eq:boundary_conditions_full}
\begin{alignat}{3}
	\label{eq:boundary_conditions_full_1}
		\partial_{3} v (\cdot,-h)=\partial_{3} v(\cdot,0)&=0,\\
	\label{eq:boundary_conditions_full_2}
		\partial_{3} \T(\cdot,-h)= \partial_{3} \T(\cdot,0)+\alpha \T(\cdot,0)&=0,&\\
	\label{eq:boundary_conditions_full_3}
		w(\cdot,-h) =w(\cdot,0)&=0,
\end{alignat}
\end{subequations}
where $\alpha\in \R$ is a given constant. 
As mentioned in Section \ref{s:intro}, the results below are also true in case \eqref{eq:boundary_conditions_full_1}-\eqref{eq:boundary_conditions_full_2} are replaced by periodic boundary conditions, see Remark \ref{r:periodic_BC}.
This section is organized as follows:
\begin{itemize}
\item In Subsection \ref{ss:reformulation} we reformulate \eqref{eq:primitive}-\eqref{eq:boundary_conditions_full} as a stochastic evolution equations for the unknown $(v,\T)$. To this end, we introduce the hydrostatic Helmholtz projection and appropriate function spaces of divergence-free vector fields.
\item In Subsection \ref{ss:strong_strong_statement} we collect the main assumptions and definitions. In particular, we provide a rigorous definition of solutions to \eqref{eq:primitive}-\eqref{eq:boundary_conditions_full} using It\^{o} calculus.
\item In Subsection \ref{ss:statement_main_results} we state local and global well-posedness results for 
\eqref{eq:primitive}-\eqref{eq:boundary_conditions_full}.
\end{itemize}

\subsection{Hydrostatic Helmholtz projection and reformulation of \eqref{eq:primitive}}
\label{ss:reformulation}
Let us begin by introducing the \emph{Helmholtz projection} on the horizontal variables $x_{\h}\in \Tor^2$ which will be denoted by $\pr$. Let $f\in L^2(\Dom;\R^2)$ and denoted by $\qr f\stackrel{{\rm def}}{=}\nabla_{\h} \Psi_f\in L^2(\Tor^2;\R^2)$ where $\Psi_f\in H^{1}(\Tor^2)$ is the unique solution to
$$
\Delta_{\h} \Psi_f=\div_{\h} f \ \  \text{ on }\Tor^2, \qquad\text{and }\qquad
\int_{\Tor^2} \Psi_f \,\dd x=0.
$$
Then the \emph{Helmholtz projection} on $\Tor^2$ is given by
$$
\pr f\stackrel{{\rm def}}{=} f-\qr f , \quad \text{ for }f\in L^2(\Tor^2;\R^2).
$$
It is easy to see that $\pr\in \calL(L^2(\Tor^2;\R^2))$ and that it is an orthogonal projection.
The \emph{hydrostatic Helmholtz projection} on $\Dom$ will be denote by $\p$, and it defined  for all $f\in L^2(\Dom;\R^2)$ by
(recall that $\fint_{-h}^0 \cdot\, \dd \zeta=\frac{1}{h}\int_{-h}^0\cdot\, \dd \zeta$)
\begin{equation}
\label{eq:Helmholtz_hydrostatic}
\q f= \qr \Big[\fint_{-h}^0 f (\cdot,\zeta)\, \dd \zeta \Big] \qquad \text{ and }\qquad
\p f  \stackrel{{\rm def}}{=} f -\q f .
\end{equation}
One can check that $\p\in \calL(L^2(\Dom;\R^2))$, it is an orthogonal projection and $\div_{\h} \int_{-h}^0 (\p f(\cdot,\zeta))\,\dd \zeta=0$ in $\D'(\Tor^2)$ for all $f\in L^2(\Dom;\R^2)$.
Let 
\begin{align*}
\Ls^2(\Dom)=\Big\{f\in L^2(\Dom;\R^2) \,:\, \div_{\h} \Big(\int_{-h}^0 f(\cdot,\zeta)\,\dd \zeta\Big)=0\text{ on }\Tor^2 \Big\},
\end{align*}
be endowed with the norm $\|f\|_{\Ls^2(\Dom)}\stackrel{{\rm def}}{=}\|f\|_{L^2(\Dom;\R^2)}$, and for all $k\geq 1$ we set
$$
\Hs^{k}(\Dom)\stackrel{{\rm def}}{=}H^{k}(\Dom;\R^2)\cap \Ls^{2}(\Dom), \qquad \|f\|_{\Hs^k(\Dom)}
\stackrel{{\rm def}}{=}\|f\|_{H^k(\Dom;\R^2)}.
$$
As in Subsection \ref{ss:set_up}, for $\A\in \{\Ls^2,\Hs^k\}$, we write $\A$ instead of $\A(\Dom)$, if no confusion seems likely.

Next we reformulate \eqref{eq:primitive} as a stochastic evolution equation on $\Ls^2(\Dom)\times L^2(\Dom)$ for the unknown $(v,\T)$. 
As usual in the context of primitive equations, we start by integrating the conditions \eqref{eq:primitive_4}-\eqref{eq:primitive_6}, and we obtain, a.e.\ on $\R_+\times \O$ and for all $(x_{\h},\z)\in \Tor^2\times (-h,0)=\Dom$,
\begin{align}
\label{eq:integration_vertical_direction_3_w}
w(\cdot,x)&=-\int_{-h}^{\z}\div_{\h} v(\cdot,x_{\h},\zeta)\,\dd \zeta,\\
	\label{eq:integration_vertical_direction_3_P}
	P(\cdot,x)&= p(\cdot,x_{\h}) -\int_{-h}^{\z} 
	\Big(\kone(\cdot,x_{\h},\zeta)\T(\cdot,x_{\h},\zeta) + 
	(\tp(\cdot,x_{\h},\zeta)\cdot \nabla) \T(\cdot,x_{\h},\z) \Big)\, \dd \zeta,\\
	\label{eq:integration_vertical_direction_3}
	\wt{P}_n(\cdot,x)&= \wt{p}_n(\cdot,x_{\h}) -\int_{-h}^{\z} \ktwon(\cdot,x_{\h},\zeta)\T(\cdot,x_{\h},\zeta)\, \dd \zeta.
\end{align}
To obtain \eqref{eq:integration_vertical_direction_3_w} we also used $w(\cdot,-h)=0$ by \eqref{eq:boundary_conditions_full_3}. Note that  $w(\cdot,0)=0$ is equivalent to 
\begin{equation}
\label{eq:integral_imcompressibility}
\int_{-h}^0 \div_{\h} v(\cdot,\zeta)\,\dd \zeta=0 \quad \text{ on }\Tor^2.
\end{equation}
Moreover, let us stress that the pressures $p$ and $\wt{p}_n$ are independent of the vertical direction $\z\in(-h,0)$. For this reason, in the physical literature, they are often referred to as \emph{surface pressures}. 

Hence, the system \eqref{eq:primitive_1}-\eqref{eq:primitive_2} can be equivalently rewritten as:
\begin{subequations}
\label{eq:primitive_v_T_pressure}
\begin{alignat}{2}
\label{eq:primitive_v_T_pressure_1}
		\begin{split}
		&\dd  v -\Delta v\, \dd t=\Big[  -(v\cdot \nabla_{\h})v- w(v)\partial_{3} v -\nabla_{\h} p
		+\partial_{\hp} \wt{p}_n\\
		& \qquad\qquad  +\nabla_{\h}\int_{-h}^{\cdot}  \big[\kone(\cdot,\zeta)\T(\cdot,\zeta)+ (\tp(\cdot,\zeta)
		\cdot\nabla) \T(\cdot,\zeta)\big]\,\dd \zeta+ \fv(v,\T,\nabla v,			\nabla\T) \Big]\, \dd t \\
		&\qquad\qquad +\sum_{n\geq 1}\Big[(\phi_{n}\cdot\nabla) v-\nabla_{\h} \wt{p}_n
		+\nabla_{\h}\int_{-h}^{\cdot} \big(\ktwon(\cdot,\zeta)\T(\cdot,\zeta)\big)\,\dd \zeta  +\gvn(v,\T)\Big] 						\, \dd \beta_t^n,
		\end{split}\\
\label{eq:primitive_v_T_pressure_2}
		\begin{split}
		&\dd  \T -\Delta \T\, \dd t=\Big[ -(v\cdot \nabla_{\h})\T- w(v)\partial_{3} v+ \ft(v,\T,\nabla v,\nabla \T ) 						\Big]\,\dd t \\
		&\qquad\qquad  +\sum_{n\geq 1}\Big[(\psi_{n}\cdot\nabla) \T+\gtn(v,\Temp)\Big] \, \dd \beta_t^n, 
		\end{split}
\end{alignat}
\end{subequations}
on $\Dom$, where
\begin{equation}
\label{eq:def_w}
w(v)\stackrel{{\rm def}}{=}-\int_{-h}^{\cdot}\div_{\h} v(\cdot,\zeta)\,\dd \zeta.
\end{equation} 
Next applying the hydrostatic Helmholtz projection $\p$ on \eqref{eq:primitive_v_T_pressure_1}, we obtain
\begin{align}
		\nonumber
&\dd  v -\Delta v\, \dd t=\Big(\p\Big[  -(v\cdot \nabla_{\h})v- w(v)\partial_{3} v
		+\partial_{\hp} \wt{p}_n\Big]\\
		\label{eq:primitive_v_T_pressure_1_projected}
		&\quad
		+\p\Big[\nabla_{\h}\int_{-h}^{\cdot}  \big[\kone(\cdot,\zeta)\T(\cdot,\zeta)+(\tp(\cdot,\zeta)
		\cdot\nabla) \T(\cdot,\zeta)\big]\,\dd \zeta+ \fv(v,\T,\nabla v,			\nabla\T) \Big]\Big)\, \dd t \\
		&\quad
		\nonumber
		+\sum_{n\geq 1}\p\Big[(\phi_{n}\cdot\nabla) v +\nabla_{\h}\int_{-h}^{\cdot} \big(\ktwon(\cdot,\zeta)\T(\cdot,		\zeta)\big)\,\dd \zeta  +\gvn(v,\T)\Big]\, \dd \beta_t^n.
\end{align}
In \eqref{eq:primitive_v_T_pressure_1_projected} we used that  $\p\Delta v=\Delta v$  by \eqref{eq:boundary_conditions_full_1}   and \eqref{eq:integral_imcompressibility}.
Note that (in general) in the stochastic part of the above, the operator $\p$ cannot be  removed.
In particular, we have
\begin{align*}
\nabla_{\h} \wt{p}_n=\underbrace{\q \Big[ (\phi_{n}\cdot\nabla) v +\nabla_{\h}\int_{-h}^{\cdot} \big(\ktwon(\cdot,\zeta)\T(\cdot,		\zeta)\big)\,\dd \zeta  +\gvn(v,\T)\Big]}_{\mathcal{Q}(v,\T)\stackrel{{\rm def}}{=}}.
\end{align*}
A similar relation holds for $\nabla_{\h} p$. Using the above identity and \eqref{eq:integration_vertical_direction_3}, we get
\begin{equation}
\label{eq:Lp_def}
\partial_{\hp} \wt{p}=\underbrace{\Big(\sum_{n\geq 1}\sum_{1\leq j\leq 2}\hp^{j,k}_n (\mathcal{Q}(v,\T))^j \Big)_{k=1}^2}_{\Lp(v,\T)\stackrel{{\rm def}}{=}},
\end{equation}
where $(\mathcal{Q}(v,\T))^j$ denotes the $j$-th coordinate of the vector $\mathcal{Q}(v,\T)$.
Therefore, we have proved that  
 the system \eqref{eq:primitive_v_T_pressure} is equivalent to following system of SPDEs on $\Dom$:
\begin{subequations}
\label{eq:primitive_strong}
\begin{alignat}{6}
\label{eq:primitive_strong_1}
		\begin{split}
		&\dd  v -\Delta v\, \dd t=\Big(\p\Big[  -(v\cdot \nabla_{\h})v- w(v)\partial_{3} v -\mathcal{P}_{\g}(v,\T)\\
		& \qquad \qquad  \ \ +\nabla_{\h}\int_{-h}^{\cdot}  \big[\kone(\cdot,\zeta)\T(\cdot,\zeta)
		+(\tp(\cdot,\zeta)
		\cdot\nabla) \T(\cdot,\zeta)\big]\,\dd \zeta+ \fv(v,\T,\nabla v,			\nabla\T) \Big]\Big)\, \dd t \\
		&\qquad \qquad \ \  +\sum_{n\geq 1}\p\Big[(\phi_{n}\cdot\nabla) v
		+\nabla_{\h}\int_{-h}^{\cdot} \big(\ktwon(\cdot,\zeta)\T(\cdot,\zeta)\big)\,\dd \zeta  +\gvn(v,\T)\Big] 	\,			\dd \beta_t^n,
		\end{split}\\
		\begin{split}
		&\dd  \T -\Delta \T\, \dd t=\Big[ -(v\cdot \nabla_{\h})\T- w(v)\partial_{3} v+ \ft(v,\T,\nabla v,\nabla \T ) 				\Big]\, \dd t \\
		&\qquad \qquad\ \   +\sum_{n\geq 1}\Big[(\psi_{n}\cdot\nabla) \T+\gtn(v,\Temp)\Big] \, \dd \beta_t^n.
		\end{split}
\end{alignat}
\end{subequations}
The above problem is complemented with the following boundary conditions on $\Tor^2$:
\begin{subequations}
\label{eq:primitive_strong_BC}
\begin{alignat}{3}
	\label{eq: primitive_strong_BC_1}
		\partial_{3} v (\cdot,-h)=\partial_{3} v(\cdot,0)&=0,\\
	\label{eq: primitive_strong_BC_2}
		\partial_{3} \T(\cdot,-h)= \partial_{3} \T(\cdot,0)+\alpha \T(\cdot,0)&=0,&
\end{alignat}
\end{subequations}
where $\alpha\in \R$ is a given constant. Note that \eqref{eq:primitive_strong_1} yields \eqref{eq:integral_imcompressibility} in case $\int_{-h}^0\div_{\h} v_0(\cdot,\zeta)\,\dd \zeta=0$ where $v_0$ is the initial condition of $v$, see \eqref{eq:primitive_7}.

\subsection{Main assumptions and definitions}
\label{ss:strong_strong_statement}
We begin by listing the main assumptions of this section. Below we employ the notation introduced in Subsection \ref{ss:set_up}.
 
\begin{assumption} There exist $M,\delta>0$ for which the following hold.
\label{ass:well_posedness_primitive_double_strong}
\begin{enumerate}[{\rm(1)}]
\item\label{it:well_posedness_measurability_strong_strong}
For all $j\in \{1,2,3\}$ and $n \geq 1$, the mappings
$$
\phi_n^j,\ \psi_n^j ,  \ \kone,\ \tp^j,   \ \ktwon: \R_+\times \O\times \Dom\to \R \text{ are $\Progress\otimes \mathscr{B}$-measurable}.
$$
\item\label{it:well_posedness_primitive_parabolicity_strong_strong}
{\em (Parabolicity)}
There exists $\ellip\in (0,2)$ such that, a.s.\ for all $t\in \R_+$, $x\in \Dom$, $\xi\in \R^d$,
\begin{align*}
\sum_{n\geq 1} \Big(\sum_{1\leq j\leq 3}\phi^j_n(t,x) \xi_j\Big)^2\leq \ellip|\xi|^2,
\ \ \text{ and }\ \ 
\sum_{n\geq 1} \Big(\sum_{1\leq j\leq 3} \psi^j_n(t,x) \xi_j\Big)^2 \leq \ellip |\xi|^2.
\end{align*}
\item\label{it:well_posedness_primitive_phi_psi_smoothness} 
{\em (Regularity I)}
a.s.\ for all $t\in \R_+$, $j,k\in \{1,2,3\}$ and $\ell,m\in \{1,2\}$,
\begin{align*}
\Big\|\Big(\sum_{n\geq 1}| \phi^j_n(t,\cdot)|^2\Big)^{1/2} \Big\|_{L^{3+\delta}(\Dom)}+
\Big\|\Big(\sum_{n\geq 1}|\partial_k \phi^j_n(t,\cdot)|^2\Big)^{1/2} \Big\|_{L^{3+\delta}(\Dom)} \leq M,&\\
\Big\|\Big(\sum_{n\geq 1}| \psi^j_n(t,\cdot)|^2\Big)^{1/2} \Big\|_{L^{3+ \delta}(\Dom)}+
\Big\|\Big(\sum_{n\geq 1}|\partial_k \psi^j_n(t,\cdot)|^2\Big)^{1/2} \Big\|_{L^{3+ \delta}(\Dom)} \leq M,&\\
\|(\hp^{\ell,m}_n(t,\cdot))_{n\geq 1}\|_{L^{3+\delta}(\Dom;\ell^2)}\leq M.&
\end{align*}
\item\label{it:well_posedness_primitive_kone_smoothness_strong_strong} 
{\em (Regularity I)}
a.s.\ for all $t\in \R_+$, $i\in \{1,2\}$ and $j\in \{1,2,3\}$,
\begin{align*}
\| \kone(t,\cdot) \|_{L^{\infty}(\Tor^2;L^2(-h,0))} +\|\partial_i \kone(t,\cdot) \|_{L^{2+\delta}(\Tor^2;L^2(-h,0))}& \leq M,\\
\| \tp^j(t,\cdot) \|_{L^{\infty}(\Tor^2;L^2(-h,0))} +\|\partial_i \tp^j(t,\cdot) \|_{L^{2+\delta}(\Tor^2;L^2(-h,0))} &\leq M.
\end{align*}

\item\label{it:well_posedness_primitive_ktwo_smoothness_strong_strong} Set $\ktwo\stackrel{{\rm def}}{=} (\ktwon)_{n\geq 1}$. Then,
a.s.\ for all $t\in \R_+$ and $i,j\in \{1,2\}$,
\begin{align*}
\|\ktwo(t,\cdot)\|_{L^{\infty}(\Dom;\ell^2)}
&+ 
\|\partial_{i} \ktwo(t,\cdot)\|_{L^{2+\delta}(\Tor^2;L^{2}(-h,0;\ell^2))}\\
&+\|\partial_{i,j}^2 \ktwo(t,\cdot)\|_{L^{2+\delta}(\Tor^2;L^{2}(-h,0;\ell^2))}\leq M.
\end{align*}
\item\label{it:nonlinearities_measurability_strong_strong}
For all $n\geq 1$, the following mappings  are $\Progress\otimes \Borel$-measurable:
\begin{align*}
\fv&: \R_+\times \O\times \Dom \times \R^6\times \R^3\times \R^2 \times \R \to \R^2,& \ft&:\R_+\times \O\times \Dom\times   \R^6\times \R^3\times \R^2 \times \R \to \R,\\
\gvn&:\R_+\times \O\times \Dom \times \R^2  \times \R \to \R^2, &\gtn&:\R_+\times \O\times \Dom \times \R^2 \times \R \to \R.&
\end{align*}
Set 
$$
\gv\stackrel{{\rm def}}{=}(\gvn)_{n\geq 1} \quad \text{ and } \quad \gt\stackrel{{\rm def}}{=}(\gtn)_{n\geq 1}.
$$
\item\label{it:nonlinearities_strong_strong}
 {\em (Global Lipschitz nonlinearities)} a.s.\ 
 \begin{align*}
\fv(\cdot,0)&\in L^2_{\loc}(\R_+\times \Dom;\R^2),& \ft(\cdot,0)&\in L^2_{\loc}(\R_+\times \Dom)\\
\gv(\cdot,0)&\in L^2_{\loc}(\R_+;H^1(\Dom;\ell^2(\N,\R^2))), &\gt(\cdot,0)&\in L^2_{\loc}(\R_+;H^1(\Dom;\ell^2)).&
\end{align*}
Moreover, there exists $K\geq 1$ such that, for all $u\in \{v,\T\}$, a.e.\ on $\R_+\times \O\times \Dom$ and for all $y,y'\in \R^2$, $Y,Y'\in \R^6$, $z,z'\in \R$ and $Z,Z'\in \R^3$,
\begin{align*}
|F_u(\cdot,y,z,Y,Z)-F_u(\cdot,y',z',Y',Z')|&\leq K(|y-y'|+|z-z'|\\
&\ \ \ + |Y-Y'|+|Z-Z'|),\\
\|\gx(\cdot,y,z)-\gx(\cdot,y',z')\|_{\ell^2}
+ \|\nabla_x \gx(\cdot,y,z)-\nabla_x \gx(\cdot,y',z')\|_{\ell^2}&\leq K(|y-y'|+|z-z'|),\\
\|\nabla_y \gx(\cdot,y,z)-\nabla_y \gx(\cdot,y',z')\|_{\ell^2}
+ \|\nabla_z \gx(\cdot,y,z)-\nabla_z  \gx(\cdot,y',z')\|_{\ell^2}&\leq K.
\end{align*}

\end{enumerate}
\end{assumption}

\begin{remark}
\label{r:boundedness}
Below we collect some observations on Assumption \ref{ass:well_posedness_primitive_double_strong}. 
\begin{itemize}
\item The Sobolev embedding $H^{1,3+\delta}(\Dom;\ell^2)\embed C^{\eta}(\Dom;\ell^2)$ for $\eta =\frac{\delta}{3+\delta}\in (0,1)$ and \eqref{it:well_posedness_primitive_phi_psi_smoothness} yield
$$
\| (\phi^j_n(t,\cdot))_{n\geq 1}\|_{C^{\eta}(\Dom;\ell^2)}+\|(\psi^j_n(t,\cdot))_{n\geq 1} \|_{ C^{\eta}(\Dom;\ell^2)} \lesssim M\ \   \text{ a.s.\ \ for all }t\in \R_+.
$$
\item 
As in \cite[Remark 3.2(c)]{Primitive1}, \eqref{it:well_posedness_primitive_parabolicity_strong_strong} 
is equivalent with the usual \emph{stochastic parabolicity} and therefore \eqref{it:well_posedness_primitive_parabolicity_strong_strong} is optimal in the parabolic setting. 
\item The global Lipschitz assumption \eqref{it:nonlinearities_strong_strong} can be weakened still keeping true the results of this manuscript. The reader is referred to Remark \ref{r:weaken_assumption_nonlinearities} for more details.
\end{itemize}
\end{remark}

Next we define $L^2$-strong solutions to \eqref{eq:primitive}-\eqref{eq:boundary_conditions_full}. 
Motivated by the reformulation of \eqref{eq:primitive} performed in Subsection \ref{ss:reformulation},
we consider the equivalent system \eqref{eq:primitive_strong} for the unknown $(v,\T)$ while the unknown $(P,\wt{P}_n,w)$ are determined uniquely by $(v,\T)$. Taking into account the boundary conditions \eqref{eq:primitive_strong_BC} and the divergence-free condition \eqref{eq:integral_imcompressibility} for the velocity $v$, we introduce the following spaces:
\begin{align}
\label{eq:def_Hn}
\Hs^{2}_{\n}(\Dom)
&\stackrel{{\rm def}}{=}\Big\{v \in H^{2}(\Dom;\R^2)\cap \Ls^2(\Dom)\,:\, \partial_{3} v(\cdot,-h)=\partial_{3} v(\cdot, 0)=0\text{ on }\Tor^2\Big\},\\
\label{eq:def_Hr}
\Hr^{2}(\Dom)
&\stackrel{{\rm def}}{=}\Big\{ \T \in H^{2}( \Dom) \,:\, \partial_{3} \T(\cdot,0)+\alpha \T(\cdot,0)= \partial_{3} \T (\cdot,-h)=0\text{ on }\Tor^2
\Big\}.
\end{align}
Note that the boundary conditions \eqref{eq:primitive_strong_BC} are included in the above spaces. Hence, the spaces $\Hs^2_{\n}$ and $\Hr^2$ serve as state spaces for the unknowns $v$ and $\T$, respectively.

Finally, we denote by $\Br_{\ell^2}$ the $\ell^2$-cylindrical Brownian motion induced by $(\beta^n)_{n\geq 1}$, i.e.\ 
\begin{equation}
\label{eq:def_Br}
\Br_{\ell^2}(f)\stackrel{{\rm def}}{=}
\sum_{n\geq 1}\int_{\R_+} f_n(t)\,\dd \beta_t^n \ \ \text{ where } \ \ f=(f_n)_{n\geq 1}\in L^2(\R_+;\ell^2).
\end{equation}

\begin{definition}[$L^2$-local, maximal and global strong solutions]
\label{def:sol_strong_strong}
Let Assumption \ref{ass:well_posedness_primitive_double_strong} be satisfied and let $\tau$ be a stopping time with values in $[0,\infty]$. Consider two stochastic processes 
$$
v:[0,\tau)\times \O \to \Hs_{\n}^2(\Dom)\qquad \text{ and } \qquad \T:[0,\tau)\times \O \to \Hr^2(\Dom).
$$
\begin{itemize}
\item\label{it:def_local} We say that $((v,\T),\tau)$ is called an \emph{$L^2$-local strong solution} to \eqref{eq:primitive}-\eqref{eq:boundary_conditions_full} if there exists a sequence of stopping times $(\tau_k)_{k\geq 1}$ for which the following hold:
\begin{itemize}
\item $\tau_k\leq \tau$ a.s.\ for all $k\geq 1$ and $\lim_{k\to \infty}\tau_k=\tau$ a.s.
\item For all $k\geq 1$, the process $\one_{[0,\tau_k]} (v,\T)$ is progressively measurable.  
\item a.s.\ we have $(v,\T)\in L^2(0,\tau_n;\Hs_{\n}^2(\Dom)\times \Hr^2(\Dom))$ and 
\begin{equation}
\begin{aligned}
\label{eq:integrability_strong_strong}
(v\cdot \nabla_{\h}) v+ \w(v)\partial_{3} v +\fv (v,\T,\nabla v,\nabla \T)+\Lp (\cdot,v,\T)&\in L^2(0,\tau_k;L^2(\Dom;\R^2)),\\
(v\cdot \nabla_{\h}) \T+ \w(v)\partial_{3} \T +\ft (v,\T,\nabla v,\nabla \T)&\in L^2(0,\tau_k;L^2(\Dom)),\\
(\gvn(v,\T))_{n\geq 1 }&\in L^2(0,\tau_k;H^1(\Dom;\ell^2(\N;\R^2))),\\
(\gtn(v,\T))_{n\geq 1 }&\in L^2(0,\tau_k;H^1(\Dom;\ell^2)).
\end{aligned}
\end{equation}
\item The following equality holds a.s.\ for all $k\geq 1$ and $t\in [0,\tau_k]$:
\begin{align*} 
v(t)-v_0
&=\int_0^t \Big(\Delta v(s)+ \p\Big[\int_{-h}^{\cdot} \nabla_{\h}\big[\kone(\cdot,\zeta)\T(\cdot,\zeta)+(\tp(\cdot,\zeta)\cdot\nabla) \T(\cdot,\zeta)\big]\,\dd \zeta\\
&\qquad \qquad
- (v\cdot \nabla_{\h}) v- \w(v)\partial_{3} v  + \fv (v,\T,\nabla v,\nabla \T) +\Lp(\cdot,v,\T)\Big]\Big)\,\dd s\\
&
+\int_0^t \Big(\one_{[0,\tau_k]}\p \Big[ (\phi_{n}\cdot\nabla) v +\int_{-h}^{\cdot}\nabla_{\h}(\ktwon(\cdot,\zeta)\T(\cdot,\zeta))\,\dd \zeta  +\gvn(v,\T) \Big] \Big)_{n\geq 1}\, \dd \Br_{\ell^2}(s),\\
\T(t)-\T_0
&=
\int_0^t  \Big[\Delta \T-(v\cdot \nabla_{\h})\T -w(v)\partial_{3} \T+ \ft(v,\T,\nabla v,\nabla \T )\Big]\,\dd s\\
&+
\int_0^t \Big(\one_{[0,\tau_k]}[ (\psi_{n}\cdot\nabla) \T   +\gtn(v,\T )] \Big)_{n\geq 1}\, \dd \Br_{\ell^2}(s).
\end{align*}
\end{itemize}

In the following, we say that $(\tau_k)_{k\geq 1}$ is a \emph{localizing sequence} for $(v,\tau)$.
\item An $L^2$-local strong solution $((v,\T),\tau)$ to \eqref{eq:primitive}-\eqref{eq:boundary_conditions_full} is said to be a (unique) \emph{$L^2$-maximal strong solution to} \eqref{eq:primitive}-\eqref{eq:boundary_conditions_full}
if for any other local solution $((v',\T'),\tau')$ we have 
\begin{equation*}
\tau'\leq \tau \  \text{ a.s.\ \  and } \ \ (v,\T)=(v',\T') \ \text{ a.e.\ on }[0,\tau')\times \O.
\end{equation*}
\item An $L^2$-maximal strong solution $((v,\T),\tau)$ to \eqref{eq:primitive}-\eqref{eq:boundary_conditions_full} is called an \emph{$L^2$-global strong solution} if $\tau=\infty$ a.s. In such a case, we write $(v,\T)$ instead of $((v,\T),\tau)$.
\end{itemize}
\end{definition}

Note that $L^2$-maximal strong solutions are \emph{unique} in the class of $L^2$-local strong solutions due to the above definition. By \eqref{eq:integrability_strong_strong}, the deterministic integrals and the 
stochastic integrals in the above definition are well-defined as 
 $L^2$-valued Bochner and $H^1$-valued It\^{o} integrals, respectively.

\subsection{Main results}
\label{ss:statement_main_results}
To economize the notation, through this manuscript we let 
\begin{equation}
\label{eq:def_HV}
H\stackrel{{\rm def}}{=}\Hs^{1}(\Dom)\times H^1(\Dom) \quad \text{ and }\quad V\stackrel{{\rm def}}{=}\Hs^{2}_{\n}(\Dom)\times \Hr^2(\Dom).
\end{equation}
Below $H$ and $V$ play the role of the trace and regularity space for \eqref{eq:primitive}-\eqref{eq:boundary_conditions_full}, respectively.

We begin by stating a local existence result for \eqref{eq:primitive}-\eqref{eq:boundary_conditions_full}. 

\begin{theorem}[Local existence and blow-up criterion]
\label{t:local_primitive_strong_strong}
Let Assumption \ref{ass:well_posedness_primitive_double_strong} be satisfied. Let 
$
(v_0,\T_0)\in L^0_{\F_0}(\O;H)
$.
Then \eqref{eq:primitive}-\eqref{eq:boundary_conditions_full} has a (unique) $L^2$-maximal strong solution $((v,\theta) ,\tau)$ such that  
$$\tau>0 \text{ a.s.} \quad \text{ and }\quad
(v,\T)\in L^2_{\loc}([0,\tau);V)\cap C([0,\tau);H) \text{ a.s.}
$$
Finally, for all $T\in (0,\infty)$,
\begin{align}
\label{eq:blow_up_criterium}
\P\Big(\tau<T,\,  \sup_{t\in [0,\tau)}\big\|(v(t),\T(t))\big\|^2_{H}+\int_0^{\tau}\big\|(v(t),\T(t))\big\|_{V}^2\,\dd t <\infty\Big)=0 . 
\end{align}
\end{theorem}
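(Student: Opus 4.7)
The plan is to recast the reformulated system \eqref{eq:primitive_strong}--\eqref{eq:primitive_strong_BC} as an abstract stochastic evolution equation
\[
\dd U + A(\cdot) U\,\dd t = F(\cdot,U)\,\dd t + (B(\cdot) U + G(\cdot,U))\,\dd \Br_{\ell^2}, \qquad U(0)=(v_0,\T_0),
\]
on the base space $X_0 = \Ls^2(\Dom)\times L^2(\Dom)$ with regularity space $X_1 = V$, and apply the abstract critical-space theory of \cite{AV19_QSEE_1,AV19_QSEE_2} in the $L^2$-setting (so that the trace space is exactly $H=\Hs^1\times H^1$). Concretely, $A$ is the (minus) Laplacian on $V$ complemented by the $\p$-projected linear drift $\p\nabla_{\h}\!\int_{-h}^{\cdot}[\kone\T+(\tp\cdot\nabla)\T]\,\dd\zeta$; the linear noise $B$ collects the transport terms $\p[(\phi_n\cdot\nabla)v]$, $(\psi_n\cdot\nabla)\T$, together with $\p\nabla_{\h}\!\int_{-h}^{\cdot}\ktwon\T\,\dd\zeta$; while $F$ collects the convective terms $\p[(v\cdot\nabla_{\h})v+w(v)\partial_3 v]$, $(v\cdot\nabla_{\h})\T+w(v)\partial_3\T$, the hydrostatic pressure contribution $\p\Lp(\cdot,v,\T)$, and $\fv,\ft$; and $G=(\gv,\gt)$.

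The next step is to establish stochastic maximal $L^2$-regularity for the pair $(A,B)$. Since the coefficients $\kone,\tp,\ktwon$ satisfy the integrability bounds in Assumption \ref{ass:well_posedness_primitive_double_strong}\eqref{it:well_posedness_primitive_kone_smoothness_strong_strong}--\eqref{it:well_posedness_primitive_ktwo_smoothness_strong_strong}, the integral operators $\T\mapsto\nabla_{\h}\!\int_{-h}^{\cdot}\kone\T\,\dd\zeta$ and $\T\mapsto\nabla_{\h}\!\int_{-h}^{\cdot}\ktwon\T\,\dd\zeta$ are first-order perturbations (mapping $H^1\to L^2$ and $H^1\to L^2(\ell^2)$, respectively, with small loss thanks to the extra vertical integration). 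Combined with the parabolicity condition \eqref{it:well_posedness_primitive_parabolicity_strong_strong} with $\ellip<2$ and the regularity \eqref{it:well_posedness_primitive_phi_psi_smoothness} of $(\phi_n,\psi_n)$, the decoupled stochastic maximal $L^2$-regularity of the Laplacian with the boundary conditions \eqref{eq:primitive_strong_BC} (see, e.g., the corresponding result invoked in \cite[Section 4]{Primitive1}) survives the coupling introduced by the non-isothermal pressure, by a standard perturbation argument that exploits the strict parabolicity margin.

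For the nonlinearities, the Cao--Titi/Kashiwabara--Hieber-type bilinear estimates on $\Dom=\Tor^2\times(-h,0)$, using $L^4$-interpolation between $H$ and $V$, yield for $U,U'\in V$ bounds of the form
\[
\|F(\cdot,U)-F(\cdot,U')\|_{X_0}\lesssim (1+\|U\|_H+\|U'\|_H)^{\rho_1}\|U-U'\|_{[X_0,X_1]_{1/2}} + (\text{lower order}),
\]
and similarly for $G$ into $\gamma(\ell^2;X_{1/2})$ with the trace-level Lipschitz assumption on $(\gvn,\gtn)$ in Assumption \ref{ass:well_posedness_primitive_double_strong}\eqref{it:nonlinearities_strong_strong}; the extra drift $\Lp(\cdot,v,\T)$ is controlled by boundedness of $\p$, the estimates on $\hp^{\ell,m}$ from \eqref{it:well_posedness_primitive_phi_psi_smoothness}, and the above bound on the integral operators. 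Plugging everything into \cite[Theorem 4.8]{AV19_QSEE_1} yields a unique $L^2$-maximal strong solution $((v,\T),\tau)$ with $\tau>0$ a.s.\ and the stated regularity $L^2_{\loc}([0,\tau);V)\cap C([0,\tau);H)$, while the blow-up criterion \eqref{eq:blow_up_criterium} is the Serrin-type criterion of \cite[Theorem 4.10]{AV19_QSEE_1} applied in this setting.

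The main obstacle in this plan is the verification that the first-order couplings introduced by the non-isothermal balance \eqref{eq:primitive_5} (both the deterministic drift and the gradient noise $\nabla_{\h}\!\int_{-h}^{\cdot}\ktwon\T\,\dd\zeta$) preserve stochastic maximal regularity for the fully coupled system $(v,\T)$; unlike the isothermal case of \cite{Primitive1}, the two equations cannot be treated separately at the linear level, and one needs to keep track of the off-diagonal coupling in order to avoid losing the strict positivity of the parabolicity parameter $2-\ellip$. The $x_3$-independence of $\ktwon$ from Assumption \ref{ass:well_posedness_primitive_double_strong} is instrumental here, since it ensures that $\partial_3\!\int_{-h}^{\cdot}\ktwon\T\,\dd\zeta=\ktwon\T$ is bounded $H^1\to L^2$ without requiring extra vertical regularity of $\ktwon$.
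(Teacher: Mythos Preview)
Your overall plan---recast \eqref{eq:primitive_strong} as an abstract stochastic evolution equation on $X_0=\Ls^2\times L^2$ with $X_1=V$, verify stochastic maximal $L^2$-regularity for the linear part, check the local Lipschitz/criticality conditions on the bilinear nonlinearities, and then invoke the local existence and blow-up criteria of \cite{AV19_QSEE_1,AV19_QSEE_2}---is exactly the route the paper takes (Subsection \ref{ss:proof_local}, Lemma \ref{l:smr}). The nonlinearity estimates are also as you indicate; cf.\ \eqref{eq:estimate_nonlinearities_continuity_proof} and \cite[Theorem 3.4]{Primitive1}.

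However, your discussion of the ``main obstacle'' misidentifies the mechanism by which maximal regularity for $(A,B)$ is obtained, and contains a concrete error. You write that ``unlike the isothermal case of \cite{Primitive1}, the two equations cannot be treated separately at the linear level,'' and propose a perturbation argument tracking the off-diagonal coupling against the parabolicity margin $2-\ellip$. In fact the linearized system \eqref{eq:linear} is \emph{triangular}: the $\T$-equation \eqref{eq:linear_2} contains no $v$. The paper exploits this (combined with the method of continuity in $\lambda\in[0,1]$) by first proving the a-priori estimate \eqref{eq:estimate_T_smr_strong} for $\T$ alone, and then inserting it into the $v$-equation \eqref{eq:linear_1}, where the non-isothermal term $\op(\ktwon\T)$ becomes a known inhomogeneity controlled by $\|\T\|_{L^2_tH^2_x}$ via \eqref{eq:estimate_op_ktwon_etc}. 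The off-diagonal coupling $\op(\ktwon\T)$ never enters the parabolicity inequality---parabolicity concerns only the diagonal transport terms $(\phi_n\cdot\nabla)v$ and $(\psi_n\cdot\nabla)\T$---so your worry about ``losing the strict positivity of $2-\ellip$'' does not arise. A bare perturbation argument of the kind you suggest would not close without using the triangular structure, since $\op(\ktwon\,\cdot): H^2\to H^1(\ell^2)$ is of the same order as the principal stochastic part and carries no smallness.

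A second, smaller point: you attribute the $x_3$-independence of $\ktwon$ to Assumption \ref{ass:well_posedness_primitive_double_strong}, but that hypothesis is only in Assumption \ref{ass:global_primitive_strong_strong} and is \emph{not} used for Theorem \ref{t:local_primitive_strong_strong}. The bound $\|(\op(\ktwon\varphi))_{n\ge1}\|_{H^1(\ell^2)}\lesssim\|\varphi\|_{H^2}$ in \eqref{eq:estimate_op_ktwon_etc} holds under Assumption \ref{ass:well_posedness_primitive_double_strong}\eqref{it:well_posedness_primitive_ktwo_smoothness_strong_strong} alone, with $\ktwon$ allowed to depend on $x_3$.
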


The proof of Theorem \ref{t:local_primitive_strong_strong} follows as in \cite{Primitive1} where we checked the applicability of the abstract results of \cite{AV19_QSEE_1,AV19_QSEE_2}. A sketch of the proof of Theorem \ref{t:local_primitive_strong_strong} will be given in Subsection \ref{ss:proof_local}.
The statement \eqref{eq:blow_up_criterium} will be referred as \emph{blow-up criterion} as it shows that explosion $\tau=\infty$ can only happen if either $(v,\T)\not\in C([0,\tau];H)$ or $(v,\T)\not\in L^2(0,\tau;V)$ for some $\tau<\infty$. Let us note that, since $(v,\T)\in V$ a.e.\ on $[0,\tau)\times \O$, the blow-up criterion \eqref{eq:blow_up_criterium} is equivalent to 
\begin{align*}
\P\Big(\tau<T,\, & \sup_{t\in [0,\tau)} \big[\|v(t)\|^2_{H^1}+\big\|\T(t)\|^2_{H^1} \big] 
+  \int_0^{\tau} \big[\|v(t)\|^2_{H^2}+\|\T(t)\|^2_{H^2} \big]\,\dd t<\infty \Big)=0.
\end{align*}

Let us turn now our attention to the existence of global solutions to \eqref{eq:primitive}-\eqref{eq:boundary_conditions_full}. In contrast to the local existence result of Theorem \ref{t:local_primitive_strong_strong}, the global existence is much more involved. 
In particular, in addition to Assumption \ref{ass:well_posedness_primitive_double_strong} we will also need the following assumptions. 

\begin{assumption}
\label{ass:global_primitive_strong_strong} a.s.\ and for all $n\geq 1$, $x=(x_{\h},\z)\in \Tor^2\times (-h,0)= \Dom$, 
$t\in \R_+$, $j,k\in \{1,2\}$  
\begin{align*}
\phi_n^j(t,x),\ \psi^j_n (t,x), \ \hp^{j,k}_n(t,x), \ \tp^j(t,x)  \text{ and } \ \ktwon(t,x) \ \text{ are independent of $\z$}.
\end{align*}
\end{assumption}

We do not know if any of the above hypotheses can be removed in general.
Note that there are no additional assumptions on $(\phi_n^{3},\psi^3_n)$.  
However, in the case of isothermal turbulent pressure, the conditions on $\psi^{j}$ in Assumption \ref{ass:global_primitive_strong_strong} can be removed (see  \cite[Sections 3 and 6]{Primitive1} and Remark \ref{r:comparison_primitive1} below).
The physical relevance of the $x_3$-independence of $(\phi^j_n,\psi_n^j,\ktwon)$ is discussed in Remarks \ref{r:sigma_independence}, \ref{r:phi_independence} and \ref{r:twod_turbulence}. While, for $(\hp^{j,k}_n, \tp^j)$, in the physically relevant case where they are related to the Stratonovich formulation of the primitive equations (see Section \ref{s:Stratonovich}), the $x_3$-independence comes from the one of $(\phi^j_n,\psi_n^j,\ktwon)$ (cf.\ formula \eqref{eq:pi_psi_etc} in Section \ref{s:Stratonovich}).

We are ready to state the main results of this paper. For notational convenience we set 
\begin{equation}
\label{eq:def_y}
\y(t)\stackrel{{\rm def}}{=}\|\fv(t,\cdot,0)\|_{L^2}+\|\ft(t,\cdot,0)\|_{L^2}
+\|\gv(t,\cdot,0)\|_{H^1(\ell^2)}+\|\gt(t,\cdot,0)\|_{H^1(\ell^2)}.
\end{equation}
Note that $\y\in L^2(0,T)$ a.s.\ for all $T<\infty$ by Assumption \ref{ass:well_posedness_primitive_double_strong}\eqref{it:nonlinearities_strong_strong}. 

\begin{theorem}[Global existence and energy estimates]
\label{t:global_primitive_strong_strong}
Let Assumptions \ref{ass:well_posedness_primitive_double_strong} and \ref{ass:global_primitive_strong_strong} be satisfied. Let 
$
(v_0,\T_0)\in L^0_{\F_0}(\O;H)
$.
Then there exists a \emph{(unique) $L^2$-global strong solution} $((v,\T),\tau)$ to \eqref{eq:primitive}-\eqref{eq:boundary_conditions_full} such that 
$$
(v,\T)\in L^2_{{\rm loc}}([0,\infty);V)\cap C([0,\infty);H) \text{ a.s. }
$$ 
Finally, for all $T\in (0,\infty)$ there exists $C_T>0$, independent of $(v_0,\T_0)$, such that,
 for all $\g>e^e$,
\begin{align*}
\E \sup_{t\in [0,T]}\|v(t)\|^2_{L^2} +\E\int_0^T \|v(t)\|^2_{H^1}\,\dd t&\leq C_T (1+\E\|v_0\|_{L^2}^2+\E\|\T_0\|_{L^2}^2+\E \|\y\|_{L^2(0,T)}^2),\\
\E \sup_{t\in [0,T]}\|\T(t)\|^2_{L^2}+ \E\int_0^T \|\T(t)\|^2_{H^1}\,\dd t&\leq 
C_T (1+\E\|v_0\|_{L^2}^2+\E\|\T_0\|_{L^2}^2+\E \|\y\|_{L^2(0,T)}^2),\\
\P\Big(\sup_{t\in [0,T]}\|v(t)\|^2_{H^1} +\int_0^T \|v(t)\|^2_{H^2}\,\dd t\geq \g\Big)&\leq
 C_T\frac{(1 +\E\|v_0\|_{H^1}^4+\E\|\T_0\|_{H^1}^4+\E \|\y\|_{L^2(0,T)}^2)}{\log\log\log(\g)} ,\\
\P\Big(\sup_{t\in [0,T]}\|\T(t)\|^2_{H^1} +\int_0^T \|\T(t)\|^2_{H^2}\,\dd t\geq \g\Big)&\leq 
C_T\frac{(1 +\E\|v_0\|_{H^1}^4+\E\|\T_0\|_{H^1}^4+\E \|\y\|_{L^2(0,T)}^2)}{\log\log\log(\g)} .
\end{align*}
\end{theorem}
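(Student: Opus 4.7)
}
The plan is to reduce global existence to an a priori bound via the blow--up criterion of Theorem \ref{t:local_primitive_strong_strong}, and to obtain this bound through a chain of three energy estimates organised along the barotropic/baroclinic decomposition. First, I would apply Theorem \ref{t:local_primitive_strong_strong} to produce the unique $L^2$--maximal strong solution $((v,\T),\tau)$ with $\tau>0$ a.s. By \eqref{eq:blow_up_criterium}, it suffices to show that for each deterministic $T\in (0,\infty)$ the pathwise quantity $\sup_{[0,\tau\wedge T)}\|(v,\T)\|_H^2 + \int_0^{\tau\wedge T}\|(v,\T)\|_V^2\,\dd t$ is a.s.\ finite; by a standard localisation, it is enough to derive, for a localising sequence $(\tau_k)_{k\geq 1}$, uniform (in $k$) bounds on the stopped solutions. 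This reduction also produces the energy inequalities in the statement, once the bounds are expressed in terms of the norms of the initial data and of the tail $\y$.

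The second step is the basic $L^2$ energy balance (the content of Section \ref{s:basic_estimate}). I would apply It\^o's formula to $\tfrac{1}{2}\|v\|_{L^2}^2$ and $\tfrac{1}{2}\|\T\|_{L^2}^2$ on $[0,\tau_k]$, exploit the orthogonality $\langle (v\cdot\nabla_{\h})v+w(v)\partial_3 v,v\rangle_{L^2}=0$ and the analogous identity for $\T$ (consequences of $\div_{\h}v+\partial_3 w=0$ together with the boundary conditions \eqref{eq:primitive_strong_BC}), and use the parabolicity assumption \ref{ass:well_posedness_primitive_double_strong}\eqref{it:well_posedness_primitive_parabolicity_strong_strong} to absorb the It\^o correction into the coercive term $\|\nabla v\|_{L^2}^2+\|\nabla\T\|_{L^2}^2$. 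The gradient--noise piece \eqref{eq:gradient_sigma_n} is handled by a Cauchy--Schwarz/Young split that charges $\sigma$--terms to $\|\nabla_{\h}\T\|_{L^2}^2$ using Assumption \ref{ass:well_posedness_primitive_double_strong}\eqref{it:well_posedness_primitive_ktwo_smoothness_strong_strong}; the globally Lipschitz structure of $(F,G)$ from \ref{ass:well_posedness_primitive_double_strong}\eqref{it:nonlinearities_strong_strong} absorbs the lower--order contributions. Burkholder--Davis--Gundy, Gr\"onwall and the deterministic forcing $\y$ give the first two estimates of the theorem.

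The third and most delicate step is the $H^1$ bound, carried out through the barotropic/baroclinic decomposition $v=\overline{v}+\wt{v}$ with $\overline{v}=\fint_{-h}^0 v\,\dd\zeta$ (Section \ref{s:intermediate_estimate}). Here Assumption \ref{ass:global_primitive_strong_strong} is crucial: the $x_3$--independence of $\phi_n^j,\psi_n^j,\ktwon,\tp^j,\hp^{j,k}_n$ for $j,k\in\{1,2\}$ guarantees that $\partial_3$ and the vertical mean commute with the stochastic integrals, and hence that $(\overline{v},\wt{v})$ satisfies a well--posed reduced system of primitive type. Following the strategy of \cite{CT07,HK16,Primitive1}, I would combine (a) a 2D--like $L^\infty_t(L^2)\cap L^2_t(H^1)$ estimate for $\overline{v}$, (b) an $L^\infty_t(L^4)\cap L^2_t(L^4;H^1)$ estimate for $\wt{v}$ obtained by It\^o on $\|\wt{v}\|_{L^4}^4$, and (c) an estimate for $\|\partial_3 v\|_{L^2}$. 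The essential new feature compared with \cite{Primitive1} is that \eqref{eq:gradient_sigma_n} forces $\T$ to enter these balances \emph{not as a lower order term}: the identities produced by $\partial_3 P+\a \T=0$ and $\partial_3 \wt P_n+\ktwo_n \T=0$ generate $\nabla_{\h}\int_{-h}^{\cdot}\T\,\dd\zeta$--terms that must be balanced against $\|\nabla\T\|_{L^2}^2$ and $\|\nabla^2\T\|_{L^2}^2$. The main obstacle is to identify the cancellations that allow $(\overline{v},\wt{v},\T)$ to be closed \emph{jointly}: testing the $\T$--equation against $-\Delta\T$ to recover $\|\nabla^2\T\|_{L^2}^2$, and pairing this with the $v$--balances so that the bad $\sigma$--terms land on the parabolic gain of $\T$ rather than on that of $v$. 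I expect this joint closure to be the heart of the proof.

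The final step converts pathwise inequalities into the moment and probability bounds stated in the theorem (Section \ref{s:proof_energy_estimate_conclusion}). Putting together the three estimates produces an inequality of the schematic form
\begin{equation*}
X(t)+\int_0^t Y(s)\,\dd s \leq X(0)+\int_0^t \Phi(s) \bigl(1+\log(e+X(s))\bigr)\bigl(1+\log\log(e^e+X(s))\bigr)\bigl(X(s)+Y(s)/2\bigr)\,\dd s + M(t),
\end{equation*}
where $X\sim \|v\|_{H^1}^2+\|\T\|_{H^1}^2$, $Y\sim \|v\|_{H^2}^2+\|\T\|_{H^2}^2$, $\Phi\in L^1_{\loc}$ a.s., and $M$ is a local martingale; the three iterated logarithms encode the three concatenated Gr\"onwall applications explained below Theorem \ref{t:intro}. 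Feeding this into the stochastic Gr\"onwall lemma of \cite[Appendix A]{AV_variational} yields the $(\log\log\log\g)^{-1}$ tail bound and, in particular, $\sup_{[0,T)}X+\int_0^T Y<\infty$ a.s.\ on $\{\tau\leq T\}$, which contradicts \eqref{eq:blow_up_criterium} unless $\tau=\infty$ a.s.; this proves global existence and simultaneously delivers the last two displayed estimates.
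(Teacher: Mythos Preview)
Your overall architecture (blow--up criterion $\to$ basic estimates $\to$ intermediate estimate via barotropic/baroclinic splitting $\to$ final step via stochastic maximal regularity, with three chained stochastic Gronwall applications producing the triple log) matches the paper. However, there are two substantive gaps in your plan for Step~3 and Step~2/4 respectively.

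\medskip
\noindent\textbf{The joint closure in Step 3 is more intricate than you describe.} Averaging the $\sigma_n$--term in the $v$--equation does \emph{not} produce $\nabla_{\h}\overline{\T}$ but rather $\nabla_{\h}(\sigma_n\wh{\T})$ with the \emph{weighted} average $\wh{\T}=\fint_{-h}^0 \T\,\zeta\,\dd\zeta$, see \eqref{eq:avarage_whT_appears}. Hence the $\overline{v}$ estimate---which in the paper is an $L^\infty_t(H^1)\cap L^2_t(H^2)$ bound via stochastic maximal regularity, not the $L^\infty_t(L^2)\cap L^2_t(H^1)$ you wrote---requires control of $\|\wh{\T}\|_{L^2_t H^2_x(\Tor^2)}$. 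Deriving an SPDE for $\wh{\T}$ (eq.~\eqref{eq:theta_hat_estimate}) in turn generates the bilinear quantities $\||\wt{v}||\nabla\T|\|_{L^2_{t,x}}$, $\||\T||\nabla\wt{v}|\|_{L^2_{t,x}}$ and $\||\TT||\nabla\wt{v}|\|_{L^2_{t,x}}$ with $\TT=\int_{-h}^{\cdot}\T\,\dd\zeta$; these cannot be absorbed by simply testing the $\T$--equation against $-\Delta\T$. The paper closes the loop by applying It\^o's formula to the \emph{mixed} functionals $\||\wt{v}||\T|\|_{L^2}^2$, $\||\wt{v}||\TT|\|_{L^2}^2$, $\||\T||\TT|\|_{L^2}^2$, $\|\TT\|_{L^4}^4$, together with a separate $\|\partial_3\T\|_{L^2}^2$ balance, and then solves an explicit $9\times 9$ linear system in the resulting energy contributions (Subsections~\ref{ss:estimate_overline_v}--\ref{ss:proof_lemma_main_estimate}). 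Your proposal to ``pair the $v$--balances with $-\Delta\T$ so that the bad $\sigma$--terms land on the parabolic gain of $\T$'' does not produce these mixed quantities on the left--hand side, and the system would not close.

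\medskip
\noindent\textbf{The basic estimate and the Gronwall chaining.} Your Step~2 omits the $L^4$ tail bound for $\T$ (the last two inequalities of Lemma~\ref{l:basic_estimates}), obtained via It\^o on $\|\T\|_{L^4}^4$ followed by one application of the stochastic Gronwall lemma. This is the \emph{first} of the three Gronwall layers and is indispensable: the process $L_t$ in \eqref{eq:def_N_v_T} that drives the intermediate Gronwall contains $\|\T\|_{L^4}^4$ and $\||\T||\nabla\T|\|_{L^2}^2$, which the plain $L^2$ balance does not control. Relatedly, the schematic display in your Step~4 is misleading: the paper never produces a single differential inequality with $(1+\log)(1+\log\log)$ factors. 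Rather, each Gronwall application yields a tail bound $\P(\,\cdot\,\geq\g)\lesssim (\log^{(k)}\g)^{-1}$, which is fed into the \emph{next} layer via the choice $R\sim \log^{(k)}\g$ in \cite[Lemma A.1]{AV_variational}; the triple log is an artefact of this chaining, not of a single master inequality.
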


The tail estimates for the r.v.\ $\sup_{t}\|v\|^2_{H^1_x} +\|v\|^2_{L^2_t (H^2_x)}$ and  $\sup_{t}\|\T\|^2_{H^1_x} 
+\|\T\|^2_{L^2_t (H^2_x)}$
 are rather weak. However, in general, it does not seem possible to improve the estimates as they come from three applications of the Gronwall lemma. Each of them costs a $\log$ factor. The same also appears in the deterministic case where 
 one obtains estimates with exponentially increasing constants in the size of the data (see e.g.\ \cite{CT07}).
The estimates of Theorem \ref{t:global_primitive_strong_strong} can be (slightly) improved in case of \emph{isothermal} turbulent pressure, see Remark \ref{r:comparison_primitive1} below.  

Theorem \ref{t:global_primitive_strong_strong} and the following show that the problem \eqref{eq:primitive}-\eqref{eq:boundary_conditions_full} is \emph{globally well-posed}.
Recall that $\xi_n \to \xi$ in probability in $Y$ 
means that $\lim_{n\to \infty}\P(\|\xi_n-\xi\|_{Y}>\varepsilon)=0$ for all $\varepsilon>0$.
 
\begin{theorem}[Continuous dependence on the initial data]
\label{t:continuous_dependence}
Let Assumptions \ref{ass:well_posedness_primitive_double_strong} and \ref{ass:global_primitive_strong_strong} be satisfied. Suppose that $((v_{0,n},\T_{0,n}))_{n\geq 1} \subseteq L^0_{\F_0}(\O;H)$ is a sequence of initial data converging in probability in $H$ to some $(v_0,\T_0)$. Let $(v_n,\T_n)$ and $(v,\T)$ be the $L^2$-global strong solutions to \eqref{eq:primitive}-\eqref{eq:boundary_conditions_full} with initial data $(v_{0,n},\T_{0,n})$ and $(v_0,\T_0)$, respectively.
Then, for all $T\in (0,\infty)$,
$$
(v_n,\T_n)\to (v,\T)\  \text{ as $n\to \infty$ in probability in } C([0,T];H)\cap L^2(0,T;V) .
$$
\end{theorem}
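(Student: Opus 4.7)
The strategy is to combine the local continuous dependence inherent in the abstract framework of \cite{AV19_QSEE_1,AV19_QSEE_2} underlying Theorem \ref{t:local_primitive_strong_strong} with the global energy estimates of Theorem \ref{t:global_primitive_strong_strong}. By the subsequence principle for convergence in probability, it suffices to show that every subsequence of $((v_{0,n},\T_{0,n}))_{n\geq 1}$ admits a further subsequence along which $(v_n,\T_n)\to(v,\T)$ in probability in $C([0,T];H)\cap L^2(0,T;V)$. Passing to a subsequence along which $(v_{0,n},\T_{0,n})\to(v_0,\T_0)$ a.s.\ in $H$ and invoking Egorov's theorem, for any $\varepsilon>0$ I would produce an event $\O_M\in\F_0$ with $\P(\O_M)\geq 1-\varepsilon$ on which $\sup_n\|(v_{0,n},\T_{0,n})\|_{H}\vee\|(v_0,\T_0)\|_{H}\leq M$.

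Next, I would truncate the initial data by setting $(\wt v_{0,n},\wt\T_{0,n})\stackrel{{\rm def}}{=}(v_{0,n},\T_{0,n})\one_{\O_M}$ and analogously $(\wt v_0,\wt\T_0)$. By pathwise uniqueness, the corresponding $L^2$--global strong solutions $(\wt v_n,\wt\T_n)$, $(\wt v,\wt\T)$ agree with $(v_n,\T_n)$, $(v,\T)$ on $\O_M$, and the truncated data belong to $L^{\infty}(\O;H)$ uniformly in $n$, so the fourth moments required by Theorem \ref{t:global_primitive_strong_strong} are finite. For $R>e^e$, define the stopping times
\begin{equation*}
\sigma_{R,n}\stackrel{{\rm def}}{=}\inf\Big\{t\in[0,T]\,:\,\|(\wt v_n(t),\wt\T_n(t))\|_{H}^{2}+\int_0^t\|(\wt v_n,\wt\T_n)(s)\|_{V}^{2}\,\dd s\geq R\Big\}\wedge T,
\end{equation*}
and $\sigma_R$ analogously for $(\wt v,\wt\T)$. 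Theorem \ref{t:global_primitive_strong_strong} applied to the truncated problems yields $\sup_n\P(\sigma_{R,n}<T)+\P(\sigma_R<T)\leq C(M,T)/\log\log\log(R)$, which tends to $0$ as $R\to\infty$ uniformly in $n$.

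The final step is the local stability up to $\sigma_R\wedge\sigma_{R,n}$. On the event $\{\sigma_R\wedge\sigma_{R,n}>t\}$, the nonlinearities in \eqref{eq:primitive_strong}---in particular $(v\cdot\nabla_{\h})v$, $w(v)\partial_{3}v$ and the gradient term \eqref{eq:gradient_sigma_n}---can be handled as perturbations whose effective Lipschitz constant is controlled by $R$ via the Sobolev embedding $H^1\embed L^4$. The stability part of the abstract local theory of \cite{AV19_QSEE_1,AV19_QSEE_2} (cf.\ the sketch of Theorem \ref{t:local_primitive_strong_strong} in Subsection \ref{ss:proof_local}) applied to the stopped SPDE then yields
\begin{equation*}
\|(\wt v_n-\wt v,\wt\T_n-\wt\T)\|_{C([0,\sigma_R\wedge\sigma_{R,n}];H)\cap L^2(0,\sigma_R\wedge\sigma_{R,n};V)}\xrightarrow{n\to\infty} 0\quad \text{in probability}.
\end{equation*}
Combining: for any $\delta>0$,
\begin{equation*}
\P\big(\|(v_n-v,\T_n-\T)\|_{C([0,T];H)\cap L^2(0,T;V)}>\delta\big)\leq \P(\O_M^c)+2\sup_n\P(\sigma_{R,n}<T)+o_n(1),
\end{equation*}
and letting $n\to\infty$, then $R\to\infty$, then $\varepsilon\to 0$ concludes the argument. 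The principal obstacle is this last step: rigorously extracting the stability estimate up to the \emph{random} stopping times $\sigma_R\wedge\sigma_{R,n}$ in the presence of the quadratic nonlinearities intrinsic to the primitive equations and of the gradient noise coming from the non--isothermal turbulent pressure. This is where the $R$-threshold is crucial, as it converts quadratic terms into (random) Lipschitz perturbations amenable to the abstract framework, and where the logarithmic moment bounds of Theorem \ref{t:global_primitive_strong_strong} (via the stochastic Gronwall lemma of \cite[Appendix A]{AV_variational}) are exactly what is needed to make the thresholds $\sigma_{R,n}$ uniformly controllable as $R\to\infty$.
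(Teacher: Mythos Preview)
Your overall architecture is sound and matches the paper's in spirit: reduce to bounded data, use the global energy estimates of Theorem~\ref{t:global_primitive_strong_strong} to control the probability that the solution paths exit a large ball, and on the good event exploit local Lipschitz continuity of the nonlinearities. The paper, however, organizes this more efficiently via an intermediate quantitative result (Proposition~\ref{prop:Lip_estimate_continuity}). Instead of defining stopping times $\sigma_{R,n}$ on the solution paths and invoking an abstract ``stability part of the local theory'' on the stopped problem, the paper writes the difference $\v=U-U'$ as the solution of a linear problem with forcing $F(U)-F(U')$, $G(U)-G(U')$, applies stochastic maximal $L^2$--regularity (Lemma~\ref{l:smr}) between arbitrary stopping times $\eta\leq\xi$, and uses the precise nonlinearity estimate \eqref{eq:estimate_nonlinearities_continuity_proof} together with interpolation to obtain an inequality of the form
\[
\E\sup_{[\eta,\xi]}\|\v\|_H^2+\E\int_\eta^\xi\|\v\|_V^2\,\dd s\leq c_0\,\E\|\v(\eta)\|_H^2+c_0\,\E\int_\eta^\xi M_s\|\v\|_H^2\,\dd s,
\]
with a random weight $M_s$ depending on $\|U\|_H,\|U'\|_H,\|U\|_V,\|U'\|_V$. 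The stochastic Gronwall lemma of \cite[Lemma~A.1]{AV_variational} then converts this directly into the tail estimate of Proposition~\ref{prop:Lip_estimate_continuity}, with the tail bounds of Theorem~\ref{t:global_primitive_strong_strong} controlling $\P(\int_0^T M_s\,\dd s\geq R)$. The continuous dependence result then follows verbatim from \cite[Theorem~3.8]{AV_variational}.

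The point is that your ``principal obstacle''---making the local stability estimate rigorous up to the random, solution-dependent stopping times $\sigma_R\wedge\sigma_{R,n}$---is precisely what the paper sidesteps. By carrying the random weight $M_s$ through the Gronwall argument rather than truncating via stopping times, one never needs to invoke an abstract stability result on a stopped problem; the SMR estimate plus interpolation already delivers the needed structure. Your appeal to ``the stability part of the abstract local theory of \cite{AV19_QSEE_1,AV19_QSEE_2}'' is the soft spot: those references provide local existence and blow-up criteria, not a ready-made continuous-dependence statement for stopped solutions with quadratic nonlinearities, so you would end up reproving the difference estimate by hand anyway. Also, a minor point: the relevant embedding to linearize the bilinear terms is $H^1\hookrightarrow L^6$ (or the interpolation scale $V_\beta$ used in \eqref{eq:estimate_nonlinearities_continuity_proof}), not merely $H^1\hookrightarrow L^4$.
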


The proof of Theorems \ref{t:global_primitive_strong_strong} and \ref{t:continuous_dependence} will be given in Subsections \ref{ss:proof_global} and \ref{ss:proof_continuity}, respectively. Both results essentially depend on the energy estimate of Proposition \ref{prop:energy_estimate_primitive_strong_strong}. The proof of the latter will be the major scope of our work, and Sections \ref{s:basic_estimate},  \ref{s:intermediate_estimate} and \ref{s:proof_energy_estimate_conclusion} are devoted to its proof. Finally, in Section \ref{s:Stratonovich} we 
discuss the case of Stratonovich noise.

We conclude this section with several remarks related to Theorems \ref{t:global_primitive_strong_strong} and \ref{t:continuous_dependence}. 

\begin{remark}[Feller property]
\label{r:Feller}
Let $(v_{\eta},\T_{\xi})$ be the global strong solution to \eqref{eq:primitive}-\eqref{eq:boundary_conditions_full} provided by Theorem \ref{t:global_primitive_strong_strong} with initial data $(\eta,\xi)\in H$. 
For all $t\geq 0$, set
$$
[{\mathcal{S}}_t  \varphi](\eta,\xi) \stackrel{{\rm def}}{=}\E[\varphi(v_{\eta}(t),\T_{\xi}(t)) ] \ \ \text{ for all }(\eta,\xi)\in H \ \text{ and }\ \varphi\in C(H;\R).
$$
Theorem \ref{t:continuous_dependence} in particular implies that ${\mathcal{S}}_t$ maps continuously $C(H;\R)$ into itself. This is often referred to as \emph{Feller property}. In particular, our results extend \cite[Theorem 1.5]{GHKVZ14}. In the spirit of \cite{GHKVZ14}, it would be interesting to study the existence and/or uniqueness of invariant measures. However, this goes beyond the scope of this paper.
\end{remark}

\begin{remark}[$\O$-localization of energy estimates]
The energy estimates in Theorem \ref{t:global_primitive_strong_strong} also imply tail probability estimates for non-integrable data by using localization arguments. To see this let $(v_0,\T_0)\in L^0_{\F_0}(\O;H)$. Fix $\delta>0$ and set $(v_{0}^{(\delta)},\T_{0}^{(\delta)})\stackrel{{\rm def}}{=} \one_{\{\|(v_0,\T_0)\|_H\leq \delta\}} (v_0,\T_0)$. Let $(v^{(\delta)},\T^{(\delta)})$ be the global strong solution to \eqref{eq:primitive}-\eqref{eq:boundary_conditions_full} with initial data $(v^{(\delta)}_0,\T_0^{(\delta)})$ provided by Theorem \ref{t:global_primitive_strong_strong}. Then by \cite[Theorem 4.7(4)]{AV19_QSEE_1} we have $(v^{(\delta)},\T^{(\delta)})=(v,\T)$ a.e.\ on $\R_+\times\{\|(v_0,\T_0)\|_H\leq \delta\}$. Hence, 
\begin{align*}
&\P\Big(\sup_{t\in [0,T]}\|v(t)\|^2_{H^1} +\int_0^T \|v(t)\|^2_{H^2}\,\dd t\geq \g\Big)\\
&\leq \P\Big(\sup_{t\in [0,T]}\|v^{(\delta)}(t)\|^2_{H^1} +\int_0^T \|v^{(\delta)}(t)\|^2_{H^2}\,\dd t\geq \g, \|(v_0,\T_0)\|_H\leq \delta\Big)
+ \P(\|(v_0,\T_0)\|_H> \delta)\\
&\leq C_T\frac{1+2\delta^4+\E \|\y\|_{L^2(\R_+;L^2)}^2 }{ \log\log\log (\g)} 
+ \P(\|(v_0,\T_0)\|_H> \delta) \qquad \text{ for all }  \g>e^e\text{ and }\delta>1,
\end{align*}
where in the last inequality we applied the third estimate of Theorem \ref{t:global_primitive_strong_strong}. For instance, we may choose $\delta=\log\log\log\log (\g)$, and the above estimate shows that the tail of the r.v.\  $\sup_{t}\|v\|^2_{H^1_x} +\|v(t)\|^2_{L^2_t (H^2_x)}$ converges to $0$ as $\g\to \infty$ with an explicit rate. A similar argument also holds for the other estimates in Theorem \ref{t:global_primitive_strong_strong}, where for the first two one also applies the Chebyshev inequality.

A similar argument also works if one only knows that $\y\in L^2(0,T)$ a.s.\ for all $T<\infty$. 
\end{remark}

\begin{remark}[Improved energy estimates in case of isothermal turbulent pressure]
\label{r:comparison_primitive1}
The tail estimates of Theorem \ref{t:global_primitive_strong_strong} 
are new even in the case of isothermal turbulent pressure $\ktwon\equiv 0$ and $\tp\equiv 0$, as considered in \cite{Primitive1}. 
However, following the proofs in \cite{Primitive1} and the one presented here, one sees that the tail estimates of  Theorem \ref{t:global_primitive_strong_strong} can be improved in the setting considered in \cite{Primitive1}. Indeed, as in \cite{Primitive1}, the tail estimate for $\sup_t\|\T\|_{L^4}^4+\||\T||\nabla \T|\|_{L^2_{t}(L^2_x)}^2$ of Lemma \ref{l:basic_estimates} are not needed as a starting point. Hence, following the arguments in \cite{Primitive1} and using the stochastic Grownall lemma of \cite[Lemma A.1]{AV_variational} as in the present paper, one sees that the $\log\log\log(\g)$ decay in Theorem \ref{t:global_primitive_strong_strong} can be improved to a $\log\log(\g)$-one (cf.\ \cite[Lemma 6.1]{A23_primitive}).
\end{remark}

\begin{remark}[Non homogeneous viscosity/conductivity]
\label{r:inhomogeneous_viscosity_conductivity}
Arguing as in \cite[Section 7]{Primitive1}, one can check that Theorems \ref{t:global_primitive_strong_strong} and \ref{t:continuous_dependence} extends to the case of inhomogeneous viscosity and/or conductivity. More precisely, we may replace the terms $\Delta v$ and $\Delta \T$ in \eqref{eq:primitive_1}-\eqref{eq:primitive_2} by 
\begin{equation}
\label{eq:differential_operators_inhomogeneous}
\p\Big[\sum_{1\leq i,j\leq 3} a^{i,j}_v \partial_{i,j}^2 v + \sum_{1\leq k\leq 3} b_v^k \partial_k v \Big] \quad \text{ and }\quad \sum_{1\leq i,j\leq 3} a^{i,j}_{\T} \partial_{i,j}^2 \T + \sum_{1\leq k\leq 3} b_{\T}^k \partial_k \T,
\end{equation} 
respectively.
The above situation arises in the case of noise in the Stratonovich formulation of \eqref{eq:primitive}-\eqref{eq:boundary_conditions_full}, see Section \ref{s:Stratonovich}. 
We may also consider $0$-th order terms in \eqref{eq:differential_operators_inhomogeneous}. 
However, as they are not needed in  Section \ref{s:Stratonovich}, we do not consider such terms here. 
We leave the details to the interested reader.

The local existence result of Theorem \ref{t:local_primitive_strong_strong} extends to such situation 
under suitable assumptions on $(a_v,b_v,a_{\T},b_{\T})$. More precisely, in addition to Assumption \ref{ass:well_posedness_primitive_double_strong}\eqref{it:well_posedness_measurability_strong_strong}, \eqref{it:well_posedness_primitive_phi_psi_smoothness}-\eqref{it:nonlinearities_strong_strong} and Assumption \ref{ass:global_primitive_strong_strong}, one assumes that:
\begin{itemize}
\item(Measurability) $a^{i,j}_v,b_v^k,a^{i,j}_{\T},b_{\T}^k:\R_+\times \O\times \Dom \to \R$ are $\Progress\otimes \Borel(\Dom)$-measurable.
\item (Parabolicity) There exists $\ellip>0$ such that, a.e.\ on $\R_+\times \O\times \Dom$ and all $\xi\in \R^3$, 
$$
\sum_{1\leq i,j\leq 3}\Big( a^{i,j}_{v}- \frac{1}{2}
\sum_{n\geq 1} \phi_n^i\phi_n^j \Big)\xi_i\xi_j\geq \ellip|\xi|^2
 \ \  \text{ and }\ \ 
\sum_{1\leq i,j\leq 3}\Big( a^{i,j}_{\T}- \frac{1}{2}
\sum_{n\geq 1} \psi_n^i\psi_n^j \Big)\xi_i\xi_j\geq \ellip|\xi|^2.
$$
\item (Regularity) There exist $M,\delta>0$ such that, a.e.\ on $\R_+\times \O$,
$$
\|a_v\|_{H^{1,3+\delta}(\Dom;\R^{d\times d})}
+
\|a_{\T}\|_{H^{1,3+\delta}(\Dom;\R^{d\times d})}
+ 
\|b_{v}\|_{L^{3+\delta}(\Dom;\R^{d})}
+
\|b_{\T}\|_{L^{3+\delta}(\Dom;\R^{d})}\leq M.
$$
\item (Boundary regularity) a.s.\ for all $t\in \R_+$, $x_{\h}\in\Tor^2$ and $R\in \{v,\T\}$,
$$
\| a_{R}^{3,j}(t,\cdot,0)\|_{H^{\frac{1}{2}+\delta}(\Tor^2)}=\| a_{R}^{3,j}(t,\cdot,-h)\|_{H^{\frac{1}{2}+\delta}(\Tor^2)}\leq M.
$$
\end{itemize} 
The reader is referred to \cite[Assumption 7.4 and Remark 7.6]{Primitive1} for a discussion on the above conditions.

Similarly, as in \cite[Section 7]{Primitive1},
the global well-posedness result of Theorems \ref{t:global_primitive_strong_strong} and \ref{t:continuous_dependence} also extend to the case of inhomogeneous viscosity and/or conductivity by also assuming that:
\begin{itemize}
\item For all $i,j\in\{1,2\}$, the maps $(a_v^{i,j},a_{\T}^{i,j},b_v^j,b_{\T}^j)$ are independent of $\z$. Moreover,
 a.s.\ for all $t\in \R_+$, $x_{\h}\in\Tor^2$ and $R\in \{v,\T\}$,
$$
a_{R}^{3,j}(t,x_{\h},0)= a_{R}^{3,j}(t,x_{\h},-h)=0.
$$
\end{itemize}
Note that the condition on $a_{R}^{3,j}|_{\Tor^2\times\{-h,0\}}$ is stronger than the one needed for local existence.
\end{remark}

\begin{remark}[Weakening the assumptions on the nonlinearities]
\label{r:weaken_assumption_nonlinearities}
Assumption \ref{ass:well_posedness_primitive_double_strong}\eqref{it:nonlinearities_measurability_strong_strong}-\eqref{it:nonlinearities_strong_strong} can be generalized still keeping true (a subset of) Theorems \ref{t:local_primitive_strong_strong} and \ref{t:global_primitive_strong_strong}-\ref{t:continuous_dependence}.
More precisely:
\begin{enumerate}[{\rm(a)}]
\item\label{it:local_lip} Theorem \ref{t:local_primitive_strong_strong} holds if Assumptions \ref{ass:well_posedness_primitive_double_strong}\eqref{it:nonlinearities_measurability_strong_strong}-\eqref{it:nonlinearities_strong_strong} are replaced by \cite[(HF)-(HG)]{AV19_QSEE_2} with $X_0=L^2(\Dom)\times \Ls^2(\Dom)$ and $X_1=\Hs_{\n}^{2}(\Dom)\times \Hr^2(\Dom)$.
In particular, instead of the global Lipschitz condition we may require the local Lipschitz condition \eqref{eq:estimate_nonlinearities_continuity_proof} below.
\item\label{it:global_sublinear} Theorems \ref{t:global_primitive_strong_strong}-\ref{t:continuous_dependence} still hold if Assumptions \ref{ass:well_posedness_primitive_double_strong}\eqref{it:nonlinearities_measurability_strong_strong}-\eqref{it:nonlinearities_strong_strong} are replaced by the conditions in \eqref{it:local_lip} and a (sub-linear) condition: There exists $\y\in L^0((0,T)\times \O)$ for all $T<\infty$ such that, for all $(v',\T')\in V$ and  a.e.\ on $\R_+\times \O$,
\begin{align*}
\|F_u(\cdot,v',\T',\nabla v',\nabla \T')\|_{L^2}
&+ \|G_u(\cdot,v',\T')\|_{H^1(\ell^2)}\\
&\ \ \lesssim\y+\|v'\|_{H^1}+\|\T'\|_{H^1} ,\ \  \text{ for } u\in \{v,\T\}.
\end{align*}
\end{enumerate}
\end{remark}

\begin{remark}[Periodic boundary conditions in all directions]
\label{r:periodic_BC}
The contents of Theorems \ref{t:global_primitive_strong_strong} and \ref{t:continuous_dependence} also hold in case the boundary conditions \eqref{eq:boundary_conditions_full} are replaced by periodic ones. The proofs remain essentially unchanged, as it is enough to neglect boundary contributions. 
\end{remark}

\section{Proof of Theorems \ref{t:local_primitive_strong_strong}, \ref{t:global_primitive_strong_strong} and \ref{t:continuous_dependence}}
\label{s:proof_main_results}
Recall that, in \eqref{eq:def_HV}, we set $H=\Hs^{1}(\Dom)\times H^1(\Dom)$ and $V=\Hs^{2}_{\n}(\Dom)\times \Hr^2(\Dom)$.

\subsection{Proof of Theorem \ref{t:local_primitive_strong_strong}}
\label{ss:proof_local}
The proof of Theorem \ref{t:local_primitive_strong_strong} follows as in \cite[Section 6.4]{Primitive1} by using the results of \cite{AV19_QSEE_1,AV19_QSEE_2}  (see \cite[Section 5.1]{Primitive1} for a similar situation). 

We begin by reformulating \eqref{eq:primitive}-\eqref{eq:boundary_conditions_full} as a stochastic evolution equation on the Banach space $V_0\stackrel{{\rm def}}{=}\Ls^2(\Dom)\times L^2(\Dom)$ for the unknown $U\stackrel{{\rm def}}{=}(v,\T)$:
\begin{equation}
\label{eq:abstract_formulation_strong_strong}
\left\{
\begin{aligned}
\dd  U+ A (\cdot)U\,\dd t
&=F(\cdot,U)\,\dd t + [(B_n (\cdot)U +G_n(\cdot,U))_{n\geq 1}]\,\dd \Br_{\ell^2}(t),\\
U(0)&=(v_0,\T_0),
\end{aligned}\right.
\end{equation}
where $(A,B,F,G)$ are given below and $\Br_{\ell^2}$ is as in \eqref{eq:def_Br}.
Before describing $(A,B,F,G)$, we introduce some more notation. 
Firstly, for a weakly differentiable map $f$, we set 
$$
[\op f](x)\stackrel{{\rm def}}{=}\nabla_{\h}
\int_{-h}^{\z}  f(x_{\h},\zeta)\,\dd \zeta, \quad \text{ for }\ \ x=(x_{\h},\z)\in \Tor^2\times (-h,0).
$$
Moreover, set 
\begin{align*}
\Lpp(v,\T) &\stackrel{{\rm def}}{=}\Big(\sum_{n\geq 1}\hp^{j,k}_n\q \Big[ (\phi_{n}\cdot\nabla) v +\nabla_{\h}\int_{-h}^{\cdot} \big(\ktwon(\cdot,\zeta)\T(\cdot,	\zeta)\big)\,\dd \zeta\Big]^j \Big)_{k=1}^2,\\
\Lpg(v,\T) &\stackrel{{\rm def}}{=}
\Big(\sum_{n\geq 1}\sum_{1\leq j\leq 2} \hp^{j,k}_n\q \big[ \gvn(v,\T)\big]^j 
\Big)_{k=1}^2.
\end{align*}
Note that $\Lpp(v,\T)+\Lpg(v,\T)=\Lp(v,\T)$, where $\Lp$ is as in \eqref{eq:Lp_def}. 

We can now make explicit the terms in \eqref{eq:abstract_formulation_strong_strong}:
\begin{align}
\label{eq:def_A}
A(\cdot,U)
&\stackrel{{\rm def}}{=}\begin{bmatrix}\vspace{0.05cm}
\Delta v -\p \big[\op(\kone \T+ (\tp\cdot\nabla)\T)+\Lpp(\cdot,v,\T)\big]\\
\Delta \T
\end{bmatrix},\\
\label{eq:def_B}
B_n(\cdot,U)
&\stackrel{{\rm def}}{=}\begin{bmatrix}\vspace{0.05cm}
 \p\big[(\phi_n\cdot\nabla) v+ \op(\ktwon \T)\big]\\
 (\psi_n\cdot\nabla) \T
\end{bmatrix}, \qquad  B(\cdot,U)=(B_n(\cdot,U))_{n\geq 1},\\
\label{eq:def_F}
F(\cdot,U)
&\stackrel{{\rm def}}{=}\begin{bmatrix}\vspace{0.05cm}
\p[ (v\cdot\nabla_{\h}) v + w(v)\cdot \partial_{3} v +\fv(\cdot,v,\T,\nabla v,\nabla\T)+\Lpg(\cdot,v,\T)]\\
 (v\cdot\nabla_{\h})\T+w(v)\partial_{3} \T + \ft(\cdot,v,\T)
\end{bmatrix},\\
\label{eq:def_G}
G(\cdot,U)
&\stackrel{{\rm def}}{=}\begin{bmatrix}\vspace{0.05cm}
 \p[\gvn(\cdot,v,\T)]\\
  \gtn(\cdot,v,\T)
\end{bmatrix},\qquad\qquad\qquad  G(\cdot,U)=(G_n(\cdot,U))_{n\geq 1},
\end{align}
where $w(v)$ is as in \eqref{eq:def_w}.

By virtue of Definition \ref{def:sol_strong_strong}, one can see  that $((v,\T),\tau)$ is a $L^2$-maximal (resp.\ $L^2$-local) strong solution to \eqref{eq:primitive}-\eqref{eq:boundary_conditions_full} if and only if $(U,\tau)$, where $U\stackrel{{\rm def}}{=}(v,\T)$, is an $L^2$-maximal (resp.\ $L^2$-local) solution to \eqref{eq:abstract_formulation_strong_strong} in the sense of \cite[Definition 4.4]{AV19_QSEE_1} (see also \cite[Remark 5.6]{AV19_QSEE_2} and Lemma \ref{l:smr} below).

Now Theorem \ref{t:local_primitive_strong_strong} can be proved as \cite[Theorem 6.4]{Primitive1}. To avoid repetitions, below we only give a sketch of the proof of the maximal $L^2$-regularity for the linearized system of \eqref{eq:primitive}-\eqref{eq:boundary_conditions_full} which is the main ingredient to apply the results of \cite{AV19_QSEE_1,AV19_QSEE_2} (see \cite[Proposition 6.8]{Primitive1} for the case of isothermal turbulent pressure). 
Below we set $H(\ell^2)\stackrel{{\rm def}}{=}\calL_2(\ell^2,H)$ where $\calL_2$ denotes the class of Hilbert-Schmidt operators.
 
\begin{lemma}[Stochastic maximal $L^2$-regularity]
\label{l:smr}
Let Assumption \ref{ass:well_posedness_primitive_double_strong}\eqref{it:well_posedness_measurability_strong_strong}-\eqref{it:well_posedness_primitive_kone_smoothness_strong_strong} be satisfied. Fix $T\in (0,\infty)$ and let $\tau$ be a stopping time with values in $[0,T]$. 
Let 
$$
f\in L^2_{\Progress}((0,\tau)\times \O;\Ls^2\times L^2) \quad \text{ and } \quad g\in  
L^2_{\Progress}((0,\tau)\times \O;H(\ell^2)).
$$
Then there exists a unique $U\in L^2_{\Progress}((0,\tau)\times \O;V)$ such that, a.s.\ for all $t\in [0,T]$, 
\begin{equation}
\label{eq:U_strong_solution}
U(t) =\int_{0}^t (A(s)U(s)+ f(s))\,\dd s +\int_{0}^t (B_n(s)U(s)+ g_n(s))_n\,\dd \Br_{\ell^2}(s) .
\end{equation}
Moreover, there exists  $C>0$, independent of $(f,g)$, such that for all $U\in L^2_{\Progress}((0,\tau)\times \O;V)$ satisfying \eqref{eq:U_strong_solution} one has  $U\in C([0,\tau];H)$ a.s.\ and 
$$
\E\|U\|_{C([0,\tau];H)}^2+
\E\|U\|_{L^2(0,\tau;V)}^2\leq C (
\E\|f\|_{L^2(0,\tau;\Ls^2\times L^2)}^2+
\E\|U\|_{L^2(0,\tau;H(\ell^2))}^2).
$$
\end{lemma}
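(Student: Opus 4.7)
The plan is to reduce to the decoupled setting of \cite[Proposition 6.8]{Primitive1} via a perturbation argument. I decompose $A=A_0+A_1$ and $B=B_0+B_1$, where
\[
A_0 U = \begin{bmatrix}\Delta v\\ \Delta\T\end{bmatrix},
\qquad
B_{0,n} U = \begin{bmatrix}\p[(\phi_n\cdot\nabla) v]\\ (\psi_n\cdot\nabla)\T\end{bmatrix},
\]
and $A_1, B_1$ collect the new couplings built from $\kone$, $\tp$, $\ktwon$, and $\hp$. For $(A_0,B_0)$ the equations decouple: the $v$-part is exactly \cite[Proposition 6.8]{Primitive1}, and the scalar $\T$-equation is its direct analogue (Robin--Laplacian plus transport noise, still stochastically parabolic by Assumption \ref{ass:well_posedness_primitive_double_strong}\eqref{it:well_posedness_primitive_parabolicity_strong_strong}). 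Hence $(A_0,B_0)$ enjoys stochastic maximal $L^2$-regularity on $V_0$ with trace space $H$, together with the corresponding estimate.

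The core analytic task is to show that $(A_1,B_1)$ is a lower-order perturbation, i.e.\ for every $\eta>0$ there is $C_\eta$ with
\[
\|A_1 U\|_{V_0} + \|B_1 U\|_{H(\ell^2)} \leq \eta\,\|U\|_{V} + C_\eta\,\|U\|_{V_0},
\qquad U\in V.
\]
The key observation is that $\op$ trades a horizontal derivative for a vertical integration, so $\op(\kone\T)$, $\op((\tp\cdot\nabla)\T)$, $\op(\ktwon\T)$, and $\Lpp(v,\T)$ each depend on at most one derivative of $U$ (respectively two when computing the $H^1(\ell^2)$-norm for the $B_1$ estimate). The hypotheses in Assumption \ref{ass:well_posedness_primitive_double_strong}\eqref{it:well_posedness_primitive_phi_psi_smoothness}--\eqref{it:well_posedness_primitive_ktwo_smoothness_strong_strong} are tailored to exactly this: the $L^{3+\delta}$-bounds on $\hp$ and the mixed $L^\infty(\Tor^2;L^2)/L^{2+\delta}(\Tor^2;L^2)$-bounds on $(\kone,\tp)$ let H\"older and Sobolev embeddings estimate each of these contributions by a fractional Sobolev norm $\|U\|_{H^{1+\epsilon}(\Dom)}$ for some small $\epsilon>0$. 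Interpolation between $V_0$ and $V$ followed by Young's inequality then yields the displayed bound. Once this is in place, the perturbation result for stochastic maximal $L^2$-regularity (see, e.g., the framework of \cite{AV19_QSEE_1}) transfers the regularity from $(A_0,B_0)$ to $(A,B)$, giving unique solvability of \eqref{eq:U_strong_solution} together with the $L^2(V)$-estimate; the path continuity $U\in C([0,\tau];H)$ with its bound is then the standard trace embedding for the stochastic maximal regularity class.

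The hard part, in my view, is the stochastic perturbation $B_1 U=(\p[\op(\ktwon\T)])_n$. To control its $H^1(\ell^2)$-norm one must differentiate $\op(\ktwon\T)$ once more, which lands two derivatives on the product $\ktwon\T$; distributing them so that only one ever falls on $\T$ forces the remaining two onto $\ktwo$. This is precisely why Assumption \ref{ass:well_posedness_primitive_double_strong}\eqref{it:well_posedness_primitive_ktwo_smoothness_strong_strong} demands an $L^{2+\delta}$-bound on $\partial^2_{i,j}\ktwo$ in the $\ell^2$-valued sense, in contrast to $\kone$ which only needs one derivative. A secondary routine check is that the boundary conditions encoded in $\Hs^2_{\n}$ and $\Hr^2$ are preserved by $A_1$ and $B_1$: this is immediate because these perturbations only introduce horizontal derivatives together with the $\p$-projection, and therefore do not interact with the Neumann/Robin conditions on $\partial_3 v$ and $\partial_3\T$.
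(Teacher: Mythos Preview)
Your perturbation strategy has a genuine gap at exactly the point you flag as hardest: the term $B_1 U=(\p[\op(\ktwon\T)])_n$ is \emph{not} lower order. Writing out $\partial_k\op(\ktwon\T)=\partial_k\nabla_{\h}\int_{-h}^{\cdot}\ktwon\T\,\dd\zeta$ for $k\in\{1,2\}$, the Leibniz expansion of $\partial_k\partial_j(\ktwon\T)$ unavoidably contains $\ktwon\,\partial_k\partial_j\T$; there is no integration by parts that moves this second derivative off $\T$. Hence $\|(\op(\ktwon\T))_{n\geq 1}\|_{H^1(\ell^2)}$ is only bounded by $C\|\T\|_{H^2}$, and the constant $C$ is controlled by $\|\ktwo\|_{L^\infty(\ell^2)}$ (not assumed small). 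So your inequality $\|B_1 U\|_{H(\ell^2)}\leq \eta\|U\|_V+C_\eta\|U\|_{V_0}$ fails, and with it the abstract perturbation theorem.

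What the paper does instead is exploit the \emph{triangular} structure of $(A,B)$: the $\T$-equation does not see $v$. One runs the method of continuity in a parameter $\lambda\in[0,1]$, and for the a priori estimate first bounds $\T$ alone by the It\^o-formula/energy argument (this gives $\E\|\T\|_{L^2(0,\tau;H^2)}^2$ in terms of the data), then plugs this into the $v$-estimate treating $(\op(\ktwon\T))_n$ as a known inhomogeneity in $H^1(\ell^2)$. The estimate $\|(\op(\ktwon\varphi))_n\|_{H^1(\ell^2)}\lesssim_M\|\varphi\|_{H^2}$ is precisely what is proved and used there---but as a forcing bound, not as a perturbation bound. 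Your observation about why Assumption \ref{ass:well_posedness_primitive_double_strong}\eqref{it:well_posedness_primitive_ktwo_smoothness_strong_strong} requires second derivatives of $\ktwo$ is correct; it is needed for the \emph{other} Leibniz terms in the same computation, not to avoid the top-order term $\ktwon\partial^2_{ij}\T$.
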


Combining the above result with \cite[Proposition 3.9]{AV19_QSEE_1}, in \eqref{eq:U_strong_solution} we can also allow non-trivial initial data from the space $L^2_{\F_0}(\O;H)$.

\begin{proof}[Proof of Lemma \ref{l:smr} - Sketch]
The proof is similar to the one of \cite[Proposition 6.8]{Primitive1} and therefore we only give a sketch of its proof. As in \cite{Primitive1} we consider only the case $\g\equiv 0$ as one can check that the term $\Lpp(v,\T)$ is of lower order, and therefore \cite[Theorem 3.2]{AV_torus} applies. 

As in \cite{Primitive1}, we used the method of continuity of \cite[Proposition 3.13 and Remark 3.14]{AV19_QSEE_2}. Hence, for $\lambda\in [0,1]$, consider, on $\Dom$,
\begin{subequations}
	\label{eq:linear}
\begin{alignat}{3}
\label{eq:linear_1}
\begin{split}
\dd v -\Delta v\,\dd t&=\Big[ f_{v} +\lambda\op(\kone \T+ (\tp\cdot\nabla) \T)\Big]\,\dd t\\
&\ \  + \sum_{n\geq 1} \Big[\lambda\p[(\phi_n\cdot\nabla)v ]+\op(\ktwon \T)+ g_{v,n}\Big]\,\dd \beta^n_t,
\end{split}
\\
\label{eq:linear_2}
\dd \T -\Delta \T\,\dd t&= f_{\T} \,\dd t + \sum_{n\geq 1} \Big[(\psi_n\cdot\nabla) \T+ g_{\T,n}\Big]\,\dd \beta^n_t,\\
\label{eq:linear_3}
 \int_{-h}^{0} \div_{\h}v(\cdot,\zeta)\,\dd \zeta&=0,\\
\label{eq:linear_4}
 v(0,\cdot)&=0,\ \ \text{ and } \ \ \  \T(0,\cdot)=0. 
\end{alignat}
\end{subequations}
The above problem is complemented with the boundary conditions \eqref{eq:primitive_strong_BC}.

The above \emph{linear} problem \eqref{eq:linear} coincides with \eqref{eq:U_strong_solution} in case $\lambda=1$ (recall that we are assuming $\g\equiv 0$). By the above-mentioned method of continuity, it is enough to show a-priori estimates for $L^2$-strong solutions of \eqref{eq:linear} (i.e.\ $(v,\T)\in L^2((0,\tau)\times \O;V)\cap L^2(\O;C([0,\tau];H))$) with constants \emph{independent} of $\lambda$.
More precisely, by \cite[Proposition 3.13 and Remark 3.14]{AV19_QSEE_2}, it is enough to show that, for all $L^2$-strong solutions $(v,\T)$ to \eqref{eq:linear},
\begin{align}
\label{eq:a_priori_estimate_lambda}
\E\|\T\|_{L^2(0,\tau;H^2)}^2+ 
\E\|v\|_{L^2(0,\tau;H^2)}^2
&\lesssim \E\|f_{\T}\|_{L^2(0,\tau; L^2)}^2+
\E\|g_{\T}\|_{L^2(0,\tau;H^1(\ell^2))}^2\\
\nonumber
&\ +
\E\|f_{v}\|_{L^2(0,\tau; L^2)}^2+
\E\|g_{v}\|_{L^2(0,\tau;H^1(\ell^2))}^2
\end{align}
with an the implicit constant that is independent of $\lambda$. 
To this end, as in the proof of \cite[Proposition 6.8]{Primitive1}, the idea is to use the triangular structure of the system \eqref{eq:linear}, i.e.\ the velocity $v$ does not appear in the equation for the temperature \eqref{eq:linear_2}. Therefore, one can first obtain an estimate for $\T$ and then use it in estimating $v$.

We begin by estimating $\T$. As in \cite[Proposition 6.8]{Primitive1} (see also Step 1 in the proof of \cite[Proposition 4.1]{Primitive1}) an application of the It\^{o} formula to $\T\mapsto \|\nabla \T\|_{L^2}^2$, an integration by part and Assumption \ref{ass:well_posedness_primitive_double_strong}\eqref{it:well_posedness_primitive_parabolicity_strong_strong} yield
\begin{equation}
\label{eq:estimate_T_smr_strong}
\E\|\T\|_{L^2(0,\tau;H^2)}^2\lesssim
\E\|f_{\T}\|_{L^2(0,\tau; L^2)}^2+
\E\|g_{\T}\|_{L^2(0,\tau;H^1(\ell^2))}^2,
\end{equation}
where the implicit constant is independent of $\lambda\in [0,1]$ and we set 
$H^1(\ell^2)\stackrel{{\rm def}}{=}H^1(\Dom;\ell^2)$. 

The same argument also applies to $v$. Since $v$ solves \eqref{eq:linear_1},  we have
\begin{align}
\label{eq:estimate_v_smr_strong}
\E\|v\|_{L^2(0,\tau;H^2)}^2
&\lesssim 
\E\|f_{v}\|_{L^2(0,\tau; L^2)}^2+
\E\|g_{v}\|_{L^2(0,\tau;H^1(\ell^2))}^2\\
\nonumber
& +\E \|(\op(\ktwo_n \T))_{n\geq 1}\|_{L^2(0,\tau;H^1(\ell^2))}^2\\
\nonumber
& + \E\|\op(\kone \T+ (\tp\cdot\nabla)\T)\|_{L^2(0,\tau;L^2)}^2,
\end{align}
where the implicit constant is independent of $\lambda\in [0,1]$.

By \eqref{eq:estimate_T_smr_strong}-\eqref{eq:estimate_v_smr_strong}, to obtain \eqref{eq:a_priori_estimate_lambda}, it remains to show that, for all $\varphi\in H^2$,
\begin{equation}
\label{eq:estimate_op_ktwon_etc}
\|(\op(\ktwo_n \varphi))_{n\geq 1}\|_{H^1(\ell^2)}
 + \|\op(\kone \varphi)\|_{L^2}
 +  \|\op((\tp\cdot\nabla)\varphi)\|_{L^2} \lesssim_M \|\varphi\|_{H^2},
\end{equation} 
where $M$ is as in Assumption \ref{ass:well_posedness_primitive_double_strong}. 
For brevity, we only provide some details for the estimate of $
\|(\op(\ktwo_n \varphi))_{n\geq 1}\|_{H^1(\ell^2)}$. The other follows similarly by using 
Assumption \ref{ass:well_posedness_primitive_double_strong}\eqref{it:well_posedness_primitive_kone_smoothness_strong_strong} instead of 
 \ref{ass:well_posedness_primitive_double_strong}\eqref{it:well_posedness_primitive_ktwo_smoothness_strong_strong}.

Let $r\in (1,\infty)$ be such that $\frac{1}{r}
+\frac{1}{2+\delta}=\frac{1}{2}$, where $\delta>0$ is as in Assumption \ref{ass:well_posedness_primitive_double_strong}. 
To estimate $
\|(\op(\ktwo_n \varphi))_{n\geq 1}\|_{H^1(\ell^2)}$, firstly, note that,
\begin{align*}
\|(\op(\ktwon \varphi))_{n\geq 1}\|_{L^2(\ell^2)}
&\lesssim 
\Big\|\Big(\int_{-h}^0 \|\ktwo\|_{\ell^2}^2\,\dd \zeta\Big)^{1/2}\Big(\int_{-h}^0 |\nabla_{\h} \varphi|^2\,\dd \zeta\Big)^{1/2}\Big\|_{L^2(\Tor^2)}\\
&+
\Big\|\Big(\int_{-h}^0 \|\nabla_{\h} \ktwo\|_{\ell^2}^2\,\dd \zeta\Big)^{1/2}\Big(\int_{-h}^0 | \varphi|^2\,\dd \zeta\Big)^{1/2}\Big\|_{L^2(\Tor^2)}\\
&\leq \|\sigma\|_{L^{\infty}(\Tor^2;L^2(-h,0;\ell^2))} \|\varphi\|_{H^1}\\
&+ 
\|\sigma\|_{H^{1,2+\delta}(\Tor^2;L^2(-h,0;\ell^2))}\|\varphi\|_{L^{r}(\Tor^2;L^2(-h,0))}\\
&\stackrel{(i)}{\lesssim}_M \|\varphi\|_{H^{1}(\Tor^2;L^2(-h,0))}\lesssim \|\varphi\|_{H^1},
\end{align*}
where in $(i)$ we used $H^1(\Tor^2;L^2(-h,0))\embed L^r(\Tor^2;L^2(-h,0))$ and Assumption \ref{ass:well_posedness_primitive_double_strong}\eqref{it:well_posedness_primitive_ktwo_smoothness_strong_strong}.
The estimate of $\|(\nabla\op(\ktwon \varphi))_{n\geq 1}\|_{L^2(\ell^2)}$ is similar, where one also uses that $\partial_{3} \op(\ktwon \varphi)= \nabla_{\h}(\ktwon \varphi)$,
$$
H^2\embed L^{\infty}\quad \text{ and }\quad 
H^2\embed L^2(\Tor^2;H^2(-h,0))\embed L^2(\Tor^2;L^{\infty}(-h,0)),
$$ 
by Sobolev embeddings.
This completes the proof of \eqref{eq:estimate_op_ktwon_etc} and the claim of Lemma \ref{l:smr} follows.
\end{proof}

\subsection{Proof of Theorem \ref{t:global_primitive_strong_strong}}
\label{ss:proof_global}
As commented below the statements of Theorems \ref{t:global_primitive_strong_strong} and \ref{t:continuous_dependence}, the following result is the key ingredient in their proofs.
Recall that $\y$ is defined in \eqref{eq:def_y}.

\begin{proposition}[Energy estimate for maximal solutions]
\label{prop:energy_estimate_primitive_strong_strong}
Let Assumptions \ref{ass:well_posedness_primitive_double_strong} and \ref{ass:global_primitive_strong_strong} be satisfied. Let $T\in (0,\infty)$.
Assume that $(v_0,\T_0)\in L^{4}_{\F_0}(\O;\Hs^1\times H^1)$. Let $((v,\T),\tau)$ be the $L^2$-maximal strong solution to \eqref{eq:primitive}-\eqref{eq:boundary_conditions_full} provided by Theorem \ref{t:local_primitive_strong_strong}.
Then
\begin{equation}
\label{eq:v_T_almost_surely_H_1_H_2}
\sup_{s\in [0,\tau\wedge T)}\big[\|v(s)\|^2_{H^1}+\|\T(s)\|^2_{H^1}] 
+\int_0^{\tau\wedge T}\big[ \|v(s)\|^2_{H^2}+ \|\T(s)\|^2_{H^2}\big]\,\dd s<\infty \ \text{ a.s.\ }
\end{equation}
Moreover,
there exists $C_T>0$, independent of $(v_0,\T_0)$, such that, for all $\g>e^e$,
\begin{align*}
\E \sup_{t\in [0,\tau\wedge T)}\|v(t)\|^2_{L^2} +\E\int_0^{\tau\wedge T} \|v(t)\|^2_{H^1}\,\dd t
&\leq C_T (1+\E \|\y\|_{L^2(0,T)}^2+\E\|v_0\|_{L^2}^2+\E\|\T_0\|_{L^2}^2),\\
\E \sup_{t\in [0,\tau\wedge T)}\|\T(t)\|^2_{L^2}+ \E\int_0^{\tau\wedge T} \|\T(t)\|^2_{H^1}\,\dd t
&\leq C_T(1+\E \|\y\|_{L^2(0,T)}^2+\E\|v_0\|_{L^2}^2+\E\|\T_0\|_{L^2}^2),\\
\P\Big(\sup_{s\in [0,\tau\wedge T)}\|v(t)\|^2_{H^1} +\int_0^{\tau\wedge T} \|v(t)\|^2_{H^2}\,\dd t\geq \g\Big)
&\leq C_T\frac{(1+\E \|\y\|_{L^2(0,T)}^2+\E\|v_0\|_{H^1}^4+\E\|\T_0\|_{H^1}^4)}{ \log\log\log(\g)} ,\\
\P\Big(\sup_{t\in [0,\tau\wedge T)}\|\T(t)\|^2_{H^1} +\int_0^{\tau\wedge T} \|\T(t)\|^2_{H^2}\,\dd t\geq \g\Big)
&\leq C_T\frac{ (1+\E \|\y\|_{L^2(0,T)}^2+\E\|v_0\|_{H^1}^4+\E\|\T_0\|_{H^1}^4)}{ \log\log\log(\g)}.
\end{align*}
\end{proposition}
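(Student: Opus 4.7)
The plan is to close four concatenated energy estimates for the stopped maximal solution and then deduce the almost sure bound \eqref{eq:v_T_almost_surely_H_1_H_2} from the blow-up criterion \eqref{eq:blow_up_criterium}. Each closing argument will cost one application of the stochastic Grönwall lemma of \cite[Appendix A]{AV_variational}; the three successive applications at the $L^4/L^2$, $H^1$ and $H^2$ levels are precisely what produces the triple logarithm in the tail estimates. Throughout I would work with $((v,\T),\tau\wedge T\wedge \tau_k)$ for a localizing sequence $(\tau_k)$ and then pass to the limit.

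I would begin, as in Section \ref{s:basic_estimate}, by applying It\^o to $\|v\|_{L^2}^2+\|\T\|_{L^2}^2$. Using $\div\,u=0$ and the boundary conditions, the transport and convection terms $(v\cdot\nabla_{\h})\cdot,\ w(v)\partial_3\cdot$ disappear after integration by parts. The cross-term produced by $\p\nabla_{\h}\int_{-h}^{\cdot}(\kone\T+(\tp\cdot\nabla)\T)\,\dd\zeta$ is controlled by Cauchy--Schwarz together with Assumption \ref{ass:well_posedness_primitive_double_strong}\eqref{it:well_posedness_primitive_kone_smoothness_strong_strong}; the stochastic correction from the gradient noise $\op(\ktwon\T)$ against the transport noise $(\phi_n\cdot\nabla)v$ is absorbed using the parabolicity condition of Assumption \ref{ass:well_posedness_primitive_double_strong}\eqref{it:well_posedness_primitive_parabolicity_strong_strong} (here $(v,\T)$ must be handled \emph{jointly} because the cross-quadratic variation couples them). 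A Burkholder--Davis--Gundy estimate followed by Grönwall proves the first two (polynomial) bounds of the proposition. As an intermediate tool I would also, as in the end of Section \ref{s:basic_estimate}, apply It\^o to $\|\T\|_{L^4}^4$ to obtain control of $\||\T||\nabla\T|\|_{L^2_t L^2_x}^2$ in terms of the $H^1$ norms of the data.

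The core step is the intermediate estimate of Section \ref{s:intermediate_estimate}. Exploiting the $x_3$-independence of $(\ktwon,\phi^j_{n,\h},\psi^j_{n,\h},\tp^j,\hp^{j,k}_n)$ granted by Assumption \ref{ass:global_primitive_strong_strong}, I decompose $v=\overline{v}+\wt{v}$ with $\overline{v}=\fint_{-h}^0 v(\cdot,\zeta)\,\dd\zeta$; only this $x_3$-independence yields clean SPDEs for $\overline{v}$ and $\wt{v}$ separately. I apply It\^o to $\|\wt v\|_{L^4}^4$ and $\|\overline v\|_{L^2}^2$ and combine with the $\|\T\|_{H^1}^2$ balance. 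The new difficulty relative to \cite{Primitive1} is that the gradient noise \eqref{eq:gradient_sigma_n} now couples these balances: the It\^o corrections and the deterministic cross-term $\nabla_{\h}\int_{-h}^{\cdot}\kone\T$ produce $H^1$-size quantities of $\T$ that cannot be treated as lower order and must be absorbed using the $H^2$ dissipation of $\T$ in combined inequality. I would follow the simplified Hieber--Kashiwabara $L^4$--based approach of \cite{HK16}, exploiting anisotropic Ladyzhenskaya/Sobolev estimates on $\Dom$ and identifying the cancellation between $\p[\op\kone\T]\cdot v$ and the corresponding drift piece. A first stochastic Grönwall on the resulting inequality yields a tail bound (with a single $\log$ loss) for $\sup_{s}(\|\wt v(s)\|_{L^4}^4+\|\T(s)\|_{H^1}^2)$ plus the natural dissipative integrals.

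Using this bound as input, I would next apply It\^o to $\|\nabla\T\|_{L^2}^2$ to upgrade the $H^1$ control on $\T$ to include $\int\|\T\|_{H^2}^2\,\dd s$ (second Grönwall, $\log\log$); the parabolicity of $\psi_n$ and the coupling to the already-controlled $v$ through $(v\cdot\nabla_{\h})\T+w(v)\partial_3\T$ close this step. Finally I apply It\^o to $\|\nabla v\|_{L^2}^2$: the problematic term is the stochastic integrand $\p\bigl[\nabla_{\h}\int_{-h}^{\cdot}\ktwon\T\,\dd\zeta\bigr]$, which by stochastic maximal regularity (Lemma \ref{l:smr}) requires an $L^2_t H^2_x$ control of $\T$—supplied exactly by the previous step. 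A third Grönwall gives the $\log\log\log$ tail estimate for $v$ in $L^\infty_t H^1_x\cap L^2_t H^2_x$. The four bounds together make the event in \eqref{eq:blow_up_criterium} of probability zero, forcing $\tau\wedge T=T$ a.s.\ and giving \eqref{eq:v_T_almost_surely_H_1_H_2}. The principal obstacle is the joint barotropic/baroclinic/temperature estimate of Section \ref{s:intermediate_estimate}, where the $\sigma_n$-induced gradient noise destroys the triangular structure of \cite{Primitive1} and one must identify the subtle cancellations that permit simultaneous absorption of $\|\wt v\|_{L^4}^4$, $\|\overline v\|_{H^1}^2$ and $\|\T\|_{H^1}^2$ into the dissipation.
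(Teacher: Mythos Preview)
Your overall architecture (basic $L^2$ bounds $\to$ intermediate estimate $\to$ full $H^1/H^2$ via three stochastic Gr\"onwall applications) matches the paper's, but the intermediate step has a genuine gap: the three balances you list are not enough to close it.

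When you average \eqref{eq:primitive_v_T_pressure_1} in $x_3$, the gradient noise does not become lower order: since $\ktwon$ is $x_3$--independent one gets $\overline{\op(\ktwon\T)}=-\nabla_\h(\ktwon\wh{\T})$ with $\wh{\T}\stackrel{{\rm def}}{=}\fint_{-h}^0 \T(\cdot,\zeta)\zeta\,\dd\zeta$ (cf.\ \eqref{eq:avarage_whT_appears}--\eqref{eq:hat_T}). An $L^\infty_tH^1\cap L^2_tH^2$ estimate for $\overline{v}$ therefore requires $\wh{\T}\in L^2_tH^2(\Tor^2)$, which is \emph{not} implied by $\T\in L^\infty_tH^1$. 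The paper handles this by deriving a separate SPDE for $\wh{\T}$, namely \eqref{eq:theta_hat_estimate}; its convection term $\reallywidehat{w(v)\partial_3\T}$ rewrites via \eqref{eq:wh_partial_3_T_equality} in terms of $\TT=\int_{-h}^\cdot\T\,\dd\zeta$ and forces control of the mixed products $\||\T||\nabla\wt{v}|\|_{L^2}$, $\||\TT||\nabla\wt{v}|\|_{L^2}$ and $\||\wt{v}||\nabla\T|\|_{L^2}$. These are obtained by applying It\^o to the functionals $\||\wt{v}||\T|\|_{L^2}^2$, $\||\wt{v}||\TT|\|_{L^2}^2$, $\||\T||\TT|\|_{L^2}^2$ and $\|\TT\|_{L^4}^4$, using the cancellation of Lemma~\ref{l:cancellation} together with its variant Lemma~\ref{l:cancellation_vv_TT}. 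Altogether nine coupled estimates (Subsections~\ref{ss:estimate_overline_v}--\ref{ss:estimate_TT}), also involving $\|\partial_3 v\|_{L^2}$ and $\|\partial_3\T\|_{L^2}$, must be combined with carefully tuned multipliers before a single Gr\"onwall closes Lemma~\ref{l:main_estimate} with a $\log\log$ tail (the first $\log$ having already been spent on the $\|\T\|_{L^4}^4$ estimate of Lemma~\ref{l:basic_estimates}). Your proposal misses $\wh{\T}$, $\TT$, $\partial_3 v$, $\partial_3\T$ and all the mixed functionals.

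A second, smaller discrepancy: the paper does not perform your steps 4 and 5 sequentially. Once Lemma~\ref{l:main_estimate} supplies control of $Y_s$ (which already contains $\|\partial_3\T\|_{H^1}^2$, $\|\partial_3 v\|_{H^1}^2$, $\|\overline{v}\|_{H^2(\Tor^2)}^2$ and $\||\wt{v}||\nabla\T|\|_{L^2}^2$), the final step bounds $(v,\T)$ in $L^\infty_tH^1\cap L^2_tH^2$ \emph{jointly} via stochastic maximal regularity (Lemma~\ref{l:smr}), estimating the four bilinear terms $I_1$--$I_4$ of \eqref{eq:smr_strong_theta_inequality_energy_estimate} directly against $(1+\xx_s^2)\yy_s\|U\|_H^2$; see \eqref{eq:claim_proposition_estimate_H2}. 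Your plan to first obtain $\T\in L^2_tH^2$ and only then treat $v$ runs into the same circularity as above: already $\|w(v)\partial_3\T\|_{L^2}$ needs a quantity of the type $\|\partial_3\T\|_{H^1}$ that your intermediate step has not produced.
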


The proof of Proposition \ref{prop:energy_estimate_primitive_strong_strong} is postponed to Section \ref{s:proof_energy_estimate_conclusion} and Sections \ref{s:basic_estimate}-\ref{s:intermediate_estimate} are preparatory to its proof. In this section, we show that Theorems \ref{t:global_primitive_strong_strong} and \ref{t:continuous_dependence} follow from Proposition \ref{prop:energy_estimate_primitive_strong_strong}.
More precisely, Theorem \ref{t:global_primitive_strong_strong}
follows from the blow-up criteria of Theorem \ref{t:local_primitive_strong_strong} and \eqref{eq:v_T_almost_surely_H_1_H_2}, see e.g.\ the proof of \cite[Theorem 3.7]{Primitive1} for a similar situation. For the reader's convenience, we provide some details.
The estimates of Proposition \ref{prop:energy_estimate_primitive_strong_strong} will be used to prove Theorem \ref{t:continuous_dependence}.

\begin{proof}[Proof of Theorem \ref{t:global_primitive_strong_strong}]
By localization of solutions to stochastic evolution equations (see \cite[Proposition 4.13]{AV19_QSEE_2}), it is enough to consider $(v_0,\T_0)\in L^{\infty}(\O;\Hs^1\times H^1)$. Hence, for all $T\in (0,\infty)$,
$$
\P(\tau<T)\stackrel{\eqref{eq:v_T_almost_surely_H_1_H_2}}{=}
\P\Big(\tau <T,\,\sup_{t\in [0,\tau)}\big\|(v(t),\T(t))\big\|^2_{H}+\int_0^{\tau}\big\|(v(t),\T(t))\big\|_{V}^2\,\dd t<\infty \Big)\stackrel{(i)}{=}0,
$$
where in $(i)$ we used Theorem \ref{t:local_primitive_strong_strong}. Since $T\in (0,\infty)$ is arbitrary, the above yields $\tau=\infty$ a.s.

The estimates in Theorem \ref{t:global_primitive_strong_strong} follow from the one in Proposition \ref{prop:energy_estimate_primitive_strong_strong} with $\tau=\infty$.
\end{proof}

\subsection{Proof of Theorem \ref{t:continuous_dependence}}
\label{ss:proof_continuity}
To prove Theorem \ref{t:continuous_dependence} we argue as in \cite{AV_variational}. As in the proof of Theorem \ref{t:continuous_dependence} readily follows from the following result.

\begin{proposition}
\label{prop:Lip_estimate_continuity}
Let Assumptions \ref{ass:well_posedness_primitive_double_strong} and \ref{ass:global_primitive_strong_strong} be satisfied. Fix $T\in (0,\infty)$ and $(v_0,\T_0),(v_0',\T_0')\in L^{4}_{\F_0}(\O;H)$. 
Let $(v,\T)$ and $(v',\T')$ be the $L^2$-global strong solution to \eqref{eq:primitive}-\eqref{eq:boundary_conditions_full} provided provided by Theorem \ref{t:global_primitive_strong_strong} with initial data $(v_0,\T_0)$ and $(v_0',\T_0')$, respectively. 
Then there exist mappings $\psi,N :[0,\infty)\to [0,\infty)$, independent of $(v_0,\T_0), (v_0',\T_0')$, such that,
 for all $R,\varepsilon>0$,%
\begin{align*}
\P\Big(\|(v,\T)-(v',\T')\|_{C([0,T];H)\cap L^2(0,T;V)} >\varepsilon \Big)
 \leq \frac{\psi(R)}{\varepsilon^2}  \E\|(v_0,\T_0)-(v_0',\T_0')\|_{H}^2&\\
 + N(R)\Big( 1+\E\|\Xi\|_{L^2(0,T)}^2+\E\|(v_0,\T_0)\|_{H}^4+ \E\|(v_0',\T_0')\|_{H}^4\Big),&
\end{align*}
and $\displaystyle{\lim_{R \to \infty} N(R) =0}$.
\end{proposition}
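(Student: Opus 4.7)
The plan is to combine a localization argument with the stochastic maximal $L^2$--regularity of Lemma \ref{l:smr} applied to the equation for the difference, and then to invoke the tail bounds of Proposition \ref{prop:energy_estimate_primitive_strong_strong} to make the ``truncation error'' vanish as the cut--off is relaxed. Set $\bar{v}\stackrel{{\rm def}}{=}v-v'$, $\bar{\T}\stackrel{{\rm def}}{=}\T-\T'$ and introduce the stopping times
$$
\tau_R\stackrel{{\rm def}}{=}\inf\Big\{t\in[0,T]\,:\,\sup_{s\in[0,t]}\|(v(s),\T(s))\|_H^2+\int_0^t\|(v(s),\T(s))\|_V^2\,\dd s\geq R\Big\}\wedge T,
$$
with $\tau_R'$ defined analogously for $(v',\T')$, and $\sigma_R\stackrel{{\rm def}}{=}\tau_R\wedge \tau_R'$. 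By construction both $(v,\T)$ and $(v',\T')$ are bounded in $L^\infty(0,\sigma_R;H)\cap L^2(0,\sigma_R;V)$ by $\sqrt{R}$.

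Because the operators $A,B$ in \eqref{eq:def_A}--\eqref{eq:def_B} are linear and because the coefficients $\phi_n,\psi_n,\ktwon,\tp,\hp$ do not depend on the solutions, $(\bar v,\bar\T)$ satisfies the linear stochastic evolution equation
$$
\dd(\bar v,\bar\T)+A(\cdot)(\bar v,\bar\T)\,\dd t=\bar F\,\dd t+\big[B(\cdot)(\bar v,\bar\T)+\bar G\big]\,\dd \Br_{\ell^2}(t),\quad (\bar v,\bar\T)(0)=(v_0-v_0',\T_0-\T_0'),
$$
with $\bar F\stackrel{{\rm def}}{=}F(\cdot,v,\T)-F(\cdot,v',\T')$ and $\bar G\stackrel{{\rm def}}{=}G(\cdot,v,\T)-G(\cdot,v',\T')$. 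Stopping at $\sigma_R$ and applying Lemma \ref{l:smr} together with \cite[Proposition 3.9]{AV19_QSEE_1} (to allow a non--trivial initial condition) yields
$$
\E\|(\bar v,\bar\T)\|_{C([0,\sigma_R];H)\cap L^2(0,\sigma_R;V)}^2 \lesssim \E\|(v_0-v_0',\T_0-\T_0')\|_H^2 + \E\|\bar F\|_{L^2(0,\sigma_R;L^2)}^2 + \E\|\bar G\|_{L^2(0,\sigma_R;H^1(\ell^2))}^2.
$$
The nonlinear differences decompose bilinearly, e.g.\ $(v\cdot\nabla_{\h})v-(v'\cdot\nabla_{\h})v'=(\bar v\cdot\nabla_{\h})v+(v'\cdot\nabla_{\h})\bar v$, and (using the linearity of $w$ in $v$ via \eqref{eq:def_w}) $w(v)\partial_{3}v-w(v')\partial_{3}v'=w(\bar v)\partial_{3}v+w(v')\partial_{3}\bar v$; the contributions of $\fv,\ft,\gv,\gt$ are Lipschitz by Assumption \ref{ass:well_posedness_primitive_double_strong}\eqref{it:nonlinearities_strong_strong}. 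Using Sobolev embeddings and the anisotropic estimates of the type carried out in Section \ref{s:intermediate_estimate} (interpolating $L^4\hookleftarrow H^1$ and $L^{\infty}_{x_{\h}}L^2_{x_3}\hookleftarrow H^2$) together with the uniform bound $\sqrt R$ on the stopped solutions, one obtains
$$
\E\|\bar F\|_{L^2(0,\sigma_R;L^2)}^2+\E\|\bar G\|_{L^2(0,\sigma_R;H^1(\ell^2))}^2 \lesssim C(R)\,\E\int_0^{\sigma_R}\big(1+\|(v,\T)\|_V^2+\|(v',\T')\|_V^2\big)\|(\bar v,\bar\T)\|_H^2\,\dd s+\tfrac{1}{2}\E\|(\bar v,\bar\T)\|_{L^2(0,\sigma_R;V)}^2.
$$
Absorbing the last summand in the left--hand side of the previous maximal regularity estimate and invoking the stochastic Gronwall lemma of \cite[Lemma A.1]{AV_variational}, which tolerates the $L^1_t$--integrable weight $\|(v,\T)\|_V^2+\|(v',\T')\|_V^2$ on $[0,\sigma_R]$, one finds a function $\psi:[0,\infty)\to[0,\infty)$ independent of the data such that
$$
\E\|(\bar v,\bar\T)\|_{C([0,\sigma_R];H)\cap L^2(0,\sigma_R;V)}^2\leq \psi(R)\,\E\|(v_0-v_0',\T_0-\T_0')\|_H^2.
$$

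To conclude, split the probability as
$$
\P\big(\|(\bar v,\bar\T)\|_{C([0,T];H)\cap L^2(0,T;V)}>\varepsilon\big)\leq \P\big(\|(\bar v,\bar\T)\|_{C([0,\sigma_R];H)\cap L^2(0,\sigma_R;V)}>\varepsilon\big)+\P(\sigma_R<T).
$$
Chebyshev's inequality applied to the previous estimate bounds the first summand by $\psi(R)\varepsilon^{-2}\E\|(v_0-v_0',\T_0-\T_0')\|_H^2$. For the second, $\P(\sigma_R<T)\leq \P(\tau_R<T)+\P(\tau'_R<T)$, and the last two tail estimates in Proposition \ref{prop:energy_estimate_primitive_strong_strong} (with $\g=R$) control each term by $C_T(1+\E\|\y\|_{L^2(0,T)}^2+\E\|(v_0,\T_0)\|_H^4+\E\|(v_0',\T_0')\|_H^4)/\log\log\log(R)$; setting $N(R)\stackrel{{\rm def}}{=}2C_T/\log\log\log(R)\to 0$ as $R\to\infty$ yields the claim. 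The main difficulty lies in the third step, namely the explicit bilinear estimates on $\bar F$ and $\bar G$: in particular, controlling $w(v')\partial_{3}\bar v$ in $L^2_t L^2_x$ and the gradient--noise correction $\op(\ktwon\bar\T)$ in $L^2_t H^1_x(\ell^2)$ in a way that is compatible with the $L^\infty_t H^1\cap L^2_t H^2$ bound on the stopped solutions. These estimates mirror those of Section \ref{s:intermediate_estimate} but must be performed in bilinear form and all constants must be tracked explicitly in~$R$.
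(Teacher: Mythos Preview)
Your approach is correct and reaches the same conclusion, but it organizes the argument differently from the paper's proof.

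The paper does \emph{not} introduce the stopping time $\sigma_R$. Instead, it derives the maximal regularity estimate for the difference $U_*=U-U'$ on an arbitrary stochastic interval $[\eta,\xi]$, combines it with the nonlinearity estimate from \cite[Theorem 3.4]{Primitive1} (which gives the precise local--Lipschitz bound in interpolation spaces $V_{\beta_j}$ with $\beta_j=\tfrac{2+\rho_j}{2(1+\rho_j)}$), and after an interpolation/Young argument arrives at an inequality of the form
\[
\E\sup_{[\eta,\xi]}\|U_*\|_H^2+\E\int_\eta^\xi\|U_*\|_V^2\,\dd s \le c_0\E\|U_*(\eta)\|_H^2+c_0\E\int_\eta^\xi M_s\|U_*\|_H^2\,\dd s,
\]
with weight $M_s=\sum_j(1+\|U\|_H^{2\rho_j}\|U\|_V^2+\|U'\|_H^{2\rho_j}\|U'\|_V^2)$. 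The stochastic Gronwall lemma \cite[Lemma A.1]{AV_variational} is then applied \emph{directly on $[0,T]$}: its output is already a tail bound containing the two terms $\tfrac{c}{\varepsilon^2}e^{cR}\E\|U_*(0)\|_H^2$ and $\P(\int_0^T M_s\,\dd s\ge R/c)$, and the latter is controlled via the energy estimates of Theorem \ref{t:global_primitive_strong_strong}. In other words, the localization you perform by hand with $\sigma_R$ is done internally by the stochastic Gronwall lemma through the term $\P(\int M\ge R)$.

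Your route---localize first via $\sigma_R$, obtain a closed estimate on $[0,\sigma_R]$, then split $\P(\cdot)\le\P(\cdot\,,\,\sigma_R=T)+\P(\sigma_R<T)$---is a valid alternative. Two small points: (i) the stochastic Gronwall lemma \cite[Lemma A.1]{AV_variational} yields tail bounds, not the $\E$--bound you claim; to get $\E\|(\bar v,\bar\T)\|_{C\cap L^2}^2\le\psi(R)\E\|\cdot\|_H^2$ on $[0,\sigma_R]$ you should instead iterate the maximal regularity estimate over stopping--time subintervals on which $\int M\le \tfrac{1}{2c_0}$ (there are $O(R)$ of them since $\int_0^{\sigma_R}M\lesssim R$ deterministically), which is a classical argument and gives $\psi(R)=e^{c(R)}$; (ii) the $\tfrac12$ in front of $\E\|(\bar v,\bar\T)\|_{L^2(V)}^2$ must be chosen small relative to the maximal regularity constant $C_0$ of Lemma \ref{l:smr}, as the paper does by taking $\eta=\tfrac{1}{4C_0 m}$. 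With these adjustments your proof goes through; the paper's version is simply more streamlined because the stochastic Gronwall lemma packages the localization step.
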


\begin{proof}
To economize the notation, here we adopt the one used in Subsection \ref{ss:proof_local} for the proof of Theorem \ref{t:local_primitive_strong_strong}. In particular $(A,B,F,G)$ are as in \eqref{eq:def_A}-\eqref{eq:def_G} and $U=(v,\T)$. Similarly $U=(v,\T)$, $U'=(v',\T')$, $V_0= \Ls^2\times L^2$, $H(\ell^2)\stackrel{{\rm def}}{=} \calL_2(\ell^2,H)$ etc. Moreover, for notational convenience, we set
$$
V_{\theta}\stackrel{{\rm def}}{=}[V_0,V]_{\theta}\  \text{ for $\theta\in (0,1)$ \ \ (complex interpolation).}
$$
Note that $V_{1/2}=H$.
Since $V\embed H^2\times H^2$ and $V_0\embed L^2\times L^2$, 
we have $
V_{\theta}\embed H^{\theta}\times H^{\theta}.
$

Next note that the difference $\v\stackrel{{\rm def}}{=}U-U'$ solves
\begin{equation*}
\left\{
\begin{aligned}
\dd \v-A \v\,\dd t&= (F(U)-F(U'))\,\dd t + [B \v + (G(U)-G(U'))]\,\dd \Br_{\ell^2},\\
\v(0)&=U_0-U_0'.
\end{aligned}\right.
\end{equation*}
Fix $T\in (0,\infty)$. By Lemma \ref{l:smr} and \cite[Proposition 3.9 and 3.12]{AV19_QSEE_1} there exists $C_0>0$,  independent of $U_0,U_0'$, such that for all stopping times $(\eta,\xi)$ such that $0\leq \eta\leq \xi\leq T$ a.s.\ 
\begin{align}
\label{eq:smr_estimate_difference}
&\E\sup_{s\in [\eta,\xi]}\|\v(s)\|_{H}^2
+\E \|\v(s)\|^2_{L^2(\eta,\xi;V)} \leq C_0\E\|\v(\eta)\|^2_{H}\\
\nonumber
&\qquad\qquad
+C_0 \E\|F(U)-F(U')\|_{L^2(0,T;V_0)}^2
+C_0  \E\|G(U)-G(U')\|_{L^2(0,T;H(\ell^2))}^2.
\end{align}
Next, we estimate the nonlinearities $(F,G)$. The arguments in \cite[Theorem 3.4]{Primitive1} show the existence of $m\geq 1$, $(\rho_j)_{j=1}^m$ such that, for all $U,U'\in X_1$,
\begin{equation}
\label{eq:estimate_nonlinearities_continuity_proof}
\begin{aligned}
\|F(U)-F(U')\|_{V_0}
+ \|G(U)-G(U')\|_{H(\ell^2)}
\lesssim \sum_{1\leq j\leq m}(1+\|U\|_{V_{\beta_j}}^{\rho_j}+\|U'\|_{V_{\beta_j}}^{\rho_j})\|U-U'\|_{V_{\beta_j}},
\end{aligned}
\end{equation}
where $\beta_j=\frac{2+\rho_j}{2(1+\rho_j)}\in (\frac{1}{2},1)$ and the implicit constant is independent of $U,U'$.
Note that, by standard interpolation arguments, for $\theta_j=2\beta_j-1\in (0,1)$, we have
$$
\|x\|_{V_{\beta_j}}\lesssim \|x\|_{H}^{1-\theta_j}\|x\|_{V}^{\theta_j}, \quad \text{ for all }x\in V.
$$
Hence, for all $x,x'\in V$ and $\eta>0$,
\begin{align*}
\|x\|_{V_{\beta_j}}^{\rho_j}\|x'\|_{V_{\beta_j}}
&\lesssim 
\|x\|_{H}^{\rho_j(1-\theta_j)}\|x\|_{V}^{\rho_j\theta_j}\|x'\|_{H}^{1-\theta_j}\|x'\|_{V}^{\theta_j}\\
&\stackrel{(i)}{\leq} C_{\eta}
\|x\|_{H}^{\rho_j}\|x\|_{V} \|x'\|_{H}+ \eta
\|x'\|_{V},
\end{align*}
where in $(i)$ we used the Young's inequality with exponents $(\frac{1}{1-\theta_j},\frac{1}{\theta_j})$ and that 
$
\frac{\rho_j\theta_j}{1-\theta_j}=1
$
since $\beta_j=\frac{2+\rho_j}{2(1+\rho_j)}$. Combining the above estimate with \eqref{eq:estimate_nonlinearities_continuity_proof} we have, for all $\eta>0$ and $x,x'\in V$,
\begin{align*}
\|F(\cdot,x)-F(\cdot,x')\|_{V_0}
&+ \|G(\cdot,x)-G(\cdot,x')\|_{H(\ell^2)}\\
& \leq \Big(\sum_{1\leq j\leq m} (1+ \|x\|_{H}^{\rho_j}\|x\|_{V}+\|x'\|_{H}^{\rho_j}\|x'\|_{V}) \|x-x'\|_{H}\Big)
  + m \eta \|x-x'\|_{V}.
\end{align*}
Choosing $\eta=\frac{1}{4C_0m}$, the above inequality and \eqref{eq:smr_estimate_difference} yield, for some $c_0>0$ independent of $U_0,U_0'$,
\begin{align} 
\label{eq:estimate_grownall_prepared_U_star}
\E\sup_{s\in [\eta,\xi]}\|\v(s)\|_{H}^2
&+\E\int_{\eta}^{\xi} \|\v\|_{V}^2 \,\dd s
\leq c_0 \E\|\v(\eta)\|_{H}^2\\
\nonumber
&+c_0   
\E \int_{\eta}^{\xi} \Big[\underbrace{\sum_{1\leq j\leq m}\big(1+\|U\|_{H}^{2\rho_j}\|U\|_{V}^2 +\|U'\|_{H}^{2\rho_j}\|U'\|_{V}^2 \big)}_{M\stackrel{{\rm def}}{=}}\Big]\|\v\|^2_{H}\,\dd s.
\end{align}
Note that $M\in L^1(0,T)$ a.s.\ since $U,U'\in C([0,T];H)\cap L^2(0,T;V)$ a.s.
By the tail estimates of Theorem \ref{t:global_primitive_strong_strong}, there exists a mapping $N:[0,\infty)\to [0,\infty)$, independent of $U_0,U_0'$, such that $\lim_{R\to \infty} N(R)=0$ and for all $R>1$
\begin{equation}
\label{eq:tail_probability_M_U_star}
\P \Big(\int_0^{T} M_s\,\dd s \geq R\Big)\leq N(R) ( 1+\E\|\Xi\|_{L^2(0,T)}^2+\E\|(v_0,\T_0)\|_{H}^4+ \E\|(v_0',\T_0')\|_{H}^4).
\end{equation}
The conclusion follows from \eqref{eq:estimate_grownall_prepared_U_star}-\eqref{eq:tail_probability_M_U_star} and 
the Gronwall lemma in \cite[Lemma A.1]{AV_variational}.
\end{proof}

\begin{proof}[Proof of Theorem \ref{t:continuous_dependence}]
Due to Proposition \ref{prop:Lip_estimate_continuity}, the proof of Theorem \ref{t:continuous_dependence} follows verbatim from the one of \cite[Theorem 3.8]{AV_variational}.
\end{proof}

\section{Basic estimates}
\label{s:basic_estimate}
The aim of this section is to prove the following result. Recall that $H$ is defined in \eqref{eq:def_HV}.

\begin{lemma}
\label{l:basic_estimates}
Let Assumptions \ref{ass:well_posedness_primitive_double_strong} and \ref{ass:global_primitive_strong_strong} be satisfied. Let $T\in (0,\infty)$.
Assume that $(v_0,\T_0)\in L^{4}_{\F_0}(\O;H)$. Let $((v,\T),\tau)$ be the $L^2$-maximal strong solution to  \eqref{eq:primitive}-\eqref{eq:boundary_conditions_full} provided by Theorem \ref{t:local_primitive_strong_strong}.
 Then, for all $\g>1$,
\begin{align*}
\E \sup_{s\in [0,\tau\wedge T)}\|v(s)\|^2_{L^2} +\E\int_0^{\tau\wedge T}\|v(s)\|^2_{H^1}\,\dd s
& \lesssim_T 1+\E\|v_0\|_{L^2}^2+\E\|\T_0\|_{L^2}^2+\E\|\Xi\|_{L^2(0,T)}^2,\\
\E \sup_{s\in [0,\tau\wedge T)}\|\T(s)\|^2_{L^2}
+ \E\int_0^{\tau\wedge T} \|\T(s)\|^2_{H^1}\,\dd s 
& \lesssim_T 1+\E\|v_0\|_{L^2}^2+\E\|\T_0\|_{L^2}^2+\E\|\Xi\|_{L^2(0,T)}^2,\\
\P\Big(\sup_{s\in [0,\tau\wedge T)} \|\T(s)\|_{L^4}^4\geq  \g\Big)
&\lesssim_T \frac{1+\E\|v_0\|_{L^2}^2+\E\|\T_0\|_{L^4}^4+\E\|\Xi\|_{L^2(0,T)}^2}{\log (\g)},\\
\P\Big(  \int_0^{\tau\wedge T}\int_{\Dom} |\T|^2|\nabla \T|^2 \,\dd x\dd s \geq \g\Big)
& \lesssim_T \frac{1+\E\|v_0\|_{L^2}^2+\E\|\T_0\|_{L^4}^4+\E\|\Xi\|_{L^2(0,T)}^2}{\log (\g)},
\end{align*}
where the implicit constants in the above estimates are independent of $(v_0,\T_0)$.
\end{lemma}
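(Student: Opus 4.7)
The plan is to apply It\^o's formula to the functionals $\|v\|_{L^2}^2$, $\|\T\|_{L^2}^2$, and $\|\T\|_{L^4}^4$ of the system \eqref{eq:primitive_strong} and to exploit three structural features: the divergence-free cancellation of the convective terms, the strict stochastic parabolicity $\ellip<2$ of Assumption \ref{ass:well_posedness_primitive_double_strong}\eqref{it:well_posedness_primitive_parabolicity_strong_strong} (which leaves a net deterministic dissipation after the It\^o correction), and the $x_3$-independence of $\kone,\ktwon,\tp,\phi_n^j,\psi_n^j$ for $j\in\{1,2\}$ from Assumption \ref{ass:global_primitive_strong_strong}. I would work on a stopped interval $[0,\tau_k\wedge T]$ along a localizing sequence $(\tau_k)_{k\geq 1}$ for $((v,\T),\tau)$ so that the stochastic integrals below are genuine martingales, and pass to $k\to\infty$ at the end by Fatou together with \eqref{eq:v_T_almost_surely_H_1_H_2}.

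For the two $L^2$-estimates I would apply It\^o to $\|v\|_{L^2}^2+\lambda\|\T\|_{L^2}^2$ with a small weight $\lambda>0$. The convective contributions vanish thanks to \eqref{eq:integral_imcompressibility} together with $w(v)|_{x_3\in\{-h,0\}}=0$. The Laplacian produces $-2\|\nabla v\|_{L^2}^2-2\lambda\|\nabla\T\|_{L^2}^2$ plus a boundary term $-2\lambda\alpha\int_{\Tor^2}\T(\cdot,0)^2\,\dd x_{\h}$ controlled by a trace inequality. The It\^o correction is bounded by $\ellip(\|\nabla v\|_{L^2}^2+\lambda\|\nabla\T\|_{L^2}^2)$ plus remainder terms coming from $\op(\ktwon\T)$ and the $\gvn,\gtn$; since $\ellip<2$, this leaves a strictly positive net dissipation. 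The coupling terms $\p\op(\kone\T+(\tp\cdot\nabla)\T)$ are handled by integration by parts in $x_{\h}$ (using $\p v=v$): Cauchy--Schwarz in the vertical variable together with the bounds $\|\kone\|_{L^\infty_{x_{\h}}L^2_{x_3}}$, $\|\tp\|_{L^\infty_{x_{\h}}L^2_{x_3}}\leq M$ from Assumption \ref{ass:well_posedness_primitive_double_strong}\eqref{it:well_posedness_primitive_kone_smoothness_strong_strong} yield
$$
\Big\|\int_{-h}^{\cdot}\kone\T\,\dd\zeta\Big\|_{L^2(\Dom)}\lesssim_{h,M}\|\T\|_{L^2},\qquad \Big\|\int_{-h}^{\cdot}(\tp\cdot\nabla)\T\,\dd\zeta\Big\|_{L^2(\Dom)}\lesssim_{h,M}\|\nabla\T\|_{L^2}.
$$
A similar argument, now exploiting the $x_3$-independence of $\ktwon$, controls $\sum_n\|\p\op(\ktwon\T)\|_{L^2}^2$ by a constant multiple of $\|\T\|_{H^1}^2$. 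Combining with the Lipschitz bounds of Assumption \ref{ass:well_posedness_primitive_double_strong}\eqref{it:nonlinearities_strong_strong} on $F_u,G_u$ and choosing $\lambda$ and the Young constants to absorb all $\|\nabla v\|_{L^2}^2$ and $\|\nabla\T\|_{L^2}^2$ contributions into the dissipation, one obtains
$$
\dd\bigl(\|v\|_{L^2}^2+\lambda\|\T\|_{L^2}^2\bigr)+c\bigl(\|\nabla v\|_{L^2}^2+\|\nabla\T\|_{L^2}^2\bigr)\,\dd t\leq C\bigl(\|v\|_{L^2}^2+\|\T\|_{L^2}^2+\Xi^2\bigr)\,\dd t+\dd M_t.
$$
Standard Gr\"onwall followed by BDG on the supremum then yields the first two expectation bounds.

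For the tail estimates I would apply It\^o to $\|\T\|_{L^4}^4$. Integration by parts gives $4\int\T^3\Delta\T\,\dd x=-12\int\T^2|\nabla\T|^2\,\dd x-4\alpha\int_{\Tor^2}\T(\cdot,0)^4\,\dd x_{\h}$, the boundary term being controlled by a trace/interpolation inequality, while the transport contribution vanishes exactly as in Step~1 since both $\int\T^3 v\cdot\nabla_{\h}\T\,\dd x$ and $\int\T^3 w(v)\partial_{3}\T\,\dd x$ reduce to $\pm\tfrac{1}{4}\int(\div_{\h} v)\T^4\,\dd x$ with opposite signs. The It\^o correction $6\int\T^2\sum_n|(\psi_n\cdot\nabla)\T+\gtn|^2\,\dd x$ is dominated by $6(1+\varepsilon)\ellip\int\T^2|\nabla\T|^2\,\dd x+C_\varepsilon(\cdots)$; since $\ellip<2$, a net dissipation $c\int\T^2|\nabla\T|^2\,\dd x$ with $c>0$ survives. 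The nonlinearity $4\int\T^3F_\T\,\dd x$ is handled via H\"older and the Sobolev embedding $H^1(\Dom)\hookrightarrow L^6(\Dom)$ (for instance $\int|\T|^3|\nabla v|\,\dd x\leq\|\T\|_{L^6}^{3}\|\nabla v\|_{L^2}$, followed by Young after interpolating $\|\T\|_{L^6}^{3}\lesssim\|\T\|_{L^4}^2\|\T\|_{H^1}$), producing a differential inequality
$$
\dd\|\T\|_{L^4}^4+c\|\T|\nabla\T|\|_{L^2}^2\,\dd t\leq C\bigl(1+\|v\|_{H^1}^2+\|\T\|_{H^1}^2+\Xi^2\bigr)\|\T\|_{L^4}^4\,\dd t+\Xi^2\,\dd t+\dd N_t,
$$
where $N$ is a local martingale with quadratic variation quadratic in $\|\T\|_{L^4}^4$. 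The crux of the proof is that the drift rate $1+\|v\|_{H^1}^2+\|\T\|_{H^1}^2+\Xi^2$ lies only in $L^1((0,T))$ a.s.\ with finite $L^1(\O)$-norm by Step~1, but is \emph{not} exponentially integrable; consequently classical Gr\"onwall yields only an a.s.\ finite bound on $\|\T\|_{L^4}^4$ and no $L^1(\O)$-bound. To recover the stated logarithmic tail bounds for both $\sup_t\|\T\|_{L^4}^4$ and $\|\T|\nabla\T|\|_{L^2_{t,x}}^2$, I would invoke the stochastic Gr\"onwall of \cite[Lemma A.1]{AV_variational}, which converts such an exponential-in-$L^1$ bound into a Chebyshev-type tail estimate $\P(\sup_t X_t\geq\gamma)\lesssim\E[\cdots]/\log\gamma$; this costs exactly the single logarithm appearing in the last two statements of the lemma, and the remaining two logs in Proposition~\ref{prop:energy_estimate_primitive_strong_strong} will be paid in the subsequent $H^1$-level Gr\"onwall iterations.
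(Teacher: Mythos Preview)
Your approach is essentially the paper's: for the $L^2$ estimates the paper simply refers to \cite[Lemma~5.2]{Primitive1} (your coupled It\^o/Gronwall/BDG argument), and for the $L^4$ tail the paper likewise applies It\^o to $\|\T\|_{L^4}^4$, uses Lemma~\ref{l:cancellation} to kill the convective term, extracts net dissipation from $\ellip<2$, and then invokes the stochastic Gronwall \cite[Lemma~A.1]{AV_variational} with drift $N=1+\|v\|_{H^1}^2+\|\T\|_{H^1}^2+\Xi$ controlled in $L^1(\O\times(0,T))$ by the $L^2$ estimates, optimizing the free level $R\sim\log\gamma$ to produce the $1/\log\gamma$ tail.

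One technical slip to fix: the interpolation $\|\T\|_{L^6}^{3}\lesssim\|\T\|_{L^4}^{2}\|\T\|_{H^1}$ is false in three dimensions (test it on $\T=\phi(\lambda\,\cdot)$ as $\lambda\to\infty$; $L^6$ is the Sobolev endpoint, so no $L^4$ weight can be extracted). The paper instead writes $\||\T|^{3}\|_{L^2}=\||\T|^{2}\|_{L^3}^{3/2}$ and uses $\||\T|^{2}\|_{L^3}\lesssim\||\T|^{2}\|_{L^2}^{1/2}\||\T|^{2}\|_{H^1}^{1/2}$, which routes through the dissipation $\||\T||\nabla\T|\|_{L^2}$ rather than $\|\T\|_{H^1}$; a small multiple of $\||\T||\nabla\T|\|_{L^2}^{2}$ is then absorbed on the left, yielding $|I_{\T,2}|\leq\varepsilon\||\T||\nabla\T|\|_{L^2}^{2}+C_{\varepsilon}(1+\|f_{\T}\|_{L^2}^{2})(1+\|\T\|_{L^4}^{4})$, which fits your differential inequality with the correct $N$.
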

 
The first two inequalities are standard energy estimates and coincide with the first two estimates in Proposition \ref{prop:energy_estimate_primitive_strong_strong}. The last two estimates are rather weak and do not give any information on moments of the r.v.\ $\sup_{s\in [0,\tau\wedge T)} \|\T(s)\|_{L^4}^4$ and $\int_0^{\tau\wedge T}\int_{\Dom} |\T|^2|\nabla \T|^2 \,\dd x\dd s$. However, it does not seem possible to improve them in general. 
Note that  
$
\int_{\Dom} \big|\nabla |\T|^2\big|^2\,\dd x \eqsim 
\int_{\Dom} |\T|^2 |\nabla 	\T|^2\,\dd x.
$
Combining the Sobolev embedding $H^1(\Dom)\embed L^{6}(\Dom)$, well-known interpolation inequalities and the last two estimates of Lemma \ref{l:basic_estimates} we get, for all $\eta\in [0,1]$ and $p\stackrel{{\rm def}}{=}\frac{4}{1-\eta}$, $q\stackrel{{\rm def}}{=}\frac{12}{1+2\eta}$,
\begin{equation}
\label{eq:P_T_eta_tail_estimate}
\P\big(\|\T\|_{L^{p}(0,\tau\wedge T;L^{q}(\Dom))}\geq \g\big)
\lesssim_{T,\eta} \frac{1+\E\|v_0\|_{L^2}^2+\E\|\T_0\|_{L^4}^4+\E\|\Xi\|_{L^2(0,T)}^2}{\log (\g)}.
\end{equation}
As in Lemma \ref{l:basic_estimates}, the implicit constant in \eqref{eq:P_T_eta_tail_estimate} is independent of $(v_0,\T_0)$.

The energy estimates of Lemma \ref{l:basic_estimates} and of Proposition \ref{prop:energy_estimate_primitive_strong_strong} are based on certain cancellations of the nonlinearities in \eqref{eq:primitive}. We formulate the one needed in the current work in the following.

\begin{lemma}[Cancellations]
\label{l:cancellation}
Assume that 
$\vv=(\vv^{k})_{k=1}^3\in C^{\infty}(\overline{\Dom};\R^3)$ satisfies 
$$
\vv^{3}(\cdot,-h)=
\vv^{3}(\cdot,0)=0\text{ on }\Tor^2 \quad \text{ and }\quad 
\div \,\vv=0\text{ on }\Dom.
$$
Then, for all integers $r\geq 2$ and all $f\in C^{\infty}(\overline{\Dom};\R^3)$, $g\in C^{\infty}(\Dom)$,
$$
\int_{\Dom} |f|^{r} g^{r-1} [(\vv\cdot \nabla) g]\,\dd x
+\int_{\Dom} g^{r} |f|^{r-2} f\cdot [(\vv\cdot \nabla) f]\,\dd x=0.
$$ 
\end{lemma}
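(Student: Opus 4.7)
The plan is to combine the two terms into the divergence of a single vector field and then invoke the incompressibility and impermeability of $\vv$.

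First I would rewrite each integrand using the chain rule. Since $r\geq 2$ is an integer and $g$ is smooth,
\begin{equation*}
g^{r-1}\,(\vv\cdot\nabla)g \;=\; \frac{1}{r}\,(\vv\cdot\nabla)(g^{r}),
\end{equation*}
and since $(\vv\cdot\nabla)|f|^{2}=2\,f\cdot(\vv\cdot\nabla)f$, one gets inductively
\begin{equation*}
|f|^{r-2}\,f\cdot(\vv\cdot\nabla)f \;=\; \frac{1}{r}\,(\vv\cdot\nabla)|f|^{r}.
\end{equation*}
(For $r=2$ the factor $|f|^{r-2}$ is simply $1$, so no issue at zeros of $f$; for larger $r$ the identity holds pointwise.)

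Next I would use the Leibniz rule to combine the two contributions:
\begin{equation*}
|f|^{r}g^{r-1}(\vv\cdot\nabla)g+g^{r}|f|^{r-2}f\cdot(\vv\cdot\nabla)f
=\frac{1}{r}\Big[|f|^{r}(\vv\cdot\nabla)(g^{r})+g^{r}(\vv\cdot\nabla)|f|^{r}\Big]
=\frac{1}{r}(\vv\cdot\nabla)\bigl(|f|^{r}g^{r}\bigr).
\end{equation*}
Setting $\varphi\stackrel{{\rm def}}{=}|f|^{r}g^{r}$ and using $\div\,\vv=0$, this equals $\frac{1}{r}\div(\varphi\,\vv)$.

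Finally I would integrate by parts (divergence theorem) on $\Dom=\Tor^{2}\times(-h,0)$. The horizontal contributions vanish by the periodicity in $x_{\h}\in\Tor^{2}$, while on the top and bottom $\Tor^{2}\times\{-h,0\}$ the outward normal is $\pm e_{3}$, so $\vv\cdot n=\pm\vv^{3}=0$ by the impermeability assumption $\vv^{3}(\cdot,-h)=\vv^{3}(\cdot,0)=0$. Hence $\int_{\Dom}\div(\varphi\,\vv)\,\dd x=0$, which proves the claim. There is no real obstacle here: the only thing to be careful about is writing the chain rule for $|f|^{r}$ in a way that is valid at points where $f$ vanishes, and this is automatic for the integer exponents $r\geq 2$ considered in the statement.
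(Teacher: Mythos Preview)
Your proof is correct and essentially the same as the paper's: both use the chain rule to recognize the integrand as $\frac{1}{r}(\vv\cdot\nabla)(|f|^{r}g^{r})=\frac{1}{r}\div(|f|^{r}g^{r}\,\vv)$ via $\div\,\vv=0$, and then kill the integral with the boundary conditions on $\vv^{3}$. The only cosmetic difference is that the paper integrates by parts to move one term onto the other rather than combining them first, but the ingredients and logic are identical.
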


To prove Lemma \ref{l:basic_estimates} we use the above with $g\equiv 1$. However, in the proof of Proposition \ref{prop:energy_estimate_primitive_strong_strong} we also need the case $f, g\neq 1$. 
To check the smoothness assumptions, we will use that $(u,f,g)$ are Sobolev maps and the density of smooth functions in Sobolev spaces.

\subsection{Proof of Lemmas \ref{l:basic_estimates}-\ref{l:cancellation} }
We first prove Lemma \ref{l:cancellation} and afterwards Lemma \ref{l:basic_estimates}.

\begin{proof}[Proof of Lemma \ref{l:cancellation}]
Integrating by parts, we have
\begin{align*}
\int_{\Dom} |f|^{r} g^{r-1} [(\vv\cdot \nabla) g]\,\dd x
&= \frac{1}{r} 
\int_{\Dom} |f|^{r}  (\vv\cdot \nabla) \big[g^{r}\big]\,\dd x\\
&\stackrel{(i)}{=} \frac{1}{r} 
\int_{\Dom} |f|^{r}  \div(\vv \, g^{r})\,\dd x\stackrel{(ii)}{=}-
\int_{\Dom} g^{r} |f|^{r-2} f\cdot [(\vv\cdot \nabla) f]\,\dd x,
\end{align*}
where in $(i)$ we used $\div\, \vv=0$ and in $(ii)$ that 
$\vv^{3}(\cdot,-h)=
\vv^{3}(\cdot,0)=0$ on $\Tor^2$.
\end{proof}

Before going into the proof of Lemma \ref{l:basic_estimates}, let us recall the boundedness of the trace operator on the boundary $\partial\Dom=\Tor^2 \times \{h,0\}$ (see e.g.\ \cite[Proposition 1.6, Chapter 4]{TayPDE1}),
\begin{equation}
\label{eq:boundedness_trace}
H^{1/2+r}(\Dom)\ni  f\mapsto f|_{\Tor^2 \times \{h,0\}} \in H^{r}(\Tor^2 \times \{-h,0\}) \
 \text{ for all }\ r>0.
\end{equation}

\begin{proof}[Proof of Lemma \ref{l:basic_estimates}]
The first two estimates of 
Lemma \ref{l:basic_estimates} can be proven as in \cite[Lemma 5.2]{Primitive1} with minor modifications.
To avoid repetitions we omit the details.
To prove the third estimate in 
Lemma \ref{l:basic_estimates} we employ the stochastic Grownall lemma \cite[Lemma A.1]{AV_variational}.  To this end, we need a localization argument. Throughout this proof, we fix $T\in (0,\infty)$. For each $j\geq 1$, let 
\begin{equation}
\label{eq:def_tau_j}
\begin{aligned}
\tau_j\stackrel{{\rm def}}{=}\inf\big\{t\in [0,\tau)&\,:\,\|v(t)\|_{H^1}+\| v\|_{L^2(0,t;H^2)} \\
&+\|\T(t)\|_{L^2}+\| \T\|_{L^2(0,t;H^1)} +\|\y\|_{L^2(0,t:L^2)}\geq j\big\}\wedge T, 
\end{aligned}
\end{equation}
where $\inf\emptyset\stackrel{{\rm def}}{=}\tau$ and $\y$ is as \eqref{eq:def_y}.
Note that $(\tau_j)_{j\geq 1}$ is a localizing sequence for $(v,\tau\wedge T)$. 
In particular $\lim_{j\to \infty}\tau_j =\tau\wedge T$. 
Moreover, by Definition \ref{def:sol_strong_strong} and \eqref{eq:def_tau_j} we have, 
uniformly in $\O$ and for all $j\geq 1$ (recall that $(H,V)$ are as in \eqref{eq:def_HV}), that
\begin{equation}
\label{eq:T_v_stopped_are_good}
\begin{aligned}
(v,\T)\in C([0,\tau_j];H)\cap L^2(0,\tau_j;V)\ \text{ a.s.\ }
\end{aligned}
\end{equation}

Fix $j\geq 1$ and let $(\eta,\xi)$ be stopping times such that $0\leq \eta\leq \xi\leq \tau_j$ a.s. 
We claim that there exists $c_0\geq 1$ independent of $(j,\eta,\xi,v_0,\T_0)$ such that 
\begin{equation}
\begin{aligned}
\label{eq:grownall_L_6}
\E\Big[\sup_{t\in [\eta,\xi]} \|\T(t)\|_{L^4}^4 \Big]
&+
\E\int_{\eta}^{\xi}\int_{\Dom} |\T|^2|\nabla \T|^2\,\dd x\dd s
\\
&\leq c_0(1+ \E\|\T(\eta)\|_{L^4}^4)
+c_0 \E\int_{\eta}^{\xi}N(s)(1+\|\T(s)\|_{L^4}^4)\,\dd s
\end{aligned}
\end{equation}
where
$
N_{v,\T}(t)\stackrel{{\rm def}}{=}1+\|v(t)\|_{H^1}^2+\|\T(t)\|_{H^1}^2 + \Xi(t).
$
To economize the notation, for all $j\geq 1$, we set
$$
E_j \stackrel{{\rm def}}{=}
\sup_{t\in [0,\tau_j]} \|\T(t)\|_{L^4}^4 +\int_{0}^{\tau_j}\int_{\Dom} |\T|^2|\nabla \T|^2\,\dd x.
$$
Suppose for a moment that \eqref{eq:grownall_L_6} holds. Then, by \cite[Lemma A.1]{AV_variational}, we have for all $R,\g>1$,
\begin{align*}
\P(E_j \geq \g)
&\leq \frac{8c_0}{\g}e^{8c_0 R} (1+\E\|\T_0\|_{L^4}^4)+ \P\Big(\int_0^{\tau_j} N(s)\,\dd s \geq \frac{R}{2c_0}\Big)\\
&\leq \frac{8c_0}{\g}e^{8c_0 R} (1+\E\|\T_0\|_{L^4}^4)+\frac{2c_0}{R} (1+\E\|v_0\|_{L^2}^2+\E\|\T_0\|_{L^2}^2+\E\|\y\|_{L^2(0,T)}^2)\\
&\leq \Big( \frac{8c_0}{\g}e^{8c_0 R} + \frac{2c_0}{R}\Big) (1+\E\|v_0\|_{L^4}^4+\E\|\T_0\|_{L^4}^4+\E\|\y\|_{L^2(0,T)}^2).
\end{align*}
Now choosing $R = \frac{1}{8c_0} \log(\frac{\g^2}{ \log(\g)}) \geq \frac{1}{8c_0} \log(\g)$ for $\g$ large, we have for some $C_0>0$ (depending only on $c_0$),
$$
\P(E_j\geq \g)\leq C_0\frac{1+\E\|v_0\|_{L^4}^4+\E\|\T_0\|_{L^4}^4+\E\|\y\|_{L^2(0,T)}^2}{\log(\g)} .
$$
Since $C_0(c_0)$ is independent of $(j,v_0,\T_0)$, the last estimate in Lemma \ref{l:basic_estimates} follows by letting $j\to \infty$ in the previous estimate.

Hence it remains to prove \eqref{eq:grownall_L_6}. To this end, we set
\begin{align*}
f_{\T}\stackrel{{\rm def}}{=} \one_{[0,\tau)\times\O}F_{\T}(\cdot,v,\T,\nabla v,\nabla \T) \quad \text{ and }\quad 
g_{\T,n}\stackrel{{\rm def}}{=}  \one_{[0,\tau)\times\O} G_{\T,n}(\cdot,v,\T,\nabla v,\nabla \T).
\end{align*}
Note that, by Assumption \ref{ass:well_posedness_primitive_double_strong}\eqref{it:nonlinearities_strong_strong} and \eqref{eq:def_y}, a.s.\ for all $t\in \R_+$,
\begin{equation}
\|f_{\T}(t)\|_{L^2}+
\|g_{\T}(t)\|_{H^1(\ell^2)}\lesssim 1+ \y+ \|v(t)\|_{H^1}+\|\T(t)\|_{H^1}.
\end{equation}
Applying the It\^{o}'s formula to $\T\mapsto \|\T\|^4_{L^4}$ (using  a standard approximation argument, see e.g.\ \cite[Step 3 of Lemma 5.3]{Primitive1}) we have, a.s.\ for all $t\in [0,T]$,
\begin{align}
\label{eq:Ito_L_6_estimate}
\|\T^{\eta,\xi}(t)\|_{L^4}^4
&+12\int_{0}^t \int_{\Dom}\one_{[\eta,\xi]} \T^2 |\nabla\T|^2\,\dd x\dd s  \\
\nonumber
&=
\|\T(\eta)\|_{L^4}^4+\sum_{1\leq j\leq 3} \int_0^t \one_{[\eta,\xi]}
I_{\T,j}(s) \,\dd s +\MT(t),
\end{align}
where $I_{\T,1}(t)\stackrel{{\rm def}}{=} -\alpha \int_{\Tor^2}|\T(\cdot,0)|^4 \,\dd x_{\h}$,
\begin{align*}
I_{\T,2}(t)&\stackrel{{\rm def}}{=}4\int_{\Dom} \T^3 f_{\T}\,\dd x, \qquad 
I_{\T,3}(t)\stackrel{{\rm def}}{=}12\sum_{n\geq 1} \int_{\Dom} \T^2 |(\psi_n\cdot\nabla)\T+g_{\T,n}|^2\,\dd x,\\
\MT(t)&\stackrel{{\rm def}}{=}
4\sum_{n\geq 1}\int_{0}^t \one_{[\eta,\xi]}\int_{\Dom}\T^3 ((\psi_n\cdot\nabla)\T+g_{\T,n})\,\dd x\dd \beta_s^n,
\end{align*}
and we used the cancellation  
$$
\int_{\Dom} \T^3\, [(v\cdot\nabla_{\h} )\T+ w(v)\partial_{3} \T] \,\dd x=0
$$ 
which follows from Lemma \ref{l:cancellation} with $g=1$, $f=\T$, $\vv=(v,w(v))$ and a standard density argument. 
For the convenience of the exposition, the remaining part of the proof is split into several steps.

\emph{Step 1: There exists $c_1$ independent of $(j,\eta,\xi,v_0,\T_0)$ such that}
\begin{align*}
\E\int_{\eta}^{\xi} |\T^2| |\nabla \T|^2\,\dd x\dd s 
\leq c_1 (1+\E\|\T_0\|_{L^4}^4)
+c_1\E \int_{\eta}^{\xi} N(s) (1+\|\T(s)\|_{L^4}^4) \,\dd s,
\end{align*}
\emph{where $N$ is as below \eqref{eq:grownall_L_6}.}
We begin by estimating $I_{\T,1}$. Let $\varepsilon>0$ be 
fixed later. Note that, by \eqref{eq:boundedness_trace} and interpolation, we have, a.e.\ on $[0,\tau)\times \O$,
\begin{align*}
I_{\T,1}=\||\T(\cdot,0)|^2\|_{L^2(\Tor^2)}^2
&\leq \varepsilon \|\nabla |\T|^2\|_{L^2}^2 + C_{\varepsilon}\|\T\|_{L^4}^4\\
&\leq \varepsilon \int_{\Dom}|\T|^2|\nabla \T|^2\,\dd x + C_{\varepsilon}\|\T\|_{L^4}^4.
\end{align*}
Next we estimate $I_{\T,2}$:
\begin{align*}
|I_{\T,2}|
&\leq \big\||\T|^3\big\|_{L^2}\|f_{\T}\|_{L^2}
= \big\||\T|^{2}\big\|_{L^{3}}^{3/2}\|f_{\T}\|_{L^2}\\
&\stackrel{(i)}{\lesssim }
\big\||\T|^{2}\big\|_{L^{2}}^{3/4}\Big(\big\||\T|^{2}\big\|_{L^{2}}+\big\|\nabla [|\T|^{2}]\big\|_{L^{2}}\Big)^{3/4}\|f_{\T}\|_{L^2}\\
&\lesssim 
\|\T\|_{L^4}^{3}\|f_{\T}\|_{L^2} + \|\T\|_{L^4}^{3/2} \big\||\T| |\nabla \T|\big\|_{L^2}^{3/4}\|f_{\T}\|_{L^2}\\
&\stackrel{(ii)}{\leq} \|\T\|_{L^4}^{3}\|f_{\T}\|_{L^2} + \varepsilon \big\||\T| |\nabla \T|\big\|_{L^2}^{2}+ 
C_{\varepsilon}\|\T\|_{L^4}^{12/5}\|f_{\T}\|_{L^2}^{8/5}\\
&\leq 
\varepsilon \big\||\T|^2 |\nabla \T|\big\|_{L^2}^2+  
C_{\varepsilon}(1+\|f_{\T}\|_{L^2}^2) (1+\|\T\|_{L^4}^{4}),
\end{align*}
where in $(i)$ we used the interpolation inequality $\|\zeta\|_{L^{3}}\lesssim \|\zeta\|_{L^2}^{1/2}\|\zeta\|_{H^1}^{1/2}$ for $\zeta\in H^1(\Dom)$ and in $(ii)$ the Young's inequality with exponents $(\frac{8}{3},\frac{8}{5})$.

It remains to estimate $I_{\T,3}$. By the Cauchy-Schwartz inequality we have, a.e.\ on $[0,\tau)\times \O$,
\begin{align*}
|I_{\T,3}|
&\leq 6(1+\varepsilon)
\sum_{n\geq 1}\Big[\int_{\Dom} \T^2 |(\psi_n\cdot\nabla)\T|^2\,\dd x
+C_{\varepsilon}
\int_{\Dom} \T^4 |g_{\T,n}|^2\,\dd x\Big]\\
&\leq 
6(1+\varepsilon)\ellip
\int_{\Dom} \T^2 |\nabla \T|^2\,\dd x
+C_{\varepsilon} \sum_{n\geq 1}
 \int_{\Dom} \T^2 |g_{\T}|^2\,\dd x,
\end{align*}
where in the last step we used Assumption \ref{ass:well_posedness_primitive_double_strong}\eqref{it:well_posedness_primitive_parabolicity_strong_strong}.
We now estimate the last term appearing in the above estimate:
\begin{equation}
\label{eq:T_6_g_estimate}
\begin{aligned}
\Big|
\sum_{n\geq 1} \int_{\Dom} \T^2 |g_{\T,n}|^2\,\dd x
\Big|
\leq \|\T\|_{L^{4}}^2 \|(g_{\T,n})_{n\geq 1} \big\|_{L^4(\ell^2)}^2
&\lesssim
 \|\T\|_{L^{4}}^2\|g_{\T}\|_{H^1(\ell^2)}^2,
\end{aligned}
\end{equation}
where in the last estimate we used the Sobolev embedding $H^{1}(\Dom;\ell^2)\embed L^6(\Dom;\ell^2)$.

Taking $t=T$ in \eqref{eq:Ito_L_6_estimate} and afterwards the expected values,  the previous estimates show that 
\begin{align*}
12\E\int_{\eta}^{\xi}|\T|^2|\nabla \T|^2\,\dd x\dd s \leq 
(6\ellip(1+\varepsilon)+2 \varepsilon) 
\E\int_{\eta}^{\xi}|\T|^2|\nabla \T|^2\,\dd x\dd s + C_{\varepsilon}
\E\int_{\eta}^{\xi} N(s)(1+\|\T\|_{L^4}^4)\,\dd s.
\end{align*}
Here we have also used that $\E[\MT(T)]=\E[\MT(0)]=0$. Recall that $\ellip<2$. Thus the claim of this step follows by choosing $\varepsilon$ so that $(6\ellip(1+\varepsilon)+2 \varepsilon) < 12$ in the above estimate.

\emph{Step 2: There exists $c_2>0$, independent of $(j,\eta,\xi,v_0,\T_0)$, such that}
\begin{align*}
\E\Big[\sup_{s\in [0,T]}|\MT(s)|\Big]
&\leq \frac{1}{2}
\E\Big[\sup_{s\in [\eta,\xi]}\|\T(s)\|_{L^4}^4\Big]
+c_2 (1+\E\|\T_0\|_{L^4}^4)\\
&+ c_2 \E\int_{\eta}^{\xi} N(s) (1+\|\T(s)\|_{L^4}^4) \,\dd s.
\end{align*}

The Burkholder-Davis-Gundy inequality yields:
\begin{align*}
\E \Big[\sup_{s\in [0,T]}|\MT(s)| \Big]
\lesssim
\E\Big[\int_{\eta}^{\xi}\sum_{n\geq  1} \Big(\int_{\Dom}|\T|^3 |(\psi_n\cdot\nabla)\T+g_{\T,n}|\,\dd x\Big)^{2}\,\dd s\Big]^{1/2}.
\end{align*}
The Cauchy-Schwartz inequality, a.e.\ on $[0,\tau)\times \O$,
\begin{align*}
\sum_{n\geq 1} \Big(\int_{\Dom}|\T|^3 |(\psi_n\cdot\nabla)\T+g_{\T,n}|\,\dd x\Big)^{2}
& \leq \|\T\|_{L^4}^4 \Big[ \int_{\Dom} |\T|^2  \sum_{n\geq 1}  |(\psi_n\cdot\nabla)\T + g_{\T,n}|^2\,\dd x\Big]\\
 &\lesssim \|\T\|_{L^4}^4 \Big[ \int_{\Dom} |\T|^2 ( |\nabla\T|^2 +\|g_{\T}\|_{\ell^2}^2)\,\dd x\Big],
\end{align*}
where in the last estimate we used boundedness of $(\psi_{n})_{n\geq 1}$, cf.\ Remark \ref{r:boundedness}. 

Hence, the Young inequality yields
\begin{align*}
\E \Big[\sup_{s\in [0,T]}|\MT(s)| \Big]
&\lesssim \E \Big[\Big(\sup_{s\in [\eta,\xi]}\|\T\|_{L^4}^4\Big)^{1/2} 
\Big(\int_{\eta}^{\xi} \int_{\Dom} |\T|^2 ( |\nabla\T|^2 +\|g_{\T}\|_{\ell^2}^2)\,\dd x\dd s\Big)^{1/2}\Big]\\
& \leq \frac{1}{2} \E \Big[\sup_{s\in [\eta,\xi]}\|\T(s)\|_{L^4}^4\Big] +  
C \E \int_{\eta}^{\xi}\int_{\Dom} |\T|^2\big( |\nabla\T|^2+ \|g_{\T}\|_{\ell^2}^2\big)\,\dd x\dd s.
\end{align*}
The claim of Step 2 follows by combining the previous estimate with Step 1 and \eqref{eq:T_6_g_estimate}.

\emph{Step 3: Proof of \eqref{eq:grownall_L_6}}.
Taking $\E\big[\sup_{t\in [0,T]}|\cdot|\big]$ on both sides of \eqref{eq:Ito_L_6_estimate} the claim follows by repeating the estimates for $(I_{\T,j})_{j=1}^3$ performed in Step 1 and using the estimate for $\mathcal{M}$ of Step 2. Note that the term $\frac{1}{2}
\E\big[\sup_{s\in [\eta,\xi]}\|\T(s)\|_{L^4}^4\big]$ can be absorbed on the left-hand side of the corresponding estimate since $\T^{\eta,\xi}=\T((\cdot\vee\eta)\wedge \xi)$.
\end{proof}

\section{The main intermediate estimate}
\label{s:intermediate_estimate}
The aim of this section is to obtain the following key estimate for the $L^2$-maximal strong solution to \eqref{eq:primitive}-\eqref{eq:boundary_conditions_full}, which is the main ingredient in the proof of Proposition \ref{prop:energy_estimate_primitive_strong_strong}.
As in \cite[Subsection 5.2]{Primitive1}, inspired by the seminal work of C.\ Cao and E.S.\ Titi \cite{CT07}, the main estimate involves the barotropic and baroclinic modes, i.e.\ 
\begin{equation}
\label{eq:splitting_v}
\overline{v}\stackrel{{\rm def}}{=}\fint_{-h}^0 v(\cdot,\zeta)\,\dd \zeta
\qquad
\text{ and }
\qquad
\wt{v}\stackrel{{\rm def}}{=}v-\overline{v}.
\end{equation}

\begin{lemma}[Main intermediate estimate]
\label{l:main_estimate}
Let Assumptions \ref{ass:well_posedness_primitive_double_strong} and \ref{ass:global_primitive_strong_strong} be satisfied. Fix $T\in (0,\infty)$.
Assume that $(v_0,\T_0)\in L^{4}_{\F_0}(\O;\Hs^1\times H^1)$.
Let $((v,\T),\tau)$ be the $L^2$-maximal strong solution to \eqref{eq:primitive}-\eqref{eq:boundary_conditions_full} provided by Theorem \ref{t:local_primitive_strong_strong}. For all $s\in [0,\tau)$, set 
\begin{align*}
X_s&\stackrel{{\rm def}}{=} 
\|\wt{v}(s)\|_{L^4(\Dom)}^4
+
\|\overline{v}(s)\|_{H^1(\Tor^2)}^2
+
\| \partial_{3} v(s)\|_{L^2(\Dom)}^2
+
\|\partial_{3} \T(s)\|_{L^2(\Dom)}^2,\\
Y_s
&\stackrel{{\rm def}}{=}
\Big\||\wt{v}(s)||\nabla \wt{v}(s)| \Big\|_{L^2(\Dom)}^2
+
\|\overline{v}(s)\|_{H^2(\Tor^2)}^2
+
\| \partial_{3} v(s)\|_{H^1(\Dom)}^2\\
& \ +
\big\|\partial_{3} \T(s)\big\|_{H^1(\Dom)}^2+ 
\Big\||\wt{v}(s)||\nabla \T(s)| \Big\|_{L^2(\Dom)}^2.
\end{align*}
Then there exists $C_T>0$, independent of $(v_0,\T_0)$, such that, for all $\g>e$,
\begin{equation}
\label{eq:tail_probability_main_estimate}
\P\Big(\sup_{s\in [0,\tau\wedge T)} X_s +  \int_0^{\tau\wedge T} Y_s\,\dd s \geq \g\Big)
 \lesssim_T\frac{1+\E \|\y\|_{L^2(0,T)}^2+ \E \|v_0\|_{H^1}^4+ \E \|\T_0\|_{H^1}^4}{\log\log(\g)}.
\end{equation}
\end{lemma}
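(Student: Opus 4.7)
The plan is to derive a differential inequality for $X_s$ with dissipation $Y_s$, where the ``bad'' coefficient lies in $L^1_t$ only up to quantities controlled by the weak tail bounds of Lemma \ref{l:basic_estimates} (which carry a $1/\log$ decay), and then apply the stochastic Gr\"onwall lemma of \cite[Lemma A.1]{AV_variational} to pick up one further logarithm, yielding the desired $1/\log\log$ bound in \eqref{eq:tail_probability_main_estimate}. As a preliminary reduction I would localize along the sequence of stopping times $\tau_j$ defined as in \eqref{eq:def_tau_j}, so that all It\^o computations below take place in settings where each functional is a.s.\ finite, and all integrations by parts are justified by density.

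First I would write down the SPDEs satisfied by the four components of $X_s$. Vertically averaging \eqref{eq:primitive_strong_1} and using \eqref{eq:integral_imcompressibility} gives a two--dimensional equation on $\Tor^2$ for $\overline{v}$; subtracting this from $v$ gives the equation for $\wt{v}$. The $x_3$-independence hypotheses of Assumption \ref{ass:global_primitive_strong_strong} on $\phi_n^j,\psi_n^j,\ktwon,\tp^j$ for $j\in\{1,2\}$ are essential here: they ensure that the barotropic/baroclinic splitting behaves well under the noise, and that the pressure contribution $\nabla_{\h}\int_{-h}^\cdot\ktwon\T\,\dd\zeta$ decomposes cleanly into mean and fluctuation parts (cf.\ Remarks \ref{r:sigma_independence} and \ref{r:phi_independence}). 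Differentiating \eqref{eq:primitive_strong_1}--\eqref{eq:primitive_strong} in $x_3$ yields equations for $\partial_3 v$ and $\partial_3 \T$ in which, thanks to \eqref{eq:primitive_5} and the $x_3$-independence of $\ktwon$, the troublesome term \eqref{eq:gradient_sigma_n} drops out (because $\partial_3$ of a vertical primitive gives the integrand). This is the single most important cancellation.

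Next I would apply It\^o's formula to each of $\|\wt{v}\|_{L^4}^4$, $\|\overline{v}\|_{H^1(\Tor^2)}^2$, $\|\partial_3 v\|_{L^2}^2$ and $\|\partial_3\T\|_{L^2}^2$. The nonlinear transport terms $(v\cdot\nabla_{\h})v+w(v)\partial_3 v$ and the temperature transport are handled by the cancellations of Lemma \ref{l:cancellation} (with $g\equiv 1$ for $\overline{v}$, $\partial_3 v$, $\partial_3 \T$ and $g=1$, $f=\wt v$ for the $L^4$ identity), together with the standard baroclinic/barotropic interaction bounds. Terms arising from the turbulent pressure couple $\wt v$, $\overline{v}$ and $\partial_3\T$ (via \eqref{eq:primitive_3}--\eqref{eq:primitive_5}); these are dominated, after Cauchy--Schwarz, Ladyzhenskaya and Gagliardo--Nirenberg, by $\eta Y_s+C_\eta(1+N_s)X_s$ with an ``auxiliary'' process
\[
N_s \stackrel{{\rm def}}{=} 1+\|v(s)\|_{H^1}^2+\|\T(s)\|_{H^1}^2+\|\T(s)\|_{L^q}^p+\bigl\||\T||\nabla\T|\bigr\|_{L^2}^2+\Xi(s)^2,
\]
where $p,q$ are chosen as in \eqref{eq:P_T_eta_tail_estimate} with a suitable $\eta\in(0,1)$. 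The It\^o corrections from the gradient/transport noise are absorbed into $\ellip<2$ via Assumption \ref{ass:well_posedness_primitive_double_strong}\eqref{it:well_posedness_primitive_parabolicity_strong_strong} (integration by parts exactly as in the proof of Lemma \ref{l:smr} and \cite[Prop.\ 6.8]{Primitive1}). Stochastic integrals are controlled by Burkholder--Davis--Gundy combined with Young's inequality, at the cost of another $\tfrac12\sup_s X_s$ that is absorbed on the left.

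Collecting everything I would obtain, for all stopping times $0\leq\eta\leq\xi\leq\tau_j\wedge T$, an estimate of the form
\begin{equation*}
\E\!\!\sup_{s\in[\eta,\xi]}\!X_s +\E\!\int_\eta^\xi \!Y_s\,\dd s
\;\leq\; c_0\bigl(1+\E X_\eta\bigr)+c_0\,\E\!\int_\eta^\xi N_s\bigl(1+X_s\bigr)\,\dd s.
\end{equation*}
By Lemma \ref{l:basic_estimates} together with \eqref{eq:P_T_eta_tail_estimate}, there is a function $R\mapsto\mathbf{P}(\int_0^{\tau_j\wedge T}N_s\,\dd s\geq R)$ which decays like $1/\log R$, with prefactor controlled by $1+\E\|\y\|_{L^2(0,T)}^2+\E\|v_0\|_{H^1}^4+\E\|\T_0\|_{H^1}^4$ (the fourth moment enters because $X_0\lesssim \|v_0\|_{H^1}^2+\|\T_0\|_{H^1}^2$ and the Gr\"onwall bound is squared inside the probability). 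Feeding this decay into \cite[Lemma A.1]{AV_variational} in the same way as in the proof of Lemma \ref{l:basic_estimates} (choosing $R\simeq \tfrac{1}{c_0}\log(\g/\log\g)$) produces the claimed $1/\log\log(\g)$ tail, and then letting $j\to\infty$ concludes \eqref{eq:tail_probability_main_estimate}.

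The main obstacle is the bookkeeping of cross terms coming from the non-isothermal pressure: unlike in \cite{Primitive1,CT07}, $\T$ cannot be decoupled from $v$ at the $H^1$ level, so the auxiliary term $\||\wt{v}||\nabla\T|\|_{L^2}^2$ must appear on the \emph{good} side $Y_s$, and the interactions $\wt{v}\cdot \nabla_{\h}(\int_{-h}^{\cdot}\ktwon\T\,\dd\zeta)$ have to be split carefully into a part absorbed into $Y_s$ and a part of the form $N_s X_s$. Exploiting Lemma \ref{l:cancellation} jointly (with $g$ a function of $\T$ and $f=\wt v$) for the mixed balances is what makes the overall scheme close.
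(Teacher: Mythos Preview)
Your overall strategy (localize, barotropic/baroclinic split, It\^o on the four pieces of $X_s$, stochastic Gr\"onwall with a $1/\log$-controlled coefficient) matches the paper's skeleton, but there is a genuine gap in the closing step. The assertion that the turbulent-pressure interactions are ``dominated, after Cauchy--Schwarz, Ladyzhenskaya and Gagliardo--Nirenberg, by $\eta Y_s+C_\eta(1+N_s)X_s$'' does not hold with only the four functionals you list. Concretely, vertically averaging the stochastic term $\ktwon\int_{-h}^{\cdot}\nabla_{\h}\T\,\dd\zeta$ in the $\overline v$-equation produces $-\ktwon\nabla_{\h}\wh{\T}$ with $\wh{\T}=\fint_{-h}^0\T\,\zeta\,\dd\zeta$, and the maximal-regularity bound for $\overline v$ then requires $\|\wh{\T}\|_{L^2_tH^2_x(\Tor^2)}$, which is neither in $Y_s$ nor controlled by $N_sX_s$. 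Likewise, the fluctuation part $\opt_n\T$ in the $\wt v$-equation contributes $\int|\wt v|^2|\nabla_{\h}\TT|^2$ (with $\TT=\int_{-h}^{\cdot}\T\,\dd\zeta$), again not of the claimed form.

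The paper closes this by tracking $\wh{\T}$ as an \emph{additional} unknown (It\^o on $\|\wh{\T}\|_{H^1(\Tor^2)}^2$), and this in turn spawns $\reallywidehat{w(v)\partial_3\T}$, which after the identity \eqref{eq:wh_partial_3_T_equality} forces estimates on $\||\T||\nabla\wt v|\|_{L^2}$ and $\||\TT||\nabla\wt v|\|_{L^2}$. These are obtained by applying It\^o to the \emph{mixed} functionals $\||\wt v|\,|\T|\|_{L^2}^2$, $\||\wt v|\,|\TT|\|_{L^2}^2$, $\||\T|\,|\TT|\|_{L^2}^2$ and $\|\TT\|_{L^4}^4$ (using Lemma \ref{l:cancellation} with nontrivial $g$ and a variant, Lemma \ref{l:cancellation_vv_TT}, for the $\TT$-cases). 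In total nine coupled estimates are assembled with carefully chosen weights so that the cross terms cancel in a triangular fashion (Subsection \ref{ss:proof_lemma_main_estimate}). Your final sentence gestures at mixed balances, but the proposal as written omits $\wh{\T}$ entirely and does not provide the cascade needed to close; without it the differential inequality you display cannot be obtained.
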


The proof of the above result requires several steps which are spread over this section. 
The proof of Lemma \ref{l:main_estimate} will be given in Subsection \ref{ss:proof_lemma_main_estimate}.

Lemma \ref{l:main_estimate} can be seen as an extension of \cite[Lemma 5.3]{Primitive1} to the case of non-isothermal turbulent pressure. Note that the estimate of the tail probability \eqref{eq:tail_probability_main_estimate} was not given in \cite{Primitive1}. In case of isothermal turbulent pressure (i.e.\ $\ktwon\equiv 0$ and $\tp\equiv 0$), the decay factor $(\log\log(\g))^{-1}$ on the right hand side of \eqref{eq:tail_probability_main_estimate} can be replaced by $(\log(\g))^{-1}$, cf.\  Remark \ref{r:comparison_primitive1} for a similar situation.

As in \cite{Primitive1}, to prove the above main estimate we follow the approach of the second author and T.\ Kashiwabara in \cite{HK16}. There the main idea was to prove three estimates separately for the variables 
$\overline{v},\wt{v}$ and $\partial_{3} v$. Afterwards, one multiplies these estimates with suitable constants and by summing them up, one obtains a closed estimate (cf.\ \cite[Lemma 5.3]{Primitive1}).  Since in  \cite{Primitive1} we were concerned with the case of \emph{isothermal} turbulent pressure, we were able to follow the strategy of \cite{HK16} as the temperature $\T$ played only a minor role in the estimates (see the discussion in Subsection \ref{ss:novelty}).
Indeed, in the case of isothermal turbulent pressure, the energy bound for $\sup_{t}\|\T\|_{L^2_x}^2 + \|\T\|_{L^2_t(L^2_x)}^2$ in Lemma \ref{l:basic_estimates} already gives enough information on $\T$ to obtain global well-posedness, cf.\ the proof of  \cite[Theorem 3.7]{Primitive1}. 
However, this is \emph{not} true in the case of non-isothermal turbulent pressure. Indeed, if $\ktwon\neq 0$, then the term
\begin{equation}
\label{eq:extra_term_main_estimate}
\p \Big[\int_{-h}^{\cdot} \nabla_{\h} (\ktwon(\cdot,\zeta)\T(\cdot,\zeta))\,\dd \zeta\Big] \, \dd \beta_t^n 
\end{equation}
appearing in \eqref{eq:primitive_v_T_pressure_1} cannot be controlled via Lemma \ref{l:basic_estimates} in the strong setting, cf.\ Lemma \ref{l:smr}. In other words, the action of $\T$ through the term \eqref{eq:extra_term_main_estimate} in the $v$-equation is \emph{not} lower order. 
Hence, in contrast to \cite{Primitive1}, we need to consider the equations for $v$ and $\T$ \emph{jointly}. 
This gives rise to some new terms in the equations for $\overline{v}$ and $\wt{v}$ which we are going to describe. 
To explain the new quantities arising in the estimates, let us follow the argument in \cite[Lemma 5.3]{Primitive1} and therefore we first look at the estimate for $\overline{v}$. Taking the averaging operator $\overline{\cdot}=\fint_{-h}^0 \cdot\,\dd \zeta$ in \eqref{eq:primitive_v_T_pressure_1}, one sees that the the following term appears
\begin{equation}
\label{eq:avarage_whT_appears}
\overline{\int_{-h}^{\cdot} \nabla_{\h}(\ktwon(\cdot) \T(\cdot,\zeta))\,\dd \zeta}
= \nabla_{\h}\Big(\ktwon(\cdot)  \overline{\int_{-h}^{\cdot}  \T(\cdot,\zeta)\,\dd \zeta} \Big)
=-\nabla_{\h}(\ktwon(\cdot)\wh{\T}(\cdot)),
\end{equation}
where we used that $\ktwon$'s are $\z$-independent by Assumption \ref{ass:global_primitive_strong_strong} and we set
\begin{equation}
\label{eq:hat_T}
\wh{\T}\stackrel{{\rm def}}{=}\fint_{-h}^0  \T(\cdot,\zeta)\zeta\,\dd \zeta.
\end{equation}

\begin{remark}[Physical interpretation of $\wh{\T}$]
Recall that  $\T$ is proportional to $ \rho$, cf.\ \eqref{eq:density_law_linear_det} and \eqref{eq:simplification_in_coeffcients}. Hence the ratio $\wh{\T}/\overline{\T}$ is equal to the center of gravity in the vertical direction. 
\end{remark}

To repeat the argument of \cite[Step 1 of Lemma 5.3]{Primitive1}, by stochastic maximal $L^2$-regularity (cf.\ Lemma \ref{l:smr}), in order to obtain $L^{\infty}_t(H^1_x)\cap L^2_t(H^2_x)$-estimates for $\overline{v}$, we need $L^{2}_t(H^1_x)$-estimates for $\wh{\T}$. However, the latter estimate does not follow from Lemma \ref{l:basic_estimates}. Thus we need an additional argument to obtain the required $L^{2}_t(H^1_x)$-estimates for $\wh{\T}$.
This this end, we apply the weighted average operator $\wh{\cdot}=\fint_{-h}^0 \cdot\, \zeta\,\dd \zeta$ in \eqref{eq:primitive_v_T_pressure_2}, and then the following term appears
\begin{align}
\label{eq:wh_partial_3_T_equality}
\reallywidehat{w(v)\partial_{3} \T}
&=
\fint_{-h}^0 w(v)\partial_{3} \T \, \zeta\,\dd \zeta\\
\nonumber
&\stackrel{(i)}{=}-\fint_{-h}^{0} \Big([w(v)](\cdot,\zeta)\T(\cdot,\zeta)
-\div_{\h} v(\cdot,\zeta)\T(\cdot,\zeta)\zeta \Big)\,\dd \zeta\\
\nonumber
&\stackrel{(ii)}{=}
-\fint_{-h}^{0} \Big[\Big(\int_{\zeta}^0 \div_{\h} v(\cdot,\xi)\,d\xi\Big)\T
- \div_{\h} v(\cdot,\zeta) \T(\cdot,\zeta) \zeta \Big]\,\dd \zeta\\
\nonumber
&=
\fint_{-h}^{0} \Big[-
\div_{\h}v (\cdot,\zeta)\Big(\int_{-h}^{\zeta} \T(\cdot,\xi)\,d\xi \Big)+\div_{\h} v(\cdot,\zeta) \T(\cdot,\zeta) \zeta 
\Big]\,\dd \zeta,
\end{align}
where in $(i)$ we use an integration by parts and $[w(v)](\cdot,-h)=[w(v)](\cdot,0)=0$, in $(ii)$  \eqref{eq:def_w} and $\int_{-h}^0 \div_{\h} v\,\dd \zeta=0 $. 
Therefore, to obtain $L^2(H^1)$-estimates for $\wh{\T}$ we need to bound the products 
\begin{equation}
\label{eq:additional_to_bound_term_mixed}
\big\||\T|\, |\nabla \wt{v}|\big\|_{L^2((0,\tau)\times\Dom)}^2 \qquad \text{ and } \qquad 
\Big\||\int_{-h}^{\cdot} \T(\cdot,\zeta)\,\dd \zeta|\, |\nabla \wt{v}|\Big\|_{L^2((0,\tau)\times\Dom)}^2.
\end{equation}
Such quantities can be estimated by applying the It\^o formula to the functionals
\begin{equation*}
(\wt{v},\T)\mapsto \big\||\T|\, | \wt{v}|\big\|_{L^2(\Dom)}^2 \qquad \text{ and } \qquad 
(\wt{v},\T)\mapsto  \Big\||\int_{-h}^{\cdot} \T(\cdot,\zeta)\,\dd \zeta| \, | \wt{v}|\Big\|_{L^2(\Dom)}^2,
\end{equation*}
respectively. 
For details, see Subsections \ref{ss:estimate_tilde_v_T} and \ref{ss:estimate_tilde_v_varphi}. 
The quantities in \eqref{eq:additional_to_bound_term_mixed} also arise in the estimate for $\wt{v}$.  
Interestingly, compared to \cite[Lemma 5.3]{Primitive1}, no further terms appear in the estimate for $\partial_{3} v$, see Subsection \ref{ss:estimate_partial_3_v}.
Finally, as it will turn out, to bound the quantities in \eqref{eq:additional_to_bound_term_mixed}, we need an estimate also for the quantities
\begin{equation*}
\Big\| |\T|\, \big|\int_{-h}^{\cdot} \nabla_{\h}\T\,\dd \zeta\big|\Big\|_{L^2((0,\tau)\times\Dom)}^2
\quad \text{ and }\quad
\Big\|\big|\int_{-h}^{\cdot} \T\,\dd \zeta\big|\, \big|\int_{-h}^{\cdot} \nabla_{\h}\T\,\dd \zeta\big|\Big\|_{L^2((0,\tau)\times\Dom)}^2,
\end{equation*}
respectively.
To estimate the above terms we apply the It\^o formula to the functionals
$$
\T\mapsto \Big\| |\T|\, \big|\int_{-h}^{\cdot} \T(\cdot,\zeta)\,\dd \zeta\big|\Big\|_{L^2(\Dom)}^2
\quad \text{ and }\quad
\T\mapsto \Big\| \int_{-h}^{\cdot} \T(\cdot,\zeta)\,\dd \zeta\Big\|_{L^4(\Dom)}^4.
$$
After that, Lemma \ref{l:main_estimate} follows by multiplying each estimate with a suitable constant and summing them up, see \cite[Step 4 of Lemma 5.3]{Primitive1} for a similar situation.

To economize the notation, below, for $(t,\om)\in \R_+\times \O$, $x_{\h}\in \Tor^2$ and $\z\in (-h,0)$, we let
\begin{equation}
\label{eq:def_TT}
\TT(t,\om,x_{\h},\z)\stackrel{{\rm def}}{=} \int_{-h}^{\z} \T(t,\om,x_{\h},\zeta)\,\dd \zeta.
\end{equation}
Next, we give an overview of this section.
\begin{itemize}
\item Subsection \ref{ss:set_up_main_estimate}: Equations for the new quantities $(\overline{v},\wt{v},\wh{\T})$.
\item Subsection \ref{ss:preparation}: Set-up of the proof of Lemma \ref{l:main_estimate}. 
\item Subsection \ref{ss:estimate_overline_v}: Estimate for $\sup_t\| \overline{v}\|_{H^1_x}$ and $\| \overline{v}\|_{L^2_t H^2_x}$.
\item Subsection \ref{ss:estimate_wh_T}: 
Estimate for $\sup_{t}\|\wh{\T} \|_{H^1_x}$ and $\|\wh{\T}\|_{L^2_t H^2_x}$.
\item Subsection \ref{ss:estimate_partial_3_v}: Estimate for $\sup_t \|\partial_{3} v\|_{L^2_x}$ and $\|\partial_{3}v\|_{L^2_t H^1_x}$.
\item  Subsection \ref{ss:estimate_partial_T_3}: Estimate for $\sup_t\|\partial_{3}\T \|_{L^2_x}$ and $\|\partial_{3}\T\|_{L^2_t H^1_x}$.
\item Subsection \ref{ss:estimate_wt_v_L_4}: Estimate for $\sup_t\|\wt{v} \|_{L^4_x}$ and $\||\wt{v}||\nabla \wt{v}|\|_{L^2_t L^2_x}$.
\item Subsection \ref{ss:estimate_tilde_v_T}: 
Estimate for $\||\wt{v}||\nabla \T|\|_{L^2_t L^2_x}$ and 
$\||\T| |\nabla \wt{v}|\|_{L^2_t L^2_x}$.
\item Subsection \ref{ss:estimate_tilde_v_varphi}: Estimate for $\||\TT|  |\nabla\wt{v}|\|_{L^2_t L^2_x}$ and $ 
\| |\wt{v}||\nabla\TT|\|_{L^2_t L^2_x}$.
\item Subsection \ref{ss:estimate_T_TT}: Estimate for $\||\T|  |\nabla\TT\|_{L^2_t L^2_x}$.
\item Subsection \ref{ss:estimate_TT}: Estimate for $\||\TT|  |\nabla\TT\|_{L^2_t L^2_x}$.
\item Subsection \ref{ss:proof_lemma_main_estimate}: Lemma \ref{l:main_estimate} obtained by multiplying with suitable constants the estimates of Subsections \ref{ss:estimate_overline_v}-\ref{ss:estimate_TT} and then summing them up.
\end{itemize}

In the following subsections, the assumptions of Lemma \ref{l:main_estimate} are in force. In particular, $((v,\T),\tau)$ is the $L^2$-maximal strong solution to \eqref{eq:primitive}-\eqref{eq:boundary_conditions_full} provided by Theorem \ref{t:local_primitive_strong_strong}, see Definition \ref{def:sol_strong_strong}.

\subsection{System of SPDEs for the unknown $(\overline{v},\wt{v},\wh{\T})$}
\label{ss:set_up_main_estimate}
By Definition \ref{def:sol_strong_strong} and Assumptions \ref{ass:well_posedness_primitive_double_strong} and \eqref{ass:global_primitive_strong_strong}, $((v,\T),\tau)$ is an $L^2$-local strong solution to (cf.\ Definition \ref{def:sol_strong_strong})
\begin{subequations}
\label{eq:primitive_v_proof_global}
\begin{alignat}{3}
\label{eq:primitive_v_proof_global_1}
\begin{split}
\dd  v 
&=\Big(\Delta v+\p\big[-(v\cdot \nabla_{\h})v-\w(v)\partial_{3} v+ \Lt \T+ \Lpp (v,\T) + \fvt \big] \Big)\, \dd t \\
&\qquad \ \ +\sum_{n\geq 1}\p\Big[(\phi_{n}\cdot\nabla) v+ \ktwon \int_{-h}^{\cdot}\nabla_{\h}\T(\cdot,\zeta)\,\dd \zeta + \gvtn \Big] \, \dd {\beta}_t^n,
\end{split}\\
\label{eq:primitive_v_proof_global_2}
\begin{split}
\dd  \T&=\Big[\Delta \T -(v\cdot\nabla_{\h}) \T -\w(v) \partial_{3} \T+ f_{\T} \Big]\, \dd t
+\sum_{n\geq 1}\Big[(\psi_n\cdot \nabla) \Temp+g_{\T,n}\Big] \, \dd \beta_t^n,
\end{split}\\
v(0,\cdot)&=v_0, \quad \T(0,\cdot)=\T_0,
\end{alignat}
\end{subequations}
where for $n\geq 1$, $\q$ as in Subsection \ref{ss:set_up} and on $[0,\tau)\times \O$, we set
\begin{subequations}
\label{eq:inhomogeneity_v_T}
\begin{alignat}{3}
\label{eq:inhomogeneity_v_T_1}
\Lt \T&\stackrel{{\rm def}}{=} 
 (\tp_{\h}\cdot\nabla_{\h}) \int_{-h}^{\cdot} \nabla_{\h} \T(\cdot,\zeta)\,\dd \zeta + \int_{-h}^{\cdot} \tp^3(\cdot,\zeta) \partial_3\nabla_{\h} \T(\cdot,\zeta)\,\dd \zeta ,\\
\label{eq:inhomogeneity_v_T_2}
 \Lpp(v,\T) &\stackrel{{\rm def}}{=}\sum_{n\geq 1}\sum_{1\leq j\leq 2} \hp^{j,\cdot}_n  \Big(\q\Big[(\phi_n\cdot \nabla) v+ \ktwon \int_{-h}^{\cdot} \nabla_{\h} \T(\cdot,\zeta)\,\dd \zeta\Big]\Big)^j ,\\
\label{eq:inhomogeneity_v_T_4}
\gvtn
&\stackrel{{\rm def}}{=}\gvn(\cdot,v,\T) + (\nabla_{\h}\ktwon) \int_{-h}^{\cdot}\T(\cdot,\zeta)\,\dd \zeta, \qquad 
  g_{v}\stackrel{{\rm def}}{=}(g_{v,n})_{n\geq 1},\\
  \label{eq:inhomogeneity_v_T_5}
   g_{\T,n}&\stackrel{{\rm def}}{=}\gtn(\cdot,v,\T),
  \qquad \qquad \qquad  \qquad \qquad  \qquad \  g_{\T}\stackrel{{\rm def}}{=}(g_{\T,n})_{n\geq 1},\\
\label{eq:inhomogeneity_v_T_3}
\begin{split}
\fvt
&\stackrel{{\rm def}}{=}\fv(\cdot,v,\T,\nabla v,\nabla \T)+ \int_{-h}^{\cdot} \nabla(\kone(\cdot,\zeta)\T(\cdot,\zeta))\,\dd \zeta\\
& \qquad  \qquad\qquad  \qquad\quad  \   +\sum_{n\geq 1}\sum_{1\leq j\leq 2} \hp_{n}^{j,\cdot}(\q [ \gvtn])^j,
\end{split}\\
\label{eq:inhomogeneity_v_T_6}
f_{\T}&\stackrel{{\rm def}}{=} \ft(\cdot,v,\T,\nabla v,\nabla \T). 
\end{alignat}
\end{subequations}
Finally, let us recall that \eqref{eq:primitive_v_proof_global} is complemented with the following boundary conditions:
\begin{subequations}
	\label{eq:primitive_v_proof_global_bc}
	\begin{alignat}{2}
		\label{eq:primitive_v_proof_global_bc_1}
		\partial_{3} v (\cdot,-h)=\partial_{3} v(\cdot,0)=0 \ \  \text{ on }\Tor^2,&\\
		\label{eq:primitive_v_proof_global_bc_2}
		\partial_{3} \T(\cdot,-h)= \partial_{3} \T(\cdot,0)+\alpha \T(\cdot,0)=0\ \  \text{ on }\Tor^2.&
	\end{alignat}
\end{subequations}
Note that to derive \eqref{eq:primitive_v_proof_global} we used that $\ktwon$ and $\tp_{\h}=(\tp^1,\tp^2)$ are $\z$-independent by Assumption \ref{ass:global_primitive_strong_strong} and 
$\int_{-h}^{\cdot}\partial_j \tp^3(\cdot,\zeta) \partial_3 \T(\cdot,\zeta)\,\dd \zeta=\partial_j \tp^3 \T$ by \eqref{eq:primitive_v_proof_global_bc_2}.
As above, here $(\cdot)^j$ denotes the $j$-th coordinate of the corresponding vector. $L^2$-local strong solutions to \eqref{eq:primitive_v_proof_global} can be defined as in Definition \ref{def:sol_strong_strong}, we omit the details for brevity. 

The logic behind the definition \eqref{eq:inhomogeneity_v_T} is that the quantities in \eqref{eq:inhomogeneity_v_T_3}-\eqref{eq:inhomogeneity_v_T_5} are lower-order in the sense that they can be estimated (in strong $L^2$-norms) due to the standard energy estimates of Lemma \ref{l:basic_estimates} and Assumption \ref{ass:well_posedness_primitive_double_strong}\eqref{it:nonlinearities_strong_strong} (cf.\ \eqref{eq:def_N_v_T}-\eqref{eq:tail_probability_N_v_T} below). This is not the case for the \emph{linear} operators in $(v,\T)$ appearing \eqref{eq:inhomogeneity_v_T_2}, due to our (relatively) weak regularity assumptions on $(\hp,\tp)$ in Assumption \ref{ass:well_posedness_primitive_double_strong}. It is easy to see that, under additional assumption on $(\hp,\tp)$, also the quantities  in \eqref{eq:inhomogeneity_v_T_1}-\eqref{eq:inhomogeneity_v_T_2} can by the energy estimates in 
Lemma \ref{l:basic_estimates}. However, it would be unnatural to enforce the regularity assumptions on $(\hp,\tp)$ as they will appear naturally when dealing with the Stratonovich formulation of \eqref{eq:primitive}, see Section \ref{s:Stratonovich}.

Next we derive SPDEs for the unknown $(\overline{v},\wt{v},\wh{\T})$. We begin by considering $\overline{v}=\fint v(\cdot,\zeta)\,\dd \zeta$.
To this end, let us recall that $\pr$ denotes the Helmholtz projection acting on the horizontal variable $x_{\h}\in \Tor^2$ where $x=(x_{\h},\z)\in \Dom$, see Subsection \ref{ss:set_up}. Since $\overline{\p v}= \pr \overline{v}$, applying the vertical average $\overline{\cdot}=\fint_{-h}^0\cdot\,\dd \zeta$ in \eqref{eq:primitive_v_proof_global_1} and using Assumption \ref{ass:global_primitive_strong_strong}, $(\overline{v}, \tau)$ is a $L^2$-local strong solution the following problem on $\Tor^2$:
\begin{subequations}
\label{eq:primitive_bar}
\begin{alignat}{3}
\label{eq:primitive_bar_1}
\begin{split}
\dd \overline{v}
 &=\Big(\Delta_{\h} \overline{v}+ \pr\Big[-(\overline{v}\cdot \nabla_{\h})\overline{v}-\force(\wt{v}) 
 -(\tp\cdot \nabla_{\h})\nabla_{\h}\wh{\T}\\
 &\qquad \qquad \qquad\qquad\qquad  - \reallywidehat{\tp^3 \partial_3 \nabla_{\h}\T}+ \overline{\fvt}+ \Lpp(v,\T) \Big]\Big)\, \dd t\\
&\quad \ \ +\sum_{n\geq 1}\pr \Big[(\phi_{n,\h}\cdot\nabla_{\h}) \overline{v}+\overline{\phi^3_n \partial_{3} v } 
- \ktwon \nabla_{\h} \wh{\T} +\overline{\gvtn}\Big] \, \dd {\beta}_t^n,
\end{split}\\
\label{eq:primitive_bar_2}
\force(\wt{v})
&\stackrel{{\rm def}}{=}
\overline{(\wt{v}\cdot \nabla_{\h}) \wt{v}+\wt{v} (\div_{\h} \wt{v})},
\\
\overline{v}(0,\cdot)&=\overline{v}_0\stackrel{{\rm def}}{=}\fint_{-h}^0 v_0(\cdot,\zeta)\,\dd \zeta, 
\end{alignat}
\end{subequations}
where $\phi_{n,\h}\stackrel{{\rm def}}{=}(\phi^1_n,\phi^2_n)$. To obtain \eqref{eq:primitive_bar_1} we also used \eqref{eq:avarage_whT_appears}, 
\begin{align*}
\overline{(v\cdot \nabla_{\h})v+ \w(v)\partial_{3} v} = 
(\overline{v }\cdot\nabla_{\h} ) \overline{v}+
\overline{(\wt{v}\cdot\nabla_{\h}) \wt{v}+(\div_{\h} \wt{v})\, \wt{v} }
\end{align*}
which follows from  $\overline{\wt{v}}=0$ and an integration by parts, and $\overline{\Lpp(v,\T)}=\Lpp(v,\T)$ which follows from the $\z$-independence of $\hp_{n}^{j,k}$ (see Assumption \ref{ass:global_primitive_strong_strong}) and the fact that $\q[\cdot]= \qr[\overline{\cdot}]$ is $\z$-independent as well (see \eqref{eq:Helmholtz_hydrostatic}).
%
Here, as above, by $L^2$-local strong solution to \eqref{eq:primitive_bar} we understand that $(\overline{v},\tau)$ solves \eqref{eq:primitive_bar} in its natural integral form, cf.\ Definition \ref{def:sol_strong_strong}. Note that $\div_{\h} \overline{v}_0=0$ since $v_0\in \Hs^1$. Hence, by \eqref{eq:primitive_bar_1}, 
\begin{equation}
\label{eq:divergence_free_bar_v}
\div_{\h} \overline{v}=0 \  \ \text{ a.e.\ on }[0,\tau)\times \O\times \Tor^2.
\end{equation}

Next, we derive a system of SPDEs for $\wt{v}$. To this end, we apply the deviation from the vertical average operator $\wt{\cdot}=\cdot-\overline{\cdot}$ in \eqref{eq:primitive_v_proof_global}. Note that $\p f-\overline{\p f}=f-\overline{f}$ for all $f\in L^2(\Dom;\R^2)$ by \eqref{eq:Helmholtz_hydrostatic}. Using \eqref{eq:avarage_whT_appears}, one sees that $(\wt{v}, \tau)$ is a $L^2$-local strong solution to
\begin{subequations}
\label{eq:primitive_tilde}
\begin{alignat}{5}
\label{eq:primitive_tilde_1}
\begin{split}
\dd  \,\wt{v} 
&= \Big[\Delta \wt{v}-(\wt{v}\cdot \nabla_{\h})\wt{v}- \w(v) \partial_{3} \wt{v} +\forcetwo(\wt{v},\overline{v}) \\
&\qquad \qquad \qquad + \Lt \T+(\tp_{\h}\cdot\nabla_{\h})\nabla_{\h}\wh{\T}+\reallywidehat{\tp^3\partial_3 \nabla_{\h}\T}+ \wt{\fvt} \,\Big]\, \dd  t\\
&\quad 
+\sum_{n\geq 1}\Big[(\phi_{n}\cdot\nabla) \wt{v}-\overline{\phi^3_n \partial_{3} v}+\opt_n \T +\wt{\gvtn}\,\Big] \, \dd {\beta}_t^n, 
\end{split}\\
\forcetwo(\wt{v},\overline{v})
&\stackrel{{\rm def}}{=} -(\wt{v}\cdot \nabla_{\h}) \overline{v} - (\overline{v}\cdot \nabla_{\h}) \wt{v} +\force(\wt{v}),\\
\opt_n \T
&\stackrel{{\rm def}}{=} \ktwon \Big(\int_{-h}^{\cdot} \nabla_{\h }\T\,\dd \zeta
 - \nabla_{\h}\wh{\T}\Big),\\
\wt{v}(0,\cdot)&=\wt{v}_0\stackrel{{\rm def}}{=}v_0-\overline{v}_0.
\end{alignat}
\end{subequations}
where $\force$ is as in \eqref{eq:primitive_bar_2} and we used that $\partial_{3} v=\partial_{3}\wt{v}$. By \eqref{eq:primitive_v_proof_global_bc_1} we also have
\begin{equation}
\partial_{3} v(\cdot,-h)=
\partial_{3} v(\cdot,0)=0 \ \ \text{ on }\Tor^2.
\label{eq:primitive_tilde_bc}
\end{equation}
Before going further, let us note that by \eqref{eq:divergence_free_bar_v}, we have 
$$
w(v)=w(\wt{v})  \  \ \text{ a.e.\ on }[0,\tau)\times \O\times \Dom.
$$
The previous identity will be used often in the following without further mentioning it.

Finally, we consider $\wh{\T}$. By taking the weighted average operator $\wh{\cdot}=\fint_{-h}^0 \cdot\, \zeta \dd \zeta$ in the second equation of \eqref{eq:primitive_v_proof_global}, we have that $(\wh{\T},\tau)$ is an $L^2$-local strong solution to
\begin{subequations}
\label{eq:theta_hat_estimate}
\begin{alignat}{4}
\label{eq:theta_hat_estimate_1}
\begin{split}
\dd  \wh{\Temp} 
&=\Big[\Delta_{\h} \wh{\T} -
\reallywidehat{(\wt{v}\cdot\nabla_{\h}) \T} 
- (\overline{v}\cdot\nabla_{\h})\wh{\T} -\Ht(v,\T)
+ f_{\wh{\T}} \Big]\, \dd t \\
&\quad \ \ +\sum_{n\geq 1}\Big[(\psi_{n,\h}\cdot \nabla_{\h}) \T+\wh{\psi^3_n \partial_{3}\T} +\wh{g_{\theta,n}}\Big] \, \dd \beta_t^n,
\end{split}\\
\label{eq:theta_hat_estimate_2}
\Ht(v,\T)&\stackrel{{\rm def}}{=}\fint_{-h}^{0} \Big[
- \TT\, \div_{\h}\wt{v} +\T\, \div_{\h} \wt{v}\,\zeta
\Big]\,\dd \zeta, \\
\label{eq:theta_hat_estimate_3}
f_{\wh{\T}}&\stackrel{{\rm def}}{=}
\wh{f_{\T}}+h^{-1}[ \T(\cdot,0)-\T(\cdot,-h)],\\
\label{eq:theta_hat_estimate_4}
\wh{\T}(0,\cdot)&=\wh{\T}_0\stackrel{{\rm def}}{=}\fint_{-h}^0 \T_0(\cdot,\zeta)\, \zeta\, \dd \zeta,
\end{alignat}
\end{subequations}
where we used that $\psi^1_n,\psi^2_n$ are $\z$-independent by Assumption \ref{ass:global_primitive_strong_strong}, the identity \eqref{eq:wh_partial_3_T_equality} 
and 
$$
\wh{\partial_{3}^2 \T}=h^{-1}[ \T(\cdot,0)-\T(\cdot,-h)] \text{ on }\Tor^2, \quad \text{ since } \quad \partial_{3} \T(\cdot,-h)=0 \text{ on }\Tor^2.
$$

\subsection{Preparation of the proof of Lemma \ref{l:main_estimate}}
\label{ss:preparation}
In this subsection, we prepare the proof of Lemma \ref{l:main_estimate}. 
To this end, let $(f_v,f_{\T},g_{v},g_{\T})$ be as in \eqref{eq:inhomogeneity_v_T_4}-\eqref{eq:inhomogeneity_v_T_6}. 
As remarked below \eqref{eq:inhomogeneity_v_T}, such terms can be estimated by using Lemma \ref{l:basic_estimates}.
More precisely, let 
\begin{equation}
\begin{aligned}
\label{eq:def_N_v_T}
\low_t
\stackrel{{\rm def}}{=}& \big[1+\|v(t)\|_{L^2}^2+\|\T(t)\|_{L^4}^4\big]\\
&\cdot\big[1+
 \big( \|f_{v}(t)\|^2_{L^2}+ \|f_{\T}(t)\|^2_{L^2}+\|g_{v}(t)\|_{H^1(\ell^2)}^2
+\|g_{\T}(t)\|_{H^1(\ell^2)}^2\big)\\
&+ \big( \|v(t)\|^2_{H^1} +\|\T(t)\|_{L^{6}}^8+
\big\|(1+|\T(t)|)|\nabla \T(t)|\big\|_{L^2}^2\big)\big].
\end{aligned}
\end{equation}
By \eqref{eq:def_y}, Assumptions \ref{ass:well_posedness_primitive_double_strong}\eqref{it:well_posedness_primitive_phi_psi_smoothness}-\eqref{it:nonlinearities_strong_strong} and \eqref{ass:global_primitive_strong_strong} (see also \eqref{eq:boundedness_ktwon}-\eqref{eq:boundedness_tp} below), there exists $K\geq 1$ independent of $(v_0,\T_0)$ such that, a.s.\ for all $t\in [0,\tau)$,
$$
\low_t\leq K\big(1+\|v(t)\|_{L^2}^2+\|\T(t)\|_{L^4}^4\big)
\big(1+ (\y(t))^2+ \|v(t)\|_{H^1}^2+\|\T(t)\|_{L^{6}}^8+\big\|(1+|\T(t)|^2)|\nabla \T(t)|\big\|_{L^2}^2\big).
$$
Hence, by the Chebyshev inequality, Lemma \ref{l:basic_estimates} and \eqref{eq:P_T_eta_tail_estimate} with $\eta=1/2$, we have
\begin{equation}
\label{eq:tail_probability_N_v_T}
\begin{aligned}
\P\Big(\int_0^{\tau\wedge T}L_s\,\dd s\geq \g\Big)
 &\lesssim_T\frac{1+\E\|\y\|_{L^2(0,T)}^2+\E\|v_0\|_{H^1}^4+\E\|\T_0\|_{H^1}^4}{\log(\g)} \ \ \  \text{ for }\g>1,
\end{aligned}
\end{equation}
where the implicit constant on the right hand side of \eqref{eq:tail_probability_N_v_T} is independent of $(v_0,\T_0)$.

We are ready to set up the proof of Lemma \ref{l:main_estimate}. Fix $T\in (0,\infty)$ and let $(\tau_j)_{j\geq 1}$ be as in \eqref{eq:def_tau_j}. Recall that $\lim_{j\to \infty}\tau_j=\tau\wedge T$ a.s.\ and \eqref{eq:T_v_stopped_are_good} holds. 
Let $(X_t,Y_t)$ and $L_t$ be as in Lemma \ref{l:main_estimate} and \eqref{eq:def_N_v_T}, respectively. Finally fix two stopping times $(\eta,\xi)$ such that $0\leq \eta\leq \xi\leq \tau_j$ a.s.\ for some $j\geq 1$. 
The aim of this section is to prove the existence of $c_0\geq 1$ independent of $(j,\eta,\xi,v_0,\T_0)$ such that
\begin{align}
\nonumber
\E\Big[\sup_{t\in [\eta,\xi]} (X_t+ \|\wh{\T}(t)\|_{H^1(\Tor^2)}^2)\Big]
&+ \E\int_{\eta}^{\xi}(Y_s+\|\wh{\T}(s)\|_{H^2(\Tor^2)}^2)\,\dd s\\
\label{eq:main_estimate_gronwall}
&\leq c_0\big(1+\E[X_{\eta}]+\E\|\T(\eta)\|_{L^4}^4+ \E\|\wh{\T}(\eta)\|_{H^1(\Tor^2)}^2\big)\\
\nonumber
&  \ +c_0\E\int_{\eta}^{\xi} L_s (1+X_s+\|\T(s)\|_{L^4}^4) \,\dd s.
\end{align}
The presence of $\|\T\|_{L^4}^4$ on the right hand side of \eqref{eq:main_estimate_gronwall} will 
turn out to be convenient later, cf.\ the last comments in Subsection \ref{sss:proof_estimate_mixed_v_T}. However, these terms do not create additional problems as they have been already estimated in Lemma \ref{l:basic_estimates}.

Next, we show the sufficiency of \eqref{eq:main_estimate_gronwall} for Lemma \ref{l:main_estimate} to hold. 
Let $X_t' \stackrel{{\rm def}}{=} X_t+ \|\T(t)\|_{L^4}^4$. By adding the estimates \eqref{eq:grownall_L_6} and \eqref{eq:main_estimate_gronwall}, we can apply the stochastic Gronwall lemma of \cite[Lemma A.1]{AV_variational} with $(X,Y,f,c_0)$ replaced by $(X',Y,L,4c_0)$. Since $X_t\leq X_t'$, the previously mentioned Grownall lemma implies, for all $R,\g>1$,
\begin{align*}
\P\Big(\sup_{t\in[0,\tau\wedge \tau_j ]} X_s
+ \int_0^{\tau\wedge \tau_j} Y_s\,\dd s\geq \g\Big)
\lesssim_{c_0}\frac{e^{16c_0 R}}{\g} (1+\E [X_0]+\E\|\T_0\|_{L^4}^4)
+ \P \Big( \int_{0}^{\tau\wedge T} L_s\, \dd  s\geq  \frac{R}{4 c_0}\Big)&\\
\lesssim_{c_0} \Big( \frac{e^{16 c_0 R}}{\g}+ \frac{C}{\log R}\Big)(1+\E\|\y\|_{L^2(0,T)}^2+\E\|v_0\|_{H^1}^4+\E\|\T_0\|_{H^1}^4),&
\end{align*}
where in the last step we used \eqref{eq:tail_probability_N_v_T} and $\E[X_0]+\E\|\T_0\|_{L^4}^4\lesssim1+ \E\|v_0\|_{H^1}^4+\E\|\T_0\|_{H^1}^4$. Choosing $R=\frac{1}{16c_0}\log(\frac{\g}{\log(\g)})$ for $\g>1$ large and letting $j\to \infty$, one can readily check that the above estimate yields \eqref{eq:tail_probability_main_estimate}.

The remaining part of this section is devoted to the proof of \eqref{eq:main_estimate_gronwall} where $(\eta,\xi)$ are two stopping times such that $0\leq \eta\leq \xi\leq \tau_j$ a.s.\ for some $j\geq 1$ and $T\in (0,\infty)$ is also fixed. 
The proof of \eqref{eq:main_estimate_gronwall} requires a long preparation which will be the scope of Subsections \ref{ss:estimate_overline_v}-\ref{ss:estimate_TT}. The proof of \eqref{eq:main_estimate_gronwall} is postponed to Subsection \ref{ss:proof_lemma_main_estimate}. 
Before starting into the proof of the estimates, we collect some facts which will be used frequently. 
Firstly, by Assumption \ref{ass:well_posedness_primitive_double_strong}\eqref{it:well_posedness_primitive_ktwo_smoothness_strong_strong} and \ref{ass:global_primitive_strong_strong} as well as the Sobolev embedding $H^{1,2+\delta}(\Tor^d;\ell^2)\embed L^{\infty}(\Tor^d;\ell^2)$ we have, a.s.\ for all $t\in \R_+$,
\begin{align}
\label{eq:boundedness_ktwon}
\|\partial_j^{k} \ktwo(t,\cdot)\|_{L^{\infty}(\Tor^2;\ell^2)}
&\lesssim_{M,\delta} 1 \ \ \text{ for all $j\in \{1,2\}$ and $k\in \{0,1\}$},\\
\label{eq:boundedness_tp}
\|\tp^j(t,\cdot)\|_{L^{\infty}(\Tor^2)}&\lesssim_{M,\delta} 1 \ \ \text{ for all $j\in \{1,2\}$}.
\end{align}
Secondly, we recall the following standard interpolation inequalities:
\begin{align}
\label{eq:interpolation_inequality_L4}
\|f\|_{L^4(\Tor^2)}&\lesssim \|f\|_{L^2(\Tor^2)}^{1/2}\|f\|_{H^1(\Tor^2)}^{1/2},& \text{ for }&f\in H^1(\Tor^2),\\
\label{eq:interpolation_inequality_L3} 
\|f\|_{L^3(\Dom)}&\lesssim \|f\|_{L^2(\Dom)}^{1/2}\|f\|_{H^1(\Dom)}^{1/2}, &\text{ for }&f\in H^1(\Dom).
\end{align}

To prove \eqref{eq:main_estimate_gronwall} we also use (small) parameters $\varepsilon_{i},\delta_i\in  (0,\infty)$,  where $i\in \{1,\dots,9\}$, which will be used to absorb energy terms on the left-hand side of the corresponding estimate.
The parameter $\delta_i$ is chosen in the $i$-th subsection among Subsections \ref{ss:estimate_overline_v}-\ref{ss:estimate_TT} and the $\varepsilon_i$'s are chosen in Subsection \ref{ss:proof_lemma_main_estimate}.
Finally, to economize the notation, we do not display the dependence of the constants on $T$.

\subsection{Estimate for $\sup_t\| \overline{v}\|_{H^1_x}$ and $\| \overline{v}\|_{L^2_t H^2_x}$}
\label{ss:estimate_overline_v}
In this subsection, we prove that 
\begin{equation}
\label{eq:estimate_v_overline}
\begin{aligned}
&\E \Big[\sup_{t\in [\eta,\xi]} \|\overline{v}(t)\|^2_{H^1(\Tor^2)}\Big]
+ 
\E \int_{\eta}^{\xi} \|\overline{v}\|_{H^2(\Tor^2)}^2\,\dd s 
\leq C_{1} 
\Big( 1+\E  \|\overline{v}(\eta)\|_{H^1(\Tor^2)}^2\\
&\qquad +  \E\int_{\eta}^{\xi}L_s \|\overline{v}\|_{H^1(\Tor^2)}^2\,\dd s+
\E\int_{\eta}^{\xi} \big\| |\wt{v}||\nabla \wt{v}| \big\|_{L^2}^2\,\dd s+ \E\int_{\eta}^{\xi}\|\nabla \partial_3 v\|_{L^2}^2\,\dd s\\
&\qquad \qquad \qquad \qquad \qquad \qquad \qquad   
+ \E\int_{\eta}^{\xi}\|\nabla \partial_3 \T\|_{L^2}^2\,\dd s
+\E\int_{\eta}^{\xi}\|\wh{\T}\|_{H^2(\Tor^2)}^2\,\dd s\Big),
\end{aligned}
\end{equation}
where ${C}_1$ is a constant independent of $(j,\eta,\xi,v_0,\T_0)$.

The estimate \eqref{eq:estimate_v_overline} follows as the one in \cite[Lemma 5.3, Step 1]{Primitive1} with minor modifications.  
The only additional term comes from the presence of $\ktwon \nabla_{\h}\wh{\T}$ in the stochastic part of \eqref{eq:primitive_bar_1}. To estimate the latter, note that (recall that $(M,\delta)$ are as in Assumption \ref{ass:well_posedness_primitive_double_strong}),
\begin{align*}
\E\int_{\eta}^{\xi} \|\Lt \wh{\T}\|_{L^2(\Tor^2)}^2\,\dd s 
&\lesssim_{M,\delta} \E\int_{\eta}^{\xi}\|\wh{\T}\|_{H^2(\Tor^2)}^2\,\dd s,\\
\E\int_{\eta}^{\xi} 
\|\reallywidehat{\tp^3\partial_3 \nabla_{\h}\T}\|_{L^2(\Tor^2)}\,\dd s
&\stackrel{\eqref{eq:boundedness_tp}}{\lesssim_{M,\delta}} \E\int_{\eta}^{\xi}\|\nabla \partial_3 \T\|_{L^2}^2\,\dd s,\\
\E\int_{\eta}^{\xi} \|(\ktwon \nabla_{\h}\wh{\T})_{n\geq 1}\|_{H^1(\Tor^2;\ell^2)}^2\,\dd s 
&\stackrel{\eqref{eq:boundedness_ktwon}}{\lesssim_{M,\delta}} \E\int_{\eta}^{\xi}\|\wh{\T}\|_{H^2(\Tor^2)}^2\,\dd s .
\end{align*}
Using the above, the estimate \eqref{eq:estimate_v_overline} follows 
as the one in \cite[Lemma 5.3, Step 1]{Primitive1} adding also the term $\E\int_{\eta}^{\xi}(\|\wh{\T}\|_{H^2(\Tor^2)}^2+\|\nabla \partial_3 \T\|_{L^2}^2)\,\dd s$ on the right hand side of the corresponding estimate.

\subsection{Estimate for $\sup_{t}\|\wh{\T} \|_{H^1_x}$ and $\|\wh{\T}\|_{L^2_t H^2_x}$} 
\label{ss:estimate_wh_T}
The aim of this subsection is to prove the following estimate:
\begin{align}
\nonumber
&\E\Big[\sup_{t\in [\eta,\xi]} \|\wh{\T}(t)\|_{H^1(\Tor^2)}^2 \Big]
+\E \int_{\eta}^{\xi} \|\wh{\T}\|_{H^2(\Tor^2)}^2\, \dd s\\
\label{eq:estimate_T_wh}
&\leq C_{2}\Big(1+\E\|\wh{\T}(\eta)\|_{H^1(\Tor^2)}^2 \,\dd s+\E\int_{\eta}^{\xi}L_s(1+\|\wh{\T}\|_{H^1(\Tor^2)}^2)\,\dd s
+\E \int_{\eta}^{\xi} \Big\||\wt{v}||\nabla \T |\Big\|_{L^2}^2 \,\dd s \\
&\qquad\ \
\nonumber
+ \E \int_{\eta}^{\xi} \Big\||\nabla \wt{v}|| \T |\Big\|_{L^2}^2 \,\dd s+ \E \int_{\eta}^{\xi} \Big\||\nabla\wt{v}||\TT |\Big\|_{L^2}^2 \,\dd s 
+ \E\int_{\eta}^{\xi} \|\partial_{3} \T\|_{H^1}^2\,\dd s
\Big),
\end{align}
where ${C}_{2}$ is a constant independent of $(j,\eta,\xi,v_0,\T_0)$ and $L_s$ is as in \eqref{eq:def_N_v_T}.

As in Subsection \ref{ss:estimate_overline_v}, the proof of \eqref{eq:estimate_T_wh} follows the line of \cite[Lemma 5.3, Step 1]{Primitive1}. Recall that $\wh{\T}$ satisfies \eqref{eq:theta_hat_estimate}. Next, let us denote by  $
\mathcal{SMR}^{\bullet}_{2}(0,T)$ the set of couples of operators having maximal $L^2$-regularity on a time interval $(0,T)$ on given spaces $(X_0,X_1)$, see Lemma \ref{l:smr} and \cite[Section 3]{AV19_QSEE_1} for the notation and examples.
By repeating the arguments in Lemma \ref{l:smr}, one sees that 
$
(-\Delta_{\h}, (\psi_{n,\h}\cdot\nabla)_{n\geq 1})\in \mathcal{SMR}^{\bullet}_{2}(0,T)$
with $X_0=L^2(\Tor^2)$ and $X_1=H^{2}(\Tor^2)$ (see also \cite{AV_torus} for the $L^p$-setting). Thus, by \cite[Proposition 3.10]{AV19_QSEE_1} and \eqref{eq:theta_hat_estimate}, there exists $\wh{C}$ independent of $(j,\eta,\xi,v_0,\T_0)$ such that
\begin{equation}
\label{eq:smr_estimate_wh_T}
\E\Big[\sup_{t\in[\eta,\xi]}\|\wh{\T}(t)\|^2_{H^1(\Tor^2)}\Big]
+ \E \int_{\eta}^{\xi} \|\wh{\T}(t)\|^2_{H^2(\Tor^2)}\,\dd s 
\leq \wh{C}\Big[\E\|\wh{\T}(\eta)\|^2_{H^1(\Tor^2)}+  \sum_{1\leq j\leq 5} \wh{I}_j\Big]
\end{equation}
where 
\begin{align*}
\wh{I}_1
&\stackrel{{\rm def}}{=}\E\int_{\eta}^{\xi}\|\reallywidehat{(\wt{v}\cdot\nabla_{\h})\T}\|_{L^2(\Tor^2)}^2\,\dd s, &
\wh{I}_2
&\stackrel{{\rm def}}{=}\E\int_{\eta}^{\xi}\|(\overline{v}\cdot\nabla_{\h})\wh{\T}\|_{L^2(\Tor^2)}^2\,\dd s,\\
\wh{I}_3 
&\stackrel{{\rm def}}{=}\E\int_{\eta}^{\xi}\|\Ht(v,\T)\|_{L^2(\Tor^2)}^2\,\dd s,&
\wh{I}_4
&\stackrel{{\rm def}}{=}
\E\int_{\eta}^{\xi}\big(\|f_{\wh{\T}}\|_{L^2(\Tor^2)}^2 + \|\wh{g_{\T}}\|_{H^1(\Tor^2;\ell^2)}^2\big)\,\dd s,\\
\wh{I}_5
&\stackrel{{\rm def}}{=}
\E\int_{\eta}^{\xi} \|(\wh{\psi^3_n \partial_{3} \T})_{n\geq 1}\|^2_{H^1(\Tor^2)}\,\dd s. & &
\end{align*}
Let us estimate each term separately. Note that 
$$
\wh{I}_1+\wh{I}_3 \lesssim_h \E\int_{\eta}^{\xi}\Big(\Big\||\wt{v}||\nabla\T|\Big\|_{L^2}^2
+\Big\||\T||\nabla \wt{v}|\Big\|_{L^2}^2+\Big\||\TT||\nabla \wt{v}|\Big\|_{L^2}^2\Big) \,\dd s.
$$
Moreover, applying \eqref{eq:interpolation_inequality_L4} twice,
\begin{align*}
\wh{I}_2
&\leq \E\int_{\eta}^{\xi}\|\overline{v}\|_{L^4(\Tor^2)}^2 \|\nabla \wh{\T}\|_{L^4(\Tor^2)}^2\,\dd s\\
&\lesssim 
\E\int_{\eta}^{\xi}\|\overline{v}\|_{L^2(\Tor^2)}\|\overline{v}\|_{H^1(\Tor^2)}
\|\wh{\T}\|_{H^1(\Tor^2)} \| \wh{\T}\|_{H^2(\Tor^2)}\,\dd s\\
&\leq \wh{C}_0 \E\int_{\eta}^{\xi}\|v\|_{L^2}^2 \|v\|_{H^1}^2\|\wh{\T}\|_{H^1(\Tor^2)}^2\, \dd  s +\frac{1}{2\wh{C}} \E\int_{\eta}^{\xi}\| \wh{\T}\|_{H^2(\Tor^2)}^2\,\dd s,
\end{align*}
where $\wh{C}$ is as in \eqref{eq:smr_estimate_wh_T}, and $\wt{C}_0$ is a constant independent of $(v_0,\T_0,\eta,\xi,j)$.  Finally, from \eqref{eq:boundedness_trace} and Remark \ref{r:boundedness}, we have 
 \begin{equation*}
\wh{I}_4 \lesssim \E\int_{\eta}^{\xi} L_s\,\dd s \quad \text{ and }\quad
 \wh{I}_5\lesssim_{M}\E\int_{\eta}^{\xi} L_s\,\dd s+ \E\int_{\eta}^{\xi}\|\nabla \partial_{3} \T\|_{L^2}\,\dd s.
 \end{equation*}
Putting together the previous estimate, one sees that there exists a constant $C_2$ independent of $(v_0,\T_0,\eta,\xi,j)$ for which \eqref{eq:estimate_T_wh} holds.

\subsection{Estimate for $\sup_t \|\partial_{3} v\|_{L^2_x}$ and $\|\partial_{3} v\|_{L^2_t H^1_x}$}
\label{ss:estimate_partial_3_v}
The aim of this subsection is to prove the following estimate: For all $\varepsilon_3\in (0,\infty)$,
\begin{equation}
\label{eq:estimate_v_3}
\begin{aligned}
&\E\Big[\sup_{t\in [\eta,\xi]} \|\partial_{3} v (t)\|_{L^2}^2\Big]
+ \E\int_{\eta}^{\xi} \| \nabla \partial_3 v(t)\|_{L^2}^2\,\dd s \\
&\qquad  \leq C_{3} \Big( 1+\E \|\partial_3v(\eta)\|_{L^2}^2+\E\int_{\eta}^{\xi} \Big\| |\wt{v}||\nabla \wt{v}| \Big\|_{L^2}^2\,\dd  s \Big)\\
&\qquad\qquad 
+ C_{3,\varepsilon_3} \E\int_{\eta}^{\xi} L_s (1+\|\partial_3v\|_{L^2}^2)\,\dd s
+\varepsilon_3 \E\int_{\eta}^{\xi} \| \partial_3 \nabla \T \|_{L^2}^2\,\dd s,
\end{aligned}
\end{equation}
where $C_3,C_{3,\varepsilon_3}$  are constants independent of $(j,\eta,\xi,v_0,\T_0)$ and $C_3$ is also independent of $\varepsilon_3$. Finally, $L_s$ is as in \eqref{eq:def_N_v_T}.

As before, here we follow the arguments in Step 2 of \cite[Lemma 5.3]{Primitive1} with minor modifications. For notational convenience, as in the previously mentioned reference, we set $v_{3}\stackrel{{\rm def}}{=}\partial_{3} v$. The estimate \eqref{eq:estimate_v_3} follows almost verbatim as in \cite[Lemma 5.3, Step 2]{Primitive1} up to considering the additional term coming from $\Lt \T \,\dd t$ and $\sum_{n\geq 1}\ktwon \int_{-h}^{\cdot} \nabla_{\h} \T(\cdot,\zeta)\,\dd \zeta\dd \beta_t^n$ in \eqref{eq:primitive_v_proof_global_1} in the It\^o formula for $v\mapsto \| \partial_3 v\|_{L^2}^2$. 
Let us begin by noticing that, the $\ktwon$-contribution does not provide any additional problem as
(recall that $\ktwon$ is $x_3$-independent by Assumption \ref{ass:global_primitive_strong_strong}) 
$$
\sum_{n\geq 1}\E\int_{\eta}^{\xi}\int_{\Dom} \Big|\partial_3 [\ktwon \int_{-h}^{\cdot} \nabla_{\h} \T(\cdot,\zeta)\,\dd \zeta]\Big|^2\,\dd x\dd s\lesssim \E\int_{\eta}^{\xi}\|\nabla \T\|_{L^2}^2\,\dd s \lesssim \E\int_{\eta}^{\xi} L_s\,\dd s.
$$ 
To estimate the contribution of $\Lt \T\,\dd t$, note that, in the It\^o formula for  $v\mapsto \| \partial_3 v\|_{L^2}^2$ it gives rise to the term $\E\int_{\eta}^{\xi} R\,\dd s$ where 
$$
R \stackrel{{\rm def}}{=}
\int_{\Dom} \p[\Lt\T] \partial_{3} v_{3}\,\dd x.
$$
Recall that $\partial_3 \p f=\partial_3 f $ by \eqref{eq:Helmholtz_hydrostatic}. Integrating by parts and using \eqref{eq:primitive_v_proof_global_bc_1}, we have
$$
R= -\underbrace{\int_{\Dom} \nabla_{\h}[(\tp_{\h}\cdot\nabla_{\h})\T] \cdot v_3\,\dd x}_{R_1\stackrel{{\rm def}}{=}}
-\underbrace{\int_{\Dom} \tp^3 \nabla_{\h} \partial_3\T\cdot  v_3\,\dd x}_{R_2\stackrel{{\rm def}}{=}}.
$$
Note that, integrating by parts in the horizontal variables, for all $\varepsilon_0>0$,
\begin{align*}
|R_1|
= \Big| \int_{\Dom} \big[(\tp_{\h}\cdot\nabla_{\h}) \T \big] \div_{\h}v_3  \,\dd x \Big|
\stackrel{\eqref{eq:boundedness_tp}}{\leq}\varepsilon_0 \|\nabla v_3\|_{L^2}^2 + C_{\varepsilon_0} \|\nabla \T\|_{L^2}^2.
\end{align*}
To estimate $R_2$ note that $\tp^3 \in H^{1,2+\delta}(\Tor^2;L^{2}(-h,0))\embed  L^{\infty}(\Tor^2;L^2(-h,0))$ 
uniformly in $ \R_+\times \O$ by Assumption \ref{ass:well_posedness_primitive_double_strong}\eqref{it:well_posedness_primitive_kone_smoothness_strong_strong}.  
Since $H^{r}(\Dom)\embed L^2(\Tor^2;H^{r}(-h,0))\embed L^2(\Tor^2;L^{\infty}(-h,0))$ for all $r\in (\frac{1}{2},1)$, by interpolation, one sees that
$$
|R_2|\leq \varepsilon_3 \|\nabla\partial_3 \T\|_{L^2}^2+ \delta_3 \|\nabla v_3\|_{L^2}^2+ C_{\delta_3,\varepsilon_3}\|v_3\|_{L^2}^2.
$$
By using the above estimates for $R$ and choosing $\delta_3$ small enough (independently of $(j,\eta,\xi,v_0,\T_0,\varepsilon_3)$), one can check that the arguments in Step 2 of \cite[Lemma 5.3]{Primitive1} yield the estimate \eqref{eq:estimate_v_3}.

\subsection{Estimate for $\sup_t\|\partial_{3} \T \|_{L^2_x}$ and $\|\partial_{3}\T\|_{L^2_t H^1_x}$}
\label{ss:estimate_partial_T_3}
In this subsection, we prove that:
\begin{equation}
\label{eq:estimate_T_3}
\begin{aligned}
\E\Big[\sup_{t\in [\eta,\xi]}\|\partial_{3} \T(t)\|_{L^2}^2\Big]
&+\E \int_{\eta}^{\xi} \| \partial_{3} \T\|_{H^1}^2\,\dd s 
\leq C_4(1+\E\|\partial_{3}\T(\eta)\|^{2}_{L^2}+\E\|\T(\eta)\|_{L^4}^4)\\
&+C_4\Big(
\E\int_{\eta}^{\xi} L_s (1+\|\partial_{3}\T\|_{L^2}^2)\,\dd s+ \E \int_{\eta}^{\xi}\Big\||\T||\nabla \wt{v}|\Big\|_{L^2}^2\,\dd s\Big),
\end{aligned}
\end{equation}
where $C_T^{(4)}$  are constants independent of $(j,\eta,\xi,v_0,\T_0)$.

Here the idea is to apply the It\^{o} formula to (see the proof of \cite[Proposition 6.8]{Primitive1} for a similar situation)
$$
\T\mapsto \Fun(\T) \stackrel{{\rm def}}{=}\|\partial_{3} \T\|_{L^2}^2+ \alpha\|\T(\cdot,0)\|_{L^2(\Tor^2)}^2.$$ 
 For notational convenience, we set $\T_{3}\stackrel{{\rm def}}{=}\partial_{3} \T$ and $\T^ {\eta,\xi}_{3}\stackrel{{\rm def}}{=}\T_{3}((\cdot\vee \eta)\wedge \xi)$. Combining a standard approximation argument (cf.\ the proof of \cite[Proposition 6.8]{Primitive1}), the It\^{o} formula, the boundary conditions \eqref{eq:primitive_v_proof_global_bc_2} and integrating by parts, one can check that, a.s.\ for all $t\in [0,T]$,
\begin{align}
\label{eq:Ito_T_3}
\|\T^ {\eta,\xi}_{3}(t)\|_{L^2}^2
&+\alpha \|\T^{\eta,\xi}(t,\cdot,0)\|_{L^2(\Tor^2)}^2
=\|\T_3(\eta)\|_{L^2}^2+\alpha \|\T(\eta,\cdot,0)\|_{L^2(\Tor^2)}^2\\
\nonumber
&+ 2 \int_{0}^t \one_{[\eta,\xi]} E(s)\,\dd s
+ \sum_{1\leq j\leq 3} \int_0^t \one_{[\eta,\xi]}  I_j(s)\,\dd s + M(t),
\end{align}
where $E\stackrel{{\rm def}}{=}- \int_{\Dom}\Delta \T\partial_3 \T_3\,\dd x$ gives the energy contribution 
and 
\begin{align*}
I_1&\stackrel{{\rm def}}{=} 2\int_{\Dom}   f_{\T}\partial_{3} \T_{3}\,\dd x, \qquad
I_2\stackrel{{\rm def}}{=} -
2\int_{\Dom}  [(v\cdot\nabla_{\h})\T+ w(v)\partial_{3} \T]\partial_{3} \T_{3}\,\dd x,\\
I_3&\stackrel{{\rm def}}{=} \sum_{n\geq 1}\Big(
\int_{\Dom} \Big|\partial_{3} [(\psi_n \cdot\nabla)\T]+\partial_{3} g_{\T,n}\Big|^2\,\dd x\\
&\qquad + \int_{\Tor^2} \Big|\partial_{3} [(\psi_n(\cdot,0) \cdot\nabla)\T(\cdot,0)]+\partial_{3} g_{\T,n}(\cdot,0)\Big|^2\,\dd x_{\h}\Big),\\
M(t)&\stackrel{{\rm def}}{=} 2 \sum_{n\geq 1} \int_{0}^t\one_{[\eta,\xi]}\Big( \int_{\Dom}
\big(\partial_{3}[(\psi_n \cdot\nabla)\T] + \partial_{3} g_{\T,n}\big)\T_{3}\,\dd x \\
&\qquad  +\int_{\Dom}
\big(\partial_{3}[(\psi_n(\cdot,0) \cdot\nabla)\T(\cdot,0)] + \partial_{3} g_{\T,n}(\cdot,0)\big)\T_{3}(\cdot,0)\,\dd x \Big)
\, \dd \beta^n_s.
\end{align*}

One can readily check that,  for all $\delta_4>0$ and a.e.\ on $[\eta,\xi]\times \O$, 
\begin{equation}
\label{eq:estimate_I_1_T3}
|I_1|\leq \delta_4 \int_{\eta}^{\xi} \|\nabla \T_{3}\|_{L^2}^2\,\dd s
+C_{\delta_4}
\int_{\eta}^{\xi}  \|f_{\T}\|_{L^2}^2\,\dd s.
\end{equation}
In the following, we need a slight improvement of \eqref{eq:boundedness_trace}, in particular to bound the boundary terms in \eqref{eq:Ito_T_3}. To this end, note that, the 1d Sobolev embeddings ensures that $|f(x_{\h},0)|\lesssim \|f(x_{\h},\cdot)\|_{H^{1/2+r}(-h,0)}$ for all $x_{\h}\in\Tor^2$ and for all $r>0$, with implicit constant independent of $x_{\h}$. Hence, by integrating over $x_{\h}\in \Tor^2$, we have
\begin{equation}
\label{eq:boundedness_trace_sharp}
\|f(\cdot,0)\|_{L^2(\Tor^2)}\lesssim \Big\|x_{\h}\mapsto \|f(x_{\h},\cdot)\|_{H^{1/2+r}(-h,0)}\Big\|_{L^2(\Tor^2)}.
\end{equation}
In particular, the second term on the left-hand side of \eqref{eq:Ito_T_3} is lower order compared to $\|\T^{\eta,\xi}_3(t)\|_{L^2}$ and we do not need to estimate it further. The same also applies to the second term on the right-hand side of \eqref{eq:Ito_T_3} for which we can use that \eqref{eq:boundedness_trace_sharp} implies $\|\T(\eta,\cdot,0)\|_{L^2(\Tor^2)}\lesssim \|\T(\eta)\|_{L^2}+ \|\partial_3\T(\eta)\|_{L^2}$.

The estimates of the remaining terms are worked out in the following subsections. 
The proof of \eqref{eq:estimate_T_3} is given in Subsection \ref{sss:estimate_T_3_conclusion} below. In the following $\varepsilon_4,\delta_4\in (0,\infty)$ are positive parameters which will be chosen in Subsections \ref{ss:proof_lemma_main_estimate} and 
\ref{sss:estimate_T_3_conclusion}, respectively.

\subsubsection{Estimate of $E$} Since $\T\in \Hr^2$, by standard approximation arguments we may assume that $\T\in C^3(\Dom)$ and satisfies \eqref{eq:primitive_v_proof_global_bc_2}. Note that, integrating by parts, 
\begin{align*}
E &=\alpha \int_{\Tor^2} (\Delta \T(\cdot,0))\T(\cdot,0)\,\dd x_{\h} + \int_{\Dom} \Delta \T_3 \T_3\,\dd x\\
&=\alpha \int_{\Tor^2} (\Delta \T(\cdot,0))\T(\cdot,0)\,\dd x_{\h} + \int_{\Tor^2} \partial_3 \T_3(\cdot,0) \T_3(\cdot,0)\,\dd x_{\h} - \int_{\Dom} | \nabla \T_3|^2\,\dd x\\
&\stackrel{\eqref{eq:primitive_v_proof_global_bc_2}}{=}\alpha \underbrace{\int_{\Tor^2} (\Delta \T(\cdot,0))\T(\cdot,0)\,\dd x_{\h} }_{e_0\stackrel{{\rm def}}{=}} -\alpha^3 \underbrace{\int_{\Tor^2} |\T(\cdot,0)|^2 \,\dd x_{\h}}_{e_1\stackrel{{\rm def}}{=}} - \int_{\Dom} | \nabla \T_3|^2\,\dd x.
\end{align*}
The last term on the right-hand side of the previous equality gives rise to the second term on the left-hand side of \eqref{eq:estimate_T_3}. To conclude, we show that $(e_0,e_1)$  are of lower-order compared to such term, i.e.\ for all $\varepsilon>0$
\begin{equation}
\label{eq:e0_e1_claim}
|e_0|+|e_1|\leq \varepsilon \|\nabla \T_3\|_{L^2}^2+C_{\varepsilon}\|\T_3\|_{L^2}^2.
\end{equation}
Note that \eqref{eq:boundedness_trace} already implies that $e_1$ is of lower order. To estimate $e_1$, note that, by \eqref{eq:primitive_v_proof_global_bc_2} and integrating by parts,
\begin{equation}
\label{eq:estimate_e0_boundary}
e_0= -\int_{\Tor^2} |\nabla_{\h} \T(\cdot,0)|^2\,\dd x_{\h}-\alpha^2 \int_{\Tor^2} |\T(\cdot,0)|^2\,\dd x_{\h}.
\end{equation}
Due to \eqref{eq:boundedness_trace}, it is clear that the second term on the right-hand side of \eqref{eq:estimate_e0_boundary} is of lower order. 
The same also holds for the first term as one can readily check by applying \eqref{eq:boundedness_trace_sharp} and a standard interpolation argument.

\subsubsection{Estimate of $I_2$}
For notational convenience, as above, we set $u\stackrel{{\rm def}}{=}(v,w(v))$. Note that, integrating by parts and using \eqref{eq:primitive_v_proof_global_bc_2}, we have, a.e.\ on $\O\times [\eta,\xi]$, 
\begin{align*}
I_2
&=  \int_{\Tor^2} (v(\cdot,0)\cdot \nabla_{\h}) \T(\cdot,0) \T(\cdot,0)\,\dd x_{\h}-
\int_{\Dom} [(u_{3}\cdot\nabla) \T] \T_{3}\,\dd x
-
\int_{\Dom} [(u\cdot\nabla) \T_{3}] \T_{3}\,\dd x\\
&=\underbrace{\int_{\Tor^2} (v(\cdot,0)\cdot \nabla_{\h}) \T(\cdot,0) \T(\cdot,0)\,\dd x_{\h}}_{b_{0}\stackrel{\rm{def}}{=}}-
\underbrace{\int_{\Dom} [(u_{3}\cdot\nabla) \T] \T_{3}\,\dd x}_{I_{2}'\stackrel{\rm{def}}{=}},
\end{align*}
where the last equality follows from Lemma \ref{l:cancellation} and an approximation argument. Next we rewrite $I_{2}'$. To this end, note that $u_{3}=(v_{3},-\div_{\h} v)$. Hence, using  an integration by parts and $\div\,u_{3}=0$, we have, a.e.\ on $[\eta,\xi]\times \O$, 
\begin{align*}
I_{2}'
&= -\int_{\Tor^2}\div_{\h}v (\cdot,0) \T(\cdot,0)\T_3(\cdot,0)\,\dd x_{\h}  
-\int_{\Dom} \T [(u_{3}\cdot \nabla) \T_{3} ]\,\dd x\\
&\stackrel{\eqref{eq:primitive_v_proof_global_bc_2}}{=}\alpha\underbrace{  \int_{\Tor^2}\div_{\h}v (\cdot,0) |\T(\cdot,0)|^2\,\dd x_{\h} }_{b_1\stackrel{\rm{def}}{=}}
- \underbrace{\int_{\Dom} \T [(u_{3}\cdot \nabla) \T_{3} ]\,\dd x}_{I_{2}''\stackrel{\rm{def}}{=}}.
\end{align*}
Finally, since $\div_{\h} \wt{v}=\div_{\h} v$ and $\wt{v}_3=v_3$, we have, a.e.\ on $[\eta,\xi]\times \O$,
\begin{align*}
|I_2''|=
- \sum_{1\leq j\leq 2}\int_{\Dom} \wt{v}_{3}^j\, \T \,(\partial_j \T_{3} )\,\dd x
+\int_{\Dom} (\div_{\h} \wt{v})\, \T \,(\partial_{3} \T_{3})\,\dd x.
\end{align*}
Therefore, by the Cauchy-Schwartz inequality we have, for all $\delta_4>0$ and a.e.\ on $[\eta,\xi]\times \O$,
\begin{align*}
| I_{2}''|
\lesssim  
\int_{\Dom} |\nabla \wt{v}| |\T| |\nabla \T_{3}|\,\dd x 
\leq \delta_4  \|\nabla \T_{3}\|_{L^2}^2
+C_{\delta_4}
  \big\||\nabla \wt{v}| |\T|\big\|_{L^2}^2 .
\end{align*}
It remains to estimate the boundary terms $(b_0,b_1)$. Recall that  $L$ is as in \eqref{eq:def_N_v_T}. We claim that, a.e.\ on $[\eta,\xi]\times \O$,
\begin{equation}
\label{eq:boundedness_boundary_terms}
|b_0|+ |b_1|\lesssim L.
\end{equation}
We prove the latter fact for $b_0$, for the $b_1$ term the proof is analogue. To this end, note that 
\begin{align*}
|b_0|
&\eqsim \Big|\int_{\Tor^2}( v(\cdot,0)\cdot\nabla_{\h}) [\T(\cdot,0)^2]\,\dd x_{\h}\Big|\leq \|v(\cdot,0)\|_{H^{\frac{1}{2}}(\Tor^2;\R^2)} \|\nabla_{\h}\T(\cdot,0)^2 \|_{H^{-\frac{1}{2}}(\Tor^2;\R^2)}\\
&\lesssim \|v(\cdot,0)\|_{H^{\frac{1}{2}}(\Tor^2;\R^2)} \|\T(\cdot,0)^2 \|_{H^{\frac{1}{2}}(\Tor^2)}
\stackrel{\eqref{eq:boundedness_trace}}{\lesssim} \|v\|_{H^1}\|\T^2\|_{H^1}.
\end{align*}
Since $\|\T^2\|_{H^1}^2\lesssim \|\T\|_{L^4}^4+\||\T||\nabla\T|\|_{L^2}^2$, we have $|b_0|\lesssim L$ as desired. Thus \eqref{eq:boundedness_boundary_terms} is proved.


\subsubsection{Estimate of $I_3$}
The Cauchy-Schwartz inequality, \eqref{eq:boundedness_trace} and standard interpolation arguments show that, a.e.\ on $[\eta,\xi]\times \O$,
\begin{align*}
|I_3|
&\leq (1+\delta_4)\sum_{n\geq 1} \int_{\Dom} |(\psi_n\cdot\nabla) \T_{3}|^2\,\dd x
+C_{\delta_4} \int_{\Dom} \sum_{n\geq 1} \big(|\nabla \psi_n|^2 |\T_{3}|^2+ |\nabla g_{\T,n}|^2\big)\,\dd x\\
&\leq\ellip (1+\delta_4) \int_{\Dom} |\nabla \T_{3}|^2\,\dd x
+C_{\delta_4}\big(\|( \psi_n)_{n\geq 1}\|_{H^{1,3+\delta}(\ell^2)}\|\T_3\|_{L^{r}}^2+ \|\nabla g_{\T}\|_{H^1(\ell^2)}^2\big),
\end{align*}
where in the last inequality we used Assumption \ref{ass:well_posedness_primitive_double_strong}\eqref{it:well_posedness_primitive_parabolicity_strong_strong} and $r\in (1,6)$ satisfies $\frac{1}{3+\delta}+\frac{1}{r}=\frac{1}{2}$.

Recall that $\|( \psi_n)_{n\geq 1}\|_{H^{1,3+\delta}(\ell^2)}\leq M$, by Assumption \ref{ass:well_posedness_primitive_double_strong}\eqref{it:well_posedness_primitive_phi_psi_smoothness}. 
Since $H^{\theta}(\Dom)\embed L^r(\Dom)$ for some $\theta\in (0,1)$, by standard interpolation theory, we have a.e.\ on $[\eta,\xi]\times \O$
$$
|I_3|
\leq
\ellip (1+2\delta_4) \int_{\Dom} |\nabla \T_{3}|^2\,\dd x
+ C_{\delta_4}\big(\|\T_3\|_{L^{2}}^2+ \|\nabla g_{\T}\|_{H^1(\ell^2)}^2\big).
$$

\subsubsection{Estimate of the martingale $M$ and proof of  \eqref{eq:estimate_T_3}}
\label{sss:estimate_T_3_conclusion}
Taking expectations in \eqref{eq:Ito_T_3} with $t=T$, choosing $\delta_4>0$ sufficiently small (independently of $(j,\eta,\xi,v_0,\T_0)$), and using that $\E[M(T)]=0$, one has 
\begin{align}
\label{eq:intermediate_estimate_T3}
&\E\int_{\eta}^{\xi}\|\nabla \T_{3}\|_{L^2}^2\,\dd s
\leq c_4 \big(1+\E\|\T_3(\eta)\|^2_{L^2}\big)\\
\nonumber
 &\qquad +c_4\Big(
 \E \int_{\eta}^{\xi}  \big\||\nabla \wt{v}| |\T|\big\|_{L^2}^2\,\dd s + \E\int_{\eta}^{\xi}L_s (1+ \|\T_3\|_{L^2}^2)\,\dd s\Big),
\end{align}
where $c_4$ is a constant independent of $(j,\eta,\xi,v_0,\T_0)$.

Arguing as in Step 2 of Lemma \ref{l:basic_estimates}, the Burkholder-Davis-Gundy inequality and 
Assumption \ref{ass:well_posedness_primitive_double_strong}\eqref{it:well_posedness_primitive_phi_psi_smoothness} readily yield, for some $C>0$ independent of $(j,\eta,\xi,v_0,\T_0)$, 
\begin{align*}
\E\Big[\sup_{t\in [\eta,\xi]} |M_t| \Big]
&
\leq \frac{1}{2} \E \Big[\sup_{s\in [\eta,\xi]}\|\T_3(s)\|_{L^2}^2\Big] +  
C \E \int_{\eta}^{\xi}\big(\|\nabla \partial_3 \T\|_{L^2}^2+ \|\T\|_{H^1}^2+\|g_{\T}\|_{H^1(\ell^2)}^2\big)\, \dd s\\
&\stackrel{\eqref{eq:intermediate_estimate_T3}}{\leq}
\frac{1}{2} \E \Big[\sup_{s\in [\eta,\xi]}\|\T_3(s)\|_{L^2}^2\Big]+  C(1+\E\|\T_3(\eta)\|^2_{L^2})\\ 
&\quad +
C \E \int_{\eta}^{\xi}\Big[ \big\||\nabla \wt{v}| |\T|\big\|_{L^2}^2+L_s (1+ \|\T_3\|_{L^2}^2)\Big]\, \dd s.
\end{align*}

Now \eqref{eq:estimate_T_3} follows by taking $\E[\sup_{t\in [\eta,\xi]}|\cdot|]$ in \eqref{eq:Ito_T_3} and using the above estimates.

\subsection{Estimate for $\sup_t\|\wt{v} \|_{L^4_x}$ and $\big\||\wt{v}||\nabla \wt{v}|\big\|_{L^2_tL^2_x}$} 
\label{ss:estimate_wt_v_L_4}
In this subsection we prove the following estimate: For all $\varepsilon_5\in (0,\infty)$,
\begin{align}
\nonumber
&\E\Big[\sup_{t\in [\eta,\xi]}\|\wt{v}(t)\|_{L^4}^4\Big] +\E\int_{\eta}^{\xi}\Big\| |\wt{v}||\nabla \wt{v}|\Big\|_{L^2}^2\,\dd s 
\leq C_{5,\varepsilon_5} \big(1+\E\|\wt{v}(\eta)\|_{L^4}^4\big)\\
\label{eq:estimate_v_wt}
&\qquad\quad
+C_5 \E\int_{\eta}^{\xi} \Big\| |\wt{v}||\nabla_{\h} \TT| \Big\|_{L^2}^2\,\dd s 
+ C_{5,\varepsilon_5} \E\int_{\eta}^{\xi}L_s (1+\|\wt{v}\|_{L^4}^4)\,\dd s\\
& \qquad \quad
\nonumber
+\varepsilon_5 \E\int_{\eta}^{\xi}\big( \|\partial_3 v\|_{H^1}^2 + \|\wh{\T}\|_{H^2}^2\big)\,\dd s,
\end{align}
where $C_5,C_{5,\varepsilon_5}$  are constants independent of $(j,\eta,\xi,v_0,\T_0)$ and $C_5$ is also independent of $\varepsilon_5$. Finally, $L_s$ is as in \eqref{eq:def_N_v_T}.

As in Subsections \ref{ss:estimate_overline_v} and  \ref{ss:estimate_partial_3_v}, here we can follow the proof of  \cite[Lemma 5.3, Step 4]{Primitive1}. More precisely, following \cite{Primitive1} we apply the It\^o formula to $\wt{v}\mapsto \|\wt{v}\|_{L^4}^4$. Comparing \eqref{eq:primitive_bar} with \cite[eq.\ (5.23)]{Primitive1}, we have the following additional terms $(\Lt \T + \Ltb \wh{\T}- \reallywidehat{\pi^3 \partial_3 \nabla_{\h} \T})\,\dd s$ and 
$\sum_{n\geq 1}\opt_n\T\,\dd \beta_t^n$. 
Here, we content ourselves to provide a suitable estimate for the It\^o corrections related to the $\opt_n$-term when applying the It\^{o} formula to $v\mapsto \|\wt{v}\|_{L^4}^4$, i.e., the term
\begin{equation}
\label{eq:correction_wt_additional_term_opt}
\E
\int_{\eta}^{\xi}\int_{\Dom}
\sum_{n\geq 1}  |\wt{v}|^2 |\opt_n\T |^2\,\dd x\dd s.
\end{equation}
The contributions related to the terms in the deterministic part can be estimated similarly, noticing that, by \eqref{eq:inhomogeneity_v_T_1},
$\Lt\T=(\pi_{\h}\cdot\nabla_{\h})\TT+ R_0
$
where 
$$
R_0 
\leq \int_{-h}^{0} |\tp^3(\cdot,\zeta)\partial_3\nabla_{\h}(\cdot,\zeta) \T(\cdot,\zeta)|\,\dd \zeta.
$$ 

To estimate the quantity in \eqref{eq:correction_wt_additional_term_opt}, note that, a.e.\ on $\O\times [0,\tau)$,
\begin{align*}
\sum_{n\geq 1} \int_{\Dom} |\wt{v}|^2 |\opt_n (\T)|^2 \,\dd x
&\stackrel{\eqref{eq:boundedness_ktwon}}{\lesssim_M}
\Big( \int_{\Dom}  |\wt{v}(\cdot,x)|^2  \Big| \int_{-h}^{\cdot}\T(\cdot,x_{\h},\zeta)\,\dd \zeta\Big|^2 \,\dd x
+
 \int_{\Dom} |\wt{v}|^2 |\wh{\T}|^2 \,\dd x\Big)\\
&\leq 
 \int_{\Dom}  |\wt{v}|^2 |\nabla_{\h} \TT|^2 \,\dd x
+
 \int_{\Dom} |\wt{v}|^2 |\nabla_{\h} \wh{\T}|^2 \,\dd x.
\end{align*}
The second term on the right-hand side of the previous can be further estimated as follows:
\begin{align*}
\int_{\Dom} |\wt{v}|^2 |\nabla_{\h} \wh{\T}|^2\,\dd x
&\leq \big\||\wt{v}|^2\big\|_{L^2} \big\||\nabla_{\h}\wh{\T}|^2\big\|_{L^2}\\
&\leq \|\wt{v}\|_{L^4}^2 \|\wh{\T}\|_{W^{1,4}(\Tor^2)}^2\\
&\stackrel{\eqref{eq:interpolation_inequality_L4}}{\lesssim} 
\|\wt{v}\|_{L^4}^2 \|\wh{\T}\|_{H^1(\Tor^2)}\|\wh{\T}\|_{H^2(\Tor^2)}\\
&\lesssim_h \|\wt{v}\|_{L^4}^2\|\T\|_{H^1} \|\wh{\T}\|_{H^2(\Tor^2)}
\leq \varepsilon_5 \|\wh{\T}\|_{H^2(\Tor^2)}^2+ C_{\varepsilon_5}L \|\wt{v}\|_{L^4}^4,
\end{align*}
where $L_s$ is as in \eqref{eq:def_N_v_T}.
With the above estimates available, one can check that the estimate \cite[eq.\ (5.54)]{Primitive1} extends to \eqref{eq:primitive_tilde} and one gets \eqref{eq:estimate_v_wt}.

\subsection{Estimate for $\||\wt{v}||\nabla \T|\|_{L^2_t L^2_x}$ and 
$\||\T| |\nabla \wt{v}|\|_{L^2_t L^2_x}$}
\label{ss:estimate_tilde_v_T}
The aim of this subsection is to prove the following estimate: For all $\varepsilon_6\in (0,\infty)$,
\begin{align}
\label{eq:estimate_mixed_v_T}
&\E \int_{\eta}^{\xi} \Big\||\wt{v}||\nabla \T|\Big\|_{L^2}^2\,\dd s 
+ \E\int_{\eta}^{\xi}\Big\||\T| |\nabla \wt{v}|\Big\|_{L^2}^2\,\dd s
\leq C_{6,\varepsilon_6}\big(1+\E\|\T(\eta)\|_{L^4}^4+\E\|\wt{v}(\eta)\|_{L^4}^4 \big)\\
\nonumber
&\qquad \qquad 
\leq C_{6,\varepsilon_6}
\E\int_{\eta}^{\xi} L_t (1+\|\wt{v}\|_{L^4}+\|\T\|_{L^4}^4)\,\dd s\\
\nonumber
&\qquad \qquad 
+\varepsilon_6 \E\int_{\eta}^{\xi} \Big[ \big\||\wt{v}|^2|\nabla\wt{v}|\big\|_{L^2}^2+ 
\|\overline{v}\|^2_{H^2(\Tor^2)}+\|\wh{\T}\|^2_{H^2(\Tor^2)}+ \| \partial_3 \nabla v\|_{L^2}^2+\| \partial_3 \nabla \T\|_{L^2}^2\Big]\,\dd s\\
\nonumber
&\qquad \qquad \qquad
+ C_6 \E\int_{\eta}^{\xi} \big\||\T||\nabla_{\h}\TT|\big\|_{L^2}^2\,\dd s,
\end{align}
where $C_6,C_{6,\varepsilon_6}$  are constants independent of $(j,\eta,\xi,v_0,\T_0)$ and $C_6$ is also independent of $\varepsilon_6$. Finally, $L_s$ is as in \eqref{eq:def_N_v_T}.

To prove \eqref{eq:estimate_mixed_v_T}, we apply the It\^{o} formula to the functional $(\wt{v},\T)\mapsto \big\||\wt{v}||\T|\big\|_{L^2}^2$.  To this end, recall that $\wt{v}$ and $\T$ satisfy the SPDEs \eqref{eq:primitive_tilde} and \eqref{eq:primitive_v_proof_global_2}, respectively. Moreover, we let
$$
\T^{\eta,\xi}\stackrel{{\rm def}}{=}\T((\cdot\vee \eta)\wedge \xi)
\quad \text{ and }\quad 
\wt{v}^{\eta,\xi}\stackrel{{\rm def}}{=}\wt{v}((\cdot\vee \eta)\wedge \xi).
$$
Applying the It\^{o} formula to $(\wt{v},\T)\mapsto \big\||\T|^2 |\wt{v}|^2\big\|_{L^2}^2$ we have, a.s.\ for all $t\in \R_+$,
\begin{align}
\label{eq:T_v_equation_Ito}
\big\||\T^{\eta,\xi}(t)|^2 |\wt{v}^{\eta,\xi}(t)|^2\big\|_{L^2}^2
&= 
\big\||\T(\eta)|^2 |\wt{v}(\eta)|^2 \,\big\|_{L^2}^2\\
\nonumber
&+\sum_{1\leq j\leq 4}\int_0^t \one_{[\eta,\xi]}  I_{2,j}(s)\,\dd s
+N_t, 
\end{align}
where $N$ is a $L^1(\O)$-martingale, such that $\E[N_t]=0$ for all $0\leq t\leq T$, and 
\begin{align*}
J_{1}&\stackrel{{\rm def}}{=}
2 \int_{\Dom} (\T^2 \wt{v}\cdot \Delta \wt{v} + |\wt{v}|^2 \T \Delta \T)\,\dd x, \\
J_{2}&\stackrel{{\rm def}}{=}
2\int_{\Dom} (\T^2 \wt{v}\cdot (f_v+ \force(\wt{v})) +  |\wt{v}|^2 \T f_{\T})\,\dd x,\\
J_{3}&\stackrel{{\rm def}}{=}
2\int_{\Dom} \T^2 \wt{v}\cdot 
\Big(\Lt \T+(\tp_{\h}\cdot\nabla_{\h})\wh{\T}+\reallywidehat{\tp^3\partial_3\nabla_{\h}\T}\Big)\,\dd x\dd s \\
J_{4}&\stackrel{{\rm def}}{=}
-2\int_{\Dom} \T^2 \wt{v}\cdot \big[(\wt{v}\cdot\nabla_{\h})\overline{v}\big]\,\dd x,\\
J_{5}&\stackrel{{\rm def}}{=}
\sum_{n\geq 1}\int_{\Dom} |\wt{v}|^2 [(\psi_n \cdot\nabla)\T+g_{\T,n}]^2 \,\dd x,\\
J_{6}&\stackrel{{\rm def}}{=} 
\int_{\Dom} |\T|^2 |(\phi_n \cdot\nabla) \wt{v}-\overline{\phi^3_n \partial_3 v}+ \opt_n (\T)+\wt{g_{n,v}}|^2\,\dd x,\\
J_{7}&\stackrel{{\rm def}}{=} 
2\sum_{n\geq 1}
\int_{\Dom} \T [(\psi_n \cdot\nabla)\T+g_{\T,n}] 
\wt{v}\cdot [(\phi_n \cdot\nabla) \wt{v}-\overline{\phi^3_n \partial_3 v}+ \opt_n (\T)+\wt{g_{n,\T}}]\,\dd x,
\end{align*}
and we used that, a.e.\ on $[0,\tau)\times \O$,
\begin{align*}
\int_{\Dom} \Big(|\wt{v}|^2 \T \big[(\overline{v}\cdot \nabla_{\h} )\T\big] + |\T|^2 \wt{v}\cdot \big[(\overline{v}\cdot\nabla_{\h})\wt{v}\big]\Big)\,\dd x&=0,\\
\int_{\Dom} \Big(|\wt{v}|^2 \T \big[(\wt{u}\cdot \nabla )\T\big] + |\T|^2 \wt{v}\cdot \big[(\wt{u}\cdot\nabla)\wt{v}\big]\Big)\,\dd x&=0,
\end{align*}
where $\wt{u}=(\wt{v},w(\wt{v}))$ and $w(\wt{v})$ is as in  \eqref{eq:def_w}.
The above follows from Lemma \ref{l:cancellation}, \eqref{eq:divergence_free_bar_v} and a standard approximation argument.
Let us remark that the application of the It\^{o} formula in \eqref{eq:T_v_equation_Ito} requires an approximation argument similar to the one used in Step 3 of \cite[Lemma 5.3]{Primitive1}. To avoid repetitions, we omit the details.

For the reader's convenience, we collect the estimates of $(J_{j})_{j=1}^7$ in the following subsections. The proof of \eqref{eq:estimate_mixed_v_T} will be given in Subsection \ref{sss:proof_estimate_mixed_v_T}.
Below $\varepsilon_6,\delta_6\in (0,\infty)$ are positive parameters which will be chosen in Subsections \ref{ss:proof_lemma_main_estimate} and 
\ref{sss:proof_estimate_mixed_v_T}, respectively.

\subsubsection{Estimate of $J_{1}$}
\label{sss:estimate_I_1}
Integrating by parts and using the boundary conditions \eqref{eq:boundary_conditions_full}, we have 
\begin{align*}
\int_{\Dom} \T^2 \wt{v}\cdot \Delta \wt{v} \,\dd x
&= -\int_{\Dom} \T^2 |\nabla \wt{v}|^2\,\dd x 
-2 \sum_{1\leq i,j\leq 3}\int_{\Dom} \T \wt{v}^i \partial_j \wt{v}^i \partial_j \T\,\dd x
\end{align*}
and
\begin{align*}
\int_{\Dom} |\wt{v}|^2 \T \Delta \T \,\dd x
&=-\alpha\int_{\Tor^2} |\wt{v}(\cdot,x_{\h},0)|^2 |\T(\cdot,x_{\h},0)|^2\,\dd x_{\h} \\
& \ \ \ -\int_{\Dom} |\wt{v}|^2 |\nabla \T|^2\,\dd x 
- 2\sum_{1\leq i,j\leq 3}\int_{\Dom} \T \wt{v}^i \partial_j \wt{v}^i \partial_j \T\,\dd x.
\end{align*}
By the boundedness of the trace operator \eqref{eq:boundedness_trace}, for any $r\in (\frac{1}{2},1)$,
\begin{align*}
&\int_{\Tor^2} |\wt{v}(\cdot,x_{\h},0)|^2 |\T(\cdot,x_{\h},0)|^2\,\dd x_{\h}\lesssim_{r} \|\wt{v}\T\|_{H^{r}}^2
\lesssim_{r} \|\wt{v}\T\|_{L^2}^{2(1-r)} \|\wt{v}\T\|_{H^1}^{2r}\\
&\qquad\qquad\qquad 
\lesssim_{r} \|\wt{v}\T\|_{L^2}^{2(1-r)} 
\Big(\|\wt{v}\T\|^2_{L^2}+\int_{\Dom}|\nabla {v}|^2|\T|^2\,\dd x +\int_{\Dom} |\wt{v}|^2 |\nabla\T|^2 \,\dd x \Big)^{r}\\
&\qquad\qquad\qquad
\leq C_{r,\delta_6}\|\wt{v}\T\|_{L^2}^2+ \delta_6\Big( \int_{\Dom}|\nabla \wt{v}|^2|\T|^2\,\dd x +
\int_{\Dom} |{v}|^2 |\nabla\T|^2 \,\dd x\Big)\\
&\qquad\qquad\qquad 
\leq  C_{r,\delta_6}
\big(\|\wt{v}\|_{L^4}^4+ \|\T\|_{L^4}^4\big)+ \delta_6\Big( \int_{\Dom}|\nabla \wt{v}|^2|\T|^2\,\dd x +
\int_{\Dom} |\wt{v}|^2 |\nabla\T|^2 \,\dd x\Big).
\end{align*}
By the Cauchy-Schwartz inequality, we have
\begin{equation*}
\sum_{1\leq i,j\leq 3}\Big|\int_{\Dom} \T \wt{v}^i \partial_j \wt{v}^i \partial_j \T\,\dd x\Big| \leq 
\varepsilon_6 \int_{\Dom} |\wt{v}|^2|\nabla \wt{v}|^2\,\dd x 
+ C_{\varepsilon_6} \int_{\Dom} |\T|^2|\nabla \T|^2\,\dd x . 
\end{equation*}
Summarizing the previous estimates, we have, a.e.\ on $[0,\tau)\times \O$,
\begin{align*}
J_{1}
&\leq
- (2-\delta_6) \Big(\int_{\Dom} \T^2 |\nabla \wt{v}|^2\,\dd x
+ \int_{\Dom} |\wt{v}|^2 |\nabla \T|^2\,\dd x\Big) \\
&+\varepsilon_6 \int_{\Dom} |\wt{v}|^2|\nabla \wt{v}|^2\,\dd x  
+ C_{\varepsilon_6,\delta_6}\big(L+ \|\wt{v}\|_{L^4}^4+ \|\T\|_{L^4}\big),  
\end{align*}
where we have also used that $\int_{\Dom}|\T|^2|\nabla \T|^2\,\dd s \leq L$ by \eqref{eq:def_N_v_T}.

\subsubsection{Estimate of $J_{2}$}
Let us write $J_{2}=J_{2,1}+J_{2,2}$ where 
\begin{equation*}
J_{2,1} \stackrel{{\rm def}}{=}2
\int_{\Dom} \T^2 \wt{v}\cdot \big(f_v+ \force(\wt{v})\big)\,\dd x \ \ \ \text{ and } \ \ \ 
J_{2,2}
\stackrel{{\rm def}}{=}2
\int_{\Dom}
 |\wt{v}|^2 \T f_{\T}\,\dd x.
\end{equation*}
To estimate such terms, observe that
\begin{align}
\label{eq:interpolation_L6_wt_v}
\|\wt{v}\|_{L^6}^2
= 
\big\||\wt{v}|^2\big\|_{L^3}
&\stackrel{\eqref{eq:interpolation_inequality_L3}}{\lesssim }
\big\||\wt{v}|^2\big\|_{L^2}^{1/2}\Big(\big\||\wt{v}|^2\big\|_{L^2}^{1/2}
+ \big\|\nabla |\wt{v}|^2\big\|_{L^2}^{1/2}\Big)\\
\nonumber
&\ \ =\ 
\|\wt{v}\|_{L^4}^2
+ 
\|\wt{v}\|_{L^4}\Big\||\wt{v}||\nabla \wt{v}|\Big\|_{L^2}^{1/2}.
\end{align}
Thus, since $\|\force(\wt{v})\|_{L^2}\lesssim \big\||\wt{v}||\nabla \wt{v}|\big\|_{L^2}$ due to \eqref{eq:primitive_bar_2}, we have,
a.e.\ on $[\eta,\xi]\times \O$,
\begin{align*}
|J_{2,1}| 
&\lesssim \|\T^2\|_{L^3}\|\wt{v}\|_{L^6}\big(\|f_v\|_{L^2}+\|\force(\wt{v})\|_{L^2} \big)\\
&\lesssim \|\T\|_{L^6}^2\Big(\|\wt{v}\|_{L^4}+ 
\|\wt{v}\|_{L^4}^{1/2}\big\||\wt{v}||\nabla \wt{v}|\big\|_{L^2}^{1/4}\Big)
 \Big(\|f_v\|_{L^2}+\big\||\wt{v}||\nabla \wt{v}|\big\|_{L^2}\Big)\\
&\leq \varepsilon_6 \big\||\wt{v}||\nabla \wt{v}|\big\|_{L^2}^2 +C_{\varepsilon_6} \|f_v\|_{L^2}^2
+ C_{\varepsilon_6}\|\T\|_{L^6}^8(1+ \|\wt{v}\|_{L^4}^4),
\end{align*}
where in the last step we applied the Young inequality twice.

Similarly, we can estimate $J_{2,2}$. Indeed, a.e.\ on $[\eta,\xi]\times\O$,
\begin{align*}
|J_{2,2}|&\lesssim \|f_{\T}\|_{L^2}\||\wt{v}|^2\|_{L^3}\|\T\|_{L^6}\\
&= \|f_{\T}\|_{L^2}\|\wt{v}\|_{L^6}^2\|\T\|_{L^6}\\
&\leq \varepsilon_6 
\big\||\wt{v}||\nabla \wt{v}|\big\|_{L^2}^2+ C_{\varepsilon_6} \|f_{\T}\|_{L^2}^2+ C_{\varepsilon_6}(1+\|\T\|_{L^6}^8)( 1+\|\wt{v}\|_{L^4}^4),
\end{align*}
where in the last step we applied Young's inequality twice again.

\subsubsection{Estimate of $J_3$}
Let us decompose $J_3$ as $J_3=J_{3,1}+J_{3,2}+J_{3,3}$ where
\begin{align*}
J_{3,1}
&\stackrel{{\rm def}}{=}
\int_{\Dom} \T^2 \wt{v}\cdot\big[ (\tp_{\h}\cdot\nabla_{\h}) \nabla_{\h}\TT\big]\,\dd x  ,\\
J_{3,2}
&\stackrel{{\rm def}}{=}
\int_{\Dom} \T^2 \wt{v}\cdot\Big( \int_{-h}^{\cdot} \tp^3 (\cdot,\zeta) \partial_3 \nabla_{\h}\T(\cdot,\zeta)\,\dd \zeta\Big)\,\dd x  ,\\
J_{3,3}
&\stackrel{{\rm def}}{=}
\int_{\Dom} \T^2 \wt{v}\cdot (\tp_{\h}\cdot\nabla_{\h})\nabla_{\h} \wh{\T} \,\dd x  ,\\
J_{3,4}
&\stackrel{{\rm def}}{=}
\int_{\Dom} \T^2 \wt{v}\cdot \reallywidehat{\tp^3 \partial_3 \nabla_{\h}\T} \,\dd x .
\end{align*}
We begin by looking at $J_{3,1}$. Integrating by parts, we have, a.e.\ on $[\eta,\xi]\times \O$,
\begin{align*}
|J_{3,1}|
&\leq  \int_{\Dom} |\nabla_{\h}\tp_{\h}|\, \T^2  \,|\wt{v}|\,  |\nabla_{\h}\TT|  \,\dd x\\
&+
\int_{\Dom} |\T|\, |\nabla_{\h}\T| \,|\wt{v}|\, |\nabla_{\h}\TT|  \,\dd x
+
\int_{\Dom}  \T^2 |\nabla_{\h}\wt{v}| \,|\nabla_{\h}\TT|  \,\dd x.
\end{align*}
By the Cauchy-Schwartz inequality and Assumption \ref{ass:well_posedness_primitive_double_strong}\eqref{it:well_posedness_primitive_kone_smoothness_strong_strong},
\begin{align*}
|J_{3,1}|&\leq \delta_6 \big(\int_{\Dom} |\wt{v}|^2 |\nabla \T|^2\,\dd x +
\int_{\Dom} |\wt{v}|^2 |\nabla \wt{v}|^2\,\dd x\big)\\
&+C_{\delta_6}\int_{\Dom}|\T|^2 |\nabla_{\h} \TT|^2\,\dd x + C_{\delta_6}(1+\|\T\|_{L^6}^8)( 1+\|\wt{v}\|_{L^4}^4).
\end{align*}
Similarly, one can readily check that,  a.e.\ on $[\eta,\xi]\times \O$,
\begin{align*}
\sum_{2\leq j\leq 4} |J_{3,j}|
&\leq  \delta_6 \Big(\int_{\Dom} |\wt{v}|^2 |\nabla \T|^2\,\dd x +
\int_{\Dom} |\wt{v}|^2 |\nabla \wt{v}|^2\,\dd x\Big)\\
&+ \varepsilon_6 \big(\|\partial_3 \nabla \T\|_{L^2}^2+\|\partial_3 \nabla v\|_{L^2}^2 + \|\wh{\T}\|_{H^2(\Tor^2)}^2\big)
+
C_{\delta_6,\varepsilon_6}(1+\|\T\|_{L^6}^8)( 1+\|\wt{v}\|_{L^4}^4).
\end{align*}

\subsubsection{Estimate of $J_{4}$}
The H\"{o}lder inequality and the embedding $H^1\embed L^6$ yield, a.e.\ on $[\eta,\xi]\times \O$,
\begin{align*}
 |J_{4}|
 &\lesssim \|\T^2\|_{L^3} \||\wt{v}|^2\|_{L^2} \|\nabla_{\h} \overline{v}\|_{L^6(\Tor^2)}\\
&\lesssim\|\T\|_{L^6}^2 \|\wt{v}\|_{L^4}^2 \|\overline{v}\|_{H^2(\Tor^2)}
\leq \varepsilon_6 \|\overline{v}\|_{H^2}^2+ C_{\varepsilon_6} \|\T\|_{L^6}^4 \|\wt{v}\|_{L^4}^4.
\end{align*}

\subsubsection{Estimate of $J_{5}$}
We begin by noticing that, for all $\varepsilon_0\in (0,\infty)$ and a.e.\ on $[0,\tau)\times \O$,
\begin{align*}
|J_{5}| 
&\stackrel{(i)}{\leq} (\ellip+\delta_6)
\int_{\Dom} |\wt{v}|^2 |\nabla \T|^2\,\dd x +
C_{\delta_6} \int_{\Dom}|\T|^2 \|g_{\T}\|_{\ell^2}^2\,\dd x\\
&\stackrel{(ii)}{\leq} (\ellip+\delta_6)
\int_{\Dom} |\wt{v}|^2 |\nabla \T|^2\,\dd x +
C_{\delta_6} \|\T\|_{L^4}^2 \|g_{\T}\|_{H^1(\ell^2)}^2,
\end{align*}
where in $(i)$ we used Assumption \ref{ass:well_posedness_primitive_double_strong}\eqref{it:well_posedness_primitive_parabolicity_strong_strong} and in $(ii)$ that $H^1(\ell^2)\embed L^4(\ell^2)$.

\subsubsection{Estimate of $J_6$} To begin, note that, a.e.\ on $[0,\tau)\times \O$,
\begin{align*}
|J_{6}|
&\leq (\ellip+\delta_6)
\int_{\Dom} |\T|^2 |\nabla \wt{v}|^2\,\dd x\\
&+ 
C_{\delta_6}  \Big(\|\T\|_{L^4}^2 \|g_{v}\|_{H^1(\ell^2)}^2
+ \Big\||\T||\nabla \TT|\Big\|_{L^2}^2
+\sum_{n\geq 1} \int_{\Dom} |\T|^2 |\overline{\phi^3_n \partial_3 v}|^2\,\dd x\Big).
\end{align*}
Next, we estimate the last term on the right-hand side of the previous inequality.
To this end, note that  $|\overline{\phi^3_n \partial_3 v}|^2$ is $\z$-independent. Therefore, 
\begin{align*}
\sum_{n\geq 1}\int_{\Dom} |\T|^2 |\overline{\phi^3_n \partial_3 v}|^2 \,\dd x 
&\lesssim_h 
\|\T\|_{L^{2}(-h,0;L^4)}^2\sum_{n\geq 1}  \|\overline{\phi^3_n \partial_3 v}\|_{L^4(\Tor^2)}^2\\
&\stackrel{(i)}{\lesssim} \|\T\|_{L^{2}(-h,0;L^4)}^2  \|(\overline{\phi^3_n \partial_3 v})_{n\geq 1}\|_{L^2(\Tor^2;\ell^2)}\|(\overline{\phi^3_n \partial_3 v})_{n\geq 1}\|_{H^1(\Tor^2;\ell^2)}\\
&\stackrel{(ii)}{\lesssim}_M 
\|\T\|_{L^4}^2 \| v\|_{H^1}(\| v\|_{H^1}+\|\nabla \partial_3 v\|_{L^2})\\
&\leq C_{\varepsilon_6}(1+ \|\T\|_{L^4}^4)(1+ \| v\|_{H^1}^2)+  \varepsilon_6 \|\nabla \partial_3 v\|_{L^2}^2,
\end{align*}
where in $(i)$ we used \eqref{eq:interpolation_inequality_L4}, the Cauchy-Schwartz inequality and $\ell^2(L^2)=L^2(\ell^2)$. Finally, $(ii)$ follows from $\|(\phi_n^j)_{n\geq 1}\|_{L^{\infty}(\ell^2)}\lesssim_M 1$ as commented in Remark \ref{r:boundedness}.

\subsubsection{Estimate of $J_{7}$}
\label{sss:estimate_I_6}
The Cauchy-Schwarz inequality yields 
\begin{align*}
|J_7|
&\leq \varepsilon_6 \sum_{n\geq 1}\int_{\Dom} |\wt{v}|^2\big|(\phi_n \cdot\nabla) \wt{v}-\overline{\phi^3_n \partial_3 v}+ \opt_n (\T)+\wt{g_{n,\T}}\big|^2\,\dd x \\
&+C_{\varepsilon_6}\sum_{n\geq 1}
\int_{\Dom} \big|\T [(\psi_n \cdot\nabla)\T+g_{\T,n}]\big|^2 \,\dd x\\ 
&\leq \varepsilon_6 \int_{\Dom}\big( |\wt{v}|^2|\nabla \wt{v}|^2
+ |\nabla \partial_3 v|^2 +|\wt{v}|^2|\nabla \TT|^2\big)\,\dd x\\
&+
C_{\varepsilon_6} \Big(\big\||\T||\nabla \T|\big\|_{L^2}^2+\|\wt{v}\|_{L^4}^2 \|g_{\T}\|_{H^1(\ell^2)}^2+\|\T\|_{L^4}^2 
\|g_v\|_{H^1(\ell^2)}^2 \Big),
\end{align*}
where in the last inequality we used that $\|(\phi_n^j)_{n\geq 1}\|_{L^{\infty}(\ell^2)}\lesssim_M 1$.

\subsubsection{Proof of \eqref{eq:estimate_mixed_v_T}}
\label{sss:proof_estimate_mixed_v_T}
Recall that $\ellip<2$ by Assumption \ref{ass:well_posedness_primitive_double_strong}\eqref{it:well_posedness_primitive_parabolicity_strong_strong}. 
Due to the estimates of Subsection \ref{sss:estimate_I_1}-\ref{sss:estimate_I_6} with $\varepsilon_0$ sufficiently small and independent of $(j,\eta,\xi,v_0,\T_0)$, the claimed estimate follows by taking $t=T$ and the expected value on both sides of \eqref{eq:T_v_equation_Ito} as well as by using $\E[N_T]=0$. 

Note that, in contrast to the previous subsections, we do not take  $\E[\sup_{t\in [0,T]} |\cdot|]$ on both sides of \eqref{eq:T_v_equation_Ito}. This would eventually give us an estimate for $\E[\sup_{t\in [\eta,\xi]} \||\wt{v}(t)||\T(t)|\|_{L^2}^2]$. However, this already follows from the $L^{\infty}_t(L^4_x)$-estimates for $\wt{v}$ and $\T$ proven in Subsection \ref{ss:estimate_wt_v_L_4} and Lemma \ref{l:basic_estimates}, respectively.

\subsection{Estimate for $\big\||\TT|  |\nabla\wt{v}|\big\|_{L^2_t L^2_x}$ and $ 
\big\| |\wt{v}||\nabla_{\h}\TT| \big\|_{L^2_t L^2_x}$} 
\label{ss:estimate_tilde_v_varphi}
The aim of this subsection is to prove the following estimate: For all $\varepsilon_7\in (0,\infty)$,
\begin{align}
\label{eq:estimate_mixed_v_varphi}
&\E \int_{\eta}^{\xi} \Big\| |\TT| |\nabla\wt{v}| \Big\|_{L^2}^2\,\dd s 
+ 
\E \int_{\eta}^{\xi} \Big\| |\wt{v}||\nabla_{\h}\TT| \Big\|_{L^2}^2\,\dd s
\leq 
C_{7,\varepsilon_7}\big(1+ \E\|\T(\eta)\|_{L^4}^4+\E\|\wt{v}(\eta)\|_{L^4}^4\big)\\
&\qquad\qquad
\nonumber
+\varepsilon_7 \Big(
\E\int_{\eta}^{\xi} \Big\| |\wt{v}||\nabla \T| \Big\|_{L^2}^2\,\dd s 
+
\E\int_{\eta}^{\xi} \Big\||\nabla\wt{v}||\T| \Big\|_{L^2}^2\,\dd s
+\E\int_{\eta}^{\xi} \|\wh{\T} \|_{H^2(\Tor^2)}^2\,\dd s \Big)\\
\nonumber
&\qquad\qquad +\varepsilon_7 \Big(
\E\int_{\eta}^{\xi} \Big\||\nabla\wt{v}||\wt{v}| \Big\|_{L^2}^2\,\dd s 
+\E\int_{\eta}^{\xi} \|\nabla \partial_3 v \|_{L^2}^2\,\dd s
+\E\int_{\eta}^{\xi} \|\nabla \partial_3 \T \|_{L^2}^2\,\dd s
\Big)\\
\nonumber
&\qquad\qquad
+C_{7,\varepsilon_7} \E\int_{\eta}^{\xi} L_s (1+\|\wt{v}\|_{L^4}+\|\T\|_{L^4}^4)\,\dd s\\
\nonumber
&\qquad \qquad +C_{7,\varepsilon_7}  \E\int_{\eta}^{\xi}\Big\| |\TT| |\nabla_{\h} \TT|\Big\|_{L^2}^2\, \dd s ,
\end{align}
where $C_7,C_{7,\varepsilon_7}$  are constants independent of $(j,\eta,\xi,v_0,\T_0)$ and $C_7$ is also independent of $\varepsilon_7$. Finally, $L_s$ is as in \eqref{eq:def_N_v_T}.

Here the idea is to apply the It\^{o} formula to the functional 
\begin{equation}
\label{eq:functional_vv_TT}
(\wt{v},\T)\mapsto \Big\||\wt{v} | |\int_{-h}^{\cdot} \T(\cdot,\zeta)\,\dd \zeta| \Big\|_{L^2(\Dom)}^2.
\end{equation}
Recall that $\TT=\int_{-h}^{\cdot}\T(\cdot,\zeta)\,\dd \zeta$, see \eqref{eq:def_TT}.
In the following result, we show cancellation properties involving convective terms, which will be useful in the application of such a formula.

\begin{lemma}[Cancellation]
\label{l:cancellation_vv_TT}
Let $v\in C^{\infty}(\Dom;\R^2)$ and set $w(v)=-\int_{-h}^{\cdot} \div_{\h} v(\cdot,\zeta)\,\dd \zeta$, $u=(v,w(v))$. Then, for all $\theta\in C^{\infty}(\Dom)$,
\begin{equation}
\label{eq:cancellation_vv_TT}
\begin{aligned}
\int_{\Dom} \TT^2 v\cdot[(u\cdot\nabla)v] \,\dd x 
&+\int_{\Dom} |v|^2 \TT \Big(\int_{-h}^{\cdot} (u\cdot\nabla)\T\,\dd \zeta\Big)\,\dd x\\
&= \int_{\Dom} |v|^2 \TT \Big[ \int_{-h}^{\cdot} (v\cdot\nabla_{\h})\T\,\dd \zeta- (v\cdot\nabla_{\h})\TT 
+ \int_{-h}^{\cdot} \div_{\h} v\,\T\,\dd \zeta\Big]\,\dd x
\end{aligned}
\end{equation}
where $\TT=\int_{-h}^{\cdot} \T(\cdot,\zeta)\,\dd \zeta$, see \eqref{eq:def_TT}.
\end{lemma}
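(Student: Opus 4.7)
The proof will be a direct computation, essentially an extension of Lemma \ref{l:cancellation} (the case $r=2$, $f=v$, $g=\TT$) to the situation where one of the two functions, $\TT$, is the vertical antiderivative of another function $\T$ that also appears on the left-hand side. The two new ingredients are: (i) turning the first integral on the LHS into an integral against $(u\cdot\nabla)\TT$ via the usual $v\cdot[(u\cdot\nabla)v]=\tfrac12(u\cdot\nabla)|v|^2$ trick; and (ii) rewriting $\int_{-h}^{\cdot}(u\cdot\nabla)\T\,\dd\zeta$ by integrating the vertical transport piece by parts in $\zeta$.

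The plan, in order. First, using $v\cdot[(u\cdot\nabla)v]=\tfrac12(u\cdot\nabla)|v|^2$ and $\div\,u=0$, integration by parts over $\Dom$ gives
\begin{align*}
\int_{\Dom}\TT^2 v\cdot[(u\cdot\nabla)v]\,\dd x
=-\int_{\Dom}|v|^2\TT\,(u\cdot\nabla)\TT\,\dd x+\tfrac12\int_{\partial\Dom}|v|^2\TT^2(u\cdot n)\,\dd S.
\end{align*}
The boundary integral vanishes: the horizontal boundaries are periodic, at $\z=-h$ we have $\TT(\cdot,-h)=0$, and at $\z=0$ we use $u\cdot n=w(v)(\cdot,0)=-\int_{-h}^{0}\div_{\h}v\,\dd\zeta$, which is $0$ in the context in which this lemma is applied (this is where the assumption $v=\wt v$ with $\overline{\wt v}=0$, hence $w(\wt v)(\cdot,0)=0$, enters; if a general $v$ is used, a boundary term at $\z=0$ remains, but it is precisely this condition that makes the identity work).

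Second, I write $(u\cdot\nabla)\T=(v\cdot\nabla_{\h})\T+w(v)\partial_{3}\T$ and integrate the vertical piece by parts in $\zeta$, using $w(v)(\cdot,-h)=0$ and $\partial_\zeta w(v)=-\div_{\h}v$:
\begin{align*}
\int_{-h}^{\z}w(v)(\cdot,\zeta)\partial_\zeta\T(\cdot,\zeta)\,\dd\zeta
=w(v)(\cdot,\z)\T(\cdot,\z)+\int_{-h}^{\z}\div_{\h}v(\cdot,\zeta)\,\T(\cdot,\zeta)\,\dd\zeta.
\end{align*}
Plugging this into the second integral on the LHS of \eqref{eq:cancellation_vv_TT} gives
\begin{align*}
\int_{\Dom}|v|^2\TT\!\int_{-h}^{\cdot}\!(u\cdot\nabla)\T\,\dd\zeta\,\dd x
=\int_{\Dom}|v|^2\TT\Big[\int_{-h}^{\cdot}(v\cdot\nabla_{\h})\T\,\dd\zeta+w(v)\T+\int_{-h}^{\cdot}\div_{\h}v\,\T\,\dd\zeta\Big]\dd x.
\end{align*}

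Third, adding the two contributions and using $(u\cdot\nabla)\TT=(v\cdot\nabla_{\h})\TT+w(v)\partial_{3}\TT=(v\cdot\nabla_{\h})\TT+w(v)\T$, the terms $\pm\int_{\Dom}|v|^2\TT\,w(v)\T\,\dd x$ cancel, and the remaining terms reassemble into the RHS of \eqref{eq:cancellation_vv_TT}. The main obstacle is the bookkeeping of the boundary contribution at $\z=0$ in step one; the identity as stated is tailored to the case where this term vanishes, which is exactly the situation (namely $v=\wt v$, so $w(v)$ vanishes on both top and bottom) in which Lemma \ref{l:cancellation_vv_TT} is invoked in Subsection \ref{ss:estimate_tilde_v_varphi}.
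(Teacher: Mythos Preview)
Your proof is correct and follows essentially the same route as the paper's. The paper first computes
\[
\int_{-h}^{\cdot} w(v)\partial_3 \T\,\dd \zeta
= w(v)\T +\int_{-h}^{\cdot} \div_{\h} v\,\T\,\dd \zeta
= (u\cdot\nabla) \TT- (v\cdot\nabla_{\h}) \TT +\int_{-h}^{\cdot} \div_{\h} v\, \T\,\dd \zeta,
\]
then invokes Lemma~\ref{l:cancellation} (with $r=2$, $f=v$, $g=\TT$) to get
$\int_{\Dom}\big[|v|^2\TT\,(u\cdot\nabla)\TT+\TT^2 v\cdot(u\cdot\nabla)v\big]\,\dd x=0$,
which is exactly your Step~1. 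The only organizational difference is that you unwind the integration by parts behind Lemma~\ref{l:cancellation} explicitly, while the paper cites it.

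Your remark about the boundary term at $\z=0$ is well taken: Lemma~\ref{l:cancellation} requires $u^3(\cdot,0)=w(v)(\cdot,0)=0$, which is not part of the hypotheses of Lemma~\ref{l:cancellation_vv_TT} as stated but is satisfied in the only application (Subsection~\ref{ss:estimate_tilde_v_varphi}, where $v=\wt v$ and hence $\int_{-h}^0\div_{\h}\wt v\,\dd\zeta=0$). The paper relies on this implicitly via the citation of Lemma~\ref{l:cancellation}; you make it explicit, which is a useful clarification.
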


The key point is that on the right-hand side of \eqref{eq:cancellation_vv_TT} the vertical component $w(v)$ of $u$ does not appear.

\begin{proof}[Proof of Lemma \ref{l:cancellation_vv_TT}]
Since $[w(v)](\cdot,-h)=0$ on $\Tor^2$, 
\begin{align*}
\int_{-h}^{\cdot} w(v)\partial_3 \T\,\dd \zeta
&= w(v)\T +\int_{-h}^{\cdot} \div_{\h} v\,\T\,\dd \zeta\\
&= w(v)\partial_3\TT -\int_{-h}^{\cdot} \div_{\h} v\,\T\,\dd \zeta\\
&= (u\cdot\nabla) \TT- (v\cdot\nabla_{\h}) \TT +\int_{-h}^{\cdot} \div_{\h} v\, \T\,\dd \zeta.
\end{align*}
Hence 
\eqref{eq:cancellation_vv_TT} follows by using that   
$
\int_{\Dom} \big[|v|^2\TT (u\cdot\nabla )\TT + |\TT|^2 v\cdot (u\cdot\nabla) v\big]\,\dd x=0
$
, cf.\ Lemma \ref{l:cancellation}.
\end{proof}

Next, we apply the It\^{o}'s formula to the functional in \eqref{eq:functional_vv_TT}. As in Subsection \ref{ss:estimate_tilde_v_T}, a standard approximation argument shows that 
\begin{align}
\label{eq:TT_v_equation_Ito}
\big\||\TT^{\eta,\xi}(t)|^2 |\wt{v}^{\eta,\xi}(t)|^2\big\|_{L^2}^2
&= 
\big\||\TT(\eta)|^2 |\wt{v}(\eta)|^2 \,\big\|_{L^2}^2\\
\nonumber
&+\sum_{1\leq j\leq 7}\int_0^t \one_{[\eta,\xi]}  K_{2,j}(s)\,\dd s
+\mathcal{N}_t, 
\end{align}
where $\mathcal{N}_t$ is an $L^1(\O)$-martingale, such that $\E[\mathcal{N}_t]=0$ for all $0\leq t\leq T$, and 
\begin{align*}
K_{1}&\stackrel{{\rm def}}{=}
2 \int_{\Dom} \Big(\TT^2 \wt{v}\cdot \Delta \wt{v} + |\wt{v}|^2 \TT \int_{-h}^{\cdot}\Delta \T(\cdot,\zeta)\,\dd \zeta\Big)\,\dd x, \\
K_{2}&\stackrel{{\rm def}}{=}
\int_{\Dom} |\wt{v}|^2 \TT \Big[ \int_{-h}^{\cdot} (\wt{v}\cdot\nabla_{\h})\T\,\dd \zeta- (\wt{v}\cdot\nabla_{\h})\TT 
+ \int_{-h}^{\cdot} \div_{\h} \wt{v}\,\T\,\dd \zeta\Big]\,\dd x,\\
K_{3}&\stackrel{{\rm def}}{=}
2\int_{\Dom} \Big(\TT^2 \wt{v}\cdot (f_v+ \force(\wt{v})) +  |\wt{v}|^2 \TT \int_{-h}^{\cdot} f_{\T}(\cdot,\zeta)\,\dd \zeta\Big)\, \dd x,\\
K_{4}&\stackrel{{\rm def}}{=}
2\int_{\Dom} \TT^2 \wt{v}\cdot 
\Big(\Lt \T+(\tp_{\h}\cdot\nabla_{\h})\wh{\T}+\reallywidehat{\tp^3\partial_3\nabla_{\h}\T}\Big)\,\dd x\dd s, \\
K_{5}&\stackrel{{\rm def}}{=}
-2\int_{\Dom} \TT^2 \wt{v}\cdot \big[(\wt{v}\cdot\nabla_{\h})\overline{v}\big]\,\dd x,\\
K_{6}&\stackrel{{\rm def}}{=}
\sum_{n\geq 1}\int_{\Dom} |\wt{v}|^2 \Big(\int_{-h}^{\cdot} \big[(\psi_n(\cdot,\zeta) \cdot\nabla)\T(\cdot,\zeta)+g_{\T,n}(\cdot,\zeta)\big]\,\dd \zeta\Big)^2 \,\dd x,\\
K_{7}&\stackrel{{\rm def}}{=} 
\int_{\Dom} |\TT|^2 |(\phi_n \cdot\nabla) \wt{v}-\overline{\phi^3_n \partial_3 v}+ \opt_n (\T)+\wt{g_{n,v}}|^2\,\dd x,\\
K_{8}&\stackrel{{\rm def}}{=} 
2\sum_{n\geq 1}
\int_{\Dom} \TT \Big(\int_{-h}^{\cdot} \big[(\psi_n(\cdot,\zeta) \cdot\nabla)\T(\cdot,\zeta)+g_{\T,n}(\cdot,\zeta)\big]\,\dd \zeta\Big) \\
&\qquad \qquad \qquad \qquad \qquad 
\wt{v}\cdot [(\phi_n \cdot\nabla) \wt{v}-\overline{\phi^3_n \partial_3 v}+ \opt_n (\T)+\wt{g_{n,\T}}]\,\dd x,
\end{align*}
where we used Lemma \ref{l:cancellation_vv_TT} with $(v,\T)$ replaced by $(\wt{v},\TT)$, and by Lemma \ref{l:cancellation},
\begin{align*}
\int_{\Dom} \Big[\TT^2 \wt{v}\cdot[(\overline{v}\cdot\nabla_{\h})\wt{v}]  
&+ |\wt{v}|^2 \TT \Big(\int_{-h}^{\cdot} (\overline{v}\cdot\nabla_{\h}) \T\,\dd \zeta\Big)\Big]\,\dd x\\
&=
\int_{\Dom} \TT^2 \wt{v}\cdot[(\overline{v}\cdot\nabla_{\h})\wt{v}] 
+|\wt{v}|^2 \TT (\overline{v}\cdot\nabla_{\h}) \TT\,\dd x=0.
\end{align*}

As before, we collect the estimates of $(K_{j})_{j=1}^7$ in the following subsections. 
Below $\varepsilon,\varepsilon_0\in (0,\infty)$ are positive parameters which will be chosen in Subsections \ref{ss:proof_lemma_main_estimate} and 
\ref{sss:proof_estimate_mixed_v_T}, respectively.

\subsubsection{Estimate of $K_1$} Integrating by parts, we have, a.e.\ on $[\eta,\xi]\times \O$,
\begin{align*}
\int_{\Dom} \TT^2\Delta \wt{v}\cdot\wt {v}\,\dd x= -2\int_{\Dom} \TT^2 |\nabla \wt{v}|^2\,\dd x 
-2\sum_{1\leq i,j\leq 3} \int_{\Dom} \TT \partial_i \TT \,\wt{v}^j \partial_i \wt{v}^j\,\dd x,
\end{align*}
and by \eqref{eq:primitive_v_proof_global_bc_2},
\begin{align*}
&\int_{\Dom}|\wt {v}|^2 \TT\Big( \int_{-h}^{\cdot} \Delta \T(\cdot,\zeta)\,\dd \zeta\Big)\,\dd x
= 
\int_{\Dom}|\wt {v}|^2 \TT \Delta_{\h} \TT\,\dd x+ \int_{\Dom} |\wt{v}|^2 \TT \partial_3 \T\,\dd x\\
&  
= 
-2\int_{\Dom}|\wt {v}|^2 |\nabla_{\h} \TT|^2\,\dd x
- 2\sum_{{ 1\leq j\leq 3}} \int_{\Dom} \TT \nabla_{\h} \TT \cdot \nabla_{\h} \wt{v}^j \wt{v}^j\,\dd x
+ \int_{\Dom} |\wt{v}|^2 \TT \partial_3 \T\,\dd x.
\end{align*}
Hence, a.e.\ on $[\eta,\xi]\times \O$,
\begin{align*}
K_1 
&\leq -2 \int_{\Dom} \big(\TT^2 |\nabla \wt{v}|^2+ |\wt{v}|^2 |\nabla_{\h} \TT|^2 \big)\,\dd x\\
&+ \varepsilon_7 \int_{\Dom}\big( |\wt{v}|^2 |\nabla \wt{v}|^2+ |\partial_3 \nabla \T|^2\big)\, \dd x\\
&+ C_{\varepsilon_7}\Big( \int_{\Dom} \TT^2 |\nabla_{\h}\TT|^2	\,\dd x+  (1+\|\T\|_{L^6}^8) (1+\|\wt{v}\|_{L^4}^4)\Big).
\end{align*}

\subsubsection{Estimate of  $K_2$} We write $K_{2}=K_{2,1}+K_{2,2}+K_{2,3}$ where
\begin{align*}
K_{2,1}&\stackrel{{\rm def}}{=}
\int_{\Dom} |\wt{v}|^2 \TT \Big( \int_{-h}^{\cdot} (\wt{v}\cdot\nabla_{\h})\T\,\dd \zeta\Big) \,\dd x ,\\
K_{2,2}&\stackrel{{\rm def}}{=} - 
\int_{\Dom} |\wt{v}|^2 \TT \big[ (\wt{v}\cdot\nabla_{\h})\TT \big]\,\dd x,\\
K_{2,3}&\stackrel{{\rm def}}{=}
\int_{\Dom} |\wt{v}|^2 \TT \Big( \int_{-h}^{\cdot} \div_{\h} \wt{v}\,\T\,\dd \zeta\Big)\,\dd x.
\end{align*}
Note that, a.e.\ on $[\eta,\xi]\times \O$,
\begin{align*}
\Big|\int_{\Dom}
 |\wt{v}|^2 \TT  \Big( \int_{-h}^{\cdot} (\wt{v}\cdot\nabla_{\h})\T\,\dd \zeta\Big) \,\dd x\Big|
 &\lesssim_h   \big\||\wt{v}|^2\big\|_{L^3} \|\TT\|_{L^6} \|(\wt{v}\cdot\nabla_{\h})\T\|_{L^2} \\
 &\leq  \varepsilon_7 \big\||\wt{v}||\nabla\T|\big\|_{L^2}^2 +C_{\varepsilon_7} \big\||\wt{v}|^2\big\|_{L^3}^2 \|\T\|_{L^6}^2\\
 &\stackrel{(i)}{\leq}  
 \varepsilon_7 \big\||\wt{v}||\nabla\T|\big\|_{L^2}^2 +\varepsilon_7\big\||\wt{v}||\nabla \wt{v}|\big\|_{L^2}^{2}+ C_{\varepsilon_7} 
 (1+\|\T\|_{L^6}^4)
 \|\wt{v}\|_{L^4}^4,
\end{align*}
where in $(i)$ we used \eqref{eq:interpolation_L6_wt_v}. With similar arguments, we have
\begin{align*}
K_{2,2} &\leq \varepsilon_7 \big\||\wt{v}||\nabla_{\h}\TT|\big\|_{L^2}^2 +\varepsilon_7\big\||\wt{v}||\nabla \wt{v}|\big\|_{L^2}^{2}+ C_{\varepsilon_7} 
 (1+\|\T\|_{L^6}^4)
 \|\wt{v}\|_{L^4}^4,\\
K_{2,3} &\leq \varepsilon_7 \big\||\nabla \wt{v}||\T|\big\|_{L^2}^2 +\varepsilon_7\big\||\wt{v}||\nabla \wt{v}|\big\|_{L^2}^{2}+ C_{\varepsilon_7} 
 (1+\|\T\|_{L^6}^4)
 \|\wt{v}\|_{L^4}^4.
\end{align*}
Putting together the estimates of $(K_{2,j})_{j=1}^3$, one sees that $\E\int_{\eta}^{\xi}K_2\,\dd s$ is bounded by the right hand side of \eqref{eq:estimate_mixed_v_varphi}.

\subsubsection{Proof of \eqref{eq:estimate_mixed_v_varphi}} One can readily check that the terms $(\E\int_{\eta}^{\xi}K_i\,\dd s )_{i=3}^8$ appearing in \eqref{eq:estimate_mixed_v_varphi}  can be estimated by the right hand side of \eqref{eq:estimate_mixed_v_varphi} by  modifying the arguments of Subsection \ref{ss:estimate_tilde_v_T} slightly. 
Now \eqref{eq:estimate_mixed_v_varphi} follows the estimates of $(\E\int_{\eta}^{\xi}K_i\,\dd s)_{i=1}^8$ by taking the expected value in \eqref{eq:TT_v_equation_Ito}. 

\subsection{Estimate for $\big\||\T||\nabla_{\h} \TT|\big\|_{L^2_t L^2_x}^2$}
\label{ss:estimate_T_TT}
The aim of this subsection is to prove the following estimate: For all $\varepsilon_8\in (0,\infty)$,
\begin{align}
\label{eq:estimate_T_Theta}
&\E \int_{\eta}^{\xi} \Big\||\T||\nabla_{\h} \TT|\Big\|_{L^2}^2\,\dd s 
\leq C_{8,\varepsilon_8} \big(1+\E\|\T(\eta)\|_{L^4}^4\big)\\
\nonumber
&\qquad 
+C_{8,\varepsilon_8}\E\int_{\eta}^{\xi} L_s (1+\|\T\|_{L^4}^4 +\|\wt{v}\|_{L^4}^4) \,\dd s\\
\nonumber
&\qquad 
+\varepsilon_8 \Big( \E\int_{\eta}^{\xi}\Big\||\wt{v}|^2 |\nabla \T|\Big\|_{L^2}^2 \,\dd s +\E\int_{\eta}^{\xi}\Big\||\wt{v}|^2 |\nabla_{\h} \TT|\Big\|_{L^2}^2
\,\dd s+ \E\int_{\eta}^{\xi}\Big\||\nabla \wt{v}|^2 |\T|\Big\|_{L^2}^2\,\dd s\Big),
\end{align}
where $C_8,C_{8,\varepsilon_8}$ are constants independent of $(j,\eta,\xi,v_0,\T_0)$ and $C_8$ is also independent of $\varepsilon_7$. As above, $L_s$ is as in \eqref{eq:def_N_v_T}.

As before, to prove the main estimate, the idea is to apply the It\^o formula to a particular function. Here we employ the following 
\begin{equation}
\label{eq:functional_T_TT}
\T\mapsto \int_{\Dom} |\T|^2  \big|\int_{-h}^{\cdot} \T(\cdot,\zeta)\,\dd \zeta\big|^2\,\dd x.
\end{equation}
The proof of \eqref{eq:estimate_T_Theta} essentially follows the line of Subsections \ref{ss:estimate_tilde_v_T} and \ref{ss:estimate_tilde_v_varphi} except using the functional  \eqref{eq:functional_T_TT} instead of the one used there. Here we content ourselves in estimating the term appearing in the corresponding It\^o formula involving the convection term, i.e.
\begin{align*}
Q\stackrel{{\rm def}}{=}
\int_{\Dom} \Big(\TT^2\T (\wt{u}\cdot\nabla)\T+
\T^2 \TT \big(\int_{-h}^{\cdot} [(\wt{u}\cdot\nabla)\T]\,\dd \zeta\big)\Big)\,\dd x.
\end{align*}
Here, as above, $\wt{u}=(\wt{v},w(\wt{v}))$. 
Note that the analogue term with $\wt{u}$ replaced by $\overline{u}=(\overline{v},0)$ vanishes due to Lemma \ref{l:cancellation}.
Repeating the argument in Lemma \ref{l:cancellation_vv_TT}, we have 
\begin{align*}
Q =
\int_{\Dom} |\T|^2 \TT \Big( \int_{-h}^{\cdot} (\wt{v}\cdot\nabla_{\h})\T\,\dd \zeta- (\wt{v}\cdot\nabla_{\h})\TT 
+\int_{-h}^{\cdot} \div_{\h} \wt{v}\,\T\,\dd \zeta\Big)\,\dd x.
\end{align*}
Hence
\begin{align*}
Q \leq \varepsilon_8 \big( \||\wt{v}|^2 |\nabla \T|\|_{L^2}^2 +
\||\wt{v}|^2 |\nabla_{\h} \TT|\|_{L^2}^2
+ \||\nabla \wt{v}|^2 |\T|\|_{L^2}^2
\big)
+ C_{\varepsilon_8} \||\T|^2 |\TT|\|_{L^2}^2.
\end{align*}
It remains to estimate the last term in the previous inequality. Note that, using the H\"{o}lder inequality with exponents $(3,6)$, we have
$$
\||\T|^2 |\TT|\|_{L^2}^2\leq \|\T\|^4_{L^6} \|\TT\|_{L^6}^2 \lesssim_{h} \|\T\|_{L^6}^6.
$$
Since $\|\T\|_{L^6}^6\leq L$ by \eqref{eq:def_N_v_T}, one sees that $\E\int_{\eta}^{\xi}Q\,\dd s$ can be estimated by the right hand side of \eqref{eq:estimate_T_Theta}.

\subsection{Estimate for $\||\TT||\nabla_{\h} \TT|\|_{L^2_t L^2_x}$}  
\label{ss:estimate_TT}
The aim of this subsection is to prove the following estimate: For all $\varepsilon_9\in (0,\infty)$,
\begin{align}
\label{eq:estimate_Theta_Theta}
&\E \int_{\eta}^{\xi} \Big\||\TT||\nabla_{\h} \TT|\Big\|_{L^2}^2\,\dd s 
\leq C_{9,\varepsilon_9} \big(1+\E\|\T(\eta)\|_{L^4}^4\big)\\
\nonumber
&+C_{9,\varepsilon_9}\E\int_{\eta}^{\xi} L_s\big(1+\|\T\|_{L^4}^4+\|\wt{v}\|_{L^4}^4\big) \,\dd s\\
\nonumber
&
+\varepsilon_9 \Big( \E\int_{\eta}^{\xi}\Big\||\wt{v}|^2 |\nabla \T|\Big\|_{L^2}^2 \,\dd s +\E\int_{\eta}^{\xi}\Big\||\wt{v}|^2 |\nabla_{\h} \TT|\Big\|_{L^2}^2
\,\dd s+ \E\int_{\eta}^{\xi}\Big\||\nabla \wt{v}|^2 |\T|\Big\|_{L^2}^2\,\dd s\Big),
\end{align}
where $C_9,C_{9,\varepsilon_9}$  are constants independent of $(j,\eta,\xi,v_0,\T_0)$ and $C_9$ is also independent of $\varepsilon_9$. Finally, $L_s$ is as in \eqref{eq:def_N_v_T}.

Here we apply the It\^o formula to the functional 
 $\T\mapsto\big\|  \int_{-h}^{\cdot} \T(\cdot,\zeta)\,\dd \zeta \big\|_{L^4}^4$. 
As in Subsection \ref{ss:estimate_T_TT}, we content ourselves to estimate the term coming from the convective term:
\begin{align*}
Q_0
&\stackrel{{\rm def}}{=}\int_{\Dom} \TT^3 \big(\int_{-h}^{\cdot} (\wt{u}\cdot\nabla )\T\,\dd \zeta\big)\,\dd x\\
&= \int_{\Dom} \TT^3 \Big[\big(\int_{-h}^{\cdot} (\wt{v}\cdot\nabla )\T\,\dd \zeta\big)-
(\wt{v}\cdot\nabla_{\h})\TT 
+ \int_{-h}^{\cdot} \div_{\h} \wt{v}\,\T\,\dd \zeta\Big]\,\dd x,
\end{align*}
where the last equality follows from the argument of Lemma \ref{l:cancellation_vv_TT}.
Hence, as in the previous subsection, one can readily check that $\E\int_{\eta}^{\xi}Q_0\,\dd s$ can be estimated by the right-hand side of \eqref{eq:estimate_Theta_Theta}.

\subsection{Proof of Lemma \ref{l:main_estimate}}
\label{ss:proof_lemma_main_estimate}
Here we conclude the proof of Lemma \ref{l:main_estimate} using the estimates proven in Subsections \ref{ss:estimate_overline_v}-\ref{ss:estimate_TT}. As explained in Subsection \ref{ss:preparation}, it remains to prove \eqref{eq:main_estimate_gronwall} with $c_0$ is independent of $(j,\eta,\xi,v_0,\T_0)$. 
Now the idea is to multiply the estimates of Subsections \ref{ss:estimate_overline_v}-\ref{ss:estimate_TT} by suitable positive constants $(\alpha_i)_{i=1}^{9}$ and then to sum up the resulting estimates. Then we choose $\alpha_i$'s such that the latter estimate is equivalent to \eqref{eq:main_estimate_gronwall}. 

To highlight the core of the argument we denote the quantities appearing in the estimates of Subsections \ref{ss:estimate_overline_v}-\ref{ss:estimate_TT} as follows:
\begin{align*}\boxed{\star}&\stackrel{{\rm def}}{=}
\sum_{1\leq i\leq 9} \boxed{i},  &
\boxed{1}&\stackrel{{\rm def}}{=}\E\int_{\eta}^{\xi} \|\overline{v}\|_{H^2}^2\,\dd s,\\
\boxed{2}
&\stackrel{{\rm def}}{=} \E\int_{\eta}^{\xi} \|\wh{\T}\|_{H^2}^2\,\dd s,&
\boxed{3}
&\stackrel{{\rm def}}{=}\E\int_{\eta}^{\xi} \|\nabla \partial_3 v\|_{L^2}^2\,\dd s,\\
\boxed{4}
&\stackrel{{\rm def}}{=}\E\int_{\eta}^{\xi} \|\nabla \partial_3 \T\|_{L^2}^2\,\dd s,& 
\boxed{5}
&\stackrel{{\rm def}}{=}\E\int_{\eta}^{\xi}\Big\||\wt{v}||\nabla \wt{v}|\Big\|_{L^2}^2\,\dd s, \\
\boxed{6}
&\stackrel{{\rm def}}{=}
\E\int_{\eta}^{\xi} \Big(\Big\| |\wt{v}||\nabla \T| \Big\|_{L^2}^2+ \Big\| |\T||\nabla \wt{v}| \Big\|_{L^2}^2\Big)\,\dd s,  &
\boxed{7}
&\stackrel{{\rm def}}{=}
\E\int_{\eta}^{\xi}\Big(\Big\||\wt{v}||\nabla_{\h} \TT|\Big\|_{L^2}^2+ \Big\| | \TT ||\nabla \wt{v}| \Big\|_{L^2}^2\Big)\,\dd s,\\
\boxed{8}
&\stackrel{{\rm def}}{=}
\E\int_{\eta}^{\xi} \Big\| |\T||\nabla_{\h} \TT | \Big\|_{L^2}^2\,\dd s,&
\boxed{9}
&\stackrel{{\rm def}}{=}
\E\int_{\eta}^{\xi} \Big\| | \TT ||\nabla_{\h} \TT | \Big\|_{L^2}^2\,\dd s,\\
\II&\stackrel{{\rm def}}{=}
 \E\int_{\eta}^{\xi}L_t(1+ X_t+ \|\wh{\T}(t)\|_{H^1(\Tor^2)}^2)\,\dd s, &  \III &\stackrel{{\rm def}}{=} 1+ \E X_t+ \E\|\wh{\T}(t)\|_{H^1(\Tor^2)}^2+\E\|\T(\eta)\|_{L^4}^4,
\end{align*}
where $X_t$ is as in Lemma \ref{l:main_estimate}.
Comparing the estimates of Subsections \ref{ss:estimate_overline_v}-\ref{ss:estimate_TT}  and \eqref{eq:main_estimate_gronwall}, one sees that the energy terms $\boxed{i}$ for $i\in \{1,\dots,9\}$ are the one we would like to absorb. 

It will be proved conveniently later to derive a consequence of \eqref{eq:estimate_mixed_v_varphi} and \eqref{eq:estimate_Theta_Theta}. Indeed, we would like to have a constant in front of the last term on the right-hand side of \eqref{eq:estimate_mixed_v_varphi} which does not blow-up as $\varepsilon_7\downarrow 0$. To this end, using the estimate \eqref{eq:estimate_Theta_Theta} with $\varepsilon_9=\varepsilon_7/ (2 (C_{7,\varepsilon_7}\vee 1))$ in \eqref{eq:estimate_mixed_v_varphi} with $\varepsilon_7$ replaced by $\varepsilon_7/2$, we get
\begin{align}
\label{eq:estimate_mixed_v_varphi_revised}
\boxed{7}
\leq 
\varepsilon_7 (\boxed{2} +\boxed{3}+\boxed{4}+\boxed{5}+ \boxed{6}+\boxed{7})+C_{7,\varepsilon_7}' (\III+\II) ,
\end{align}
where $C_{7,\varepsilon_7}'$ is a constant independent of $(j,\eta,\xi,v_0,\T_0)$.

In the following we apply the estimates of Subsections \ref{ss:estimate_overline_v}-\ref{ss:estimate_wt_v_L_4} and \ref{ss:estimate_T_TT}-\ref{ss:estimate_TT}  as well as \eqref{eq:estimate_mixed_v_varphi_revised} with 
$$
\varepsilon_i \equiv \varepsilon\in (0,\infty) \text{ for all }i\in \{1,\dots,9\},
$$
where $\varepsilon$ is chosen below. 
Let $(C_{i,\varepsilon}, C_{i})_{i=1}^9$ be the constants introduced in Subsections \ref{ss:estimate_overline_v}-\ref{ss:estimate_wt_v_L_4} and \ref{ss:estimate_T_TT}-\ref{ss:estimate_TT} and set
$$
C_{0,\varepsilon}\stackrel{\rm def}{=}\max_{1\leq i\leq 9, i\neq 7} C_{i,\varepsilon} \vee C_{7,\varepsilon}' \ \ \text{ and }\ \ 
C_{0}\stackrel{\rm def}{=}\max_{1\leq i\leq 9,  i\neq 7} C_{i}.
$$ 
As before, the constants 
$C_{0,\varepsilon},C_{0}$ are independent of $(j,\eta,\xi,v_0,\T_0)$ and $C_0$ is also independent of $\varepsilon$. 
With the above setting and notation, the estimates of Subsections \ref{ss:estimate_overline_v}-\ref{ss:estimate_wt_v_L_4} and \ref{ss:estimate_T_TT}-\ref{ss:estimate_TT}  as well as \eqref{eq:estimate_mixed_v_varphi_revised} imply:
\begin{align*}
\E \Big[\sup_{s\in [\eta,\xi]} \|\overline{v}(s)\|^2_{H^1}\Big]+ \boxed{1}
&\leq C_0(\boxed{3}+\boxed{5})+C_{0,\varepsilon}(\III+\II)+ \varepsilon \boxed{\star}, 
&\text{consequence of \eqref{eq:estimate_v_overline}},&\\
\E \Big[\sup_{s\in [\eta,\xi]} \|\wh{\T}(s)\|^2_{H^1}\Big]+
\boxed{2}
&\leq C_0 (\boxed{4}+ \boxed{6}+ \boxed{7})+C_{0,\varepsilon}(\III+\II)+ \varepsilon \boxed{\star},
&\text{consequence of \eqref{eq:smr_estimate_wh_T}},&\\
\E \Big[\sup_{s\in [\eta,\xi]} \|\partial_3 v(s)\|^2_{L^2}\Big]+
\boxed{3}
&\leq C_0\boxed{5}+ C_{0,\varepsilon}(\III+\II)+ \varepsilon \boxed{\star},
&\text{consequence of \eqref{eq:estimate_v_3}},&\\
\E \Big[\sup_{s\in [\eta,\xi]} \|\partial_3 \T(s)\|^2_{L^2}\Big]
+
\boxed{4}
&\leq C_0\boxed{6}+ C_{0,\varepsilon}(\III+\II)+ \varepsilon \boxed{\star},
&\text{consequence of \eqref{eq:estimate_T_3}},&\\
\E \Big[\sup_{s\in [\eta,\xi]} \|\wt{ v}(s)\|^4_{L^4}\Big]+
\boxed{5}
&\leq C_0\boxed{7} + C_{0,\varepsilon}(\III+\II)+ \varepsilon \boxed{\star},  
&\text{consequence of \eqref{eq:estimate_v_wt}},&\\
\boxed{6}
&\leq C_0 \boxed{8}+C_{0,\varepsilon}(\III+\II)+ \varepsilon \boxed{\star},
&\text{consequence of \eqref{eq:estimate_mixed_v_T}},&\\
\boxed{7}
&\leq C_{0,\varepsilon}(\III+\II)+ \varepsilon \boxed{\star},  
&\text{consequence of \eqref{eq:estimate_mixed_v_varphi_revised}},&\\
\boxed{8}
&\leq C_{0,\varepsilon}(\III+\II)+ \varepsilon \boxed{\star},
&\text{consequence of \eqref{eq:estimate_T_Theta}},&\\
\boxed{9}&\leq C_{0,\varepsilon}(\III+\II)+ \varepsilon \boxed{\star}, 
&\text{consequence of \eqref{eq:estimate_Theta_Theta}}.&
\end{align*}
In the above estimates, $\varepsilon\in (0,\infty)$ is a free parameter which will be fixed later.  
Multiplying the above estimates by $\alpha_i\in [1,\infty)$ and then summing them up, we have:
\begin{equation}
\label{eq:c_i_summed_final_lemma}
\E\Big[\sup_{s\in [\eta,\xi]} X_s\Big]
+
\sum_{1\leq i\leq 9} c_i \boxed{i}\leq C_{0,\varepsilon} \alpha(\III+\II)+\alpha \varepsilon \boxed{\star}  ,
\end{equation}
where we used the definition of $X_t$ in Lemma \ref{l:main_estimate} and we set $\alpha\stackrel{\rm def}{=} \sum_{1\leq i\leq 9}\alpha_i$,
\begin{align*}
c_1 &=\alpha_1,  & c_2&=\alpha_2,\\ 
c_3 &=\alpha_3-C_0 \alpha_1,  & c_4&=\alpha_4-C_0 \alpha_2,\\
c_5 &=\alpha_5-C_0( \alpha_1+\alpha_3),  & c_6&=\alpha_6-C_0 ( \alpha_2+\alpha_4),\\ 
c_7 &=\alpha_7-C_0( \alpha_2+\alpha_5),   & c_8&=\alpha_8-C_0\alpha_6,\\ 
c_9&=\alpha_9. & & 
\end{align*}
Note that the terms $\boxed{i}$ can be absorbed on the right-hand side of \eqref{eq:c_i_summed_final_lemma} as $\boxed{i}\lesssim j$ for all $1\leq i\leq 9$.

Next, we show that there exists a choice of $(\alpha_i)_{i=1}^9$ such that $c_i\equiv 1$. To see this, one can split the argument into several steps as follows:
\begin{itemize}
\item Choose $\alpha_1=\alpha_2=1$ and $\alpha_9=1$.
\item Choose $\alpha_3=\alpha_4=C_0+1$.
\item Choose $\alpha_5=\alpha_6=C_0(C_0+2)+1$.
\item Choose $\alpha_7=C_0(\alpha_2+\alpha_5)+1$ and $\alpha_8=C_0\alpha_6+1$.
\end{itemize}
With the above choices we have $c_i\equiv 1$ and $\min_{1\leq i\leq 9}\alpha_i\geq 1$. Thus \eqref{eq:c_i_summed_final_lemma} yields \eqref{eq:main_estimate_gronwall} choosing 
$
\varepsilon  = (2C_0\alpha)^{-1}
$
and recalling that $C_0$ is independent of $(j,\eta,\xi,v_0,\T_0)$.

\section{Proof of Proposition \ref{prop:energy_estimate_primitive_strong_strong} }
\label{s:proof_energy_estimate_conclusion}
To prove Proposition \ref{prop:energy_estimate_primitive_strong_strong} we collect some useful facts.  
Let $X_t,Y_t$ be as in Lemma \ref{l:main_estimate}.
For notational convenience, we set
\begin{align*}
\xx_s\stackrel{{\rm def}}{=}1+\|v(s)\|_{L^2}^2+\|\T(s)\|_{L^2}^2+X_s, \quad \text{ and } \quad  
\yy_s \stackrel{{\rm def}}{=}1+\|v(s)\|_{H^1}^2+\|\T(s)\|_{H^1}^2+Y_s.
\end{align*}
By Lemmas \ref{l:basic_estimates} and \ref{l:main_estimate}, we have, for some constant  $c_{0,T}$ independent of $(v_0,\T_0)$, for all $\g>1$,
\begin{equation}
\label{eq:distribution_of_mathcal_X_Y}
\P\Big(\sup_{s\in[0,\tau\wedge T)}\xx_s+\int_0^{\tau\wedge T} \yy_s\,\dd s\geq \g\Big)
 \leq c_{0,T}\frac{(1+\E\|\Xi\|_{L^2(0,T)}^2)+\E\|v_0\|_{H^1}^4+\E\|\T_0\|_{H^1}^4}{\log\log(\g)}.
\end{equation}

\begin{proof}[Proof of Proposition \ref{prop:energy_estimate_primitive_strong_strong}]
Let $(\tau_j)_{j\geq 1}$ be as in \eqref{eq:def_tau_j}. As above the estimate is reduced to an application of the stochastic Gronwall lemma \cite[Lemma A.1]{AV_variational}. To simplify the notation we write $U=(v,\T)$ and $(A,B,F,G)$ are as in \eqref{eq:def_A}-\eqref{eq:def_G}. Recall that $H=\Hs^1\times H^1$ and $V=\Hs_{\n}^2\times \Hr^2$.
We claim that there exists $C_0>0$ independent of $(v_0,\T_0)$ such that, for all $j\geq 1$ and all stopping times $(\eta,\xi)$ satisfying $0\leq \eta\leq \xi\leq \tau_j$,
\begin{align}
\label{eq:claim_proposition_estimate_H2}
\E \sup_{s\in [\eta,\xi]} \| U(s)\|_{H}^2
&+ 
\E \int_{\eta}^{\xi} \| U(s)\|_{V}^2\,\dd s \\
\nonumber
&\leq  C_{0} \Big[ 1 +\E\|U(\eta)\|_{H}^2+\E\int_{\eta}^{\xi}
(1+\xx_s^2)
\yy_s(1+\|U\|_{H}^2)\,\dd s\Big] .
\end{align}

\emph{Step 1: Sufficiency of \eqref{eq:claim_proposition_estimate_H2}}. Recall that $\lim_{j\to \infty}\tau_j= \tau\wedge T$ a.s.\ by  \eqref{eq:def_tau_j} and $U\in  C([0,\tau);H)\cap L^2_{\loc}([0,\tau);V)$ a.s.\ (recall that $(H,V)$ are as in \eqref{eq:def_HV}). The stochastic Gronwall lemma \cite[Lemma A.1]{AV_variational}, \eqref{eq:distribution_of_mathcal_X_Y} and the fact that $c_0$ is independent of  $(j,U_0)$ ensure that, for all $R,\g>1$, 
\begin{align*}
\P\Big(\sup_{s\in [0,\tau\wedge T)}\|U(s)\|_H^2 \ + &\int_{0}^{\tau\wedge T}\|U(s)\|^2_V\,\dd s\geq \g\Big)\\
&\leq \Big( \frac{c_{T}}{\g}e^{c_{T} R} + \frac{c_{T}}{\log\log(R)}\Big)
(1+\E\|\Xi\|_{L^2(\R_+;L^2)}^2+\E\|U_0\|_{H}^4).
\end{align*}
Here $c_{T}$ is a constant which depends only on $(c_0,c_{0,T})$. 
Choosing $R=R(\g)=\frac{1}{c_{T}}\log(\frac{\g}{\log\g})$ for $\g$ large, one obtain the estimates claimed in Proposition \ref{prop:energy_estimate_primitive_strong_strong}.

\emph{Step 2: Proof of  \eqref{eq:claim_proposition_estimate_H2}}. Reasoning as in the proof of Proposition \ref{prop:Lip_estimate_continuity}, by Lemma \ref{l:smr} and \cite[Proposition 3.9]{AV19_QSEE_1} there exists $C_0>0$,  independent of $U_0,U_0'$, such that for all stopping times $(\eta,\xi)$ satisfying $0\leq \eta\leq \xi\leq T$ a.s.\ one has
\begin{equation}
\begin{aligned}
\label{eq:smr_strong_theta_inequality_energy_estimate}
\E \sup_{s\in [\eta,\xi]} \|U(s)\|_H^2
+ 
\E \int_{\eta}^{\xi} \|U(s)\|_{V}^2 \,\dd s 
 \leq C_0 \Big[ \E\|U(\eta)\|_{H}^2 +\sum_{j=1}^6 I_{j}\Big],
\end{aligned}
\end{equation}
where 
\begin{align*}
I_{1}&\stackrel{{\rm def}}{=} \E\int_{\eta}^{\xi} \|(v\cdot\nabla_{\h})v\|_{L^2}^2\,\dd s, &
I_{2}&\stackrel{{\rm def}}{=}\E\int_{\eta}^{\xi} \|w(v)\partial_3 v\|_{L^2}^2\,\dd s,\\
I_{3}&\stackrel{{\rm def}}{=}\E\int_{\eta}^{\xi} \|(v\cdot\nabla_{\h}) \T\|_{L^2}^2\,\dd s,&
I_{4}&\stackrel{{\rm def}}{=}\E\int_{\eta}^{\xi} \|w(v)\partial_3 \T\|_{L^2}^2\,\dd s,\\
I_{5}&\stackrel{{\rm def}}{=}\E\int_{\eta}^{\xi} \|\fv (\cdot,v,\T,\nabla v,\nabla \T)\|_{L^2}^2\,\dd s,&
I_{6}&\stackrel{{\rm def}}{=}\E\int_{\eta}^{\xi} \|\ft (\cdot,v,\T,\nabla v,\nabla \T)\|_{L^2}^2\,\dd s,\\
I_{7}&\stackrel{{\rm def}}{=}\E\int_{\eta}^{\xi} \|\gv (\cdot,v,\T)\|_{H^1(\ell^2)}^2\,\dd s,&
I_{8}&\stackrel{{\rm def}}{=}\E\int_{\eta}^{\xi} \|\gt (\cdot,v,\T)\|_{H^1(\ell^2)}^2\,\dd s.
\end{align*}
By Assumption \ref{ass:well_posedness_primitive_double_strong}\eqref{it:nonlinearities_strong_strong} and \eqref{eq:def_y} (or, more generally, the condition in Remark \ref{r:weaken_assumption_nonlinearities}\eqref{it:global_sublinear}), we have
\begin{align*}
\sum_{5\leq j\leq 8}I_j 
&\lesssim (1+\E\|\Xi\|_{L^2(0,T)}^2+ \E\|v\|_{L^2(0,T;L^2)}^2+\E\|\T\|_{L^2(0,T;L^2)}^2)\\
&\lesssim 
1+\E\|\Xi\|_{L^2(0,T)}^2+ \E\int_{\eta}^{\xi}\yy_s\,\dd s.
\end{align*}
To estimate the remaining terms, let us recall the following useful estimate:
\begin{equation}
\label{eq:estimate_w_v}
\|w(v)\|_{L^{\infty}(-h,0;L^4(\Tor^2))}\lesssim_h \|v\|_{L^2(-h,0;L^4(\Tor^2))}\lesssim \|v\|_{H^1}^{1/2}\|v\|_{H^2}^{1/2},
\end{equation}
where the last inequality follows from \eqref{eq:interpolation_inequality_L4}.
The terms $I_1$ and $I_2$ can be estimated as in the proof of \cite[Proposition 5.1]{Primitive1}. The arguments given there show:
\begin{equation}
\label{eq:I_1_I_2_final_estimate}
I_1 + I_2 \leq \frac{1}{4C_0} \E\int_{\eta}^{\xi} \|v(s)\|_{H^2}^2\,\dd s+ C_1 
\E\int_{\eta}^{\xi}(1+ \xx_s^2 )\yy_s(1+ \|v(s)\|_{H^1}^2 )\,\dd s,
\end{equation}
where $C_0\geq 1$ is as in \eqref{eq:smr_strong_theta_inequality_energy_estimate} and $C_1$ is independent of $(j,\eta,\xi,v_0,\T_0)$. 
However, the above estimate can also be obtained by (slightly) modifying the argument below where we estimate $I_3$ and $I_4$.

To estimate $I_{3}$, note that, $I_{3}\leq 2( I_{3,1}+I_{3,2})$ where
$$
I_{3,1}\stackrel{{\rm def}}{=}
\E\int_{\eta}^{\xi} \|(\overline{v}\cdot\nabla_{\h}) \T\|_{L^2}^2\,\dd s
\ \  \text{ and }\ \ 
I_{3,2}
\stackrel{{\rm def}}{=}\E\int_{\eta}^{\xi} \|(\wt{v}\cdot\nabla_{\h}) \T\|_{L^2}^2\,\dd s,
$$
since $v=\overline{v}+\wt{v}$.
Note that 
$I_{3,2}\leq \E\int_{\eta}^{\xi} \big\||\wt{v}||\nabla \T|\big\|_{L^2}^2\,\dd s\leq \E\int_{\eta}^{\xi} \yy_{s}\,\dd s $  and
\begin{align*}
I_{3,2}
&\lesssim\E\int_{\eta}^{\xi} \|\overline{v}(s)\|_{L^6}^2 \|\nabla\T(s)\|_{L^3}^2\,\dd s \stackrel{(i)}{\lesssim} 
\E\Big[\int_{\eta}^{\xi}   \|\overline{v}(s)\|_{H^1}^2 \|\nabla\T(s)\|_{L^3}^2\,\dd s\Big]\\
&\lesssim \E\Big[\int_{\eta}^{\xi} \xx_s \|\T(s)\|_{H^{1,3}}^2\,\dd s \Big] 
\stackrel{(ii)}{\lesssim}  \E\int_{\eta}^{\xi} \xx_s\|\T(s)\|_{H^1}\|\T(s)\|_{H^2}\,\dd s \\
&\leq \frac{1}{ 8 C_0}
 \E\int_{\eta}^{\xi} \|\T(s)\|_{H^2}^2\,\dd s + C
 \E\int_{\eta}^{\xi} \xx_s^2\|\T(s)\|^2_{H^1}\,\dd s  ,
\end{align*}
where in $(i)$ we used the Sobolev embedding $H^1\embed L^6$ and in $(ii)$ \eqref{eq:interpolation_inequality_L3}. Here $C$ depends only on $C_0$. In particular $C$ is independent of $(j,\eta,\xi,v_0,\T_0)$.

Finally we estimate $I_{4}$. The H\"{o}lder inequality, \eqref{eq:interpolation_inequality_L4} and \eqref{eq:estimate_w_v} yield
\begin{align*}
I_{4}
&\leq \E\int_{\eta}^{\xi}
\|w(v)\|_{L^{\infty}(-h,0;L^4(\Tor^2))}^2
\|\partial_3\T\|_{L^2(-h,0;L^4(\Tor^2))}^2\,\dd s \\
&\lesssim  \E\int_{\eta}^{\xi}
\|v\|_{H^1}\|v\|_{H^2}
\|\partial_3\T\|_{L^2}\| \partial_3\T\|_{H^1}\,\dd s \\
&\leq \frac{1}{8C_0}  \E\int_{\eta}^{\xi} \| v\|_{H^2}^2\,\dd s 
+C_{2} \E\int_{\eta}^{\xi}
\xx_s\yy_s
\| v(s)\|_{H^1}^2\,\dd s.
\end{align*}
Hence, for some $C_2$ is independent of $(j,\eta,\xi,v_0,\T_0)$,
\begin{equation}
\label{eq:I_3_I_4_final_estimate}
\begin{aligned}
I_3 + I_4 
&\leq \frac{1}{4C_0} \E\int_{\eta}^{\xi} (\|v(s)\|_{H^2}^2+\|\T(s)\|_{H^2}^2)\,\dd s\\
&+ C_2
\E\int_{\eta}^{\xi}(1+ \xx_s^2 )\yy_s(1+ \|v(s)\|_{H^1}^2+\|\T(s)\|_{H^1}^2 )\,\dd s.
\end{aligned}
\end{equation}
Using the estimates \eqref{eq:I_1_I_2_final_estimate}-\eqref{eq:I_3_I_4_final_estimate} in \eqref{eq:smr_strong_theta_inequality_energy_estimate}, one gets \eqref{eq:claim_proposition_estimate_H2} as desired.
\end{proof}

\section{Stratonovich formulation}
\label{s:Stratonovich}
In this section, we analyze the case of primitive equations \eqref{eq:primitive_full} where the noise is understood in the  Stratonovich formulation. 
More precisely, following the reformulation of \eqref{eq:primitive_full} as \eqref{eq:primitive_v_T_pressure} with $\hp=\tp=0$ (cf.\ \eqref{eq:primitive_v_T_pressure_1_projected}), here we consider
\begin{subequations}
	\label{eq:primitive_stratonovich}
\begin{alignat}{3}
	\label{eq:primitive_stratonovich_1}
\begin{split}
\dd  v-\Delta v\,\dd t & =  \p\Big[ -(v\cdot \nabla_{\h}) v- \w(v)\partial_3 v  \\
&\quad\quad\ \ 
 -\nabla_{\h}\int_{-h}^{\cdot} (\kone(\cdot,\zeta)\T(\cdot,\zeta))\,\dd \zeta + \fv (v,\T,\nabla v,\nabla \T) \Big]
\, \dd t \\
& 
+\sum_{n\geq 1}\p \Big[(\phi_{n}\cdot\nabla) v+\int_{-h}^{\cdot}\nabla_{\h}(\ktwon(\cdot,\zeta) \T(\cdot,\zeta))\,\dd \zeta  \Big] \circ \dd \beta_t^n,
\end{split}\\
	\label{eq:primitive_stratonovich_2}
\begin{split}
\dd  \Temp-\Delta \T\,\dd t &=\Big[(v\cdot\nabla_{\h}) \T -\w(v) \partial_3 \T+ \ft(v,\T,\nabla v,\nabla \T ) \Big]\, \dd t
+\sum_{n\geq 1}(\psi_n\cdot \nabla) \Temp\circ \dd \beta_t^n,
\end{split}\\
	\label{eq:primitive_stratonovich_3}
v(0,\cdot)&=v_0,\qquad \T(0,\cdot)=\T_0
\end{alignat}
\end{subequations}
on $\Dom\stackrel{{\rm def}}{=}\Tor^2\times(-h,0)$,
where $\circ$ and $\p$ denote the Stratonovich integration and the hydrostatic Helmholtz projection; see e.g.\ \cite{Gar09} and Subsection \ref{ss:reformulation}, respectively. As in the previous sections, the above problem is complemented by the following boundary conditions
\begin{subequations}
\label{eq:primitive_stratonovich_BC}
\begin{alignat}{3}
	\label{eq: primitive_strong_BC_1}
		\partial_3 v (\cdot,-h)=\partial_3 v(\cdot,0)&=0\quad \text{ on }\Tor^2,\\
	\label{eq: primitive_strong_BC_2}
		\partial_3 \T(\cdot,-h)= \partial_3 \T(\cdot,0)+\alpha \T(\cdot,0)&=0\quad \text{ on }\Tor^2.&
\end{alignat}
\end{subequations}
For the sake of simplicity, in contrast to the previous parts of this manuscript, in \eqref{eq:primitive_stratonovich}-\eqref{eq:primitive_stratonovich_BC} we do not consider lower order terms in the stochastic perturbation keeping only the transport and gradient type terms which are the most relevant from an application point of view. The reader is referred to \cite{BiFla20,MR01,MR04} and Section \ref{s:physical_derivation} for physical motivations of the transport noise terms and of the $\ktwon$-term, respectively. Last but not least, lower-order terms are mathematically easier to deal with. We leave the details to the interested reader.

Let us mention that, in applications, the Stratonovich formulation of the noise is often preferred to the It\^o one, as the former is closer to numerical simulations due to Wong-Zakai type results \cite{Fla15_book} and to two scale type arguments \cite{DP22_two_scale,FlaPa21}.

As common in SPDEs, and as in \cite[Section 8]{Primitive1}, our approach is to view the Stratonovich noise in  \eqref{eq:primitive_stratonovich}-\eqref{eq:primitive_stratonovich_BC} as an It\^{o} one plus additional correction terms. Therefore, as announced at the beginning of Section \ref{s:strong_strong}, while rephrasing \eqref{eq:primitive_stratonovich}-\eqref{eq:primitive_stratonovich_BC} in a system of It\^{o} SPDEs, the terms $\partial_{\g} \wt{p}$ and $(\tp\cdot\nabla) \T$ in \eqref{eq:primitive_3}-\eqref{eq:primitive_4} will appear naturally.
The same also applies to inhomogeneous viscosity and/or conductivity discussed in Remark \ref{r:inhomogeneous_viscosity_conductivity}. As we will see below, the term $(\tp\cdot\nabla) \T$ is a consequence of the Stratonovich formulation and the temperature-dependent turbulent pressures, cf.\ \eqref{eq:pi_psi_etc} below.
Instead the term $\partial_{\hp} \wt{p}$ depends only on the presence of the transport noise in \eqref{eq:primitive_stratonovich_1}.

To study \eqref{eq:primitive_stratonovich}-\eqref{eq:primitive_stratonovich_BC} we need the following assumptions. 

\begin{assumption} 
\label{ass:stratonovich}
There exist $M,\delta>0$ for which the following hold.
\begin{enumerate}[{\rm(1)}]
\item\label{it:stratonovich_1}
For all $j\in \{1,2,3\}$ and $n \geq 1$, the mappings
$$
\phi_n^j,\ \psi_n^j ,  \ \kone,   \ \ktwon: \O\times \Dom\to \R \ \text{ are \ $\F_0\otimes \mathscr{B}(\Dom)$-measurable}.
$$
\item\label{it:stratonovich_2} a.s.\ for all $n\geq 1$, $x=(x_{\h},\z)\in \Tor^2\times (-h,0)= \Dom$ and $j,k\in \{1,2\}$
\begin{align*}
\phi_n^j(x),\psi^j_n (x)\text{ and } \ktwon(x)\text{ are independent of $\z$}.
\end{align*}
\item\label{it:stratonovich_3} 
{\em (Regularity)}
A.s.\ for all $j,k\in \{1,2,3\}$ and $i\in \{1,2\}$,
\begin{align*}
\Big\|\Big(\sum_{n\geq 1}| \phi^j_n|^2\Big)^{1/2} \Big\|_{L^{3+\delta}(\Dom)}+
\Big\|\Big(\sum_{n\geq 1}|\partial_k \phi^j_n|^2\Big)^{1/2} \Big\|_{L^{3+\delta}(\Dom)} \leq M,&\\
\Big\|\Big(\sum_{n\geq 1}| \psi^j_n|^2\Big)^{1/2} \Big\|_{L^{3+ \delta}(\Dom)}+
\Big\|\Big(\sum_{n\geq 1}|\partial_k \psi^j_n|^2\Big)^{1/2} \Big\|_{L^{3+ \delta}(\Dom)} \leq M,&\\
\| \kone(t,\cdot) \|_{L^{\infty}(\Tor^2;L^2(-h,0))} +\|\partial_i \kone(t,\cdot) \|_{L^{2+\delta}(\Tor^2;L^2(-h,0))} \leq M,&\\
\| (\ktwon(t,\cdot))_{n\geq 1}\|_{H^{2,2+\delta}(\Tor^2\ell^2)}\leq M.&
\end{align*}
\item\label{it:stratonovich_4} a.s.\ for all $n\geq 1$ and $x_{\h}\in \Tor^2$,
$$
\phi^3(x_{\h},0)
=
\phi^3(x_{\h},-h)=0.
$$
\end{enumerate}
\end{assumption}

Next, under Assumption \ref{ass:stratonovich} we (formally) rewrite \eqref{eq:primitive_stratonovich}-\eqref{eq:primitive_stratonovich_BC} in the form \eqref{eq:primitive_v_T_pressure} with suitable $(\tp,\hp)$. 
As usual, for two stochastic processes $(X_t,Y_t)$, we denote by $[X,Y]_t$ their joint quadratic variation at time $t$. 
By \eqref{eq:primitive_stratonovich_2}, at least formally we have $ [\T,\beta^n]_{\cdot}=
\int_0^t (\psi_n \cdot\nabla) \T\,\dd s$. Moreover, formally from \cite[Thereom 2.3.5, p.\ 60]{Kunita97}, 
\begin{align}
\nonumber
\int_0^t (\psi_n\cdot \nabla) \Temp\circ \dd \beta_s^n
&= 
\int_0^t 
(\psi_n\cdot \nabla) \Temp\, \dd \beta_s^n
+\frac{1}{2} (\psi_n \cdot\nabla )[\T,\beta^n]_t \\
&=
\label{eq:correction_T}
\int_0^t 
(\psi_n\cdot \nabla) \Temp\, \dd \beta_s^n
+ \int_0^t \dop_{\psi} \T\,\dd s,
\end{align}
where 
\begin{align*}
\dop_{\psi} \T
&\stackrel{{\rm def}}{=}
\frac{1}{2}\sum_{n\geq 1}(\psi_n \cdot\nabla )[(\psi_n \cdot\nabla)\T]\\
&=
\frac{1}{2}\sum_{n\geq 1}\sum_{1\leq i,j\leq 3}\Big(\psi^j_n \psi^i_n \partial_{i,j}^2 \T +\psi^i_n ( \partial_i \phi^j_n ) \partial_j\T\Big).
\end{align*}
The reformulation of the Stratonovich noise in \eqref{eq:primitive_stratonovich_1} is computationally more involved.
To shorten the notation, similar to Subsection \ref{ss:proof_local}, we set 
$$
\op f(x)\stackrel{{\rm def}}{=} \nabla_{\h}\int_{-h}^{x_3} \T(x_{\h},\zeta)\,\dd \zeta,
$$
 where $ x=(x_{\h},x_3)\in \Tor^2\times (-h,0)=\Dom$.

Note that by Assumption \ref{ass:stratonovich}\eqref{it:stratonovich_2} and the linearity of $\op$, at least formally,
\begin{align}
\label{eq:ito_correction_v}
\int_0^t\p\big[
 (\phi_n\cdot\nabla) v + \op(\ktwon \T)\big]\circ \dd \beta_s^n
\nonumber
&=\int_0^t 
\p\big[
(\phi_n\cdot\nabla) v + \op(\ktwon \T)\big]\, \dd \beta_s^n\\
&+
\frac{1}{2} 
\underbrace{\p\big( (\phi_n\cdot\nabla) [v,\beta^n]_t\big)}_{\cor_{v}\stackrel{{\rm def}}{=}} + 
\frac{1}{2}\underbrace{\p\Big( \op\big(\ktwon [\T,\beta^n]_t\big)\Big)}_{\cor_{\T}\stackrel{{\rm def}}{=}} .
\end{align}
%
Next we formally compute the corrective terms $(\cor_{v},\cor_{\T})$. We begin by taking a look at $\cor_{\T}$. 
Recall that $ [\T,\beta^n]_{\cdot}=\int_0^t 
 (\psi_n \cdot\nabla) \T\,ds$ by \eqref{eq:primitive_stratonovich_2}. Hence, formally,
\begin{align*}
\cor_{\T}&=
\int_0^t \p \Big[ \op\big(\ktwon [(\psi_n \cdot\nabla )\T]\big)\Big]\,\dd s.
\end{align*}
To compute $\cor_v$ we begin by looking at $ [v,\beta^n]_{t}$. 
To this end, note that, by \eqref{eq:primitive_stratonovich_1}, we formally have
$$
 [v,\beta^n]_t=\int_{0}^t \p\Big[ (\phi_n \cdot\nabla) v+\op(\ktwon \T)\Big]\,\dd s.
$$
To economize the notation, set $\nabla_{\h}\wt{p}_n=\q [(\phi_n \cdot\nabla) v+\op(\ktwon \T)]$. 
Thus 
\begin{align}
\label{eq:v_correction_computations}
\cor_v
&=\int_0^t \p \big((\phi_n\cdot\nabla )[(\phi_n \cdot\nabla) v]\big)\,\dd s\\
\nonumber
&\ \ \ +\underbrace{\int_0^t \p [(\phi_n \cdot\nabla) \op(\ktwon \T)]\,\dd s}_{\cor_{v,1}\stackrel{{\rm def}}{=}}
-\underbrace{\int_0^t  \p\big[(\phi_n\cdot\nabla)\nabla_{\h}\wt{p}_n \big]\,\dd s}_{\cor_{v,2}\stackrel{{\rm def}}{=}}.
\end{align}
Next we rewrite the terms $(\cor_{v,1},\cor_{v,2})$ conveniently. 
Recall that, due to our notation, $\phi_{n,\h}=(\phi^{1}_{n})_{j=1}^2$ and that $(\phi_{n,\h},\ktwon)$ are $\z$-independent by Assumption \ref{ass:stratonovich}\eqref{it:stratonovich_2}. Hence
\begin{align*}
(\phi_n \cdot\nabla) \op(\ktwon\T)
&= (\phi_{n,\h}\cdot\nabla_{\h})\op(\ktwon\T)+  \phi^{3}_n \nabla_{\h}(\ktwon \T)\\
&= \op\big[ \ktwon(\phi_{n,\h} \cdot\nabla_{\h}) \T\big]+  \phi^{3}_n \nabla_{\h}(\ktwon \T).
\end{align*}
Finally, we consider $\cor_{v,2}$. By \eqref{eq:Helmholtz_hydrostatic} and the fact that the $\wt{p}_n$'s are $x_3$-independent, 
\begin{align*}
\p[(\phi_n\cdot\nabla)\nabla_{\h}\wt{p}_n ]
=
\p[(\phi_{n,\h}\cdot\nabla_{\h})\nabla_{\h}\wt{p}_n ]
= - \sum_{1\leq i\leq 2} \p[(\nabla_{\h} \phi^i_n)\partial_i\wt{p}_n].
\end{align*}
Hence, by collecting the previous identities, we have
\begin{align}
\nonumber
\p\Big[
 (\phi_n\cdot\nabla) v + \op(\ktwon \T)\Big]\circ \dd \beta_s^n
&=\p\Big[
 (\phi_n\cdot\nabla) v + \int_{-h}^{\cdot}\nabla_{\h}\big(\ktwon(\cdot,\zeta) \T(\cdot,\zeta)\big)\,\dd \zeta\Big]\, \dd \beta_s^n
 \\
 \label{eq:correction_v}
 &+\p \big[\dop_{\phi} v +\Lp(v,\T)\big]\,\dd t\\
 \nonumber
 &+\p\Big[\int_{-h}^{\cdot} (\tp(\cdot,\zeta)\cdot \nabla) \T(\cdot,\zeta)\,\dd \zeta+\frac{1}{2}  \sum_{n\geq 1}\phi^{3}_n \nabla_{\h}(\ktwon \T)\Big] \,\dd t,
\end{align}
where $\dop_{\phi} v\stackrel{{\rm def}}{=}\frac{1}{2}\sum_{n\geq 1} (\phi_n \cdot\nabla )[(\phi_n\cdot\nabla) v]$,  $\Lp(v,\T)$ is as in \eqref{eq:Lp_def} with $\gvn=0$ and
$(\tp,\g)$ are given by
\begin{equation}
\label{eq:pi_psi_etc}
\tp^{j}\stackrel{{\rm def}}{=}
\begin{cases}
\vspace{0.1cm}
\displaystyle{\frac{1}{2}\sum_{n\geq 1} \ktwon(\psi_n^{j}+\phi_n^{j})} &\text{ for }j\in \{1,2\},\\
\displaystyle{\frac{1}{2}\sum_{n\geq 1} \ktwon\psi_n^{j}} \ \ \ &\text{ otherwise},
\end{cases}
\quad \text{ and }\quad \hp_n =\frac{1}{2}(\partial_{i}\phi^{j}_n)_{i,j=1}^2.
\end{equation}

Therefore \eqref{eq:correction_T} and \eqref{eq:correction_v}-\eqref{eq:pi_psi_etc} show that \eqref{eq:primitive_stratonovich} can be (formally) rephrased as \eqref{eq:primitive_strong}  (in the reformulation of \eqref{eq:primitive_strong}) by choosing $(\tp,\hp)$ as in \eqref{eq:pi_psi_etc}, $\fv= \frac{1}{2}\sum_{n\geq 1}\phi^{3}_n \nabla_{\h}(\ktwon \T)$, $\ft=0$, $\gv=\gt=0$ and the differential operators $(\Delta v,\Delta \T)$ replaced by 
$
(\Delta v+ \dop_{\phi}v,\Delta \T + \dop_{\psi} \T).
$
As we commented in Remark \ref{r:inhomogeneous_viscosity_conductivity} the case of inhomogeneous viscosity and/or diffusivity fits in our framework.
In particular, the definition of (global) $L^2$-solution to \eqref{eq:primitive}-\eqref{eq:boundary_conditions_full} given in Definition \ref{def:sol_strong_strong}
carries over to
\eqref{eq:primitive_stratonovich}-\eqref{eq:primitive_stratonovich_BC}. 

Now, we formulate the main result of this section. As in \eqref{eq:def_HV}, we let 
$
H=\Hs^{1}(\Dom)\times H^1(\Dom)$ and $  V=\Hs^{2}_{\n}(\Dom)\times \Hr^2(\Dom)
$, 
where $\Hs_{\n}^2(\Dom)$ and $ \Hr^2(\Dom)$ are defined in \eqref{eq:def_Hn} and \eqref{eq:def_Hr}, respectively.

\begin{theorem}[Global well-posedness -- Stratonovich formulation]
\label{t:global_Stratonovich}
Let Assumption \ref{ass:stratonovich} be satisfied. Let $(v_0,\T_0)\in L^0_{\F_0}(\O;H)$. Then  \eqref{eq:primitive_stratonovich}-\eqref{eq:primitive_stratonovich_BC} has a unique global $L^2$-strong solution $(v,\T)$ such that 
$$
(v,\T)\in C([0,\infty);H) \cap L^2_{{\rm loc}}([0,\infty);V) \text{ a.s.\ }
$$
Moreover, the following hold:
\begin{itemize}
\item The estimates of Theorem \ref{t:global_primitive_strong_strong} hold for the global $L^2$-strong solution $(v,\T)$ to \eqref{eq:primitive_stratonovich}-\eqref{eq:primitive_stratonovich_BC}. 
\item The assigment $(v_0,\T_0)\mapsto (v,\T)$ is continuous in the sense of Theorem \ref{t:continuous_dependence}.
\end{itemize}
\end{theorem}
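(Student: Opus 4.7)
The plan is to reduce Theorem \ref{t:global_Stratonovich} to Theorems \ref{t:global_primitive_strong_strong} and \ref{t:continuous_dependence} (combined with Remark \ref{r:inhomogeneous_viscosity_conductivity}) by recasting the Stratonovich system \eqref{eq:primitive_stratonovich}--\eqref{eq:primitive_stratonovich_BC} as an It\^o system of the form \eqref{eq:primitive}--\eqref{eq:boundary_conditions_full} with suitably chosen $(a_v,a_{\T},b_v,b_{\T},\tp,\hp,\fv,\ft,\gvn,\gtn)$ in the sense of Remark \ref{r:inhomogeneous_viscosity_conductivity}. Equivalence of the notions of solution will follow by the standard It\^o--Stratonovich conversion for $H^1$--valued semimartingales, once the regularity afforded by Theorem \ref{t:local_primitive_strong_strong} is available.

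First, I would make the heuristic computations \eqref{eq:correction_T}--\eqref{eq:correction_v} rigorous for any $L^2$--local strong solution $((v,\T),\tau)$ to the It\^o problem. The joint quadratic variations $[v,\beta^n]_{\cdot}$ and $[\T,\beta^n]_{\cdot}$ are well-defined thanks to $(v,\T)\in L^2_{\loc}([0,\tau);V)$ a.s., which makes every factor appearing on the RHS of \eqref{eq:ito_correction_v} a genuine element of $L^2_{\loc}([0,\tau);L^2(\Dom))$ (using Assumption \ref{ass:stratonovich}\eqref{it:stratonovich_3} together with the embeddings already exploited in Lemma \ref{l:smr}). This yields an equivalence: $((v,\T),\tau)$ solves the It\^o problem obtained by replacing $\Delta v \,\dd t$, $\Delta\T\,\dd t$ and the Stratonovich stochastic integrals in \eqref{eq:primitive_stratonovich} by
\begin{align*}
(\Delta v+ \dop_\phi v)\,\dd t,\qquad (\Delta\T + \dop_{\psi}\T)\,\dd t,\qquad \text{the It\^o integrals in \eqref{eq:primitive_strong_1}--\eqref{eq:primitive_v_T_pressure_2}},
\end{align*}
together with the additional drift correction $\p[\Lp(v,\T)+\int_{-h}^{\cdot}(\tp\cdot\nabla)\T\,\dd\zeta + \tfrac12\sum_n \phi^3_n\nabla_{\h}(\ktwon\T)]\,\dd t$, where $(\tp,\hp)$ are the ones in \eqref{eq:pi_psi_etc}. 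Hence it suffices to prove global well--posedness and the announced estimates for the resulting It\^o problem.

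The latter is of the form covered by Remark \ref{r:inhomogeneous_viscosity_conductivity}. The corresponding diffusion matrices are
\[
a_v^{i,j}=\delta_{i,j}+\tfrac12\sum_{n\geq 1}\phi^i_n\phi^j_n,\qquad
a_{\T}^{i,j}=\delta_{i,j}+\tfrac12\sum_{n\geq 1}\psi^i_n\psi^j_n,
\]
and the first-order coefficients $b_v,b_{\T}$ come from the $\nabla\phi_n,\nabla\psi_n$ factors in $\dop_{\phi},\dop_{\psi}$. The parabolicity condition reduces to $\sum_{i,j}\delta_{i,j}\xi_i\xi_j=|\xi|^2$ and is trivially satisfied with $\nu=1$. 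The regularity assumption on $(a_v,a_{\T},b_v,b_{\T})$ follows from Assumption \ref{ass:stratonovich}\eqref{it:stratonovich_3} and the multiplicative property of $H^{1,3+\delta}$, exactly as in the proof of Lemma \ref{l:smr}. The vanishing boundary traces $a_R^{3,j}(\cdot,\cdot,0)=a_R^{3,j}(\cdot,\cdot,-h)=0$ for $R\in\{v,\T\}$ and $j\in\{1,2,3\}$ required by Remark \ref{r:inhomogeneous_viscosity_conductivity} are forced by Assumption \ref{ass:stratonovich}\eqref{it:stratonovich_4} combined with the $x_3$-independence of $\phi_n^1,\phi_n^2,\psi_n^1,\psi_n^2$ from Assumption \ref{ass:stratonovich}\eqref{it:stratonovich_2}. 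Similarly, the $x_3$-independence of $a_v^{i,j},a_{\T}^{i,j}, b_v^j,b_{\T}^j$ for $i,j\in\{1,2\}$ required in Remark \ref{r:inhomogeneous_viscosity_conductivity} is inherited from that of $\phi_n^1,\phi_n^2,\psi_n^1,\psi_n^2,\ktwon$.

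Next, I would check that $(\tp,\hp)$ defined in \eqref{eq:pi_psi_etc} fulfill Assumption \ref{ass:well_posedness_primitive_double_strong}\eqref{it:well_posedness_primitive_phi_psi_smoothness}--\eqref{it:well_posedness_primitive_kone_smoothness_strong_strong} and are $x_3$-independent (Assumption \ref{ass:global_primitive_strong_strong}): this is a Cauchy--Schwarz computation using $\ell^2(\N)$-duality and Assumption \ref{ass:stratonovich}\eqref{it:stratonovich_3}. The additional corrective term $\tfrac12\sum_{n}\phi^3_n\nabla_{\h}(\ktwon\T)$ is globally Lipschitz from $H^1(\Dom)\times H^1(\Dom)$ into $L^2$, hence fits into the generalized framework of Remark \ref{r:weaken_assumption_nonlinearities}\eqref{it:global_sublinear} with $\fv(\cdot,v,\T,\nabla v,\nabla\T)$ of the stated form, while $\ft=0$, $\gvn=\gtn=0$. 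Once these verifications are in place, Theorems \ref{t:global_primitive_strong_strong} and \ref{t:continuous_dependence} (as extended in Remarks \ref{r:inhomogeneous_viscosity_conductivity} and \ref{r:weaken_assumption_nonlinearities}) apply and deliver the global strong solution together with the energy estimates and continuous dependence.

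The main obstacle will be the rigorous It\^o--Stratonovich conversion: checking that all terms in \eqref{eq:ito_correction_v} make analytic sense along a solution, interchanging $\p$, $\op$, and $(\phi_n\cdot\nabla)$ to arrive at the clean form of the correction (in particular the identity $(\phi_n\cdot\nabla)\op(\ktwon\T)=\op[\ktwon(\phi_{n,\h}\cdot\nabla_{\h})\T]+\phi_n^3\nabla_{\h}(\ktwon\T)$, which relies on $\z$-independence of $\phi_{n,\h}$ and $\ktwon$), and verifying that $\sum_n$ converges in the correct topologies. Each step is governed by Assumption \ref{ass:stratonovich}\eqref{it:stratonovich_2}--\eqref{it:stratonovich_4} and the regularity already used throughout the paper; conceptually it is bookkeeping, but it is the step where the structural hypotheses on $(\phi_n,\psi_n,\ktwon)$ are genuinely used.
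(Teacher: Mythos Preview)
Your approach is the same as the paper's: convert the Stratonovich integrals to It\^o form via the computations \eqref{eq:correction_T}--\eqref{eq:pi_psi_etc} already carried out before the theorem, then verify that Assumption \ref{ass:stratonovich} implies the hypotheses of Theorems \ref{t:global_primitive_strong_strong}--\ref{t:continuous_dependence} in the variant of Remark \ref{r:inhomogeneous_viscosity_conductivity}, and conclude. You supply considerably more detail than the paper's three-sentence proof, and your identification of $(a_v,a_\T,b_v,b_\T,\tp,\hp,\fv)$ and the parabolicity/regularity checks are all correct.

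One small slip: you assert that the boundary traces $a_{\T}^{3,j}=\tfrac12\sum_n\psi^3_n\psi^j_n$ vanish on $\Tor^2\times\{-h,0\}$ by Assumption \ref{ass:stratonovich}\eqref{it:stratonovich_4}, but that item only constrains $\phi^3_n$, not $\psi^3_n$, so the claim for $R=\T$ does not follow from what is written. (Also, for $j=3$ one has $a_R^{3,3}=1+\tfrac12\sum_n|\cdot|^2\geq 1$, which never vanishes; Remark \ref{r:inhomogeneous_viscosity_conductivity} should be read with $j\in\{1,2\}$ there.) The paper's own proof is equally silent on this point, so this is most plausibly an omission in Assumption \ref{ass:stratonovich}---adding $\psi^3_n(\cdot,0)=\psi^3_n(\cdot,-h)=0$, as is standard in the companion paper, closes the gap---rather than a defect in your strategy.
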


\begin{proof}
One can readily check that  Assumption \ref{ass:stratonovich} is stronger than 
Assumptions \ref{ass:well_posedness_primitive_double_strong} and \ref{ass:global_primitive_strong_strong}. 
For instance the parabolicity assumption of Assumption \ref{ass:well_posedness_primitive_double_strong}\eqref{it:well_posedness_primitive_parabolicity_strong_strong} (see Remark \ref{r:inhomogeneous_viscosity_conductivity} for the case of inhomogeneous viscosity and/or conductivity) is automatically satisfied. Moreover, Assumption \ref{ass:global_primitive_strong_strong} follows from Assumption \ref{ass:stratonovich}\eqref{it:stratonovich_2} and \eqref{eq:pi_psi_etc}. 
Thus Theorem \ref{t:global_Stratonovich} follows from Theorems \ref{t:global_primitive_strong_strong}-\ref{t:continuous_dependence} and Remark \ref{r:inhomogeneous_viscosity_conductivity}.
\end{proof}

\begin{remark}[Weakening Assumption \ref{ass:stratonovich}\eqref{it:stratonovich_4} -- Local existence for \eqref{eq:primitive_stratonovich}]
Theorem \ref{t:local_primitive_strong_strong} also applies to the Stratonovich formulation \eqref{eq:primitive_stratonovich}. In particular, the local existence result of 
Theorem \ref{t:local_primitive_strong_strong} holds for \eqref{eq:primitive_stratonovich} provided Assumption \ref{ass:stratonovich}\eqref{it:stratonovich_1} and \eqref{it:stratonovich_3}-\eqref{it:stratonovich_4}.  
By the first part of Remark \ref{r:inhomogeneous_viscosity_conductivity}, to extend the local existence result of Theorem \ref{t:local_primitive_strong_strong}, the condition in Assumption \ref{ass:stratonovich}\eqref{it:stratonovich_4} can be weakened to the following: There exist $K,\eta>0$ such that, a.s.\ for all $j\in \{1,2\}$,
$$
\Big\|\sum_{n\geq  1} \phi^3_n(\cdot,0)\phi^j_n (\cdot,0)\Big\|_{H^{\frac{1}{2}+\eta}(\Tor^2)}
+\Big\|\sum_{n\geq  1} \phi^3_n(\cdot,-h)\phi^j_n (\cdot,-h)\Big\|_{H^{\frac{1}{2}+\eta}(\Tor^2)}\leq K.
$$
\end{remark}

{\small
\subsubsection*{Acknowledgements}
The first author thanks Umberto Pappalettera for helpful suggestions on Section \ref{s:physical_derivation} and for bringing to his attention the reference \cite{MTVE01}. 
The first author is grateful to Marco Romito for helpful comments related to Remarks \ref{r:sigma_independence} and \ref{r:phi_independence}. Finally, the first author thanks Caterina Balzotti for her support in creating the picture.
}

\bibliographystyle{alpha-sort}
\bibliography{literature}

\end{document}